\documentclass[12pt]{amsart}
\usepackage[utf8]{inputenc}
\usepackage{url,hyperref}
\usepackage{tikz}
\usetikzlibrary{shapes.geometric}
\usepackage{amsthm,amsmath,amssymb}
\usepackage{caption}

\newcommand{\RR}{\mathbb{R}}

\newcommand{\ZZ}{\mathbb{Z}}
\newcommand{\QQ}{\mathbb{Q}}

\newcommand{\sym}{\mathfrak{S}}

\newcommand{\wcomp}{{\rm WComp}}

\newcommand{\E}{\mathcal{E}}
\newcommand{\xx}{\mathbf{x}}
\newcommand{\cc}{\mathbf{c}}
\newcommand{\clr}{{\bf\mathop{clr}}}

\newcommand{\internal}{{\bf\mathop{int}}}
\newcommand{\BT}{\mathcal{BT}}

\newcommand{\Lyn}{\mathcal{L}yn}

\newcommand{\bT}{\mathbf{T}}

\newcommand{\bpi}{\pmb{\pi}}

\newcommand{\balpha}{\pmb{\alpha}}
\newcommand{\bbeta}{\pmb{\beta}}

\newcommand{\PF}{\mathnormal{PF}}

\renewcommand{\P}{\mathcal{P}}
\newcommand{\B}{\mathcal{B}}

\def\newop#1{\expandafter\def\csname #1\endcsname{\mathop{\rm #1}\nolimits}}

\newop{sgn}
\newop{supp}
\newop{init}
\newop{des}
\newop{peak}
\newop{asc}
\newop{nlyn}
\newop{aapair}
\newop{Max}
\newop{co}
\newop{Des}

\newtheorem{theorem}{Theorem}[section]
\newtheorem{lemma}[theorem]{Lemma}
\newtheorem{proposition}[theorem]{Proposition}
\newtheorem{corollary}[theorem]{Corollary}

\newtheorem{question}[theorem]{Question}

\newtheorem{conjecture}[theorem]{Conjecture}
\theoremstyle{definition}
  \newtheorem{remark}[theorem]{Remark}
  \newtheorem{definition}[theorem]{Definition}
  \newtheorem{example}[theorem]{Example}

\counterwithin{figure}{section}
\numberwithin{equation}{section}

\title[Weighted bond posets]{Weighted bond posets and a new chromatic symmetric function}

\author[Gonz\'{a}lez D'Le\'{o}n]{Rafael S. Gonz\'alez D'Le\'on$^{*}$}
\address[R. S. Gonz\'{a}lez D'Le\'{o}n]{Department of Mathematics and Statistics, Loyola University of Chicago, Chicago, USA}
\email{rgonzalezdleon@luc.edu}
\urladdr{\url{https://dleon.combinatoria.co/}}
\thanks   {$^{*}$  Partially supported by an AMS-Simons Research Enhancement Grant for Primarily Undergraduate Institution Faculty and by the program of postdoctoral fellowships of Minciencias (Colombian Ministry of Science).
}

\author[Wachs]{Michelle L. Wachs$^{**}$}
\address[M.\ L.\ Wachs]{Department of Mathematics\\University of Miami\\Coral Gables\\Florida\\USA} 
\email{wachs@math.miami.edu}
\urladdr{\url{http://www.math.miami.edu/~wachs/}}
\thanks  { {$^{**}$Partially supported by NSF grants
DMS  1502606 and DMS 2207337}}
		
\keywords{weighted partitions, bond lattice, chromatic polynomial, graph associahedra, gamma-positivity and unimodality, symmetric functions, lexicographic shellability}

\date{\today}
\subjclass[2020]{Primary: 06A07, 06A11, 05A18, 05C31, 05E05; Secondary: 05A15, 05C05, 05C15, 05C40, 52B05, 52B12}  

\begin{document}

\begin{abstract}

The classical bond lattice of a graph was used in a formula of Whitney to compute the chromatic polynomial.  In this paper, we use weighted versions of the bond lattice, to introduce and study a new polynomial and a new symmetric function invariant of a graph. Examples of the new polynomial invariant include the classical Narayana polynomials (for the path graph), 
the tree-Eulerian polynomials (for the complete graph), and the binomial-Eulerian polynomials (for the star graph).  These examples suggest an interesting connection to  
$h$-polynomials of general graph-associahedra. We prove that for any chordal graph, our polynomial graph invariant is $\gamma$-positive.

Multiweighted bond posets yield a symmetric function graph invariant that is an analog of the chromatic polynomial. The highest degree homogeneous component of this symmetric function is of particular interest. The parking function symmetric function introduced by Haiman arises as an example, as do symmetric functions studied by the first author in connection with  multibracketed Lie algebras and with colored exterior algebras. The $\gamma$-positivity result mentioned above is a specialization of an $e$-positivity result for the highest degree homogeneous component, which we prove for any chordal graph using the theory of lexicographic shellability. We conjecture that this symmetric function is Schur-log-concave, which specializes to  Huh's log-concavity theorem for the chromatic polynomial.
\end{abstract}

\maketitle

\setcounter{tocdepth}{1}
\tableofcontents

\section{Introduction}

In \cite{BjornerWelker2005} Bj\"orner and Welker introduce the notion of Rees product of posets, which is a combinatorial analog of the Rees construction for semigroup algebras. A particular example that they consider  
can be viewed as a weighted version of the Boolean algebra $\mathcal B_n$, i.e., the  lattice of subsets of the set $[n]:=\{1,2,\dots,n\}$.   A  {\em weighted set} $B^v$  is a  finite set $
B$ together with a weight $v \in \{0,1,\dots,|B|-1\}$ attached (the empty set does not get any weight).  The {\em weighted Boolean algebra}  $\mathcal {WB}_n$ consists of the weighted subsets of $[n]$ with a certain order relation that reduces to the usual inclusion order on subsets when the weights are ignored.

 The poset $\mathcal {WB}_n$ is pure (or graded) of length $n$, but  is not bounded.   Its minimum element is $$\hat 0 = \emptyset$$  and its maximal elements are 
 $$[n]^0, [n]^1, \dots, [n]^{n-1}.$$  The Hasse diagram of  $\mathcal {WB}_3$ is shown in Figure~\ref{figweightedboolean_n3}, where the  commas and the set brackets have been omitted. 
 
 \begin{figure}[ht]

\begin{center} 
\begin{tikzpicture}[line join=bevel,scale=1]

\tikzstyle{every node}=[inner sep=0pt, scale=0.8, minimum width=4pt]

\node (nempty) at (0,0)  {$\emptyset$};
\node (n10) at (-3,2)  {$1^0$};
\node (n20) at (0,2)  {$2^0$};
\node (n30) at (3,2)  {$3^0$};

\node (n120) at (-5,4)  {$12^0$};
\node (n130) at (-3,4)  {$13^0$};
\node (n230) at (-1,4)  {$23^0$};
\node (n121) at (1,4)  {$12^1$};
\node (n131) at (3,4)  {$13^1$};
\node (n231) at (5,4)  {$23^1$};

\node (n1230) at (-3,6)  {$123^0$};
\node (n1231) at (0,6)  {$123^1$};
\node (n1232) at (3,6)  {$123^2$};

\draw (nempty)--(n10);
\draw (nempty)--(n20);
\draw (nempty)--(n30);

\draw (n10)--(n120);
\draw (n10)--(n130);
\draw (n10)--(n121);
\draw (n10)--(n131);

\draw (n20)--(n120);
\draw (n20)--(n230);
\draw (n20)--(n121);
\draw (n20)--(n231);

\draw (n30)--(n130);
\draw (n30)--(n230);
\draw (n30)--(n131);
\draw (n30)--(n231);

\draw (n120)--(n1230);
\draw (n120)--(n1231);

\draw (n130)--(n1230);
\draw (n130)--(n1231);

\draw (n230)--(n1230);
\draw (n230)--(n1231);

\draw (n121)--(n1231);
\draw (n121)--(n1232);

\draw (n131)--(n1231);
\draw (n131)--(n1232);

\draw (n231)--(n1231);
\draw (n231)--(n1232);

\end{tikzpicture}
\end{center}
\caption[]{Weighted Boolean algebra $\mathcal {WB}_3$}\label{figweightedboolean_n3}
\end{figure}
 
 Shareshian and Wachs \cite{ShareshianWachs2009} prove that   $\mathcal{WB}_n \cup \{\hat 1\}$, where $\hat 1$ is an attached maximum element, is  EL-shellable and  that  the classical Eulerian polynomial $A_n(t)$  arises when one applies the M\"obius function $\mu_{\mathcal{WB}_n}$ to the maximal intervals of ${\mathcal{WB}_n}$.  More precisely they prove 
\begin{equation}  \label{intro:equation:boolean}  (-1)^n \sum_{j=0}^{n-1} \mu_{\mathcal{WB}_n}(\hat 0, [n]^j) \,  t^j =   A_n(t):=\sum_{\sigma \in \mathfrak S_n} t^{\des(\sigma)},\end{equation}
 where  $\mathfrak S_n$ is the symmetric group on $[n]$ and $\des$ is the descent statistic defined by $$\des(\sigma):=| \{i \in [n-1] : \sigma(i) > \sigma(i+1) \}|.$$
  
A weighted version of the partition lattice $\Pi_n$, analogous to the weighted Boolean algebra $\mathcal {WB}_n$, was proposed by Dotsenko and Khoroshkin in \cite{DotsenkoKhoroshkin2007} as a tool for studying  the  free Lie algebra with two compatible brackets.    In \cite{GonzalezDLeonWachs2016} Gonz\'alez D'Le\'on and Wachs  carry out a study of this poset and  its connection to  the free Lie algebra with two compatible brackets.  
 
 A {\em weighted partition} of $[n]$ is a set $\{B_1^{v_1},B_2^{v_2},...,B_t^{v_t}\}$
where 
\begin{itemize}
\item $\{B_1,B_2,...,B_t\}$ is a partition of $[n]$
\item each $B_i^{v_i}$ is a weighted set, i.e.,  $v_i \in \{0,1,2,...,|B_i|-1\}$ for all~$i$.
\end{itemize}
The {\em poset of weighted partitions} $\mathcal W\Pi_n$  is the set of weighted partitions of $[n]$ with 
a certain order relation that reduces to the usual refinement order on partitions when the weights are ignored.\footnote{Precise definitions are given in Section~\ref{section:definitions}.} \footnote{The notation $\mathcal W\Pi_n$ used in this paper is different from that of \cite{GonzalezDLeonWachs2016} where the notation $\Pi_n^w$ is used instead.}
The poset $\mathcal W\Pi_n$   is pure of length $n-1$, but is not bounded. Its minimum element is 
\begin{equation} \label{equation:min_elt} \hat 0:= \{\{1\}^0,\{2\}^0,\dots, \{n\}^0\}\end{equation}
and it has  $n$
maximal elements 
\begin{equation} \label{equation:max_elt} \{[n]^0\}, \, \{[n]^1\}, \dots,  \{[n]^{n-1}\}.\end{equation}  

As is customary for ordinary partitions, we sometimes use the notation $B_1^{v_1} |B_2^{v_2}|\cdots |B_k^{v_k}$ with set brackets omitted to denote the weighted partition $\{B_1^{v_1}, \dots, B_k^{v_k}\}$.  The Hasse diagram of  $\mathcal W\Pi_3$ is shown in Figure~\ref{fign3k2}.

\begin{figure}[ht]

\begin{center} 
\begin{tikzpicture}[line join=bevel,scale=1]

\tikzstyle{every node}=[inner sep=0pt, scale=0.8, minimum width=4pt]
\node (n1232) at (3,4) {$123^{2}$};
  \node (n13020) at (-3,2) {$13^{ 0}| 2^{ 0}$};
  \node (n102030) at (0,0)  {$1^{0}| 2^{0}| 3^{0}$};
  \node (n1231) at (0,4) {$123^{1}$};
  \node (n12030) at (-5,2) {$12^{0}| 3^{0}$};
  \node (n13120) at (3,2)  {$13^{1}| 2^{0}$};
  \node (n1230) at (-3,4){$123^ {0}$};
  \node (n10230) at (-1,2)  {$1^{0}| 23^{0}$};
  \node (n12130) at (1,2)  {$12^{ 1}| 3^{0}$};
  \node (n10231) at (5,2) {$1^{0}| 23^ {1}$};

  \draw (n1231) -- (n10230) ;
  \draw [] (n13020) -- (n102030);
  \draw [] (n1232) -- (n13120);
  \draw [] (n1231)-- (n13020);
  \draw [] (n10230)--(n102030);
  \draw [] (n1230) -- (n10230);
  \draw [] (n1231) -- (n13120);
  \draw [] (n12030)-- (n102030);
  \draw [] (n1231) --(n12130);
  \draw [] (n1232) -- (n12130);
  \draw [] (n13120) --(n102030);
  \draw [] (n1231) --(n10231);
  \draw [] (n1230) -- (n13020);
  \draw [] (n1230)  -- (n12030);
  \draw [] (n12130) --  (n102030);
  \draw [] (n1232)  --  (n10231);
  \draw [] (n10231)  --  (n102030);
  \draw [] (n1231) -- (n12030);

\end{tikzpicture}
\end{center}
\caption[]{Weighted partition poset $\mathcal W\Pi_3$}\label{fign3k2}
\end{figure}

In \cite{GonzalezDLeonWachs2016} Gonz\'alez D'Le\'on and Wachs  prove that   $\mathcal{W}\Pi_n \cup \{\hat 1\}$  is   EL-shellable and   that  the {\em tree-Eulerian polynomial} (cf. \cite{GonzalezDLeon2016-2}), studied by Gessel and Seo  \cite[Theorem 9.1]{GesselSeo2004},  arises when one applies the M\"obius function $\mu_{\mathcal{W}\Pi_n}$ to the maximal intervals of ${\mathcal{W}\Pi_n}$.  More precisely, they prove 
$$   \sum_{j=0}^{n-1} \mu_{\mathcal{W}\Pi_n}(\hat 0, \{[n]^j\}) \,  t^j =  (-1)^{n-1} \sum_{T \in \mathcal T_n} t^{\des(T)},$$
 where  $\mathcal T_n$ is the set of (nonplanar) rooted trees on 
node set $[n]$ and $\des(T)$ is the \emph{descent} statistic on $T\in \mathcal T_n$ that gives the number of 
pairs  $(p,c)$ for which $p$ is the parent of $c$ in $T$ and $p>c$.

In this paper we introduce a weighted version of the bond lattice of a graph, which generalizes the weighted partition poset.  The {\em bond lattice} $\Pi_G$ of a graph $G$ on vertex set $[n]$ is the induced subposet of $\Pi_n$ consisting of the partitions  whose blocks induce connected subgraphs of $G$.  For the path graph $P_3= ([3], \{\{1,2\},\{2,3\}\})$, the Hasse diagram of  $\Pi_{P_3}$ is given in Figure~\ref{figure:example_bond_lattice}. Note that the partition $13|2$ does not belong to $\Pi_{P_3}$ since the induced subgraph $P_3|_{\{1,3\}}$ is not connected.

Bond lattices play an important role in the study of chromatic polynomials.  Indeed,  a formula of Whitney \cite{Whitney1932} for the chromatic polynomial  $\chi_G(t)$ of $G$ is given by
\begin{equation} \label{equation:chromatic} \chi_G(t) = \sum_{\pi \in \Pi_G} \mu_{\Pi_G}(\hat 0, \pi) t^{|\pi|}.\end{equation}
Through the weighted  version of the bond lattice, we  introduce and study a  different  polynomial graph invariant. 

\begin{figure}
    \centering
    \begin{tikzpicture}[line join=bevel,scale=1.1]
\begin{scope}[scale=0.5,yshift=2cm]
        \node  at (-4,0) { $P_3$};
\tikzstyle{every node}=[draw,circle,scale=0.7]
\node (n0) at (-6,1) {\Large $1$};
\node (n1) at (-4,1) {\Large $2$};
\node (n2) at (-2,1) {\Large $3$};
\draw (n0) -- (n1) -- (n2);
\end{scope}
\begin{scope}[xshift=3.5cm,scale=0.6]

\tikzstyle{every node}=[inner sep=0pt, scale=0.85, minimum width=4pt]
 \node (n102030) at (0,0)  {$1 | 2 | 3$};
\node at (0,-1) {$\Pi_{P_3}$}; 
  \node (n12i30) at (-2,2) {$12 | 3$};
  \node[color=black!10] (n13i20) at (0,2) {$13 | 2$};
  \node (n1023i) at (2,2)  {$1 | 23$};

 \node (n123ii) at (0,4){$123$};
 \draw (n123ii) -- (n1023i) ;
 \draw[color=black!20] (n123ii)-- (n13i20);
  \draw (n123ii) -- (n12i30);  

  \draw [color=black!10] (n13i20) -- (n102030);
  \draw [] (n12i30)-- (n102030);
  \draw [] (n1023i)--(n102030);

\end{scope}

\end{tikzpicture}
    \caption{Bond lattice of $P_3$.}
    \label{figure:example_bond_lattice}
\end{figure}

Let $G$ be a graph on vertex set $[n]$. We define the \emph{weighted bond poset} $\mathcal {W}\Pi_G$ of  $G$ to be the induced subposet of $\mathcal W\Pi_n$ consisting of weighted partitions $\bpi$ whose underlying partitions $\pi$ are in $\Pi_G$.   Note that for the complete graph $K_n$, we have  
$$\mathcal W\Pi_{K_n} = \mathcal W\Pi_n.$$ 

The weighted bond poset $\mathcal W\Pi_G$ is pure, but not bounded. Its minimum element is the same as that for $\mathcal W\Pi_n$,  given in (\ref{equation:min_elt}). If $G$ is connected, its maximal elements are also the same as those for $\mathcal W\Pi_n$, which are given in (\ref{equation:max_elt}). For the path graph $P_3$, the Hasse diagram of  $\mathcal W\Pi_{P_3}$ is given in Figure~\ref{figure:weighted_bond_poset_P3r2}.  

\begin{figure}
    \centering
    \begin{tikzpicture}[line join=bevel,scale=1.1]
\begin{scope}[scale=0.5,yshift=2cm]
        \node  at (-4,0) { $P_3$};
\tikzstyle{every node}=[draw,circle,scale=0.7]
\node (n0) at (-6,1) {\Large $1$};
\node (n1) at (-4,1) {\Large $2$};
\node (n2) at (-2,1) {\Large $3$};
\draw (n0) -- (n1) -- (n2);
\end{scope}
\begin{scope}[xshift=3.5cm,scale=0.6]

\tikzstyle{every node}=[inner sep=0pt, scale=0.85, minimum width=4pt]
 \node (n102030) at (-2.5,0) {$1^{0}| 2^{0}| 3^{0}$};

  \node (n12i30) at (-5.5,2) {$12^{0}| 3^{0}$};
  
  \node (n1023i) at (-3.5,2) {$1^{0}| 23^{0}$};

 \node (n12j30) at (-1,2) {$12^{1}| 3^{0}$};
 
  \node (n1023j) at (1,2) {$1^{0}| 23^{1}$};

 \node (n123ii) at (-4.5,4) {$123^ {0}$};
 \node (n123ij) at (-2.5,4) {$123^ {1}$};

 \node (n123jj) at (0,4) {$123^ {2}$};

 \draw (n123ii) -- (n1023i) ;

  \draw (n123ii) -- (n12i30);  

\draw (n123jj) -- (n1023j) ;

  \draw (n123jj) -- (n12j30);  

 \draw (n123ij) -- (n1023i) ;

  \draw (n123ij) -- (n12i30);  
\draw (n123ij) -- (n1023j) ;

  \draw (n123ij) -- (n12j30);
	  \draw [] (n12i30)-- (n102030);
  \draw [] (n1023i)--(n102030);
 \draw [] (n1023j)  --  (n102030);
  \draw [] (n12j30) --  (n102030);

\end{scope}
\end{tikzpicture}
    \caption{Weighted bond poset $\mathcal W\Pi_{P_3}$}
    \label{figure:weighted_bond_poset_P3r2}
\end{figure}

 Now let $G$ be a  connected graph on vertex set $[n]$ and  define the {\em M\"obius polynomial} of $G$ to be
 $$\mu_G(t):=\sum_{j=0}^{n-1} \mu_{\mathcal W\Pi_G} (\hat 0, \{[n]^j\}) t^j.$$
 For graphs $G$ that are not connected, the sum is taken over the maximal elements of $\mathcal W\Pi_G$.
 The constant term $\mu_G(0)$ is the M\"obius invariant $\mu(\Pi_G)$ of the bond poset $\Pi_G$.
 From Figures~\ref{fign3k2} and~\ref{figure:weighted_bond_poset_P3r2}, we see that 
 \begin{equation} \label{equation:K_3_P_3} \mu_{K_3}(t) = 2+5t+2t^2 \,\, \mbox{ and } \,\, \mu_{P_3}(t) = 1+3t+t^2.\end{equation}
 
 We show  that if $G$ is a path graph on $[n]$ then 
 $(-1)^{n-1}\mu_G(t)$ is a Narayana polynomial and  if  $G$ is the star graph on $[n]$ then $(-1)^{n-1}\mu_G(t)$ is a polynomial arising in the work of Postnikov, Reiner, and Williams \cite{PostnikovReinerWilliams2008} on graph associahedra, called the binomial-Eulerian polynomial in \cite{ShareshianWachs2020}.\footnote{Definitions are given in Sections~\ref{subsection:path} and~\ref{subsection:star}.}   As was discussed in \cite{PostnikovReinerWilliams2008}, the Narayana polynomials and the binomial-Eulerian polynomials are  $h$-polynomials of  certain   graph associahedra. The M\"obius polynomials of these examples motivated the following conjecture, which appeared in an earlier version of this paper and has   since been proved by Avila, Carrillo, and the first author \cite{Avila2022,AvilaCarrilloGonzalezDleon2026}.

\begin{conjecture}\label{conjecture:forest_conjecture} 
Let $G$ be a graph with $n$ vertices and $k$ connected components. Let $\mathcal P_G$ be the graph associahedron associated with $G$ and let  $h_{\P_{G}}(t)$ denote its $h$-polynomial.  Then 
\begin{equation*} (-1)^{n-k} \mu_{G}(t)-h_{\P_{G}}(t)\succeq 0,\end{equation*}
 with equality  if 
 $G$ is a forest, 
 where  $p(t)\succeq 0$ indicates that all the coefficients of the polynomial $p(t)$ are nonnegative.
 \end{conjecture}

 In \cite{PostnikovReinerWilliams2008} Postnikov, Reiner, and Williams establish the important $\gamma$-positivity property for $h_{\mathcal \mathcal A_G}(t) $ when $G$ is a {\em chordal graph} in  order to  confirm, for this particular class of polytopes, a well known conjecture of  Gal \cite{Gal2005}, which asserts that every  simple flag polytope is $\gamma$-positive.
 The $\gamma$-positivity property implies unimodality (and palindromicity), which was established by Stanley~\cite{Stanley1980} for all convex polytopes.  In this paper we prove the analogous result for $\mu_G(t)$.  
  
  \begin{theorem}\label{intro_theorem:gamma} If $G$ is a chordal graph with $n$ vertices and $k$ connected components  then
 $ (-1)^{n-k}\mu_G(t)$ is $\gamma$-positive.
 \end{theorem}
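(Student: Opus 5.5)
We plan to follow the route indicated in the abstract: lift the polynomial $\mu_G(t)$ to a symmetric function and prove a stronger $e$-positivity statement, from which $\gamma$-positivity follows by specialization. First we reduce to connected $G$. The weighted bond poset of a disjoint union is a product of the weighted bond posets of the components, with weights adding. Hence $\mu_G(t)=\prod_i \mu_{G_i}(t)$, and a product of $\gamma$-positive polynomials is again $\gamma$-positive. So it suffices to treat connected chordal $G$, where the sign is $(-1)^{n-1}$.

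Next we introduce multiweighted bond posets $\mathcal W\Pi_G^{k}$, where the weights are compositions with $k$ parts in place of a single integer. We then form the generating function
\[\sum_{\mu} \mu(\hat 0,\{[n]^{\mu}\})\,\xx^{\mu},\]
summed over the maximal weights $\mu$. We expect this to be symmetric, being the highest degree component of the new chromatic symmetric function. Specializing to two variables $x_1=1$, $x_2=t$ recovers $\mu_G(t)$. It is standard that if a homogeneous symmetric function of degree $n-1$ is $e$-positive, then its specialization $x_1=1,\,x_2=t$, all other $x_i=0$, is $\gamma$-positive. The reason is that $e_\lambda$ specializes to $t^{a}(1+t)^{b}$, which is nonzero only when all parts of $\lambda$ are at most $2$, giving terms of the form $t^{j}(1+t)^{n-1-2j}$. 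Thus the task becomes proving $e$-positivity of $(-1)^{n-1}$ times this symmetric function.

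To prove $e$-positivity we would construct an EL-labeling of each interval $[\hat 0,\{[n]^\mu\}]\cup\{\hat 1\}$. We would adapt the labeling of \cite{GonzalezDLeonWachs2016} for $\mathcal W\Pi_n$: an edge merging blocks $B,B'$ with weight increment $i$ receives a label built from $(\max B,\max B')$ together with a color $i$. The ordering would be taken from a perfect elimination ordering of the chordal graph $G$. Chordality is what should guarantee a unique increasing chain in each interval. The reason is that blocks in the bond lattice are connected, and under a perfect elimination ordering, merging can always proceed along edges to the earlier neighbor. Then $|\mu(\hat 0,\{[n]^\mu\})|$ counts the decreasing maximal chains.

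Finally, we would encode decreasing chains as colored labeled trees or words and read off the generating function over $\mu$. We would organize these by a statistic, such as the descent or ascent pattern along the perfect elimination order, so that the colors factor as products of $e_k$'s. This would take the form of a sign-free sum of $e_\lambda$ in which each block of consecutive forced-equal colors contributes an elementary symmetric function.

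The main obstacle is this last step together with the design of the labeling. We need the decreasing chains, as $\mu$ varies, to assemble visibly into $e$-basis terms, not merely to give monomial positivity. If the direct combinatorial grouping fails, we would first try to verify $e$-positivity for $K_n$, where we expect to recover known results on $\mathcal W\Pi_n$. We would then induct on chordal graphs by adding a simplicial vertex, showing that the symmetric function transforms by an $e$-positive operation.
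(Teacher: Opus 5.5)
Your architecture matches the paper's: the product decomposition over components, the lift to $M_G(\xx)$ with $M_G(1,t)=\mu_G(t)$, the specialization $e_\lambda(1,t)=t^{m_2(\lambda)}(1+t)^{m_1(\lambda)}$ (or $0$), and an EL-labeling feeding the M\"obius function. However, the proof lives in the two steps you leave as hopes, and what you say about them is either missing the key idea or needs correcting.

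\emph{The labeling.} You need not design anything new. Restrict the known EL-labeling $(\min A,\min B,j)$ of $\widehat{\Pi^r_n}$, with $G$ labeled so that each vertex's smaller neighbors form a clique (a $\max$-based version is just its mirror image under $i\mapsto n+1-i$). Every maximal chain of an interval of $\widehat{\Pi^r_G}$ is a maximal chain of the same interval of $\widehat{\Pi^r_n}$. So the EL property is inherited as soon as the $K_n$-increasing chain of every interval lies in $\widehat{\Pi^r_G}$. That chain is not yours to choose: inside each block of $\beta$ it accumulates the blocks of $\alpha$ in increasing order of their minima. The missing lemma is therefore: if $B_1,\dots,B_k$ have increasing minima, each $G|_{B_i}$ is connected, and $G|_{B_1\cup\cdots\cup B_k}$ is connected, then every $G|_{B_1\cup\cdots\cup B_s}$ is connected. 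The paper derives this from the fact that in a connected perfectly labeled chordal graph every vertex is reached from the minimum vertex by an increasing path. That fact is proved by induction, using that there is no induced path $v_i - v_k - v_j$ with $i<j<k$. Apply it to the induced subgraph on the union: the last edge of an increasing path into $\min B_s$ starts in some $B_i$ with $i<s$. Your phrase ``merging along edges to the earlier neighbor'' gestures at this but is not an argument.

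\emph{The $e$-expansion.} $\Lambda_{n,r}$ is only partially ordered: it is an ordinal sum of product orders in the second coordinate and the color. So the M\"obius function counts \emph{ascent-free} chains, not decreasing ones, and this is exactly what produces elementary symmetric functions. The ascent-free chains of $[\hat 0,[n]^\nu]_G$ are the $c(\bT,\tau_{\bT})$ with $\bT$ a Lyndon-colored normalized tree all of whose sets $A_x$ induce connected subgraphs. At a Lyndon node, the labels of $L(x)$ and $x$ never form an ascent whatever the colors, so those colors are free. A nonLyndon node must receive a color strictly smaller than its left child. Fixing the uncolored tree $T\in\mathcal N_G$, colors therefore strictly increase down each left path $y_1,\,y_2=L(y_1),\dots,y_k$ with $y_1,\dots,y_{k-1}$ nonLyndon. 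Summing over colorings gives $e_{\lambda(T)}$. Your ``forced-equal colors'' would instead produce power sums, and $p$-positivity does not yield $\gamma$-positivity: $p_2(1,t)=1+t^2=(1+t)^2-2t$. Once these two steps are supplied, the simplicial-vertex induction you hold in reserve is unnecessary, and as stated it has no mechanism behind it.
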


 Our proof of Theorem~\ref{intro_theorem:gamma} relies on an EL-labeling of the weighted bond poset of a chordal graph and the theory of lexicographic shellability.  
Previously this technique was  used in \cite{ShareshianWachs2009,LinussonShareshianWachs2012,GonzalezDLeon2018}  to establish $\gamma$-positivity of  variations and generalizations of $(-1)^n\sum_{j=0}^{n}\mu_{\mathcal {WB}_n}(\hat 0,[n]^j) t^j$   and in \cite{GonzalezDLeonWachs2016} to establish $\gamma$-positivity of $(-1)^{n-1} \mu_{K_n}(t)$.

Our results hold in greater generality. In \cite{GonzalezDLeon2016} Gonz\'alez D'Le\'on considers  a multiweighted generalization of the weighted partition poset $\mathcal W\Pi_n$ in order to study the multilinear component of the free Lie algebra with more than two  compatible brackets.  In \cite{GonzalezDLeon2018} he also introduces  a multiweighted generalization of a variant of the weighted Boolean algebra $\mathcal{WB}_n$  in order to study the multilinear component of colored exterior algebras.  In this paper we consider an analogous multiweighted generalization of the weighted bond posets $\mathcal{W}\Pi_G$.  This yields a symmetric function generalization $M_{G}(\xx)$ of the M\"obius polynomial $\mu_G(t)$, which we call the {\em M\"obius symmetric function} of $G$.  Indeed, the specialization $M_{G}(\xx)(1,t,0,\dots)$ yields $\mu_G(t)$.  The specialization $M_{G}(\xx)(1,0,0,\dots)= \mu_G(0)$ yields the M\"obius invariant $\mu(\Pi_G)$.

When $G$ is the complete graph $K_n$,  we have that $(-1)^{n-1}M_G(\xx)$ is equal to the symmetric function studied in \cite{GonzalezDLeon2016}, whose  coefficients give the dimensions of  components of the free Lie algebra with multiple compatible brackets. When $G$ is the star graph with $n$ vertices, $(-1)^{n-1} M_G(\xx)$ is equal to the symmetric function studied in \cite{GonzalezDLeon2018}, whose  coefficients give the dimensions of  components of the colored exterior algebra.
We show that when $G$ is equal to the path graph with $n$ vertices, $(-1)^{n-1}\omega M_G(\xx)$  is the  parking function symmetric function of Haiman \cite{Haiman1994}, where $\omega$ is the standard involution on the ring of symmetric functions taking the complete homogeneous symmetric function $h_n$ to the elementary symmetric function $e_n$.  

We also obtain a multiweighted generalization  of Theorem~\ref{intro_theorem:gamma}  whose proof relies on an EL-labeling of the multiweighted bond poset of a chordal graph.

\begin{theorem} \label{intro_theorem:e_pos} If $G$ is a chordal graph with $n$ vertices and $k$ connected components  then the symmetric function
 $ (-1)^{n-k}M_G(\xx)$ is $e$-positive, in the sense that when expanded in the basis of elementary symmetric functions $\{e_\lambda: \lambda \vdash n-1\}$, the coefficients are nonnegative.
\end{theorem}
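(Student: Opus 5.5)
We will follow the strategy the introduction announces: build an EL-labeling of the multiweighted bond poset of a chordal graph and then translate its falling chains into an $e$-expansion. We write $\mathcal W\Pi_G^{k}$ for the multiweighted bond poset, whose blocks carry a weight vector in $\mathbb N^{k}$ with entries summing to $|B|-1$. The invariant $M_G(\xx)$ collects the values $\mu(\hat 0,\{[n]^{\mathbf v}\})$ weighted by $\xx^{\mathbf v}$. Since $\mathcal W\Pi_G$ is a product over connected components, and $M_G$ is correspondingly multiplicative, it suffices to treat connected chordal $G$. In that case $n-k=n-1$ and $\deg M_G=n-1$, consistent with the basis $\{e_\lambda:\lambda\vdash n-1\}$.

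\textbf{Step 1 (the EL-labeling).} Fix a perfect elimination ordering of $G$ and relabel the vertices so that, for each $v$, the neighbors of $v$ with larger label form a clique. We then adapt the labeling of Gonz\'alez D'Le\'on \cite{GonzalezDLeon2016}, built for $K_n$. A cover merges blocks $A$ and $B$ and increases the weight by one unit in a color $c$. The label is a pair $(\max$ of an edge-type invariant determined by $A,B$ in the elimination order$,\ c)$, placed in a totally ordered label set in which the color is interleaved with a bicolored lexicographic order. We must show two things. First, each interval $[\hat 0,\{[n]^{\mathbf v}\}]\cup\{\hat 1\}$ has a unique increasing chain. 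Second, that chain is lexicographically first. Chordality is used in one place: when we merge along the first available edge in the elimination order, the resulting blocks stay connected, and the relevant minimal elements are well defined because later neighbors form a clique. This mirrors the standard proof that $\Pi_G$ is EL-shellable via minimal-label arguments for chordal graphs. We expect this step to be the main obstacle. The weights interact with connectivity, so a block merge allowed in $K_n$ may be illegal in $G$. We would first try to reduce to the known $K_n$ labeling by checking that, for chordal $G$, every lex-first step from the $K_n$ labeling inside an interval of $\mathcal W\Pi_G$ stays in $\mathcal W\Pi_G$.

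\textbf{Step 2 (Möbius values as falling chains).} By Björner–Wachs lexicographic shellability theory, $(-1)^{n-1}\mu(\hat 0,\{[n]^{\mathbf v}\})$ counts the decreasing maximal chains in the corresponding interval. Hence
\[(-1)^{n-1}M_G(\xx)=\sum_{\text{decreasing chains } \mathbf c}\xx^{\mathbf w(\mathbf c)},\]
where $\mathbf w(\mathbf c)$ records the colors of the steps. We then encode each decreasing chain as a combinatorial object: a rooted tree or forest on $[n]$ compatible with $G$ and the elimination order, with colored edges. In this encoding the color sequence read in a canonical order is a word whose descents are forced at certain positions.

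\textbf{Step 3 ($e$-positivity).} We group the decreasing chains by their underlying uncolored skeleton, meaning the tree together with the forced descent/ascent pattern of the label sequence. Within a class of fixed skeleton, the admissible colorings must be weakly increasing along some runs and strictly decreasing along others. We will arrange the labeling so that only strictly decreasing runs occur, or apply $\omega$ if the strict runs instead appear as increasing ones. Then each run of length $m$ contributes $e_m(\xx)$, so each class contributes a product $e_{\lambda}$ with $\lambda\vdash n-1$. Summing over skeletons gives nonnegative $e$-coefficients, which proves the theorem.

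As a check, specializing $\xx=(1,t,0,\dots)$ sends $e_\lambda$ to a product of factors $(1+t)$ and $t$, which recovers the $\gamma$-positivity of Theorem~\ref{intro_theorem:gamma}. The examples $P_3$ and $K_3$ in \eqref{equation:K_3_P_3} should be verified against this grouping.
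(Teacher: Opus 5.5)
Your outline has the paper's architecture: reduce to connected $G$ by Proposition~\ref{proposition:M_product}, restrict the $K_n$ labeling of \cite{GonzalezDLeon2016} to $\widehat{\Pi^r_G}$, read the ascent-free chains as colored trees, and group them by uncolored tree. But both load-bearing steps are left as intentions, and the specifics you commit to in Step 1 would fail.

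First, the known labeling $\lambda(\balpha\lessdot\bbeta)=(\min A,\min B,j)$ takes values in $\Lambda_{n,r}$, where inside each $\Gamma_{a,r}$ the order is the \emph{product} order on $(b,j)$. This order is not total, and no linear extension of it gives an EL-labeling. In $[\hat 0,[3]^{(1,1)}]\subseteq\Pi^2_3$ the chain labeled $(1,2,1)(1,3,2)$ is increasing. In any linear extension, one of the chains $(1,2,2)(1,3,1)$ or $(1,3,1)(1,2,2)$ becomes a second increasing chain.

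Second, the restriction step needs a specific lemma that you do not supply. By Lemma~\ref{theorem:increasingchains}, the increasing chain of $[\balpha,\bbeta]$ merges the blocks $A_{i,1},\dots,A_{i,k_i}$ in increasing order of their minima. So you must show: if connected sets $A_1,\dots,A_k$, listed by increasing minima, have connected union, then each $G|_{A_1\cup\cdots\cup A_s}$ is connected. This is Lemma~\ref{lemma:connectedincreasing}: take an increasing path from $\min A_1$ to $\min A_s$; its last edge starts in an earlier block.

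That lemma requires each vertex's \emph{smaller} neighbors to form a clique. With your orientation and the known min-based labeling it is false. Take $P_3$ with the middle vertex labeled $3$; it satisfies ``larger neighbors form a clique.'' The increasing chain of $[\hat 0,\hat 1]$ in $\Pi_3$ passes through $12|3\notin\Pi_G$. Both maximal chains of $\Pi_G$ get decreasing labels, $(1,3,1)(1,2,1)$ and $(2,3,1)(1,2,1)$, so there is no increasing chain at all. You must reverse the order, or mirror the labeling using maxima and prove the mirrored lemma.

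Step 3 is where the $e$'s come from, and there you assume what has to be proved. The needed fact is this: for each fixed uncolored object, the admissible colorings are exactly those that are strictly monotone along the blocks of a partition of the $n-1$ steps, with no other constraints. In the paper this comes from Theorem~\ref{theorem:ascent_free_chains_K}. The ascent-free chains are $c(\bT,\tau_{\bT})$ with $\bT$ Lyndon-colored, normalized, and every $G|_{A_x}$ connected.

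Consecutive steps of $\tau_{\bT}$ with equal first label coordinate are $L(x)$ followed by $x$, with labels $(a,b,c)$ and $(a,b',c')$. They form an ascent only when $b<b'$ (that is, $x$ is nonLyndon) and $c\le c'$. This is exactly the incomparability that a total order would destroy. So the only constraint is $\clr(L(x))>\clr(x)$ at nonLyndon $x$. The blocks of $\pi(T)$ are left paths $y_1,\dots,y_k$ with $L(y_i)=y_{i+1}$ and $y_i$ nonLyndon for $i<k$, and they partition $\internal(T)$. Hence each $T\in\mathcal N_G$ contributes exactly $e_{\lambda(T)}$.

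Your fallbacks do not work. Weakly monotone runs would produce $h_m$, and mixed strict and weak constraints along one run give ribbon Schur functions. Applying $\omega$ would prove $e$-positivity of $\omega M_G$, i.e.\ $h$-positivity of $M_G$, which is a different statement. Strictly increasing runs need no $\omega$, since they already give $e_m$. Finally, the classes must be indexed by the uncolored tree alone, so that the sum over colorings in each class factors.
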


 We conjecture that Theorem~\ref{intro_theorem:e_pos} is true for all graphs.  
 
 A symmetric function generalization of the chromatic polynomial can also be obtained from the multiweighted bond posets.  The maximal degree component of this symmetric function is $M_G(\xx)$.  For chordal graphs $G$, we establish a property that we call alternating $e$-positivity and we conjecture that it holds for all graphs.   
 We also present a conjecture asserting that this symmetric function has a property that we call Schur-log-concavity. This conjecture generalizes Huh's celebrated log-concavity result for the chromatic polynomial \cite{Huh2018}.

The paper is organized as follows. Sections~\ref{section:definitions} - \ref{section:gamma_positive}  deal with the M\"obius polynomial $\mu_G(t)$ and Sections~\ref{section:multiweight} - ~\ref{section:new_chromatic_symmetric_function} deal with the symmetric function generalizations of the M\"obius polynomial and the chromatic polynomial.

 In Section~\ref{section:definitions}, we present the definitions and basic properties  of the weighted bond lattice and M\"obius polynomial of a graph. 
In Section~\ref{section:recurrence}, we present recurrence relations which   enable us to show that $(-1)^{n-1}\mu_G(t)$ is the tree-Eulerian polynomial when $G$ is the complete graph, the Narayana polynomial when $G$ is the path graph, and the binomial-Eulerian polynomial when $G$ is the star graph.  We also discuss in Section~\ref{section:recurrence} the connection with graph associahedra. 
In Section~\ref{section:gamma_positive}, we present  $\gamma$-positivity results for M\"obius polynomials of chordal graphs, which are analogous to those of Postnikov, Reiner and Williams \cite{PostnikovReinerWilliams2008} for $h$-polynomials of graph associahedra of chordal graphs.  Proofs of some of the results in Sections~\ref{section:recurrence}  and~\ref{section:gamma_positive}  are deferred to Sections~\ref{section:multiweight} and~\ref{section:e_positivity}
where more general results are proved.

 In     Section~\ref{section:multiweight},  we introduce the  multiweighted generalization of the weighted bond poset and establish recurrence relations for the M\"obius symmetric functions, which enable us to derive the relationship between  the M\"obius symmetric function of the path graph and  the parking function symmetric function. Section~\ref{section:e_positivity} deals with EL-shellability of the multiweighted bond poset and its $e$-positivity implications, which include Theorem~\ref{intro_theorem:e_pos}.    The symmetric function generalization of the chromatic  polynomial is  discussed in Section~\ref{section:new_chromatic_symmetric_function}.

\section{Definitions and basic properties} \label{section:definitions}
Let $\Pi_S$ denote the lattice of partitions of the finite set $S$ partially ordered by refinement, that is $\pi \le \pi^\prime$ in $\Pi_S$  if  blocks of $\pi$ are merged to obtain blocks of $\pi^\prime$.  See \cite{Stanley2012} for basic information,  terminology and notation on partially ordered sets.

A {\em weighted partition} of a set $S$ is a set  $\bpi =\{B_1^{v_1},B_2^{v_2},...,B_k^{v_k}\}$ of weighted blocks such that  $\{B_1,B_2,...,B_k\}$ is a partition of $S$ and $v_i \in \{0,1,2,...,|B_i|-1\}$ for all $i\in [k]$.  We refer to $\pi = \{B_1,B_2,...,B_k\}$ as the {\em underlying partition} of $\bpi$ and $v_i$ as the weight of the block $B_i$.
The {\it poset of weighted partitions} $\mathcal W\Pi_S$  is the set of weighted partitions of $S$ with 
order relation given by 
$$\{A_1^{w_1},A_2^{w_2},...,A_j^{w_j}\}\le\{B_1^{v_1}, B_2^{v_2},...,B_k^{v_k}\}$$ if the following
conditions hold:
\begin{itemize}
 \item $\{A_1,A_2,...,A_j\} \le \{B_1,B_2,...,B_k\}$ in $\Pi_S$
 \item if $B_m=A_{i_1}\cup A_{i_2}\cup ... \cup A_{i_l} $ then 
 $v_m-(w_{i_1} + w_{i_2} + ... + w_{i_l})\in \{0,1,...,l-1\}$.
\end{itemize}
Equivalently,  the covering relation $\lessdot$  is given by 
$$\{A_1^{w_1},A_2^{w_2},...,A_j^{w_j}\}\lessdot \{B_1^{v_1}, B_2^{v_2},...,B_k^{v_k}\}$$ if the
following conditions hold:
\begin{itemize}
 \item $\{A_1,A_2,\dots,A_j\} \lessdot \{B_1,B_2,\dots,B_k\}$ in $\Pi_S$
 \item if $B_m=A_{h}\cup A_{i}$, where $h \ne i$, then $v_m-(w_{h} + w_{i}) \in \{0,1\}$
 \item if $B_m = A_i$ then $v_m = w_i$.
 \end{itemize}
 
 For each $n \in \ZZ_{>0}$, let $\Pi_n:= \Pi_{[n]}$ and $\mathcal W\Pi_n := \mathcal W\Pi_{[n]}$, where
 $[n]:=\{1,2,\dots,n\}$.  Recall that the Hasse diagram of $\mathcal W\Pi_3$ (using the notation $B_1^{w_1}|B_2^{w_2}|,\dots, |B_k^{w_k}$ with set brackets omitted to denote the weighted partition $\{B_1^{w_1}, \dots, B_k^{w_k}\}$) is given in Figure~\ref{fign3k2}. 
 
Let $G=(V,E)$ be a finite graph. The 
{\em bond lattice} $\Pi_G$ of  $G$  is the induced subposet of $\Pi_V$ consisting of the partitions  of $V$ whose blocks induce connected subgraphs of $G$.  The \emph{weighted bond poset} $\mathcal {W}\Pi_G$ of  $G$ is defined to be the induced subposet of $\mathcal W\Pi_V$ consisting of all weighted partitions  whose underlying partitions are in $\Pi_G$.  Recall that the Hasse diagrams of $\Pi_{P_3}$ and $\mathcal W\Pi_{P_3}$ are given in Figure~\ref{figure:example_bond_lattice} and~\ref{figure:weighted_bond_poset_P3r2}, respectively.

Let $K_n$ be the complete graph on vertex set $[n]$.  Since 
$$\mathcal W\Pi_{K_n} = \mathcal W\Pi_n,$$   the weighted bond poset of a graph is a more general structure than the weighted partition poset.

For $ \balpha \le \bbeta$ in $\mathcal W \Pi_G$,   let $$[\balpha , \bbeta]_G=\{ \bpi \in \mathcal W \Pi_G : \balpha \le \bpi \le \bbeta\}.$$  When $G=(V,E)$ is connected, for convenience, we will drop the set brackets for the maximal elements of $\mathcal W \Pi_G$.  That is, we  write the weighted partition $\{V^w\}$ as $V^w$ and the maximal interval $[\hat 0, \{V^w\}]_G$ as $[\hat 0, V^w]_G$

 A poset is said to be {\it bounded} if it has a minimum element, which is denoted $\hat 0$, and a maximum element, which is denoted $\hat 1$.
 A poset is said to be {\em pure} (or {\em graded}) if all its maximal chains have the same length, where the {\em length} of a chain is one less than the number of elements in the chain.   The length $\ell(P)$ of a pure poset $P$ is the common length of its maximal chains.

 The bond lattice  $\Pi_G$ of any connected graph $G$ is pure and bounded, while the weighted bond poset $\mathcal W\Pi_G$ is pure but not bounded even when $G$ is connected.  Indeed if  $V= \{v_1,v_2, \dots,v_n\}$ then 
 $\mathcal W\Pi_G$ has minimum element
$$\hat 0:= \{\{v_1\}^0,\{v_2\}^0,\dots, \{v_n\}^0\}$$ and if $G$ is connected, $\mathcal W\Pi_G$ it has  $n$
maximal elements 
$$V^0, \, V^1, \dots,  V^{n-1},$$  
but no maximum element unless $n=1$.   If $G=(V,E)$ has connected components $G_i= (V_i,E_i)$, where $i \in [k]$, then 
the maximal elements of $\mathcal W\Pi_G$ are all the weighted partitions whose underlying partition is 
$\{V_1,\dots,V_k\}$.  Note that $$\ell(\mathcal W\Pi_G)= \ell(\Pi_G) = |V|-k.$$

We have the following observation whose straightforward proof is left to the reader.
\begin{proposition} \label{prop:product} Let $G$ be a graph whose connected components are  $G_1,\dots, G_k$.
Then the map $$\varphi: \mathcal W \Pi_{G_1} \times \cdots \times \mathcal W \Pi_{G_k} \to \mathcal W \Pi_G $$  defined by $\varphi(\bpi_1,\dots, \bpi_k)= \bpi_1 \cup  \cdots \cup \bpi_k$ is a poset isomorphism.
\end{proposition}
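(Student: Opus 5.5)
The plan is to check, in order, that $\varphi$ is well defined, that it is a bijection, and that it preserves and reflects the order. The key observation is that every block of an element of $\mathcal W\Pi_G$ lies inside a single vertex set $V_i$, since a block must induce a connected subgraph of $G$. Likewise, both the refinement order and the weight condition in the definition of $\le$ on $\mathcal W\Pi_V$ are checked block by block.

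For well-definedness, take $\bpi_i \in \mathcal W\Pi_{G_i}$. Their underlying partitions partition the disjoint sets $V_i$, so the union $\bpi_1\cup\cdots\cup\bpi_k$ is a weighted partition of $V=V_1\sqcup\cdots\sqcup V_k$. Each of its blocks induces a connected subgraph of some $G_i$, and hence of $G$, because $G_i$ is an induced subgraph of $G$; so the union lies in $\mathcal W\Pi_G$.

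For bijectivity, let $\bpi\in\mathcal W\Pi_G$. A block $B$ of $\bpi$ induces a connected subgraph of $G$, and there are no edges between different components, so $B\subseteq V_i$ for a unique $i$. Set $\bpi_i$ to be the collection of weighted blocks of $\bpi$ contained in $V_i$. Then $\bpi_i\in \mathcal W\Pi_{G_i}$, since $G|_B = G_i|_B$ for $B \subseteq V_i$. The map $\bpi\mapsto(\bpi_1,\dots,\bpi_k)$ is clearly a two-sided inverse of $\varphi$.

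For the order, suppose $\balpha=\varphi(\balpha_1,\dots,\balpha_k)$ and $\bbeta=\varphi(\bbeta_1,\dots,\bbeta_k)$. By definition, $\balpha\le\bbeta$ requires two things. First, each block $B$ of $\bbeta$ must be a union of blocks $A_{i_1},\dots,A_{i_l}$ of $\balpha$. Second, the weight of $B$ minus the sum of the weights of those $A$'s must lie in $\{0,\dots,l-1\}$. Any block $B$ of $\bbeta$ lies in some $V_i$, so the $A$'s composing it are exactly blocks of $\balpha_i$. Hence both conditions for the pair $(\balpha,\bbeta)$ split into the same conditions for the pairs $(\balpha_i,\bbeta_i)$, one component at a time. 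Therefore $\balpha\le\bbeta$ if and only if $\balpha_i\le\bbeta_i$ in $\mathcal W\Pi_{G_i}$ for every $i$, which is exactly the product order.

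There is no real obstacle here. The only point needing care is the observation that connected blocks cannot straddle components, and that fact drives every step.
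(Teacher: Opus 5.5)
Your proof is correct. The paper gives no proof of its own (it calls this a straightforward observation left to the reader), and your componentwise verification is the intended routine argument. The key point is that blocks inducing connected subgraphs cannot straddle components, so well-definedness, bijectivity, and both the refinement and weight conditions of the order all split block by block over the $V_i$.
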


Recall that the \emph{M\"obius function} of a poset $P$ is defined recursively on intervals $[x,y]$ of $P$ by
\begin{equation}\label{equation:definition_mobius}
\mu_P(x,y)=\begin{cases} 1 &\mbox{if } x=y, \\ 
-\displaystyle \sum_{x\le z < y} \mu_P(x,z) & \mbox{if } x < y. \end{cases}
\end{equation}
Define the {\em M\"obius polynomial} of a graph $G$ by
$$\mu_G(t) := \sum_{\bpi \in  \Max(\mathcal W \Pi_G)} \mu_{\mathcal W \Pi_G}(\hat 0, \bpi) t^{w(\bpi)},$$
where  $\Max(P)$ is the set of maximal elements of  a poset $P$, and $w(\bpi)$ is the sum of the weights of the blocks of $\bpi$ for any weighted partition $\bpi$.  If $G$ is a connected graph on vertex set $V$ then
\begin{equation} \label{equation:mu_def_connected}  \mu_G(t) = \sum_{j=0}^{|V|-1} \mu_{\mathcal W \Pi_G}(\hat 0, V^j)\, t^j.\end{equation}

The following proposition, which is a consequence of Proposition \ref{prop:product} and of the multiplicative property of the M\"obius function (see \cite[Proposition~3.8.2]{Stanley2012}) tells us that we can focus on M\"obius polynomials of connected graphs.

\begin{proposition} \label{proposition:product_components} Let $G$ be a graph whose connected components are $G_1,\dots,G_k$.  Then
$$ \mu_G(t) =  \prod_{i=1}^k \mu_{G_i} (t).$$
\end{proposition}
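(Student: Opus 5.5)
The plan is to combine the poset isomorphism of Proposition~\ref{prop:product} with the multiplicativity of the M\"obius function on products of posets, and then track how the weight statistic $w$ behaves under the isomorphism.

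\emph{Maximal elements.} By Proposition~\ref{prop:product}, $\varphi$ is a poset isomorphism from $\mathcal W\Pi_{G_1}\times\cdots\times\mathcal W\Pi_{G_k}$ onto $\mathcal W\Pi_G$. It therefore carries maximal elements to maximal elements, and minimum to minimum. Maximal elements of a product are exactly the tuples of maximal elements of the factors, so
\[\Max(\mathcal W\Pi_G)=\{\bpi_1\cup\cdots\cup\bpi_k : \bpi_i\in \Max(\mathcal W\Pi_{G_i})\}.\]
The minimum $\hat 0$ of $\mathcal W\Pi_G$ is $\varphi(\hat 0_1,\dots,\hat 0_k)$, where $\hat 0_i$ is the minimum of $\mathcal W\Pi_{G_i}$.

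\emph{Factoring each term.} For $\bpi=\varphi(\bpi_1,\dots,\bpi_k)$, the interval $[\hat 0,\bpi]$ in $\mathcal W\Pi_G$ is isomorphic to the product of intervals $\prod_i [\hat 0_i,\bpi_i]$. By the product formula for M\"obius functions \cite[Proposition~3.8.2]{Stanley2012},
\[\mu_{\mathcal W\Pi_G}(\hat 0,\bpi)=\prod_{i=1}^k \mu_{\mathcal W\Pi_{G_i}}(\hat 0_i,\bpi_i).\]
The blocks of $\bpi$ are the disjoint union of the blocks of the $\bpi_i$, so $w(\bpi)=\sum_i w(\bpi_i)$ and hence $t^{w(\bpi)}=\prod_i t^{w(\bpi_i)}$.

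\emph{Summing.} Sum over $\Max(\mathcal W\Pi_G)$, which is in bijection with $\prod_i \Max(\mathcal W\Pi_{G_i})$. The sum of products over this Cartesian product factors as the product of the sums, and each factor is $\mu_{G_i}(t)$ by definition. This gives $\mu_G(t)=\prod_{i=1}^k\mu_{G_i}(t)$.

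No step is a real obstacle. The only point needing care is that $\varphi$ restricts to an isomorphism of lower intervals, which follows immediately from its being a poset isomorphism.
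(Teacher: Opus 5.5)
Your proposal is correct and follows essentially the same route as the paper's proof. Both use the isomorphism of Proposition~\ref{prop:product}, the multiplicativity of the M\"obius function over products, additivity of weights over the components, and factoring the sum over the Cartesian product of maximal elements.
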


\begin{proof} For each $i \in [k]$, let $V_i$ be the vertex set of $G_i$.  The maximal elements of  $\mathcal W\Pi_G$ are of the form $\{V_1^{j_1},V_2^{j_2}, \dots, V_k^{j_k} \}$, where $0 \le j_i \le |V_i|-1$ for each $i$.  It follows that

$$\mu_G(t) = \sum_{j_1 =0}^{|V_1|-1}  \sum_{j_2 =0}^{|V_2|-1} \dots \sum_{j_k=0}^{|V_k|-1} \mu_{\mathcal W\Pi_G}(\hat 0, \{V_1^{j_1}, V_2^{j_2}, \dots, V_k^{j_k} \}) t^{j_1+j_2+ \dots + j_k}.$$
By Proposition~\ref{prop:product}
$$ \mu_{\mathcal W\Pi_G}(\hat 0, \{V_1^{j_1},  \dots, V_k^{j_k} \}) = \mu_{\times_{i=1}^k \mathcal W\Pi_{G_i}} ((\hat 0_1,\cdots,\hat 0_k),( \{V_1^{j_1}\},\dots,\{V_k^{j_k}\})),$$
where $\hat 0_i$ is the minimum element of $\mathcal W\Pi_{G_i}$. Now by the multiplicative property of the M\"obius function (see \cite[Proposition~3.8.2]{Stanley2012}),
$$\mu_{\times_{i=1}^k \mathcal W\Pi_{G_i}} ((\hat 0_1,\cdots,\hat 0_k),( \{V_1^{j_1}\},\dots,\{V_k^{j_k}\})) = \prod_{i=1}^k \mu_{\mathcal W\Pi_{G_i}} (\hat 0_i, \{V_i^{j_i}\}) .$$
Putting these equations together yields,
\begin{align*} \mu_G(t) &= \sum_{j_1 =0}^{|V_1|-1}  \sum_{j_2 =0}^{|V_2|-1} \dots \sum_{j_k=0}^{|V_k|-1} \left (\prod_{i=1}^k \mu_{\mathcal W\Pi_{G_i}} (\hat 0_i, \{V_i^{j_i}\})\right) t^{j_1+j_2+ \dots + j_k} \\
&= \prod_{i=1}^k \sum_{j_i=0}^{|V_i|-1} \mu_{\mathcal W \Pi_{G_i} } (\hat 0, V_i^{j_i}) t^{j_i} \\
&=\prod_{i=1}^k \mu_{G_i} (t).\qedhere
\end{align*}
\end{proof}

\begin{proposition} \label{proposition:intervals}Let $G$ be a connected graph on vertex set $V$.  Then for all $j=0,\dots, |V|-1$,
$$ [\hat 0, V^j]_G \cong [\hat 0, V^{|V|-1-j}]_G,$$
and both intervals  
$[\hat 0, V^0]_G$ and $ [\hat 0, V^{|V|-1}]_G$
are isomorphic to  the bond lattice $\Pi_G$.
\end{proposition}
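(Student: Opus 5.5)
We prove the two claims separately: first exhibit an explicit isomorphism $[\hat 0, V^j]_G \cong [\hat 0, V^{|V|-1-j}]_G$, then show that the extreme interval $[\hat 0, V^0]_G$ is a copy of $\Pi_G$.

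\textbf{Step 1: the complementation map.} For a weighted partition $\bpi=\{B_1^{v_1},\dots,B_k^{v_k}\}$ define
$$\psi(\bpi)=\{B_1^{|B_1|-1-v_1},\dots,B_k^{|B_k|-1-v_k}\}.$$
Since $0\le v_i\le |B_i|-1$ iff $0\le |B_i|-1-v_i\le |B_i|-1$, and the underlying partition is unchanged, $\psi$ is an involution of $\mathcal W\Pi_G$. We check that it preserves covering relations. Suppose $\balpha\lessdot\bbeta$ with $B_m=A_h\cup A_i$ and $v_m-(w_h+w_i)\in\{0,1\}$. The complemented weights satisfy
$$(|B_m|-1-v_m)-\bigl((|A_h|-1-w_h)+(|A_i|-1-w_i)\bigr)=1-\bigl(v_m-w_h-w_i\bigr)\in\{0,1\},$$
using $|B_m|=|A_h|+|A_i|$. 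Unmerged blocks keep equal weights under $\psi$. Hence $\psi$ preserves covers, and since it is an involution it is an automorphism of $\mathcal W\Pi_G$. It fixes $\hat 0$ (singletons have weight $0=1-1-0$) and sends $V^j$ to $V^{|V|-1-j}$. Therefore it restricts to the desired isomorphism of intervals.

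\textbf{Step 2: $[\hat 0,V^0]_G\cong\Pi_G$.} We show that the forgetful map $\bpi\mapsto\pi$ is a bijection from $[\hat 0,V^0]_G$ onto $\Pi_G$, namely that every element below $V^0$ has all weights $0$.

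If $\bpi\le V^0$, the definition of the order gives $0-\sum v_i\in\{0,\dots,l-1\}$. Since weights are nonnegative, this forces all $v_i=0$. Conversely, let $\pi\in\Pi_G$ and assign every block weight $0$. Any two such all-zero weighted partitions $\balpha,\bbeta$ with $\alpha\le\beta$ in $\Pi_G$ satisfy $\balpha\le\bbeta$, because the condition $0-0\in\{0,\dots,l-1\}$ holds. In particular each of them lies below $V^0$.

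Thus the forgetful map is an order isomorphism between the all-zero weighted partitions and $\Pi_G$. Composing with $\psi$ from Step 1 gives $[\hat 0,V^{|V|-1}]_G\cong\Pi_G$.

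The only point requiring care is the cover computation in Step 1. Rather than work with covers, one could verify the general relation directly: if $B_m$ is a union of $l$ blocks, the complemented difference is $(l-1)-(v_m-\sum w)$, which lies in $\{0,\dots,l-1\}$ exactly when the original difference does. This avoids appealing to covers altogether.
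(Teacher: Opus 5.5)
Your proof is correct and follows the same route as the paper. The weight-complementation map $B^{w}\mapsto B^{|B|-1-w}$ gives $[\hat 0,V^j]_G\cong[\hat 0,V^{|V|-1-j}]_G$, and forgetting the weights, which are forced to be $0$ below $V^0$, identifies $[\hat 0,V^0]_G$ with $\Pi_G$; you supply the verifications (the identity $(l-1)-(v_m-\sum w)$ and the vanishing of all weights below $V^0$) that the paper leaves as ``not difficult to see.''
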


\begin{proof}It is not difficult to see that the map from $[\hat 0, V^j]_G$ to $ [\hat 0, V^{|V|-1-j}]_G$ which takes 
$\{B_1^{w_1},\dots, B_k^{w_k}\}$ to $\{B_1^{|B_1|-1-w_1},\dots, B_k^{|B_k| -1-w_k}\}$ is a well-defined  poset isomorphism.  Since  all the blocks of the weighted partitions in $[\hat 0, V^0]_G$ have weight $0$, the map that removes these  weights is an isomorphism from $[\hat 0, V^0]_G$ to $ \Pi_G$. 
\end{proof}

\begin{proposition} \label{corollary:palindromic} Let $G$ be a  graph with $n$ vertices and $k$ connected components.  Then
\begin{enumerate}
\item  $\mu_G(t)$ has degree  $n-k$.
\item $\mu_G(0)$ is equal to the M\"obius invariant $\mu(\Pi_G)$ of the bond lattice $\Pi_G$.
\item  $\mu_G(t)$ is palindromic in the sense that the coefficient of $t^j$ equals the coefficient of $t^{n-k-j}$ for all $j = 0,\dots,n-k$.
\end{enumerate}
\end{proposition}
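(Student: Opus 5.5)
The plan is to reduce to the connected case with Proposition~\ref{proposition:product_components}, and then read off all three parts from Proposition~\ref{proposition:intervals}.

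\emph{Reduction.} Write $G_1,\dots,G_k$ for the connected components of $G$, and let $n_i$ be the number of vertices of $G_i$, so $n=\sum_i n_i$. By Proposition~\ref{proposition:product_components}, $\mu_G(t)=\prod_{i=1}^k\mu_{G_i}(t)$. Suppose each factor $\mu_{G_i}(t)$ has degree $n_i-1$, constant term $\mu(\Pi_{G_i})$, and is palindromic of degree $n_i-1$. Then:
\begin{itemize}
\item the product has degree $\sum_i(n_i-1)=n-k$;
\item the product is palindromic of degree $n-k$, since $t^{d}p(1/t)=p(t)$ and $t^{e}q(1/t)=q(t)$ imply $t^{d+e}pq(1/t)=pq(t)$;
\item the product has constant term $\prod_i\mu(\Pi_{G_i})$.
\end{itemize}
For part (2) we still need $\prod_i\mu(\Pi_{G_i})=\mu(\Pi_G)$. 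We have $\Pi_G\cong\prod_i\Pi_{G_i}$, by the unweighted analogue of Proposition~\ref{prop:product} (or by forgetting weights in it). The multiplicative property of the M\"obius function then gives the identity.

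\emph{Connected case.} Let $G$ be connected on $V$ with $|V|=n$. By \eqref{equation:mu_def_connected}, the coefficient of $t^j$ is $\mu_{\mathcal W\Pi_G}(\hat 0,V^j)$. This value depends only on the isomorphism type of the interval $[\hat 0,V^j]_G$.
\begin{itemize}
\item \emph{Palindromicity.} Proposition~\ref{proposition:intervals} gives $[\hat 0,V^j]_G\cong[\hat 0,V^{n-1-j}]_G$, so the coefficients of $t^j$ and $t^{n-1-j}$ agree.
\item \emph{Constant term.} Proposition~\ref{proposition:intervals} gives $[\hat 0,V^0]_G\cong\Pi_G$, so the constant term is $\mu_{\Pi_G}(\hat 0,\hat 1)=\mu(\Pi_G)$.
\item \emph{Degree.} By palindromicity, the coefficient of $t^{n-1}$ also equals $\mu(\Pi_G)$. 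There are no terms beyond $t^{n-1}$, since weights are at most $n-1$.
\end{itemize}

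\emph{Main obstacle.} The degree claim needs $\mu(\Pi_G)\neq 0$. I would cite the classical fact that the bond lattice of a graph is a geometric lattice, namely the lattice of flats of the graphic matroid. By Rota's theorem its M\"obius function strictly alternates in sign, so $(-1)^{n-1}\mu(\Pi_G)>0$. Alternatively, Whitney's formula \eqref{equation:chromatic} identifies $\mu(\Pi_G)$ with the coefficient of $t$ in $\chi_G(t)$, and this coefficient is nonzero for connected $G$.

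For $n=1$ everything is trivial: $\mu_G(t)=1$. In the disconnected case, the leading coefficient $\prod_i\mu(\Pi_{G_i})$ is then also nonzero, so the degree is exactly $n-k$.
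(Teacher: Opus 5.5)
Your proposal is correct and follows essentially the same route as the paper: reduce to connected components via Proposition~\ref{proposition:product_components} and multiplicativity of the M\"obius function. Then read palindromicity and the constant term off Proposition~\ref{proposition:intervals}, and get the degree from $\mu(\Pi_G)\neq 0$ for the geometric lattice $\Pi_G$. The only cosmetic difference is that the paper obtains the coefficient of $t^{n-1}$ directly from $[\hat 0,V^{|V|-1}]_G\cong\Pi_G$, whereas you obtain it via palindromicity.
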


\begin{proof} Item (1) for connected graphs  follows from Equation~\eqref{equation:mu_def_connected}, 
Proposition~\ref{proposition:intervals}, and the fact that the M\"obius 
invariant of a geometric lattice is never $0$; see \cite[Exercise 3.100b]{Stanley2012}. 
Indeed, by \eqref{equation:mu_def_connected}, the degree of $\mu_G(t)$ is at most $n-1$ and by Proposition~\ref{proposition:intervals}, the coefficient of $t^{n-1}$ is $\mu(\Pi_G)$.  Thus since $\Pi_G$ is  a geometric lattice,  the coefficient of $t^{n-1}$ is not $ 0$. 
 For graphs that are not connected, item (1) follows from the connected case and 
Proposition~\ref{proposition:product_components}. 

Item (2) for connected graphs follows from Proposition~\ref{proposition:intervals}.  To prove item (2) in general, we first note that Proposition~\ref{proposition:product_components} implies that $\mu_G(0) = \prod_{i =1}^k \mu_{G_i}(0)$, where the $G_i$ are the  connected components of $G$.  Since $\Pi_G \cong \Pi_{G_1} \times \cdots \times \Pi_{G_k} $, item (2)  now follows from the multiplicative property of the M\"obius invariant and the connected case.

To prove item (3), we use Propositions~\ref{proposition:product_components} and~\ref{proposition:intervals} and the fact that products of palindromic polynomials are palindromic.
\end{proof}

 \section{Recurrence relations and examples} \label{section:recurrence}
 We present recurrence relations for the M\"obius polynomials, which  can be used to study specific examples.

\subsection{Recurrence relations for the M\"obius polynomials} \label{section:recursive_structure} Let $G=(V,E)$.
For $B \subseteq V$, the  {\em restriction}  $G|_B$ is defined to be the  subgraph of $G$ induced by the vertex set $B$.  For $\pi \in \Pi_G$, the {\em contraction} $G/\pi$ is defined to be the graph  whose vertices are the blocks of $\pi$ and whose edge set is $$\{\{A,B\}\subseteq \pi \mid A \neq B, \, \{a,b\}\in E \text{ for some } a\in A \text{ and } b\in B\}.$$  Note that if $ \pi \in \Pi_G$ then the restriction $G|_B$ is connected for all $B \in \pi$ and the contraction $G/\pi$ is  connected if and only if $G$ is connected.

Although the following result can be proved directly by simply using the recursive definition of M\"obius function  
for the first recurrence relation \eqref{equation:recursion_polynomial_2-1} and the dual version of the definition of M\"obius function for the second recurrence relation \eqref{equation:recursion_polynomial_2-2}, we prove it 
as a special case of Theorem~\ref{theorem:multi_recurrence} below. 

\begin{theorem} \label{theorem:recurrence} Let  $G$ be a connected graph with more than one vertex.  Then $\mu_G(t)$ can be computed recursively by either
\begin{align}\label{equation:recursion_polynomial_2-1}
    \mu_G(t) = -\sum_{\pi \in \Pi_G \setminus \{\hat 1\}} [|\pi|]_t\prod_{B\in \pi} \mu_{G|_B}(t),
\end{align}
or
\begin{align}\label{equation:recursion_polynomial_2-2}
    \mu_G(t) = - \sum_{\pi \in \Pi_G\setminus \{\hat{0}\}} \mu_{G/\pi}(t)\prod_{B\in \pi} [|B|]_t,
\end{align}
where $[k]_t:=1+t+\cdots+t^{k-1}$ is the $t$-analog of $k$ and the initial condition is given by $ \mu_{G}(t) = 1$ when $G=(\{\bullet\},\emptyset)$, is the graph with one vertex.
\end{theorem}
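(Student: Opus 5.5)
The plan is to prove both recurrences directly from the definition of the M\"obius function on the intervals $[\hat 0, V^j]_G$. The first recurrence uses the recursive definition of $\mu$ from below, and the second uses its dual from above. Then I multiply by $t^j$ and sum over $j$. The key structural facts needed are descriptions of lower intervals $[\hat 0,\bpi]_G$ and upper intervals $[\bpi, V^j]_G$ in terms of restrictions and contractions.

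\textbf{Lower intervals.} For $\bpi=\{B_1^{w_1},\dots,B_k^{w_k}\}$ I will show
$$[\hat 0,\bpi]_G\cong\prod_{i=1}^k[\hat 0,B_i^{w_i}]_{G|_{B_i}}.$$
The weight conditions are blockwise, so this follows as in Proposition~\ref{prop:product}. By the multiplicative property,
$$\mu(\hat 0,\bpi)=\prod_{i}\mu_{\mathcal W\Pi_{G|_{B_i}}}(\hat 0,B_i^{w_i}).$$
For $G$ connected with $|V|>1$, the defining recursion gives
$$0=\sum_{\hat 0\le\bpi\le V^j}\mu(\hat 0,\bpi).$$
Now group the $\bpi$ by their underlying partition $\pi\in\Pi_G$. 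Such $\bpi$ lies below $V^j$ iff $j-w(\bpi)\in\{0,\dots,|\pi|-1\}$. Multiplying by $t^j$ and summing over $j$, each $\bpi$ contributes $t^{w(\bpi)}[|\pi|]_t\,\mu(\hat 0,\bpi)$. Summing over the weights of the blocks factorizes the sum into $[|\pi|]_t\prod_{B\in\pi}\mu_{G|_B}(t)$. The term $\pi=\hat 1$ gives $\mu_G(t)$ (with $[1]_t=1$), and moving it to the other side yields \eqref{equation:recursion_polynomial_2-1}.

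\textbf{Upper intervals.} This is the main obstacle. For $\bpi$ with underlying partition $\pi$, I need to identify $[\bpi,V^j]_G$ with $[\hat 0,\pi^{j-w(\bpi)}]_{G/\pi}$. The map sends a weighted partition $\bsigma\ge\bpi$ to the partition of the blocks of $\pi$ induced by $\sigma$. Each merged block receives weight equal to its weight in $\bsigma$ minus the sum of the weights of the $\bpi$-blocks it contains.

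I must check three things:
\begin{itemize}
\item connectivity of blocks in $G$ corresponds to connectivity in $G/\pi$ (true since the blocks of $\pi$ are connected);
\item the weight ranges match, i.e.\ if a block merges $l$ blocks of $\pi$ then its excess weight lies in $\{0,\dots,l-1\}$, by the order relation;
\item order relations are preserved, since the excess weights add along covers exactly as in the definition.
\end{itemize}
This is where care is needed, but it is a direct verification.

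\textbf{Second recurrence.} The dual recursion gives
$$0=\sum_{\bpi\le V^j}\mu(\bpi,V^j)=\sum_{\bpi}\mu_{\mathcal W\Pi_{G/\pi}}\bigl(\hat 0,\pi^{j-w(\bpi)}\bigr),$$
with the convention that the term vanishes if the weight is out of range. Multiply by $t^j$ and sum over $j$. Fixing $\pi$, the sum over $j$ and over the block weights $w_B\in\{0,\dots,|B|-1\}$ factorizes as $\mu_{G/\pi}(t)\prod_{B\in\pi}[|B|]_t$. The term $\pi=\hat 0$ gives $\mu_G(t)$, since $G/\hat 0\cong G$ and all $[1]_t=1$. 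Isolating it yields \eqref{equation:recursion_polynomial_2-2}. The initial condition for the one-vertex graph is immediate.
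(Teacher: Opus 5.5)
Your proposal is correct and is essentially the paper's argument. The paper proves the symmetric-function version (Theorem~\ref{theorem:multi_recurrence}) by the same two steps: the recursive definition of $\mu$ combined with the product decomposition of lower intervals (Proposition~\ref{proposition:multiMobius_prod}), and the dual recursion combined with the contraction isomorphism for upper intervals (Proposition~\ref{proposition:multi_upper_order}). It then specializes via $M_H(1,t)=\mu_H(t)$ and $h_m(1,t)=[m+1]_t$. Your direct single-weight version is the direct proof the paper says is possible, and your check of the upper-interval isomorphism supplies the verification the paper leaves to the reader.
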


 The recurrence relation (\ref{equation:recursion_polynomial_2-2})   in the case of the complete graph $$K_n:=\left ([n],\{\{i,j\}: 1\le i < j <n\}\right )$$ and the compositional formula (see \cite[Theorem~5.1.4]{Stanley1999})
 were used in \cite{GonzalezDLeonWachs2016} to obtain the  exponential generating function formula
 \begin{equation} \label{equation:K_n_generating} \sum_{n\ge 1} \mu_{K_n}(t) \dfrac{y^n}{n!}=\left(\sum_{n\ge 1}[n]_t\dfrac{y^n}{n!}\right)^{\langle -1 \rangle} = \left(\frac{e^{ty} -e^y} {t-1}\right )^{\langle -1 \rangle},\end{equation}
where  $(\bullet)^{\langle-1\rangle}$ denotes the inverse with respect to composition (substitution) of power series.
 This and  \cite[Equation (10)]{Drake2008} were used in \cite{GonzalezDLeonWachs2016} to obtain the  formula,
\begin{equation} \label{equation:complete_formula} (-1)^{n-1} \mu_{K_n}(t)   = \prod_{i=1}^{n-1}((n-i)+it)  .\end{equation} 
 A nice consequence of this formula is
 $$(-1)^{n-1} \mu_{K_n}(1) = n^{n-1},$$ which by Cayley's formula is the number of (nonplanar) rooted trees on node set $[n]$.
 Equation~\ref{equation:complete_formula} and a refinement of Cayley's formula obtained by Gessel and Seo  \cite[Theorem 9.1]{GesselSeo2004} yield
\begin{equation} \label{equation:complete_tree} (-1)^{n-1} \mu_{K_n}(t)  = \sum_{T \in \mathcal T_n} t^{\des(T)}, \end{equation}
where $\mathcal T_n$ is the set of (nonplanar) rooted trees on node set $[n]$ and $\des(T)$ is the number of pairs  $(p,c)$, where $p$ is the parent of $c$ in $T$ and $p>c$.  
We call the polynomial on the right hand side of (\ref{equation:complete_tree}), the {\em tree-Eulerian polynomial}; cf. \cite{GonzalezDLeon2016-2}.

\subsection{The path graph and Narayana polynomials} \label{subsection:path}
Define the  {\em path graph}  to be $$P_n := ([n], \{ \{i,i+1\} : i \in [n-1]\}).$$
In this section, we apply the recurrence relations of Theorem~\ref{theorem:recurrence}  to the path graphs to show that the signless M\"obius polynomials for these graphs are  equal to the well known Narayana polynomials.

The Narayana numbers are defined by 
\begin{equation} \label{equation:narayana} N(n,j) := \frac 1 n \binom n j \binom n {j+1}\end{equation}
and the Narayana polynomials by
$$N_n(t) := \sum_{j=0}^{n-1} N(n,j) t^j .$$
The Narayana numbers refine the Catalan numbers 
$$C_n := \frac 1 {n+1}  \binom {2n}{n}.$$
Indeed, by Vandermonde's identity
$$N_n(1) = C_{n}.$$

\begin{theorem}  \label{theorem:path_narayana}For $n \ge 1$, 
$$(-1)^{n-1} \mu_{P_n}(t) = N_{n}(t) .$$
Consequently, 
$$(-1)^{n-1} \mu_{P_n}(1) = C_{n}.$$
\end{theorem}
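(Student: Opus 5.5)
We will use the recurrence \eqref{equation:recursion_polynomial_2-1} of Theorem~\ref{theorem:recurrence}, specialized to the path graph. The key structural fact is that a partition $\pi\in\Pi_{P_n}$ is exactly a partition of $[n]$ into consecutive intervals. Such partitions correspond bijectively to compositions $(a_1,\dots,a_k)$ of $n$. Each block $B$ of size $a$ induces a path, so $P_n|_B\cong P_{a}$. Writing $f_n(t):=(-1)^{n-1}\mu_{P_n}(t)$ and noting $\sum_i (a_i-1)=n-k$, the recurrence \eqref{equation:recursion_polynomial_2-1} becomes
\[
f_n(t)=\sum_{k\ge 2}\ \sum_{\substack{a_1+\cdots+a_k=n\\ a_i\ge 1}} (-1)^{k}\,[k]_t\, f_{a_1}(t)\cdots f_{a_k}(t),
\]
with $f_1=1$. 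The sign bookkeeping is $(-1)^{n-1}\cdot(-1)\cdot(-1)^{n-k}=(-1)^{k}$.

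Next we pass to ordinary generating functions. Set $F(y)=\sum_{n\ge1} f_n(t)y^n$. Including the $k=1$ term, which contributes $-[1]_tF=-F$, the recurrence says
\[
\sum_{k\ge1}(-1)^k[k]_t F^k = -y .
\]
Since $\sum_{k\ge1}[k]_t x^k = \frac{x}{(1-x)(1-tx)}$, substituting $x=-F$ gives
\[
\frac{F}{(1+F)(1+tF)}=y ,
\]
that is, $F=y(1+F)(1+tF)$.

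It remains to check that the Narayana generating function $N(y)=\sum_{n\ge1}N_n(t)y^n$ satisfies the same functional equation. This is the standard equation for Narayana polynomials, and we can verify it by Lagrange inversion. With $F=y\phi(F)$ and $\phi(u)=(1+u)(1+tu)$, we get
\[
[y^n]F=\frac1n[u^{n-1}](1+u)^n(1+tu)^n=\frac1n\sum_j \binom nj\binom n{n-1-j}t^j=\sum_j N(n,j)t^j .
\]
Lagrange inversion therefore gives the Narayana coefficients directly, so the two series are equal. The functional equation determines $F$ uniquely as a power series with zero constant term, so $f_n=N_n$ for all $n$. Setting $t=1$ and applying $N_n(1)=C_n$ gives the consequence.

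The main obstacle is the sign and indexing bookkeeping in the first step. We must make sure the translation to compositions is exact, in particular that $\hat1$ is excluded by $k\ge2$, and that absorbing the $k=1$ term produces exactly $-y$ on the right-hand side. I would sanity-check the result against $f_3=1+3t+t^2$ from \eqref{equation:K_3_P_3}.
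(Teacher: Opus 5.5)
Your proof is correct and is essentially the paper's own argument. The paper likewise specializes the recurrence of Theorem~\ref{theorem:recurrence} to compositions of $n$, obtains $\sum_{n\ge1}(-1)^{n-1}\mu_{P_n}(t)y^n=\bigl(y/((1+y)(1+ty))\bigr)^{\langle -1\rangle}$, and reads off the Narayana numbers by Lagrange inversion. Your sign bookkeeping $(-1)^{n-1}\cdot(-1)\cdot(-1)^{n-k}=(-1)^k$ and your absorption of the $k=1$ term into $-y$ both check out.
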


\begin{proof}The recurrence relations of Theorem~\ref{theorem:recurrence} applied to $G=P_n$ can be expressed as 
$$
 \sum_{k=1}^n  [k]_t \sum_{(n_1,\dots, n_k) \vDash n   }   \prod_{i=1}^{k} \mu_{P_{n_i}}(t) = \delta_{n,1}$$ and
$$ \sum_{k=1}^{n}  \mu_{P_k}(t) \sum_{ (n_1,\dots, n_k) \vDash n   }   \prod_{i=1}^{k} [n_i]_t  = \delta_{n,1},$$
where $\vDash$ denotes ``is a composition of". 
Both  equations  are equivalent to the generating function formula
\begin{equation} \label{equation:path_generating} \sum_{n\ge1}(-1)^{n-1} \mu_{P_n}(t)y^n = \left(\sum_{n\ge1}(-1)^{n-1} [n]_t \, y^n\right )
^{\left\langle -1 \right\rangle} = \left ( \frac{y}{(1+y) (1+ty)}\right ) ^{\left\langle -1 \right\rangle}, \end{equation}
where again   $(\bullet)^{\langle-1\rangle}$ denotes the compositional inverse.\footnote{It is  known that $\frac{y}{(1+y) (1+ty)}$ is the compositional inverse of the generating function for the Narayana polynomials.  For the sake of completeness we are including this proof.}  
Hence, by the  Lagrange inversion formula (see \cite[Theorem 5.4.2]{Stanley1999}), 
$n(-1)^{n-1} \mu_{P_n}(t)$ is equal to the coefficient of $y^{n-1}$ in $((1+y)(1+ty))^n$.  Since
 \begin{align*} ((1+y)(1+ty))^n &= \sum_{j=0}^n \binom{n}{j} y^j \sum_{j=0}^n \binom{n}{j} (ty)^j \\ 
 &= \sum_{k\ge 0} \left (\sum_{j=0}^k \binom n j \binom n {k-j} t^j\right ) y^k, 
 \end{align*}
 the coefficient of $y^{n-1}$ is $$\sum_{j=0}^{n-1} \binom n j \binom n{n-1-j} t^j =
 \sum_{j=0}^{n-1} \binom n j \binom n {j+1} t^j .$$  This means that
$$(-1)^{n-1} \mu_{P_n}(t) = \frac 1 n  \sum_{j=0}^{n-1} \binom n j \binom n {j+1} t^j ,$$
 which by (\ref{equation:narayana}) yields the desired result.
 \end{proof}

\subsection{The star graph and binomial-Eulerian polynomials} \label{subsection:star}
Define the  {\em star graph}  to be $$St_n := ([n], \{ \{1,i\} : i \in \{2,\dots,n\}\}).$$
In this section, we use the recurrence relations of Theorem~\ref{theorem:recurrence} applied to the star graphs to show that the signless M\"obius polynomials for these graphs are  equal to a variant of the Eulerian polynomials. 

Recall that the classical Eulerian polynomials are defined by 
\begin{equation} \label{equation:combinatorial_euler} A_n(t):=\sum_{\sigma \in \mathfrak S_n} t^{\des(\sigma)},\end{equation} 
where  $$\des(\sigma):=| \{i \in [n-1] : \sigma(i) > \sigma(i+1) \}|.$$
 Following \cite{ShareshianWachs2020}, we refer to the  polynomial
$$\tilde A_n(t) := 1+ t \sum_{m=1}^n  \binom n m A_m(t)$$  as the  {\em binomial-Eulerian polynomial} of degree $n$.
We refer to the coefficient
$\tilde a_{n,j}$ of $t^j$ in $\tilde A_n(t)$ as  a binomial-Eulerian number.
The binomial-Eulerian polynomials arose in the work of Postinkov, Reiner and Williams \cite{PostnikovReinerWilliams2008} as the h-polynomial of a polytope that they called the stellohedron.  A $q$-analog and a symmetric function analog were studied by Shareshian and Wachs \cite{ShareshianWachs2020} and  further studied  by Gonz\'alez D'Le\'on~\cite{GonzalezDLeon2018}.

Just as the Eulerian numbers enumerate permutations with a given number of descents, the binomial-Eulerian numbers enumerate
partial permutations with a given number of descents, where a partial permutation of $[n]$ is a permutation of a  nonempty subset of $[n]$.
For example, when $n=3$, the partial permutations of $[n]$ written in one line notation are: 
$$1,2,3,12,21,13,31,23,32,123,132,213,231,312,321.$$

Let $\tilde {\mathfrak S}_n$ be the set of partial permutations of $[n]$.
The descent number  of a partial permutation $\sigma=\sigma_1\sigma_2\cdots \sigma_m$ is defined in the usual way as
$\des (\sigma) = |\{i \in [m-1] : \sigma_i > \sigma_{i+1} \}|$.  
 For the partial permutations listed above the respective descent numbers are:
$$0,0,0,0,1,0,1,0,1,0,1,1,1,1,2.$$  
The following combinatorial characterization of $\tilde A_n(t)$ is straightforward to prove:
   \begin{equation} \label{equation:binom_euler} \tilde A_n(t) = 1+\sum_{\sigma \in \tilde{ \mathfrak S}_n } t^{\des(\sigma)+1}.\end{equation}
It follows from this  and the list of descent numbers  above that $$\tilde A_3(t) = 1+ 7t+7t^2 + t^3.$$

\begin{theorem}  \label{theorem:star_binom_euler} For $n \ge 1$, 
$$(-1)^{n-1} \mu_{St_n}(t) = \tilde A_{n-1}(t) .$$
Consequently 
$$(-1)^{n-1} \mu_{St_n}(1) = 1+\sum_{m=1}^{n-1} (n-1)_m,$$
where $(n-1)_m$ is the falling factorial $(n-1)(n-2) \dots (n-m)$.
\end{theorem}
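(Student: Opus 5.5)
We will apply the recurrence \eqref{equation:recursion_polynomial_2-1} of Theorem~\ref{theorem:recurrence} to $G=St_n$, with center $1$. A partition $\pi\in\Pi_{St_n}$ has exactly one block containing $1$, say $B=\{1\}\cup S$ with $S\subseteq\{2,\dots,n\}$. Every other vertex lies in a singleton block, since a block not containing $1$ induces a connected subgraph only if it is a single leaf. So $\pi$ is determined by $S$, and $|\pi|=n-|S|$. The restriction $G|_B$ is the star $St_{|S|+1}$, and singletons contribute $\mu=1$. Writing $m=|S|$ and $f_n(t):=(-1)^{n-1}\mu_{St_n}(t)$, the case $\pi=\hat 1$ corresponds to $m=n-1$. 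The recurrence then becomes
\begin{equation*}
\sum_{m=0}^{n-1}\binom{n-1}{m}[n-m]_t\,(-1)^{m}f_{m+1}(t)=0\qquad (n\ge 2),
\end{equation*}
with $f_1=1$. This uniquely determines $f_n$ by induction, since the $m=n-1$ term has coefficient $[1]_t=1$.

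So it suffices to show that $g_n:=\tilde A_{n-1}(t)$ satisfies the same recurrence with $g_1=1=\tilde A_0$. We will pass to exponential generating functions. Put $F(y)=\sum_{n\ge1} f_n\, y^{n-1}/(n-1)!$, so that $F(-y)$ is the egf of $(-1)^{m}f_{m+1}$ indexed by $m$. Also put $H(y)=\sum_{k\ge1}[k]_t\, y^{k-1}/(k-1)!$, which is the derivative of $(e^{ty}-e^y)/(t-1)$, namely $(te^{ty}-e^y)/(t-1)$. The recurrence says that the binomial convolution indexed by $n-1=m+(n-1-m)$ vanishes except in degree $0$, that is, $F(-y)H(y)=1$. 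Hence
\begin{equation*}
F(y)=\frac{t-1}{te^{-ty}-e^{-y}}=\frac{(1-t)e^{y}}{e^{(1-t)y}-t}\cdot\frac{e^{ty}}{e^{ty}}\ \ \text{(after simplification)}.
\end{equation*}

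On the other side, $\tilde A_{n}=1+t\sum_{m\ge1}\binom nm A_m$ gives the egf
\begin{equation*}
\sum_{n\ge0}\tilde A_n\frac{y^n}{n!}=e^y\Bigl(1+t\bigl(A(y)-1\bigr)\Bigr),
\end{equation*}
where $A(y)=\sum_{m\ge0}A_m y^m/m!=(t-1)/(t-e^{(t-1)y})$ (with the convention $A_0=1$ and $\des$-based $A_m$). The remaining step is to verify that this equals $F(y)$. Clearing denominators, this reduces to the identity $e^y(1-t)\cdot t\,e^{(t-1)y}/(t-e^{(t-1)y})+e^y=\ldots$, which we will check directly.

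The main obstacle is this exponential-formula bookkeeping: getting the sign $(-1)^m$ and the index shift right, and confirming the algebraic identity, where sign slips are easy to make. A sanity check is $n=4$, where both sides should give $1+7t+7t^2+t^3$.

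The consequence follows by setting $t=1$. Since $A_m(1)=m!$, we get $\tilde A_{n-1}(1)=1+\sum_{m=1}^{n-1}\binom{n-1}{m}m!=1+\sum_{m=1}^{n-1}(n-1)_m$.
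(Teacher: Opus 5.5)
Your route is the same as the paper's. You specialize \eqref{equation:recursion_polynomial_2-1} to the star, where only the block containing $1$ can be nontrivial, and read the result as a binomial convolution. You then invert the egf $\frac{te^{ty}-e^y}{t-1}$, pass from $y$ to $-y$, and compare with the egf of $\tilde A_n$ obtained from Euler's formula. The recurrence, the identity $F(-y)H(y)=1$, your first expression $F(y)=\frac{t-1}{te^{-ty}-e^{-y}}$, the formula $A(y)=\frac{t-1}{t-e^{(t-1)y}}$, and the $t=1$ consequence are all correct.

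The gap is in the last step, where the identification actually happens. You leave it unfinished, and the one closed form you write down is wrong. The simplified expression $\frac{(1-t)e^{y}}{e^{(1-t)y}-t}$ is not equal to $\frac{t-1}{te^{-ty}-e^{-y}}$: its coefficient of $y$ is $0$, whereas $f_2=1+t$. The partial identity you write at the end is not a rearrangement of $e^y\bigl(1+t(A(y)-1)\bigr)$ either; at $y=0$ it equals $1-t$ rather than $\tilde A_0=1$. So the comparison as you set it up would fail.

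The fix is to multiply numerator and denominator by $e^{ty}$:
\[ F(y)=\frac{t-1}{te^{-ty}-e^{-y}}=\frac{(t-1)e^{ty}}{t-e^{(t-1)y}}. \]
On the other side, $A(y)-1=\frac{e^{(t-1)y}-1}{t-e^{(t-1)y}}$, so
\[ e^y\bigl(1+t(A(y)-1)\bigr)=e^y\,\frac{t-e^{(t-1)y}+te^{(t-1)y}-t}{t-e^{(t-1)y}}=\frac{(t-1)e^{ty}}{t-e^{(t-1)y}}, \]
which is exactly $F(y)$. Comparing coefficients of $y^{n-1}/(n-1)!$ then gives $f_n=\tilde A_{n-1}$. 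With this computation supplied, your argument is complete and coincides with the paper's proof.
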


\begin{proof} The recurrence relations of Theorem~\ref{theorem:recurrence} for $G=St_n$ reduce to
$$\sum_{k=0}^{n-1} \binom {n-1} k [k+1]_t \,\, \mu_{St_{n-k}}(t) = \delta_{n,1}.$$
This is equivalent to the exponential generating function formula 
\begin{equation} \label{equation:star_exponential_2}  \sum_{n\ge0}\mu_{St_{n+1}}(t)\frac{y^n}{n!}  = \left ( \sum_{n \ge 0} [n+1]_t \frac {y^n}{n!} \right )^{-1} = \frac{t-1}{te^{ty} - e^y} .\end{equation}
Replacing $y$ by $-y$ yields
\begin{align}\nonumber \sum_{n\ge0}(-1)^{n} \mu_{St_{n+1}}(t)\frac{y^n}{n!}  &=  \frac{t-1}{te^{-ty} - e^{-y}} \\
\label{equation:star_exponential} &= \frac{(t-1)e^{ty}}{t-e^{(t-1)y}}.
\end{align}

It is straightforward using Euler's exponential generating formula 
\begin{equation} \label{equation:Euler} \sum_{n\ge 1} A_n(t) \frac{y^n}{n!} = \frac{e^y - e^{ty}}{e^{ty} -te^y}, \end{equation}
to show that the  formula given in (\ref{equation:star_exponential}) also holds for the exponential generating function for $\tilde A_n(t)$.
Indeed we have
\begin{align}\nonumber \sum_{n\ge 0} \tilde A_n(t) \frac{y^n}{n!} &= \sum_{n\ge 0} \left(1+ t\sum_{m=1}^n \binom n m A_m(t)\right) \frac{y^n}{n!} \\
\nonumber &= e^y + t \sum_{n\ge 1} A_n(t) \frac{y^n}{n!} \sum_{n\ge 0} \frac{y^n}{n!} \\
\nonumber &= e^y + t \frac{e^y - e^{ty}}{e^{ty} -te^y} e^y \\
&= \frac{(t-1)e^{ty}}{t-e^{(t-1)y}} \label{equation:tilde_exponential}
\end{align}
with the second to last equality following from (\ref{equation:Euler}) 
and the last equality following from straightforward manipulation.
Comparing this with (\ref{equation:star_exponential}) yields the desired result.
\end{proof}

\begin{remark} A $q$-analog and symmetric function analog of  the exponential generating function formula given in \eqref{equation:tilde_exponential} appears in   \cite[Propositions~4.3 and~3.3]{ShareshianWachs2020}  respectively. 
\end{remark}

\subsection{A connection with  graph associahedra} \label{section:connection_with_graph_associahedra}

The $h$-polynomial of a {\it simple}\footnote{Here we will be following the convention in \cite{PostnikovReinerWilliams2008}, in which the h-polynomial is defined for simple polytopes rather than, as is often done, for simplicial polytopes.} $d$-dimensional polytope $\mathcal P$ is defined by  
\begin{align}
h_{\mathcal P}(t)=\sum_{i=0}^d f_i(t-1)^i,    
\end{align}
where $f_i$ is the number of $i$ dimensional faces  of $\mathcal P$.  

It is well known that the Eulerian polynomials and the Narayana polynomials are $h$-polynomials
of simple polytopes known as the permutohedron and the associahedron, respectively.  
It is shown in \cite{PostnikovReinerWilliams2008} that the binomial-Eulerian  polynomials are $h$-polynomials of  simple polytopes as well. Hence  $(-1)^{n-1} \mu_G(t)$ is, by Theorems~\ref{theorem:path_narayana} and~\ref{theorem:star_binom_euler} respectively, the $h$-polynomial of a simple polytope when $G$ is the path graph $P_n$ or the star graph $St_n$.  It is natural to ask if there are any other graphs $G$ for which this is the case; in this section we discuss this question.

Graph associahedra are a family of simple polytopes that were studied by Carr and Devadoss \cite{CarrDevadoss2006} and that form part of the larger class of generalized permutahedra studied by Postnikov in \cite{Postnikov2009}. Several ocurrances of equivalent families have appeared in the literature as well. In particular, the dual simplicial complexes of graph associahedra are included in the family of nested set complexes studied by De Concini and Procesi in \cite{DeConciniProcesi1995}. We should also mention that in recent years a great many papers have appeared with further connections between generalized permutahedra and other areas of mathematics.  
A paper we would  like to mention is that of Aguiar and Ardila  \cite{AguiarArdila2023}, in which  a beautiful structure of a Hopf monoid on generalized permutahedra is described, for which the restriction to graph associahedra yields a 
  known Hopf monoid on graphs. 

We follow the notation and terminology used in \cite{Postnikov2009, PostnikovReinerWilliams2008}.
A \emph{building set} on  a finite set $V$ is a collection $\B \subseteq 2^{V}$ satisfying:
\begin{itemize}
    \item $\{v\} \in \B$ for all $v\in V$.
    \item If $B, B' \in \B$ and $B\cap B' \neq \emptyset$ then $B\cup B' \in \B$.
\end{itemize}
Given a graph $G$ on vertex set $V$, one can define the building set,
$$\B(G):=\{\emptyset \neq B\subseteq V \mid G|_{B}\text{ is connected}\}.$$

The \emph{nestohedron} $\P_{\B}$ associated to a building set $\B$ on $V \subseteq [n]$ is the polytope defined as the Minkowski sum
$$\P_{\B}:=\sum_{B \in \B}\Delta_{B},$$
where the \emph{Minkowski sum} of two subsets $S,T\in \RR^n$ is defined as the subset $S+T:=\{x+y \mid x\in S\text{ and } y \in T\}\subseteq \RR^n$ and $\Delta_B$ is the simplex in $\RR^n$ obtained as the convex hull of the standard unit vectors in $\{{\bf e}_i\mid i\in B\}$.  In \cite{Postnikov2009} it is shown that the nestohedron $\P_{\B}$ is a simple polytope.

Nestohedra of the form  $\P_G:=\P_{\B(G)}$, where   $G$ is a graph on $V \subseteq [n]$, are called \emph{graph-associahedra} in \cite{CarrDevadoss2006}.
  It follows from \cite[Theorem 7.11]{Postnikov2009} that if $G_1,\dots,G_k$ are the connected components of $G$ then
\begin{equation} \label{equation:prodform}  h_{\P_{G}}(t) = h_{\P_{G_1}}(t)  \cdots h_{\P_{G_k}}(t) .\end{equation}

The graph associahedron for the complete graph is the standard permutohedron and the graph associahedron for the path graph is the standard associahedron; see \cite[Section 10]{PostnikovReinerWilliams2008}.   The graph associahedron for the star graph is called the {\em stellohedron} in
 \cite[Section 10.4]{PostnikovReinerWilliams2008}.
As was mentioned above, it is well known that the $h$-polynomials for the permutohedron and the associahedron are the Eulerian and Narayana polynomials, respectively. 
Thus
\begin{align} 
 \label{equation:h_complete} h_{\mathcal{P}_{K_n} }(t)&= A_n(t) \\
\nonumber h_{\mathcal{P}_{P_n}}(t) &= N_n(t). 
\end{align}
In \cite[Section 10.4]{PostnikovReinerWilliams2008}, it is shown that,
$$ h_{\mathcal {P}_{St_n } }(t) =  \tilde A_{n-1}(t).
$$
It follows from these equations and Theorems~\ref{theorem:path_narayana} and~\ref{theorem:star_binom_euler} that $$(-1)^{n-k} \mu_{G}(t)=h_{\P_{G}}(t)$$ is true for the special cases that $G$ is the  path graph and the star graph.  By (\ref{equation:prodform}) and Proposition~\ref{proposition:product_components},  it is also true for forests $G$ whose trees are path graphs or star graphs.

 In an earlier version of this paper the authors conjectured the following. For a polynomial  $p(t)=\sum_{i=0}^d a_i t^i$, we write  $p(t) \succeq 0$ if all its coefficients are nonnegative.

\begin{conjecture} \label{conjecture:h_T} 
 For any graph $G$ with $n$ vertices and $k$ connected components we have that
\begin{equation} \label{equation:mobius_assoc} (-1)^{n-k} \mu_{G}(t)-h_{\P_{G}}(t)\succeq 0,\end{equation}
 with equality  if 
 $G$ is a forest.
\end{conjecture}

 More recently, the first author together with Avila and Carrillo were able to prove  this conjecture in \cite{Avila2022, AvilaCarrilloGonzalezDleon2026} and generalize it to a theorem for general building sets.

Furthermore, we observe in the following proposition that if  equality in  (\ref{equation:mobius_assoc}) holds then $G$ is necessarily a forest.

\begin{proposition}\label{proposition:necessary_forest_condition} Let $G$ be a  graph with $n$ vertices and $k$ connected components.   If $(-1)^{n-k} \mu_{G}(t)$ is equal to the $h$-polynomial of a simple polytope then $G$ is a forest.
\end{proposition}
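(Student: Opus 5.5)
The plan is to compare constant terms. Suppose $(-1)^{n-k}\mu_G(t)=h_{\mathcal P}(t)$ for some simple polytope $\mathcal P$. Evaluating at $t=0$ gives
\[
h_{\mathcal P}(0)=\sum_i (-1)^i f_i = 1
\]
by the Euler--Poincar\'e relation, where the sum includes the polytope itself as its unique top-dimensional face. So we will have $|\mu_G(0)|=1$. By Proposition~\ref{corollary:palindromic}(2), $\mu_G(0)=\mu(\Pi_G)$. Since $\Pi_G\cong \Pi_{G_1}\times\cdots\times\Pi_{G_k}$ and the M\"obius invariant is multiplicative, we get $|\mu(\Pi_{G_i})|=1$ for every connected component $G_i$. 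The statement therefore reduces to the following claim: \emph{a connected graph $H$ with $|\mu(\Pi_H)|=1$ is a tree}.

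To prove the claim, I will use Whitney's formula \eqref{equation:chromatic}. For connected $H$, the only partition in $\Pi_H$ with one block is $\hat 1$. Hence the coefficient of $t$ in $\chi_H(t)$ is exactly $\mu_{\Pi_H}(\hat 0,\hat 1)=\mu(\Pi_H)$. It then suffices to show that the linear coefficient $a_1(H)$ of the chromatic polynomial of a connected graph satisfies $|a_1(H)|\ge 2$ whenever $H$ contains a cycle.

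For this I will use deletion--contraction, $\chi_H=\chi_{H-e}-\chi_{H/e}$. It is standard that the coefficients of the chromatic polynomial alternate in sign, with $a_1$ of sign $(-1)^{|V|-1}$ for connected graphs. Since $H/e$ has one fewer vertex, this gives
\[
|a_1(H)|=|a_1(H-e)|+|a_1(H/e)|.
\]
When contracting, loops are removed and multiple edges are merged, which does not change $\chi$. Now take $e$ to lie on a cycle. Then $H-e$ is still connected, and so is $H/e$. Connected graphs have $|a_1|\ge 1$, since $\mu(\Pi_H)\neq 0$ for the geometric lattice $\Pi_H$, as used in the proof of Proposition~\ref{corollary:palindromic}. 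Hence $|a_1(H)|\ge 2$. Conversely, trees have $\Pi_H$ Boolean, so $|\mu|=1$; this direction is not needed for the statement.

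The only slightly delicate point is making sure that the sign alternation and the nonvanishing of $a_1$ are applied to connected (multi)graphs. This is why $e$ must be chosen on a cycle, so that $H-e$ stays connected. Everything else follows directly from the earlier propositions.
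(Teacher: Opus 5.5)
Your proof is correct. Its first half matches the paper: the constant term of $h_{\mathcal P}$ is $1$, which you justify via Euler--Poincar\'e where the paper just asserts it, and $\mu_G(0)=\mu(\Pi_G)$. The difference lies in how you show that $|\mu(\Pi_G)|=1$ forces $G$ to be a forest.

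The paper invokes Whitney's Broken Circuit Theorem: $(-1)^{n-k}\mu(\Pi_G)$ counts the spanning forests of $G$ that contain no broken circuit, and this count is $1$ exactly when $G$ is a forest. The check that a cycle produces at least two such forests is left to the reader.

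You instead reduce to connected components by multiplicativity. You then read $\mu(\Pi_H)$ off as the linear coefficient of $\chi_H$ via Whitney's formula. A single deletion--contraction step on an edge $e$ of a cycle, together with sign alternation and nonvanishing of the M\"obius function of the geometric lattices $\Pi_{H-e}$ and $\Pi_{H/e}$, gives $|\mu(\Pi_H)|=|\mu(\Pi_{H-e})|+|\mu(\Pi_{H/e})|\ge 2$.

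What each route buys:
\begin{itemize}
\item The paper's route gives an exact combinatorial interpretation of $(-1)^{n-k}\mu(\Pi_G)$ and handles disconnected graphs directly.
\item Your route avoids broken circuits entirely and makes the inequality fully explicit.
\item As a bonus, your identity gives the strict decrease $|\mu(\Pi_{H-e})|<|\mu(\Pi_H)|$ for any non-bridge $e$. This is exactly the fact the paper later needs, again via broken circuits, in the proof of Corollary~\ref{corollary:positivity}.
\end{itemize}
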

\begin{proof} The $h$-polytope of every simple polytope has constant term equal to $1$. Recall from Proposition~\ref{corollary:palindromic} that the constant term  $ \mu_G(0)$ of  $ \mu_G(t)$  is equal to the M\"obius invariant  $\mu(\Pi_G)$. The result now follows from the known fact that $(-1)^{n-k}\mu(\Pi_G)=1$ if and only if $G$ is a forest.  One can prove this by using  Whitney's Broken Circuit Theorem  \cite{Whitney1932}, which tells us that $(-1)^{n-k} \mu(\Pi_G)$ is equal to the number of spanning forests of $G$ that contain no broken circuits.  (A broken circuit is  a subgraph of $G$ obtained by removing the smallest edge, under a fixed total ordering of the edges of $G$, from a cycle of $G$.)
\end{proof}

By Theorem~\ref{theorem:recurrence}, the following recurrence relations are consequences of Conjecture~\ref{conjecture:h_T} (now proved in \cite{Avila2022, AvilaCarrilloGonzalezDleon2026} as noted above).
\begin{corollary}
\label{corollary:recurrence_tree} Let $T$ be a tree on node set $[n]$.  If   $n > 1$ then
\begin{equation}\label{equation:recursion_polynomial_h2-3}
    h_{\mathcal P_{T}}(t) = \sum_{\pi \in \Pi_T \setminus \{\hat 1\}} (-1)^{|\pi|} [|\pi|]_t\prod_{B\in \pi} 
    h_{\mathcal P_{T|_B}}(t)
\end{equation}
and
\begin{equation}\label{equation:recursion_polynomial_h2-4}
   h_{\mathcal P_{T}}(t) =  \sum_{\pi \in \Pi_T\setminus \{\hat{0}\}} (-1)^{n-|\pi|-1} h_{\mathcal P_{T/\pi}}(t)\prod_{B\in \pi} [|B|]_t.
\end{equation}

\end{corollary}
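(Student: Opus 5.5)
The plan is to substitute the now-proved equality case of Conjecture~\ref{conjecture:h_T} into the two recurrences of Theorem~\ref{theorem:recurrence}, keeping careful track of the signs. The tree $T$ is connected with $n>1$ vertices. Every graph that appears in the recurrences is again a tree. For $B\in\pi$ with $\pi\in\Pi_T$, the restriction $T|_B$ is a connected induced subgraph of a tree, so it is a tree on $|B|$ vertices. Likewise, for $\pi\in\Pi_T$ the contraction $T/\pi$ is connected with $|\pi|$ vertices, and it is a tree. To see the last point: the blocks of $\pi$ are subtrees, and two distinct blocks are joined by at most one edge of $T$, since two such edges together with paths inside the blocks would produce a cycle. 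Hence the number of edges of $T/\pi$ equals the number of edges of $T$ minus the internal edges, namely $(n-1)-\sum_{B\in\pi}(|B|-1)=|\pi|-1$. A connected graph with $|\pi|$ vertices and $|\pi|-1$ edges is a tree.

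By the equality case of Conjecture~\ref{conjecture:h_T}, now a theorem by \cite{Avila2022, AvilaCarrilloGonzalezDleon2026}, every tree $S$ on $m$ vertices satisfies $\mu_S(t)=(-1)^{m-1}h_{\P_S}(t)$. This includes $m=1$, where both sides equal $1$.

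For \eqref{equation:recursion_polynomial_h2-3}, substitute into \eqref{equation:recursion_polynomial_2-1}. The left side becomes $(-1)^{n-1}h_{\P_T}(t)$. On the right, the product over the blocks contributes the sign $\prod_{B\in\pi}(-1)^{|B|-1}=(-1)^{n-|\pi|}$, and the leading minus sign contributes one more factor of $-1$. Multiplying both sides by $(-1)^{n-1}$ leaves the total sign
\[
(-1)^{n-1}\cdot(-1)\cdot(-1)^{n-|\pi|}=(-1)^{|\pi|},
\]
which is exactly the sign in \eqref{equation:recursion_polynomial_h2-3}.

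For \eqref{equation:recursion_polynomial_h2-4}, substitute into \eqref{equation:recursion_polynomial_2-2}. Now $\mu_{T/\pi}(t)=(-1)^{|\pi|-1}h_{\P_{T/\pi}}(t)$. Multiplying through by $(-1)^{n-1}$ and including the leading minus sign gives
\[
(-1)^{n-1}\cdot(-1)\cdot(-1)^{|\pi|-1}=(-1)^{n-|\pi|-1},
\]
as required.

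The only non-bookkeeping step is checking that the contractions $T/\pi$ are trees, which the edge-count argument above settles. Everything else is routine sign tracking.
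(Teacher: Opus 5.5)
Your proposal is correct and follows the paper's route: the paper obtains the corollary by substituting the now-proved equality case of Conjecture~\ref{conjecture:h_T} into the two recurrences of Theorem~\ref{theorem:recurrence}, exactly as you do. Your sign bookkeeping and your edge-count check that every restriction $T|_B$ and every contraction $T/\pi$ is again a tree supply details the paper leaves implicit.
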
 

\begin{remark} In \cite[Theorem 7.11]{Postnikov2009}, Postnikov gives a recurrence  for $h_{\mathcal P_{\mathcal B}}(t)$, where $\mathcal B$ is a  building set.  If one applies this to  $\mathcal B(T)$, where $T$ is a tree, one gets a recurrence relation that is different from those in the conjectured  Corollary ~\ref{corollary:recurrence_tree}.    Indeed the recurrence relation in \cite{Postnikov2009}  for trees $T$ is
$$h_{\mathcal P_T}(t) = \sum_{ U \subsetneq [n]} (t-1)^{n-|U| -1} \prod_{R \in T|_U}  h_{\mathcal P_R}(t), $$
where the forest $T|_U$ is viewed as a set of trees.
\end{remark}

\section{\texorpdfstring{$\gamma$}{}-positivity for chordal graphs} \label{section:gamma_positive}
In this section we discuss a property of polynomials in $\RR[t]$ called $\gamma$-positivity.  Postnikov, Reiner, and Williams \cite{PostnikovReinerWilliams2008} established this property for $h$-polynomials of graph associahedra of chordal graphs and here we do the same for M\"obius polynomials.  We review their $\gamma$-positivity result in Section~\ref{subsection:associahedra_gamma} for the sake of comparison with our  analogous result presented in Section~\ref{subsection:chordal_graphs}.

Let $p(t) = \sum_{j=0}^d a_j t^j \in \RR[t]$.   Recall that $p(t)$ is said to be {\em palindromic} if $a_j = a_{d-j}$ for all $j$, and $p(t)$ is said to be {\em unimodal} if  
\begin{equation} \label{equation:unimodal} 0 \le a_0 \le a_1 \le \cdots \le a_c \ge \cdots \ge a_{d-1} \ge a_d \ge 0,\end{equation} for some  $c \in \{0,1,\dots,d\}$.
Note that if $p(t)$ is palindromic and unimodal and $a_0 >0$ (which means $d$ is the degree of $p(t)$) then all the coefficients of $p(t)$ are positive, denoted $p(t) \succ 0$.

It is straightforward to see that 
 $p(t)$  is palindromic if and only if
$$p(t) = \sum_{j=0}^{\lfloor \frac{d}{2} \rfloor } \gamma_j t^j (1+t)^{d-2j} $$ 
for some $\gamma_j \in \RR$.  The coefficients $\gamma_j$ are commonly called {\em $\gamma$-coefficients}.  
If $\gamma_j \ge 0$ for all $j$ then the polynomial $p(t)$ is said to be {\em $\gamma$-positive}.\footnote{This is often called {\em $\gamma$-nonnegative}.}  It is well known and straightforward to prove that if  $p(t)$ is $\gamma$-positive then $p(t)$ is both palindromic and  unimodal.  

A celebrated result of Stanley \cite{Stanley1980} asserts that the $h$-polynomial of any convex simple polytope is unimodal as well as palindromic (which  was already well-known).
Interest in $\gamma$-positivity stems in part from Gal's conjecture \cite{Gal2005} which in the case of convex polytopes asserts that the $h$-polynomial of any simple flag polytope is $\gamma$-positive.

A graph is said to be \emph{chordal} if it does not have any induced cycles of length greater than $3$. Chordal graphs are exactly those that admit a \emph{(reverse) perfect elimination order (PEO)}, i.e., a total order of the vertices $v_1, v_2,\dots,v_n$  such that for every $i$ the set of neighbors $v_j$ of $v_i$, such that $j <i$,  form a  clique, see \cite[Section~9.2]{PostnikovReinerWilliams2008}. 

We will say that a graph $G$ on vertex set $V = \{v_1, v_2,\dots,v_n\} \subset \ZZ_{>0}$ is a {\it perfectly labeled chordal graph} if the natural order $v_1<v_2<\dots<v_n$ is a PEO for $G$.  Note that every chordal graph  is isomorphic to a perfectly labeled chordal graph.  The complete graph on $[n]$ is a perfectly labeled chordal graph, as is every increasing tree on $[n]$, where a tree on $[n]$ is {\em increasing} if when rooted at $1$, every vertex is smaller than its children.

\subsection{$h$-polynomials of chordal graph associahedra} \label{subsection:associahedra_gamma}
In  \cite{PostnikovReinerWilliams2008},  Postnikov, Reiner, and Williams establish $\gamma$-positivity of the $h$-polynomial of the graph associahedron of  any chordal graph, 
thereby confirming Gal's conjecture for this class of flag simple polytopes.\footnote{They do this more generally for chordal nestohedron.}  They do this by giving a  combinatorial characterization of the coefficients and the $\gamma$-coefficients of the $h$-polynomial   in terms of a certain set  of 
permutations.  We review this result  in this section.

For any 
graph $G$ on vertex set $[n]$,   a \emph{$G$-permutation}\footnote{Note that $\max$ is used instead of $\min$ in \cite{PostnikovReinerWilliams2008}.  Hence,  the roles of  descents and ascents are exchanged.}    is a permutation $\sigma \in \sym_n$ such that     for every $i \in [n]$, the elements $\sigma(i)$ 
and $\min \{\sigma(1),\dots,\sigma(i)\}$ are in the same connected component of 
$G|_{\{\sigma(1),\dots,\sigma(i)\}}$.  Let $\sym(G)$ be the set 
of $G$-permutations and let $\widehat{\sym}(G):= \sym(G)\cap \widehat \sym_n$, where  $\widehat \sym_n$ is the subset of $\sym_n$ consisting of permutations $\sigma$ with no  consecutive ascents; that is for all $i \in [n-2]$, if
$\sigma(i)<\sigma(i+1)$ then $\sigma(i+1) >\sigma(i+2)$.\footnote{Note that  in \cite{PostnikovReinerWilliams2008}, $\widehat{\sym}_n$ denotes a smaller subset  in which a final descent is also required.} 
For $\sigma \in \sym_n$, let $$\asc(\sigma) = |\{i \in [n-1]  : \sigma(i) < \sigma(i+1)\}|.$$

\begin{theorem}[Postnikov, Reiner, Williams \cite{PostnikovReinerWilliams2008}]\label{theorem:h_vector}
For $n\ge 1$ and $G$ a connected perfectly labeled chordal graph on $[n]$, we have that 
\begin{equation} \label{equation:assoc} h_{\P_G}(t)=\sum_{\sigma\in \sym(G)}t^{\asc(\sigma)}\end{equation}
and 
\begin{equation} \label{equation:gamma_assoc} h_{\P_G}(t)=\sum_{\substack{\sigma\in \widehat{\sym}(G)\\ \sigma(n-1) > \sigma(n)}}t^{\asc(\sigma)}(1+t)^{n-1-2\asc(\sigma)}.\end{equation}
\end{theorem}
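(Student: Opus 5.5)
The plan is to follow the strategy of Postnikov, Reiner and Williams in two stages. First we prove the descent-type formula \eqref{equation:assoc} from a general description of $h$-vectors of nestohedra. Then we deduce \eqref{equation:gamma_assoc} from \eqref{equation:assoc} using a group action of Foata--Strehl (``valley hopping'') type on $\sym(G)$. Throughout we use the building set $\B(G)$. We use the fact from \cite{Postnikov2009} that $\P_G$ is simple and that its $h$-polynomial can be read off from its normal fan.

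For \eqref{equation:assoc}, we start from the fact that the normal fan of $\P_G$ coarsens the braid arrangement fan. Vertices of $\P_G$ are indexed by maximal nested sets, which are in bijection with $\B(G)$-trees (spanning rooted forests on $[n]$ in which the set of descendants of each node, the node included, is connected in $G$, plus a compatibility condition). Choose a generic linear functional, say $x \mapsto \sum_i c_i x_i$ with $c_1 \ll c_2 \ll \cdots \ll c_n$. Then $h_j$ counts vertices with exactly $j$ improving edges, which is the usual shelling/line-shelling argument for simple polytopes. This turns $h_{\P_G}(t)$ into a sum over $\B(G)$-trees $T$ of $t^{\des(T)}$, where $\des(T)$ counts parent--child pairs with a suitable order relation.

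The key step is then a bijection between $\B(G)$-trees and $G$-permutations that sends tree descents to permutation ascents, and here the perfect elimination order is used. Given $\sigma \in \sym(G)$, we read it left to right. Each $\sigma(i)$ is attached as the parent of the connected components of $G|_{\{\sigma(1),\dots,\sigma(i)\}}$ that it merges. The $G$-permutation condition (with respect to $\min$) together with the PEO guarantees that each tree arises from exactly one $\sigma$, namely the one obtained by taking the minimum available element at each step. I expect checking that this map is a bijection, and that it matches $\asc$ with the tree statistic, to be the main obstacle. The clique property of earlier neighbours in a PEO is precisely what makes the inverse map well defined, and I would try first to verify it by induction on $n$, deleting the vertex $n$ (a simplicial vertex).

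For \eqref{equation:gamma_assoc}, we use the modified Foata--Strehl action of $(\ZZ/2)^{n}$ on $\sym_n$, suitably adjusted so that the last position behaves like a descent into $0$. In this action each letter that is neither a peak nor a valley is hopped across adjacent larger letters. The orbits all have the form $t^{a}(1+t)^{n-1-2a}$, and each orbit has a unique representative with no double ascents and a final descent. What must be shown is that $\sym(G)$ is a union of orbits. Hopping a letter $x$ past a block of larger letters does not change the relative order of smaller letters, and it only moves $x$ within a region where the running minimum is smaller than $x$. The PEO then ensures that connectivity of $x$ to the running minimum within the prefix is preserved, because the neighbours of $x$ with smaller labels form a clique. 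Summing over orbit representatives in $\widehat{\sym}(G)$ with $\sigma(n-1)>\sigma(n)$ yields \eqref{equation:gamma_assoc}.
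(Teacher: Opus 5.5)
The paper gives no proof of this statement; it is quoted from Postnikov--Reiner--Williams. Your sketch therefore has to stand on its own, and its second half does not.

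\textbf{Convention errors.} Two of your identifications use PRW's $\max$ convention rather than the $\min$ convention of this statement.
\begin{itemize}
\item In Part 1, the distinguished linear extension of a $\B(G)$-tree is not the greedy-minimum one. For $P_3$ and the tree with root $2$ and leaves $1,3$, the greedy-minimum extension is $132\notin\sym(P_3)$. The $G$-permutation is $312$, which is the greedy-maximum extension.
\item In Part 2, hopping letters over adjacent larger letters with a $0$ boundary already fails for $P_3$: the letter $2$ in $123$ is neither a peak nor a valley, and the hop gives $123\mapsto 132\notin\sym(P_3)$.
\item The action you actually need is the one with $\sigma(0)=\sigma(n+1)=+\infty$, where each double ascent or double descent $x$ hops over the maximal adjacent block of letters smaller than $x$. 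Its orbit representatives are exactly the permutations with no double ascents and a final descent, and its orbit sums are $t^{\asc}(1+t)^{n-1-2\asc}$.
\end{itemize}

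\textbf{The genuine gap.} Even with this corrected action, $\sym(G)$ is not a union of orbits. So the step you identify as ``what must be shown'' is false.
\begin{itemize}
\item Take the perfectly labeled path $G$ with edges $\{1,2\},\{1,4\},\{2,3\}$; it is an increasing tree.
\item $3142\in\sym(G)$. Its only hoppable letter is $3$, and its orbit is $\{3142,1342\}$. But $1342\notin\sym(G)$, because $3$ is not joined to $1$ in $G|_{\{1,3\}}$.
\item Likewise $1423\in\sym(G)$ has orbit $\{1423,1432\}$, and $1432\notin\sym(G)$.
\end{itemize}
Formula \eqref{equation:gamma_assoc} does hold for this $G$: both sides equal $1+6t+6t^2+t^3=(1+t)^3+3t(1+t)$. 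It holds only because these two defective orbits compensate each other. The pairing that works is $3142\leftrightarrow 1423$, and it moves $3$ past the larger letter $4$.

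Your justification breaks exactly at this point. That the smaller neighbours of $x$ form a clique says nothing about whether any of them has already appeared at $x$'s new position. Here the only neighbour of $3$ is $2$, which comes later.

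Part 2 therefore needs a genuinely new ingredient. One option is a $G$-dependent hopping rule, together with a proof that it is still an involutive action preserving the peak/valley data. Another is a different derivation of \eqref{equation:gamma_assoc} from \eqref{equation:assoc}.

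\textbf{Part 1.} This is a reasonable plan, via Postnikov's $\B$-tree formula plus a bijection. But its core is only announced, not carried out: the bijection between $\sym(G)$ and $\B(G)$-trees, and the matching of $\asc$ with the tree statistic.
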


Now by (\ref{equation:prodform}) and the fact that products of $\gamma$-positive polynomials are $\gamma$-positive we have the following consequence. 
\begin{corollary}
 If $G$ is a chordal graph then $h_{\P_G}(t)$ is $\gamma$-positive.  
\end{corollary}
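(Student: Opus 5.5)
The corollary is an immediate consequence of Theorem~\ref{theorem:h_vector} combined with the product formula (\ref{equation:prodform}). We reduce to the connected case, apply the $\gamma$-expansion there, and then multiply.

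\textbf{Reduction to connected, perfectly labeled graphs.} Let $G$ be chordal with connected components $G_1,\dots,G_k$. Each $G_i$ is an induced subgraph of $G$, so it is again chordal; restricting a PEO of $G$ to the vertices of $G_i$ gives a PEO of $G_i$. Since $h_{\P_{G_i}}(t)$ depends only on the isomorphism class of $G_i$, we may relabel the vertices of $G_i$ by $[n_i]$, where $n_i=|V(G_i)|$. Choosing the relabeling so that the natural order is the PEO makes $G_i$ a connected perfectly labeled chordal graph on $[n_i]$.

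\textbf{The connected case.} For such a $G_i$, equation (\ref{equation:gamma_assoc}) reads
\[
h_{\P_{G_i}}(t)=\sum_{\sigma} t^{\asc(\sigma)}(1+t)^{n_i-1-2\asc(\sigma)}.
\]
This is an expansion in the basis $t^j(1+t)^{d_i-2j}$ with $d_i=n_i-1$, and the coefficient
\[
\gamma_j=\bigl|\{\sigma\in\widehat{\sym}(G_i): \sigma(n_i-1)>\sigma(n_i),\ \asc(\sigma)=j\}\bigr|
\]
is a cardinality, hence nonnegative.

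The one point to verify is that the exponent $n_i-1-2\asc(\sigma)$ is nonnegative, so that the expansion really lies in the $\gamma$-basis. Suppose $\sigma$ has no double ascents and ends in a descent. Then every ascent at position $i$ is followed by a descent at position $i+1$, which gives an injection from ascents to descents. Hence $2\asc(\sigma)\le n_i-1$. (For $n_i=1$ the sum consists of the single term $1$.) Therefore each $h_{\P_{G_i}}(t)$ is $\gamma$-positive of degree $d_i=n_i-1$.

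\textbf{Products.} By (\ref{equation:prodform}), $h_{\P_G}(t)=\prod_i h_{\P_{G_i}}(t)$. It remains to show that $\gamma$-positivity is preserved under products. The product of the basis terms
\[
t^{a}(1+t)^{d-2a}\cdot t^{b}(1+t)^{e-2b}=t^{a+b}(1+t)^{(d+e)-2(a+b)}
\]
is again a basis term, now for degree $d+e$. Multiplying out the two expansions therefore gives nonnegative $\gamma$-coefficients for the product with $d=\sum_i(n_i-1)=n-k$.

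There is no real obstacle here. The only points needing care are the relabeling by a PEO and the check $2\asc(\sigma)\le n-1$, both handled above.
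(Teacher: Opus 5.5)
Your proof is correct and follows the paper's argument. The paper deduces the corollary from Theorem~\ref{theorem:h_vector} in the connected perfectly labeled case, the product formula (\ref{equation:prodform}), and the fact that products of $\gamma$-positive polynomials are $\gamma$-positive; your extra checks (relabeling each component along a PEO, the bound $2\asc(\sigma)\le n_i-1$, and closure of the basis $t^j(1+t)^{d-2j}$ under multiplication) are valid and just make explicit what the paper leaves implicit.
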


\begin{example} \label{example:G_permutation} 
(1) For $G = K_n$, we have $\mathfrak S(G) = \mathfrak S_n$.  Thus, in this case, (\ref{equation:assoc}) reduces to the  combinatorial definition of  the Eulerian polynomial given in (\ref{equation:combinatorial_euler}),
and (\ref{equation:gamma_assoc}) reduces to a well known formula of Foata and Sch\"utzenberger \cite[Theorem 5.6]{FoataSchutzenberger1970} for  the Eulerian polynomials.

(2) For $G= P_n$, we have that $\sigma \in \mathfrak S(G)$  if and only if
$\sigma$ avoids the pattern $132$, i.e.,  if $1 \le i <j < k \le n$ and $\sigma(i) <\sigma(k)$ then $\sigma(j) < \sigma(k)$. Thus, in this case, (\ref{equation:assoc}) and (\ref{equation:gamma_assoc}) reduce to  well known combinatorial formulas for  the Narayana polynomials in terms of descent numbers of pattern avoiding permutations; see \cite[Sections~2.3 and~4.3]{Petersen2015}.

(3) For $G= St_{n+1}$, we have that $\sigma \in \mathfrak S(G)$ if and only if 
$$\sigma(1) > \sigma(2) > \dots > \sigma(k) =1 $$ for some $k \in [n+1]$. Thus, in this case, (\ref{equation:assoc}) reduces to a formula equivalent to the combinatorial characterization of  the binomial-Eulerian polynomials given in (\ref{equation:binom_euler}), and (\ref{equation:gamma_assoc}) reduces to the formula
\begin{equation} \label{equation:gamma_tilde} \tilde A_n(t) =  \sum_{\substack{\sigma \in \widehat{\mathfrak S}_{n+1}\\ \sigma(n)>\sigma(n+1) \\ \sigma(1) > \sigma(2) > \dots > \sigma(k) =1}} t^{\asc(\sigma)} (1+t)^{n-1-2\asc(\sigma)}.\end{equation}
 \end{example}

\subsection{M\"obius polynomials of chordal graphs} \label{subsection:chordal_graphs}
In this subsection we present results for  M\"obius polynomials that are analogous to the results for $h$-polynomials reviewed in Section~\ref{subsection:associahedra_gamma}.  

By {\em binary tree} we mean a planar rooted tree in which every internal node has a left child and a
right child.     A {\it leaf-labeled  binary tree}  is a  binary tree whose leaves are labeled with distinct labels.  
If $A$ is the set of leaf labels of the binary tree $T$ we say that $T$ is a binary tree on leaf set $A$.
Let $\BT_n$ denote the set of  binary trees on leaf set $[n]$.

For each node $x$ of $T \in \BT_n$,  let 
\begin{itemize} 
\item $A_x $ be the subset of $[n]$ consisting of the leaf labels of the subtree rooted at $x$
\item  $L(x)$ be the left child of $x$ 
\item $R(x)$ be the right child of $x$. 
\end{itemize}
We say that $T \in \BT_n$ is {\em normalized} if 
$$\min A_{L(x)}  < \min A_{R(x)}  $$ for all internal nodes $x$ of $T$. Let $$\mathcal N_n = \{T \in \BT_n : T \mbox{ is normalized}\}.$$ 
We say that an internal node $x$ of  $T \in \mathcal N_n$ is a {\it Lyndon node} if its left child is a leaf or
\begin{equation} \label{eq:lynnode} \min A_{R(L(x))}> \min A_{R(x)}.\end{equation}
We refer to these nodes as Lyndon nodes because the classical Lyndon trees are the normalized binary trees in which all internal nodes are Lyndon.
Now let
$$\widehat{\mathcal N_n} = \{ T \in  \mathcal N_n: \forall x \in \internal(T), \mbox{ $x$ is not Lyndon $\implies L(x)$ is Lyndon} \},$$
where $\internal(T)$ is the set of internal nodes of  $T$.  See Figure \ref{figure:non_double_lyndon} for an example of a tree in $\widehat{\mathcal N_n}$ and a normalized tree not in $\widehat{\mathcal N_n}$.

\begin{figure}
    \centering
    \begin{tikzpicture}[scale=1.1]

\begin{scope}

\tikzstyle{every node}=[fill,circle, draw,inner sep=2pt, scale=1.1, minimum width=4pt,scale=1.1]

\node (v2) at (-2,-0.5) {};

\node  (v8) at (-0.5,-0.5) {};
\node  (v9) at (0,0) {};
\node (v3) at (-0.75,0.75) {};
\node [pin={[red, inner sep =0.5pt, pin distance = 8pt, pin edge={red, thick, <-}]170:\footnotesize *}
] (v4) at (-0.25,1.25) {};

\tikzstyle{every node}=[inner sep=0pt, scale=1.1, minimum width=4pt,scale=1.1]

\node (v1) at (-2.5,-1) {$1$};
\node (v6) at (-1.5,-1) {$3$};
\node (v7) at (-1,-1) {$2$};
\node (v10) at (0,-1) {$6$};
\node (v11) at (0.5,-0.5) {$4$};
\node (v5) at (0.25,0.75) {5};
\tikzstyle{every path}=[ thick]

\draw  (v1) edge (v2);
\draw  (v2) edge (v3);
\draw  (v3) edge (v4);
\draw  (v4) edge (v5);
\draw  (v2) edge (v6);
\draw  (v7) edge (v8);
\draw  (v8) edge (v9);
\draw  (v8) edge (v10);
\draw  (v9) edge (v11);
\draw  (v3) edge (v9);

\node at (-1,-1.5) {$T \in \widehat{\mathcal N_n}$};
\end{scope}

\begin{scope}[shift={(5,0)}]

\tikzstyle{every node}=[fill,circle, draw,inner sep=2pt, scale=1.1, minimum width=4pt,scale=1.1]

\node (v2) at (-2,-0.5) {};

\node  (v8) at (-0.5,-0.5) {};
\node  (v9) at (0,0) {};
\node  [pin={[red, inner sep =0.5pt, pin distance = 8pt, pin edge={red, thick, <-}]170:\footnotesize *}
] (v3) at (-0.75,0.75) {};
\node [pin={[red, inner sep =0.5pt, pin distance = 8pt, pin edge={red, thick, <-}]170:\footnotesize *}
] (v4) at (-0.25,1.25) {};

\tikzstyle{every node}=[inner sep=0pt, scale=1.1, minimum width=4pt,scale=1.1]

\node (v1) at (-2.5,-1) {$1$};
\node (v6) at (-1.5,-1) {$2$};
\node (v7) at (-1,-1) {$3$};
\node (v10) at (0,-1) {$6$};
\node (v11) at (0.5,-0.5) {$4$};
\node (v5) at (0.25,0.75) {5};
\tikzstyle{every path}=[ thick]

\draw  (v1) edge (v2);
\draw  (v2) edge (v3);
\draw  (v3) edge (v4);
\draw  (v4) edge (v5);
\draw  (v2) edge (v6);
\draw  (v7) edge (v8);
\draw  (v8) edge (v9);
\draw  (v8) edge (v10);
\draw  (v9) edge (v11);
\draw  (v3) edge (v9);

\node at (-1,-1.5) {$T \in \mathcal N_n \setminus \widehat{\mathcal N_n}$};
\end{scope}

\end{tikzpicture}
    \caption{Example of a normalized tree that is in $\widehat{\mathcal N_n}$ and one that is not. The red {\color{red} $*$} indicate nodes that are not Lyndon.}
    \label{figure:non_double_lyndon}
\end{figure}

Let $G$ be a  graph on vertex set $[n]$. Define
$$\mathcal{N}_G := \{T \in \mathcal{N}_n : G|_{A_x} \mbox{ is connected } \forall  x \in \internal(T) \}$$
and
$$ \widehat{\mathcal N_G} := \mathcal{N}_G \cap \widehat{\mathcal N_n}.$$

The proof of the following theorem is deferred to Section~\ref{subsection:chordal}, where the more general Theorem~\ref{theorem:e_positive} is proved.
 
\begin{theorem} \label{theorem:gamma_positive} Let $G$ be a connected perfectly labeled chordal graph on vertex set $[n]$. Then
$$(-1)^{n-1}\mu_G(t) =  \sum_{T \in \widehat{\mathcal N_G} } t^{m(T)} (1+t)^{n-1 - 2m(T)},$$
where $m(T)$ is the number of nonLyndon internal nodes of $T$.
\end{theorem}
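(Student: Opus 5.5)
We prove the theorem by lexicographic shellability, following the approach of \cite{GonzalezDLeonWachs2016} for $K_n$. First adjoin a maximum element $\hat 1$ to $\mathcal W\Pi_G$. Then define an edge labeling as follows. A cover $\bpi \lessdot \bpi'$ in $\mathcal W\Pi_G$ merges two blocks $A,B$ of $\bpi$ with $\min A<\min B$. Since $G$ is connected and the merged block induces a connected subgraph, some edge of $G$ joins $A$ and $B$. Label the cover by the pair $(\min A,\min B)$, decorated with a color $u\in\{0,1\}$ equal to the weight increment $v-(w_A+w_B)$. Edges from $V^j$ to $\hat 1$ receive a special label, exactly as in the $K_n$ case. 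The labels are ordered by a total order on $\{(a,b)^u\}$ that interleaves the two colors, mirroring the order used for $\mathcal W\Pi_n$.

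The key steps are the following.
\begin{enumerate}
\item Show that each closed interval $[\bpi,\bpi']$ has a unique increasing maximal chain, and that this chain is lexicographically first.
\item Deduce from Bj\"orner--Wachs theory that $\mu_{\mathcal W\Pi_G}(\hat 0,V^j)$ equals $(-1)^{n-1}$ times the number of decreasing maximal chains of $[\hat 0,V^j]\cup\{\hat 1\}$ in which the total weight increments sum to $j$.
\item Biject these decreasing chains with trees in $\mathcal N_G$. Reading the merges from the top down builds a normalized binary tree whose internal nodes record the merges. The connectivity of $G|_{A_x}$ for each internal node $x$ is exactly the bond condition. The weight increment at a node corresponds to whether that node is Lyndon, up to the usual twist.
\end{enumerate}
The result of these steps is the formula $(-1)^{n-1}\mu_G(t)=\sum_{T\in \mathcal N_G} t^{\mathrm{stat}(T)}$, where the statistic is built from Lyndon and non-Lyndon nodes together with colors.

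The perfect elimination order enters in step (1). Suppose a block is formed by merging pieces whose minima are $a<b<c$, with $G|_{\text{block}}$ connected. We must show that the lexicographically smallest way to merge is always available in $\Pi_G$. The point is that merging the components containing the two smallest minima keeps the partition inside the bond lattice. Chordality with the PEO guarantees that when the vertex $c$ is adjacent to components with smaller minima, those neighbors form a clique, so the intermediate merges remain connected. Without this the labeling would fail to be EL; for example, cycles give nonunique increasing chains. I expect this verification to be the main obstacle, and I would handle it by induction on the number of blocks, following the proof for $K_n$ and checking at each step that connectivity is preserved using the PEO.

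Finally, pass from $\mathcal N_G$ to the $\gamma$-expansion by a Foata--Strehl-type group action on $\mathcal N_G$. The action consists of involutions at Lyndon nodes whose left child is also Lyndon. Each involution toggles which of two adjacent merges carries the color $1$, in the same way as the valley-hopping action used in \cite{GonzalezDLeonWachs2016,LinussonShareshianWachs2012}. For this to work we must check that the involutions preserve the connectivity conditions defining $\mathcal N_G$, and again the PEO is what is needed. Each orbit then contains a unique representative in $\widehat{\mathcal N_G}$, namely a tree with no two consecutive Lyndon nodes along a left spine in the relevant sense. The contribution of an orbit is $t^{m(T)}(1+t)^{n-1-2m(T)}$, because each non-Lyndon node pairs with its forced Lyndon left child to contribute a factor $t$, and each remaining free Lyndon node contributes a factor $1+t$. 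Summing over orbits gives the theorem.
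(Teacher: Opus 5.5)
\textbf{Gap 1: what the chains correspond to, and the $\gamma$-step.}
Your step 3 misidentifies what the ascent-free chains correspond to, and this breaks the final $\gamma$-step. The chains are not in bijection with uncolored trees in $\mathcal N_G$. Nor is the weight increment of a merge determined by whether the corresponding node is Lyndon.

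\begin{itemize}
\item For $G=K_3$ the three maximal intervals contribute $2+5+2=9$ ascent-free chains, while $|\mathcal N_3|=3$. So any sum $\sum_{T\in\mathcal N_G}t^{\mathrm{stat}(T)}$ is already wrong at $t=1$.
\item In $[\hat 0,123^1]$, the chains through $1|23^0$ and $1|23^1$ come from the same tree $1\wedge(2\wedge 3)$ with different colorings.
\end{itemize}

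The correct statement is Theorem~\ref{theorem:ascent_free_G} restricted to two colors. The ascent-free chains are the chains $c(\bT,\tau_T)$ for Lyndon-colored trees $\bT=(T,\clr)$. Here $T\in\mathcal N_G$, and $\clr:\internal(T)\to\{1,2\}$ is independent data, subject only to $\clr(x)<\clr(L(x))$ for every non-Lyndon node $x$.

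Once you have this, no Foata--Strehl action is needed. The action you describe also cannot work, for two reasons.
\begin{itemize}
\item Trees in $\mathcal N_G\setminus\widehat{\mathcal N_G}$ must contribute zero; they do not sit in orbits of trees from $\widehat{\mathcal N_G}$.
\item $\widehat{\mathcal N_G}$ forbids two consecutive \emph{non}-Lyndon nodes along a left path, not two Lyndon ones as you wrote.
\end{itemize}

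The actual mechanism is a factorization. For fixed $T$, the admissible colorings are exactly those that strictly increase along each block of the Lyndon set partition. These blocks are maximal paths $y_1,\dots,y_k$ with $L(y_i)=y_{i+1}$ and each $y_i$ (for $i<k$) non-Lyndon. Weighting color $2$ by $t$:
\begin{itemize}
\item a singleton block contributes $1+t$;
\item a block of size $2$ contributes $t$;
\item a block of size at least $3$ contributes $0$.
\end{itemize}
So the colorings of $T$ sum to $t^{m(T)}(1+t)^{n-1-2m(T)}$ if $T\in\widehat{\mathcal N_G}$, and to $0$ otherwise. This is exactly the specialization $x_1=1,\ x_2=t$ of $\sum_{T\in\mathcal N_G}e_{\lambda(T)}(\xx)$ used in the paper.

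\textbf{Gap 2: the label order.}
Step 1 also needs correcting, because no total order on labels $(\min A,\min B)^u$ gives an EL-labeling. Look at $[\hat 0,123^1]$ in $\mathcal W\Pi_{K_3}$:
\begin{itemize}
\item the chains through $12^1|3$ and $13^0|2$ have label words $(1,2)^1(1,3)^0$ and $(1,3)^0(1,2)^1$;
\item the chains through $12^0|3$ and $13^1|2$ have label words $(1,2)^0(1,3)^1$ and $(1,3)^1(1,2)^0$.
\end{itemize}
Any total order makes one chain of each pair increasing, giving two increasing chains.

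You must instead use the partial order $\Lambda_{n,2}$: product order within each $\Gamma_{a,2}$, ordinal sum over $a$. Step 2 must then count ascent-free chains, not decreasing ones. In this interval $5$ chains are ascent-free but only $3$ are decreasing.

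With that order, the PEO is needed in only one place. You must show that the increasing chain of $[\balpha,\bbeta]$ in $\widehat{\mathcal W\Pi_n}$ lies in $\widehat{\mathcal W\Pi_G}$; the restricted labeling then inherits the EL property, with no new induction. The underlying chain of that increasing chain is the fundamental chain. Showing it lies in $\Pi_G$ is Lemma~\ref{lemma:connectedincreasing}. That lemma follows from the existence of an increasing path from the smallest vertex to every vertex in a connected perfectly labeled chordal graph. Your clique heuristic points in that direction, but this is the statement you actually need to prove.
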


\begin{example}\label{example:gamma_trees} \label{example:gamma_positive_theorem} Let $G$ be $P_4$ with the additional edge $\{1,3\}$. Since $G$ is a perfectly labeled chordal graph, we can apply Theorem~\ref{theorem:gamma_positive}. In Figure~\ref{figure:example_modified_P4}, the trees of $\widehat{\mathcal N_G}$ are listed and their nonLyndon nodes are indicated.  By Theorem~\ref{theorem:gamma_positive}, we have that 
$$- \mu_G(t) = 2(1+t)^3 + 5t(1+t).$$  By eliminating the trees that are not in $\mathcal N_{P_4}$, we see that
$$- \mu_{P_4}(t) = (1+t)^3 + 3t(1+t) = N_4(t).$$
 \end{example} 

\begin{figure}
    \centering
    \begin{tikzpicture}[scale=1]

\tikzstyle{internal_nodes} = [fill,circle, draw,inner sep=2pt, scale=1.1, minimum width=4pt,scale=1]
\tikzstyle{leaves} = [inner sep=0pt, scale=1.1, minimum width=4pt,scale=1]
\tikzstyle{path_styles} = [thick]

\begin{scope}

\tikzstyle{every node} = [internal_nodes]

\node (i1) at (-2,-0.5) {};
\node (i2) at (-1.5,0) {};
\node [pin={[red, inner sep =0.5pt, pin distance = 8pt, pin edge={red, thick, <-}]170:\footnotesize *}
] (i3) at (-1,0.5) {};

\tikzstyle{every node} = [leaves]
\node (v1) at (-2.5,-1) {$1$};
\node (v2) at (-1,-0.5) {$2$};
\node (v3) at (-1.5,-1) {$3$};
\node (v4) at (-0.5,0) {$4$};

\tikzstyle{every path}=[path_styles]

\draw  (v1) edge (i1);
\draw  (v3) edge (i1);
\draw  (v2) edge (i2);
\draw  (v4) edge (i3);
\draw  (i1) edge (i2);
\draw  (i2) edge (i3);
\end{scope}

\begin{scope}[shift={(3.5,0)}]

\tikzstyle{every node} = [internal_nodes]

\node (i1) at (-2,-0.5) {};
\node [pin={[red, inner sep =0.5pt, pin distance = 8pt, pin edge={red, thick, <-}]170:\footnotesize *}
] (i2) at (-1.5,0) {};
\node (i3) at (-1,0.5) {};

\tikzstyle{every node} = [leaves]
\node (v1) at (-2.5,-1) {$1$};
\node (v4) at (-1,-0.5) {$4$};
\node (v3) at (-1.5,-1) {$3$};
\node (v2) at (-0.5,0) {$2$};
\tikzstyle{every path}=[path_styles]

\draw  (v1) edge (i1);
\draw  (v3) edge (i1);
\draw  (v4) edge (i2);
\draw  (v2) edge (i3);
\draw  (i1) edge (i2);
\draw  (i2) edge (i3);
\end{scope}

\begin{scope}[shift={(7,0)}]

\tikzstyle{every node} = [internal_nodes]

\node (i1) at (-2,0) {};
\node (i2) at (-1.5,-0.5) {};
\node (i3) at (-1.5,0.5) {};

\tikzstyle{every node} = [leaves]
\node (v1) at (-2.5,-0.5) {$1$};
\node (v4) at (-1,-1) {$4$};
\node (v3) at (-2,-1) {$3$};
\node (v2) at (-1,0) {$2$};
\tikzstyle{every path}=[path_styles]

\draw  (v1) edge (i1);
\draw  (v3) edge (i2);
\draw  (v4) edge (i2);
\draw  (v2) edge (i3);
\draw  (i1) edge (i2);
\draw  (i1) edge (i3);
\end{scope}

\begin{scope}[shift={(-1,2.5)}]

\tikzstyle{every node} = [internal_nodes]

\node (i1) at (-2,0) {};
\node (i2) at (-1.5,-0.5) {};
\node[pin={[red, inner sep =0.5pt, pin distance = 8pt, pin edge={red, thick, <-}]170:\footnotesize *}
]  (i3) at (-1.5,0.5) {};

\tikzstyle{every node} = [leaves]
\node (v1) at (-2.5,-0.5) {$1$};
\node (v3) at (-1,-1) {$3$};
\node (v2) at (-2,-1) {$2$};
\node (v4) at (-1,0) {$4$};
\tikzstyle{every path}=[path_styles]

\draw  (v1) edge (i1);
\draw  (v3) edge (i2);
\draw  (v2) edge (i2);
\draw  (v4) edge (i3);
\draw  (i1) edge (i2);
\draw  (i1) edge (i3);
\end{scope}

\begin{scope}[shift={(2,2.5)}]

\tikzstyle{every node} = [internal_nodes]

\node (i1) at (-2.5,-0.5) {};
\node[pin={[red, inner sep =0.5pt, pin distance = 8pt, pin edge={red, thick, <-}]170:\footnotesize *}
] (i2) at (-1.75,0.25) {};
\node (i3) at (-1,-0.5) {};

\tikzstyle{every node} = [leaves]
\node (v1) at (-3,-1) {$1$};
\node (v3) at (-1.5,-1) {$3$};
\node (v2) at (-2,-1) {$2$};
\node (v4) at (-0.5,-1) {$4$};
\tikzstyle{every path}=[path_styles]

\draw  (v1) edge (i1);
\draw  (v3) edge (i3);
\draw  (v2) edge (i1);
\draw  (v4) edge (i3);
\draw  (i1) edge (i2);
\draw  (i2) edge (i3);
\end{scope}

\begin{scope}[shift={(5,2.5)}]

\tikzstyle{every node} = [internal_nodes]

\node (i1) at (-2,0.5) {};
\node[pin={[red, inner sep =0.5pt, pin distance = 8pt, pin edge={red, thick, <-}]170:\footnotesize *}
] (i2) at (-1.5,0) {};
\node (i3) at (-2,-0.5) {};

\tikzstyle{every node} = [leaves]
\node (v1) at (-2.5,0) {$1$};
\node (v3) at (-1.5,-1) {$3$};
\node (v2) at (-2.5,-1) {$2$};
\node (v4) at (-1,-0.5) {$4$};
\tikzstyle{every path}=[path_styles]

\draw  (v1) edge (i1);
\draw  (v3) edge (i3);
\draw  (v2) edge (i3);
\draw  (v4) edge (i2);
\draw  (i1) edge (i2);
\draw  (i2) edge (i3);
\end{scope}

\begin{scope}[shift={(7.5,2.5)}]

\tikzstyle{every node} = [internal_nodes]

\node (i1) at (-2,0.5) {};
\node (i2) at (-1.5,0) {};
\node (i3) at (-1,-0.5) {};

\tikzstyle{every node} = [leaves]
\node (v1) at (-2.5,0) {$1$};
\node (v3) at (-1.5,-1) {$3$};
\node (v2) at (-2,-0.5) {$2$};
\node (v4) at (-0.5,-1) {$4$};
\tikzstyle{every path}=[path_styles]

\draw  (v1) edge (i1);
\draw  (v3) edge (i3);
\draw  (v2) edge (i2);
\draw  (v4) edge (i2);
\draw  (i1) edge (i2);
\draw  (i2) edge (i3);
\end{scope}

\end{tikzpicture}
    \caption{Trees in  $\widehat{\mathcal N_G}$ for $G$ in Example \ref{example:gamma_trees}.}
    \label{figure:example_modified_P4}
\end{figure}

\begin{corollary} \label{corollary:gamma_positive} Let $G$ be a  chordal graph with $n$ vertices and $k$ connected components. 
Then $(-1)^{n-k} \mu_G(t)$ is $\gamma$-positive.

\end{corollary}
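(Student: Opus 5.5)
The plan is to reduce to the connected case and then read $\gamma$-positivity off Theorem~\ref{theorem:gamma_positive}.

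\emph{Reduction to connected graphs.} Let $G_1,\dots,G_k$ be the connected components of $G$, with $n_i$ vertices each, so that $\sum_i (n_i-1) = n-k$. Each $G_i$ is an induced subgraph of $G$. An induced cycle of $G_i$ is therefore an induced cycle of $G$, so every $G_i$ is chordal. By Proposition~\ref{proposition:product_components},
$$(-1)^{n-k}\mu_G(t)=\prod_{i=1}^k (-1)^{n_i-1}\mu_{G_i}(t).$$
The product of $\gamma$-positive polynomials is $\gamma$-positive. To check this, take $p(t)=\sum_j \gamma_j t^j(1+t)^{d-2j}$ and $q(t)=\sum_l \gamma'_l t^l(1+t)^{e-2l}$, both with nonnegative coefficients. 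Then
$$pq=\sum_{m}\Bigl(\sum_{j+l=m}\gamma_j\gamma'_l\Bigr)t^m(1+t)^{d+e-2m},$$
whose coefficients are again nonnegative. It therefore suffices to treat a connected chordal graph.

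\emph{Connected case.} A connected chordal graph $G$ admits a perfect elimination order. Relabeling the vertices by $[n]$ according to this order gives an isomorphic graph $G'$ that is a connected perfectly labeled chordal graph on $[n]$.

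The Möbius polynomial is a graph isomorphism invariant. A relabeling bijection induces a poset isomorphism $\mathcal W\Pi_G\cong\mathcal W\Pi_{G'}$ that preserves weights and fixes $\hat 0$. Hence $\mu_G(t)=\mu_{G'}(t)$.

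Theorem~\ref{theorem:gamma_positive} then gives
$$(-1)^{n-1}\mu_{G'}(t)=\sum_{T\in\widehat{\mathcal N_{G'}}}t^{m(T)}(1+t)^{n-1-2m(T)}.$$
Grouping terms by $j=m(T)$, this is $\sum_j \gamma_j t^j(1+t)^{n-1-2j}$, where $\gamma_j$ is the number of trees $T\in\widehat{\mathcal N_{G'}}$ with $m(T)=j$. These are nonnegative integers.

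\emph{Remaining check: the degree.} The $\gamma$-expansion must be centered at $d=n-1$, the degree of $\mu_G(t)$ from Proposition~\ref{corollary:palindromic}. This requires $n-1-2m(T)\ge 0$ for every $T\in\widehat{\mathcal N_{G'}}$.

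The defining condition of $\widehat{\mathcal N_n}$ forces every nonLyndon node $x$ to have Lyndon left child $L(x)$. The map $x\mapsto L(x)$ is therefore an injection from nonLyndon nodes into Lyndon internal nodes; it is injective because each node has a unique parent. The root of a normalized tree contains the minimal leaf in its left subtree, and the bottom-left internal node is Lyndon because its left child is a leaf.

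So, out of the $n-1$ internal nodes, at most half are nonLyndon, which gives $2m(T)\le n-1$.

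Beyond this, the argument uses only Theorem~\ref{theorem:gamma_positive} as stated. The substantive work, the EL-labeling of the weighted bond poset of a chordal graph, lives in the deferred proof of that theorem and is not needed here.
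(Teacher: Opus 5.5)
Your proposal is correct and matches the paper's proof: reduce to connected components via Proposition~\ref{proposition:product_components}, use that products of $\gamma$-positive polynomials are $\gamma$-positive, and read the nonnegative $\gamma$-coefficients off Theorem~\ref{theorem:gamma_positive}. Your extra details are all sound and make explicit what the paper leaves implicit: components of a chordal graph are chordal, you may pass to an isomorphic perfectly labeled copy, and the injection $x\mapsto L(x)$ gives $2m(T)\le n-1$.
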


\begin{proof}This follows from Theorem~\ref{theorem:gamma_positive}, Proposition~\ref{proposition:product_components}, and the fact that  products of $\gamma$-positive polynomials are $\gamma$-positive.  
\end{proof}

 \begin{corollary} \label{corollary:gamma_difference} Let $G$ and $H$ be    perfectly labeled chordal graphs  with $n$ vertices and $k$ connected components.  If  $H$ is a subgraph of $G$  
then $(-1)^{n-k}(\mu_G({t}) -  \mu_H({t}))$ is $\gamma$-positive.  
\end{corollary}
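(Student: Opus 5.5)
The plan is to reduce to the connected case, where Theorem~\ref{theorem:gamma_positive} gives an explicit $\gamma$-expansion as a sum over trees, and to observe that the tree sets are nested. First, the components must match up. Since $H$ is a subgraph of $G$ on the same vertex set, every connected component of $H$ lies inside a connected component of $G$. Both graphs have exactly $k$ components, so the two component partitions of the vertex set coincide; call the common blocks $V_1,\dots,V_k$ and set $n_i=|V_i|$. For each $i$, $H|_{V_i}$ is a connected spanning subgraph of the connected graph $G|_{V_i}$.

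Next I check that the hypotheses pass to components. If the natural order on $V$ is a PEO for $G$, then its restriction to $V_i$ is a PEO for the induced subgraph $G|_{V_i}$. This holds because the earlier neighbors of a vertex inside $V_i$ are a subset of its earlier neighbors in $G$, which form a clique. The same argument applies to $H$. Thus each $G|_{V_i}$ and $H|_{V_i}$ is a connected perfectly labeled chordal graph on $V_i\subset \ZZ_{>0}$. After the order-preserving relabeling $V_i\to[n_i]$, which preserves both the PEO property and the M\"obius polynomial, Theorem~\ref{theorem:gamma_positive} applies to each of them.

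Connected case. Let $G$ and $H$ be connected perfectly labeled chordal graphs on $[n]$ with $H\subseteq G$. If $T\in\mathcal N_H$, then $H|_{A_x}$ is connected for every internal node $x$, and hence so is $G|_{A_x}$. Therefore $\mathcal N_H\subseteq\mathcal N_G$, and intersecting with $\widehat{\mathcal N_n}$ gives $\widehat{\mathcal N_H}\subseteq\widehat{\mathcal N_G}$. The statistic $m(T)$ depends only on $T$ and not on the graph. Applying Theorem~\ref{theorem:gamma_positive} to both graphs and subtracting gives
$$(-1)^{n-1}(\mu_G(t)-\mu_H(t))=\sum_{T\in\widehat{\mathcal N_G}\setminus\widehat{\mathcal N_H}} t^{m(T)}(1+t)^{n-1-2m(T)},$$
which is $\gamma$-positive with respect to the center $(n-1)/2$.

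General case. By Proposition~\ref{proposition:product_components},
$$(-1)^{n-k}\mu_G(t)=\prod_{i=1}^k p_i(t), \qquad (-1)^{n-k}\mu_H(t)=\prod_{i=1}^k q_i(t),$$
where $p_i=(-1)^{n_i-1}\mu_{G|_{V_i}}$ and $q_i=(-1)^{n_i-1}\mu_{H|_{V_i}}$. Each $q_i$ and each $p_i-q_i$ is $\gamma$-positive with respect to the same center $(n_i-1)/2$: for $q_i$ this is Theorem~\ref{theorem:gamma_positive}, and for $p_i-q_i$ it is the connected case above. Then $p_i=q_i+(p_i-q_i)$ is also $\gamma$-positive. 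Telescoping gives
$$\prod_i p_i-\prod_i q_i=\sum_{j=1}^k p_1\cdots p_{j-1}\,(p_j-q_j)\,q_{j+1}\cdots q_k .$$
Each summand is a product of polynomials that are $\gamma$-positive with respect to centers summing to $(n-k)/2$, so each summand is $\gamma$-positive with respect to that common center. A sum of such polynomials is again $\gamma$-positive.

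The only delicate point is the bookkeeping of centers. A difference such as $p_i-q_i$ may have lower actual degree or even vanish, so $\gamma$-positivity must be understood relative to the prescribed center $(n_i-1)/2$ rather than relative to the degree. The tree expansion handles this automatically, since every term has the form $t^m(1+t)^{n_i-1-2m}$.
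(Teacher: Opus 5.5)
Your proposal is correct and follows essentially the paper's argument: the connected case comes from the inclusion $\widehat{\mathcal N_H}\subseteq\widehat{\mathcal N_G}$ in Theorem~\ref{theorem:gamma_positive}, and the general case comes from Proposition~\ref{proposition:product_components} together with the telescoping identity (the paper's $f_1f_2-g_1g_2=(f_1-g_1)f_2+(f_2-g_2)g_1$, spelled out in the proof of Corollary~\ref{corollary:e_positive_difference}). Your additional bookkeeping makes explicit details the paper leaves to the reader: that the component partitions of $G$ and $H$ coincide, that the PEO restricts to each component, and that $\gamma$-positivity is tracked relative to the fixed centers $(n_i-1)/2$.
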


\begin{proof} In the connected case, this result follows immediately from  Theorem~\ref{theorem:gamma_positive}, since clearly $\widehat {\mathcal N_H} \subseteq  \widehat {\mathcal N_G} $. In the general case we also use Proposition~\ref{proposition:product_components}.  Details are given  in the proof of the more general Corollary~\ref{corollary:e_positive_difference}.
\end{proof}

\begin{corollary}\label{corollary:positivity} Let $G$  and $H$ be    perfectly labeled chordal graphs  with $n$ vertices and $k$ connected components. Then \begin{equation}\label{equation:postivity_of_mu}
     (-1)^{n-k} \mu_G(t)\succ 0.\end{equation}
 If $H$ is a subgraph of $G$ then
 \begin{equation}\label{equation:postivity_of_differences} (-1)^{n-k} (\mu_G({t}) -  \mu_H({t}))\succ 0. \end{equation} 
 \end{corollary}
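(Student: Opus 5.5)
The plan is to combine the $\gamma$-positivity results already proved (Corollaries~\ref{corollary:gamma_positive} and~\ref{corollary:gamma_difference}) with the elementary remark made at the start of Section~\ref{section:gamma_positive}. That remark says a palindromic unimodal polynomial $\sum_{j=0}^d a_jt^j$ with $a_0>0$ has all coefficients $a_0,\dots,a_d$ positive. A $\gamma$-positive polynomial of the form $\sum_j \gamma_j t^j(1+t)^{d-2j}$ is palindromic and unimodal about degree $d$. So, with $d=n-k$, it only remains to show that the constant term is strictly positive. I read \eqref{equation:postivity_of_differences} as applying when $H$ is a \emph{proper} subgraph of $G$ with the same $k$ components; for $H=G$ the difference is $0$, so that case must be excluded.

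For \eqref{equation:postivity_of_mu}, Corollary~\ref{corollary:gamma_positive} gives $\gamma$-positivity of $(-1)^{n-k}\mu_G(t)$ with $d=n-k$. By Proposition~\ref{corollary:palindromic}(2) the constant term is $(-1)^{n-k}\mu(\Pi_G)$. This is nonnegative by $\gamma$-positivity. It is nonzero because $\Pi_G$ is a product of geometric lattices, whose M\"obius invariants never vanish (this is the fact already used in the proof of Proposition~\ref{corollary:palindromic}(1)). Hence it is positive, and the remark yields $(-1)^{n-k}\mu_G(t)\succ 0$.

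For \eqref{equation:postivity_of_differences}, Corollary~\ref{corollary:gamma_difference} gives $\gamma$-positivity of $(-1)^{n-k}(\mu_G(t)-\mu_H(t))$, again with $d=n-k$, since both polynomials are palindromic of that degree. The key step, and the main obstacle, is showing that the constant term $(-1)^{n-k}(\mu(\Pi_G)-\mu(\Pi_H))$ is strictly positive. I would use Whitney's Broken Circuit Theorem, as in the proof of Proposition~\ref{proposition:necessary_forest_condition}: $(-1)^{n-k}\mu(\Pi_G)$ counts the spanning forests of $G$ (with $n-k$ edges) that contain no broken circuit, and this count does not depend on the chosen edge order.
\begin{itemize}
\item Order the edges so that every edge of $G$ not in $H$ is larger than every edge of $H$.
\item Any broken circuit of $G$ contained in $H$ comes from a cycle $C$ of $G$ all of whose edges except its smallest lie in $H$. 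If $C$ had an edge outside $H$, that edge would have to be the smallest edge of $C$, which is impossible because such edges are larger than the $H$-edges of $C$. So $C\subseteq H$, and the broken circuit is a broken circuit of $H$.
\item Consequently every NBC forest of $H$ is an NBC forest of $G$ (for $H$ we use the restricted order), and spanning forests of $H$ with $n-k$ edges are also spanning forests of $G$, since the components agree.
\end{itemize}

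To get strictness, I take an edge $e\in E(G)\setminus E(H)$. The singleton $\{e\}$ contains no broken circuit, since broken circuits have at least two edges. The broken circuit complex of a matroid is pure of dimension rank minus one, so $\{e\}$ extends to an NBC spanning forest of $G$ with $n-k$ edges. This forest contains $e$, so it is not a forest of $H$. Therefore the constant term of the difference is at least $1$, and the remark above gives $(-1)^{n-k}(\mu_G(t)-\mu_H(t))\succ 0$.
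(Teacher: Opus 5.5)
Your proposal is correct and follows the paper's route. Both arguments take $\gamma$-positivity from Corollaries~\ref{corollary:gamma_positive} and~\ref{corollary:gamma_difference} and add a strictly positive constant term: for $\mu_G$ this comes from the nonvanishing M\"obius invariant of a geometric lattice, and for the difference from Whitney's Broken Circuit Theorem. Your explicit edge ordering and your appeal to the purity of the broken circuit complex simply fill in details the paper only sketches. You are also right that \eqref{equation:postivity_of_differences} requires $E(H)\subsetneq E(G)$, which the paper's proof tacitly assumes.
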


\begin{proof} Equation (\ref{equation:postivity_of_mu}) follows from Corollary~\ref{corollary:gamma_positive} and  the  the fact that $\mu_G(0) \ne 0$, cf.  Proposition~\ref{corollary:palindromic}.

Equation (\ref{equation:postivity_of_differences}) follows from Corollary~\ref{corollary:gamma_difference} and  the  the fact that $\mu_G(0) -\mu_H(0) \ne 0$. Through a similar argument to the one used in proof of Proposition \ref{proposition:necessary_forest_condition}, one can prove this fact 
by using  Whitney's Broken Circuit Theorem  \cite{Whitney1932}. Indeed,  removing one edge that does not increase the number of connected components decreases the number of spanning forests that are free of broken circuits. 
\end{proof}

\begin{example} Let $G$ be the graph in Example~\ref{example:gamma_positive_theorem} and let $H=P_4$. We have
\begin{align*}-(\mu_G(t) -\mu_{H}(t)) &= 2(1+t)^3+5t(1+t) - ((1+t)^3 + 3t(1+t)) \\ 
&= (1+t)^3 + 2t(1+t) ,
\end{align*} 
which confirms Corollary~\ref{corollary:gamma_difference} for these graphs.
\end{example}

\begin{example}  \label{example:path} We apply Theorem~\ref{theorem:gamma_positive} to the path graphs.
A  binary tree $T$ on leaf set $[n]$ is in  $\mathcal N_{P_n}$ if and only if the leaves of $T$ are labeled from left to 
right in increasing order.  Consequently, the only Lyndon nodes of $T$ are the internal nodes whose left child is a 
leaf. Since there can be only one leaf labeling for the trees in $\mathcal N_{P_n}$, we can view these trees as unlabeled binary trees.  Thus by  Theorems~\ref{theorem:path_narayana} and~\ref{theorem:gamma_positive}, we get the following combinatorial interpretation of the Narayana polynomials
$$N_n(t) = \sum_{T } t^{m(T)} (1+t)^{n-1-2m(T)} ,$$
where the sum is taken over the unlabeled binary trees $T$ with $n$ leaves in which the left child of every left child is a leaf, and $m(T)$ is the number of internal nodes of $T$ that are left children.\end{example} 
 
 \begin{example}  \label{example:star} We apply Theorem~\ref{theorem:gamma_positive} to the star graphs $St_{n+1}$.  We will represent a binary tree $T$ as $T_L \land T_R$, where $T_L$ is the left subtree of $T$ and $T_R$ is the right subtree of $T$.
A  binary tree $T$ on leaf set $[n+1]$ is  in $ \mathcal N_{St_{n+1}}$ if and only if $T$  is of the form
 $$(\dots ((\sigma(1) \land \sigma(2) ) \land \sigma(3)) \land \cdots)\land \sigma(n+1),$$
 where $\sigma \in \mathfrak S_{n+1}$ and $\sigma(1) = 1$. (See Figure~\ref{figure:normal_star_graph}.)   Moreover, $x$ is a Lyndon node of $T$ if and only if 
 $R(x) = \sigma(2)$, or $R(x) = \sigma(i)$ where $3 \le i \le n+1$ and  $\sigma(i-1) > \sigma(i)$.
 \begin{figure}
    \centering
     \begin{tikzpicture}[thick,scale=0.6]

\tikzstyle{every node}=[draw,scale=0.5]

    \draw [circle,radius=20pt,color=black] (1,1)  node (i1){};
    \draw [circle,color=black] (2,2)  node (i2){};
    \draw [circle,color=black] (4,4)  node (i4){};
    \draw [circle,color=black] (3,3)  node (i3){};

\tikzstyle{every node}=[inner sep=1pt, minimum width=14pt,scale=0.7]

    \draw (0,0)  node (m){$\sigma(1)=1$};
    \draw (2,0)  node (l1){$\sigma(2)$};
    \draw (3,1)  node (l2){$\sigma(3)$};
    \draw (4,2)  node (l3){$\sigma(n)$};
    \draw (5,3)  node (l4){$\sigma(n+1)$};
    \draw (6,2) node (comma){\Large ,};

    \draw (m) --  (i1) ;
    \draw (i1) --  (l1) ;
    \draw (i2) --  (l2) ;
    \draw (i3) --  (l3) ;
    \draw (i4) --  (l4) ;
    \draw (i1) --  (i2) ;
    \draw [dashed, thick] (i2) --  (i3) ;
    \draw [dotted, thick] (2.6,1.6) --  (3.3,2.3) ;

    \draw (i3) --  (i4) ;

\end{tikzpicture}
    \caption{Trees in $\mathcal N_{St_{n+1}}$}
    \label{figure:normal_star_graph}
\end{figure}
 
  Let $\sigma_T $ be 
 the permutation in $\mathfrak S_{n}$ defined by letting $\sigma_T(i)= \sigma(i+1)-1$ for all $i \in[n-1]$. It follows that $T \in \widehat{ \mathcal N_{St_{n+1}}}$ if and only if $\sigma_T$ has no consecutive ascents. Thus by Theorems~\ref{theorem:star_binom_euler} and~\ref{theorem:gamma_positive},
 \begin{equation} \label{equation:tildeA_gamma} \tilde A_{n}(t) =  \sum_{\sigma \in \widehat{\mathfrak S}_n} t^{\asc(\sigma)} (1+t)^{n-2\asc(\sigma)} ,\end{equation}
 where $\widehat{\mathfrak S}_n$ is the set of permutations in $\mathfrak S_n$ with no consecutive ascents and 
 $\asc(\sigma)$ is the number of ascents of $\sigma$.  This combinatorial description of the $\gamma$-coefficients of $\tilde A_{n}(t) $  was previously  obtained in \cite{ShareshianWachs2020}, along with a $q$-analog and a symmetric function analog. 
 
 Note that  the Postnikov-Reiner-Williams formula \eqref{equation:gamma_tilde} yields
 a different combinatorial description of the $\gamma$-coefficients.
 Equating the two descriptions yields $$ |\{ \sigma \in \widehat{\mathfrak S}_n | \asc(\sigma) = j \}| = $$ $$ |\{ \sigma \in \widehat{\mathfrak S}_{n+1} |  \sigma(n)>\sigma(n+1), \sigma(i ) > \sigma(i+1)  \forall i \in [\sigma^{-1}(1)-1],\asc(\sigma) = j \}|. $$   A nice bijective proof of this was obtained by Ellzey \cite{Ellzey2014}.
 \end{example}

We observe that (\ref{equation:postivity_of_mu}) (without the chordality assumption) is a consequence of Conjecture~\ref{conjecture:h_T}, which as was noted above was proved in \cite{Avila2022, AvilaCarrilloGonzalezDleon2026}. Indeed this follows from the fact that  the $h$-polynomial of any convex simple polytope has positive coefficients. We think  it likely that  other results discussed above also hold 
without the  chordality assumption. 
Item (1) of the following conjecture has been tested by computer for connected graphs with up to $7$ vertices.
\begin{conjecture}Let $G$ be a   graph with $n$ vertices and $k$ connected components.  Then 
\begin{enumerate} \item 
$(-1)^{n-k} \mu_G(t) $  is $\gamma$-positive. 
\item If $H$ is a subgraph of $G$ with $n$ vertices and $k$ connected components  then 
$$(-1)^{n-k}(\mu_G({t}) -   \mu_H({t}))$$  is $\gamma$-positive.
\end{enumerate}
 \end{conjecture}

It is well known that all real-rooted palindromic polynomials with nonnegative real coefficients are $\gamma$-positive; see \cite{Branden2015}.  Since by Proposition~\ref{corollary:palindromic}, $\mu_G(t)$ is palindromic for all $G$, we conjecture the following.

\begin{conjecture}Let $G$ be a   graph with $n$ vertices and $k$ connected components.   Then
\begin{enumerate} \item 
$(-1)^{n-k} \mu_G(t) $  
is real-rooted. 
\item If $H$ is a subgraph of $G$ with $n$ vertices and $k$ connected components  then 
$$(-1)^{n-k}(\mu_G({t}) -   \mu_H({t}))$$  
is real-rooted.
\end{enumerate}
 \end{conjecture}

We have verified (1) by computer for all connected graphs with up to $7$ vertices.   It follows from (\ref{equation:complete_formula})  that (1) holds for the complete graph.  It is known that the Narayana polynomials are real-rooted (see \cite[Exercise 4.7]{Petersen2015}) and  more recently, Haglund and Zang \cite{HaglundZhang2019} established real-rootedness, for the binomial-Eulerian polynomials. Avila, Carrillo, and the first author prove (1) in the case of the cycle graph in \cite{Avila2022,AvilaCarrilloGonzalezDleon2026}. Hence by Theorems~\ref{theorem:path_narayana} and~\ref{theorem:star_binom_euler}, and Proposition~\ref{proposition:product_components}, item (1) is valid for all graphs whose connected components are  complete graphs, path graphs,   star graphs, or cycle graphs.

 Next we consider the polynomial $(-1)^{n-k}\mu_G(t) - h_{\mathcal P_G}(t)$ of  Conjecture~\ref{conjecture:h_T}.  (As noted above this conjecture has now been proved in \cite{Avila2022,AvilaCarrilloGonzalezDleon2026}.)   Palindromicity of this polynomial follows from the fact that $\mu_G(t)$ and $h_{\mathcal P_G}(t)$ are both palindromic with the same center of symmetry. Note that this polynomial fails to be $\gamma$-positive already for $G=K_3$. Indeed, $\mu_{K_3}(t) - h_{\mathcal P_{K_3}}(t)=1+t+t^2$.  However, note that the weaker unimodality property  still  holds for $K_3$. In fact, as we see in the next result, it holds for all complete graphs.

 \begin{theorem} For all $n \ge 1$, the polynomial
$(-1)^{n-1}\mu_{K_n}(t) - h_{\mathcal P_{K_n}}(t)$ is palindromic and unimodal. Moreover, $(-1)^{n-1}\mu_{K_n}(t) - h_{\mathcal P_{K_n}}(t)\succ 0$.
     \end{theorem}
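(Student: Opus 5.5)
The plan is to compare the explicit formula \eqref{equation:complete_formula} for $(-1)^{n-1}\mu_{K_n}(t)$ with $h_{\mathcal P_{K_n}}(t)=A_n(t)$ from \eqref{equation:h_complete}. Write
$$P_n(t):=(-1)^{n-1}\mu_{K_n}(t)=\prod_{i=1}^{n-1}((n-i)+it)=\sum_{T\in\mathcal T_n}t^{\des(T)},\qquad D_n(t):=P_n(t)-A_n(t),$$
using \eqref{equation:complete_tree} for the tree form of $P_n$. For $n\le 2$ we have $P_n=A_n$, so $D_n=0$. The strict positivity assertion is therefore to be read for $n\ge 3$, and I would say this explicitly.

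\emph{Palindromicity.} Both $P_n(t)$ and $A_n(t)$ have degree $n-1$. $P_n$ is palindromic by Proposition~\ref{corollary:palindromic}(3); alternatively, the factors $(n-i)+it$ and $i+(n-i)t$ are swapped under $t\mapsto 1/t$, which gives the same conclusion. $A_n$ is palindromic via the complement map $\sigma(i)\mapsto n+1-\sigma(i)$. Hence $D_n$ is palindromic with center $(n-1)/2$.

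\emph{Nonnegativity and positivity.} Map $\sigma\in\sym_n$ to the path tree $T_\sigma\in\mathcal T_n$ rooted at $\sigma(1)$, in which $\sigma(i)$ is the parent of $\sigma(i+1)$. This map is injective, and $\des(T_\sigma)=\des(\sigma)$. Therefore
$$D_n(t)=\sum_{T\in\mathcal T_n,\ T\text{ not a path}}t^{\des(T)}.$$
This gives $D_n\succeq 0$ at once. For $D_n\succ 0$ with $n\ge 3$, I would show that for every $j\in\{0,\dots,n-1\}$ some non-path rooted tree has exactly $j$ descents. One construction: take the root $r$ with two children, and hang below them decreasing or increasing chains, choosing $r$ and the chains so that the number of parent-greater-than-child pairs is $j$. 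By palindromicity it is enough to treat $j\le (n-1)/2$.

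\emph{Unimodality.} This is the main obstacle. Since $D_n$ is palindromic, it suffices to show that for $1\le j\le (n-1)/2$ the coefficients of $D_n$ satisfy $[t^j]D_n\ge[t^{j-1}]D_n$, that is,
$$[t^j]P_n-[t^{j-1}]P_n\ \ge\ [t^j]A_n-[t^{j-1}]A_n .$$
Gamma-positivity of $D_n$ cannot be used, because it already fails for $n=3$, where $D_3=1+t+t^2=(1+t)^2-t$. My first attempt would be combinatorial. Using the Foata--Strehl valley-hopping action on $\sym_n$, the increments $[t^j]A_n-[t^{j-1}]A_n$ count orbit-minimal data. I would then build an injection from these orbit data into the analogous increment data for trees, via the Lyndon-node gamma description of Theorem~\ref{theorem:gamma_positive} for $G=K_n$. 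Concretely, I would aim to compare the $\gamma$-vectors and show $\sum_{i\le j}\gamma_i^{P}\binom{d-2i}{j-i}-\binom{d-2i}{j-i-1}$ dominates the analogous sum for $A_n$, where $d=n-1$.

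If that comparison is messy, the fallback is analytic. All roots of $P_n$ are real and negative, and $A_n$ is real-rooted, so both coefficient sequences are log-concave, and the increments can be bounded directly. I would prove the displayed increment inequality by induction on $n$, using the recurrence $A_n=(1+(n-1)t)A_{n-1}+t(1-t)A_{n-1}'$. For $P_n$ I would use the explicit product, or the generating function \eqref{equation:K_n_generating}, to get a comparable recurrence. The aim is to show that the increments of $P_n$ grow at least as fast as those of $A_n$. The dominant term here is $[t^0]P_n=(n-1)!$ against $[t^0]A_n=1$, which should make room for the inequality.
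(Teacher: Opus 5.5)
Your palindromicity argument and your reduction $D_n(t)=\sum_{T\in\mathcal T_n,\ T\text{ not a path}}t^{\des(T)}$ are correct, and they match the first step of the paper's proof, which also views permutations as labeled linear rooted trees. Your explicit positivity construction works too. You are also right that $D_1=D_2=0$, so the claim $D_n\succ 0$ holds only for $n\ge 3$.

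The gap is unimodality. You identify it as the crux but do not prove it.

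\textbf{Route (a).} This is only a stated aim; no injection is constructed. The $\gamma$-vectors also cannot be compared termwise, because $\gamma^{P}_j-\gamma^{A}_j$ can be negative. Already for $n=3$ one has $\gamma^{P}=(2,1)$ and $\gamma^{A}=(1,2)$. So the lower-index terms would have to compensate, and your sketch does not control how.

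\textbf{Route (b).} This does not work as stated, for three reasons.
\begin{enumerate}
\item Real-rootedness, hence log-concavity, of $P_n$ and of $A_n$ separately implies nothing about unimodality of $P_n-A_n$.
\item The recurrence for $P_n$ that your induction needs is never produced.
\item The observation $[t^0]P_n=(n-1)!\gg 1=[t^0]A_n$ is a heuristic, not a bound on the increments at every $j\le (n-1)/2$.
\end{enumerate}

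\textbf{The missing idea.} Regroup your sum by tree shape instead of comparing $P_n$ and $A_n$ as wholes. For each unlabeled nonplanar rooted tree $S$ with $n$ nodes, consider the descent polynomial $\sum_{w}t^{\des(S,w)}$ over the labelings $w$ of $S$. This polynomial is palindromic and unimodal with center $(n-1)/2$. That is the Grady--Poznanovi\'c theorem on tree descent polynomials; it is proved for planar rooted trees, and the nonplanar version differs from the planar one only by a constant factor.

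Thus $D_n$ is a sum, over the nonlinear shapes $S$, of palindromic unimodal polynomials with a common center of symmetry. Hence $D_n$ is palindromic and unimodal.

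This also gives positivity for $n\ge 3$ without your construction. Every shape has an increasing and a decreasing labeling, so each summand has positive constant term and degree $n-1$. A palindromic unimodal polynomial with positive constant term has all coefficients positive.
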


\begin{proof} 
   By (\ref{equation:complete_tree}), (\ref{equation:combinatorial_euler}) and (\ref{equation:h_complete}), we have $$(-1)^{n-k}\mu_{K_n}(t) - h_{\mathcal P_{K_n}}(t) = \sum_{T \in \mathcal {UT}_n} \, \sum_{w\in L(T)}  t^{\des(T,w)} - \sum_{\sigma \in \mathfrak S_n} t^{\des(\sigma)}, $$ where $\mathcal {UT}_n$ is the set of unlabeled nonplanar rooted trees with $n$ nodes and $L(T)$ is the set of  labelings of $T$.  Since permutations can be viewed as labeled linear nonplanar rooted trees, we have that 
   $$(-1)^{n-k}\mu_{K_n}(t) - h_{\mathcal P_{K_n}}(t) = \sum_{T \in \overline{\mathcal {UT}}_n} \, \sum_{w\in L(T)}  t^{\des(T,w)},$$
   where $\overline{\mathcal {UT}}_n$ is the set of {\it nonlinear} unlabeled nonplanar rooted trees with $n$ nodes.  Now for each $T\in {\mathcal {UT}}_n $, we claim that  $\sum_{w\in L(T)}  t^{\des(T,w)}$ is palindromic and unimodal.  Indeed, this follows from \cite[Lemma 2.2 and Theorem 2.3]{GradyPoznanovic2020}, which gives the claimed result for rooted {\it planar} trees.  Since the polynomial $\sum_{w\in L(T)}  t^{\des(T,w)}$ when $T$ is planar is a constant multiple of the polynomial when $T$ is nonplanar, we can conclude that our claim holds.  Since every tree has both an increasing and a decreasing labeling, the polynomial $\sum_{w\in L(T)}  t^{\des(T,w)}$ has strictly positive constant term and degree $n-1$ for all  $T\in {\mathcal {UT}}_n $.  The result now follows from the fact that the sum of palindromic unimodal polynomials with the same center of symmetry is palindromic and unimodal.
\end{proof}
     
 The  result for $K_n$ leads us to make the following conjecture, which is stronger than Conjecture~\ref{conjecture:h_T} and is still open.  We have checked this by computer for all graphs with up to $n=7$ vertices.  

   \begin{conjecture}  For any graph $G$  with $n$ vertices and $k$ connected components, the palindromic polynomial
 $(-1)^{n-k}\mu_G(t) - h_{\mathcal P_G}(t)$ is  unimodal, and all its coefficients are positive.
 \end{conjecture}

 The following even stronger conjecture has also been checked by computer for all graphs with up to $n=7$ vertices.
  \begin{conjecture}  For a graph $G$  with $n$ vertices and $k$ connected components, the palindromic polynomial
 $(-1)^{n-k}\mu_G(t) - h_{\mathcal P_G}(t)$ is  log-concave, in the sense of  (\ref{equation:log_concave}), and all its coefficients are positive. \end{conjecture}

\section{Multiweighted bond posets and M\"obius symmetric functions} \label{section:multiweight}

In \cite{GonzalezDLeon2016} Gonz\'alez D'Le\'on considers a generalization of the weighted partition poset based on the observation   that a weighted block $B^u$ of a weighted partition  can be written as $B^{(u,|B|-1-u)}$ thereby using weights that are  pairs of nonnegative integers rather than single nonnegative integers.  To merge two bi-weighted blocks, $B_1^{(\mu(1),\mu(2))} $ and $B_2^{(\nu(1),\nu(2))} $, now means to form either the bi-weighted block 
$(B_1 \cup B_2)^{(\mu(1) + \nu(1) +1, \mu(2) + \nu(2) )} $ or the biweighted block $(B_1 \cup B_2)^{(\mu(1) + \nu(1) , \mu(2) + \nu(2) +1)} $.    By using weights that are $r$-tuples instead of pairs, one gets the more general  $r$-weighted partition poset studied in \cite{GonzalezDLeon2016}. 
In this section we  generalize the notion of weighted bond posets  to $r$-weighted bond posets.  This leads to a symmetric function generalization of the M\"obius polynomial $\mu_G(t)$ and  symmetric function generalizations of the results for  $\mu_G(t)$ discussed in earlier sections.   

\subsection{Preliminaries}

For $r>0$, a \emph{weak $r$-composition}  of an integer $m\ge 0$ is an  $r$-tuple $\nu=(\nu(1),\mu(2),\dots,\nu(r))$ of nonnegative integers  whose sum $\sum_{i= 1}^r \nu(i)$ equals $ m$.  We  denote by $\wcomp_{m,r}$ the set of   weak $r$-compositions of $m$.  
  For $\mu = (\mu(1), \dots,\mu(r)) \in \wcomp_{m,r}$ and $\nu = (\nu(1),\dots,\nu(r)) \in \wcomp_{n,r}$, let $$\mu+\nu:=(\mu(1)+\nu(1), \dots,\mu(r)+\nu(r)) \in\wcomp_{m+n,r}.$$
  We say $\mu \ge \nu$ if $$\mu-\nu:=(\mu(1)-\nu(1), \dots,\mu(r)-\nu(r)) \in\wcomp_{m-n,r}.$$
Let $\bf 0$ denote the $r$-tuple of $0$'s and for each $i=1,\dots,r$, let ${\bf e_i}$ denote the $r$-tuple with a $1$ in the $i$th component and $0$'s elsewhere.

 By padding with $0$'s we can view  weak $r$-compositions of $m$ as  $s$-compositions of $m$  for $s >r$.
  Thus we can say
  $$\wcomp_{m,1} \subset \wcomp_{m,2} \subset \wcomp_{m,3} \subset \cdots,$$
 and define
 $$ \wcomp_{m} := \wcomp_{m,\infty}:= \bigcup_{r \ge 1} \wcomp_{m,r}.$$
We can view $\wcomp_{m}$  as the set of   infinite sequences of nonnegative integers whose sum is $m$.  We call these infinite sequences {\em weak $\infty$-compositions} of $m$ or just {\em weak compositions} of $m$.  

For $r\in \ZZ_{>0} \cup \{\infty\}$, an {\em $r$-weighted partition} of a set $S$ is defined to be a set  of the form 
$\bpi =\{B_1^{\nu_1},B_2^{\nu_2},...,B_k^{\nu_k}\}$,
where $\{B_1,B_2,...,B_k\}$ is a partition of $S$ and $\nu_i \in \wcomp_{|B_i|-1,r}$ for all $i\in [k]$. In what follows, it will be convenient to let $[\infty]$ denote $\ZZ_{>0}$.

The \emph{poset of $r$-weighted partitions} $\Pi_S^r$ is the set of $r$-weighted partitions of $S$ with order relation defined by
$$\{A_1^{\mu_1},A_2^{\mu_2},...,A_j^{\mu_j}\}\le\{B_1^{\nu_1}, B_2^{\nu_2},...,B_k^{\nu_k}\}$$ if the following
conditions hold:
\begin{itemize}
 \item $\{A_1,A_2,...,A_j\} \le \{B_1,B_2,...,B_k\}$ in $\Pi_S$
 \item if $B_m=A_{i_1}\cup A_{i_2}\cup ... \cup A_{i_l} $ then 
 $$\nu_m \ge \mu_{i_1} + \mu_{i_2} + ... + \mu_{i_l}.$$
\end{itemize}

The \ covering relation $\lessdot$ is given by
$$\{A_1^{\mu_1},A_2^{\mu_2},...,A_j^{\mu_j}\} \lessdot \{B_1^{\nu_1}, B_2^{\nu_2},...,B_k^{\nu_k}\}$$ if and only if the
following conditions hold:
\begin{itemize}
 \item $\{A_1,A_2,\dots,A_j\} \lessdot \{B_1,B_2,\dots,B_k\}$ in $\Pi_S$
 \item if $B_m=A_{h}\cup A_{i}$, where $h \ne i$, then $\nu_m-(\mu_{h} + \mu_{i})= \mathbf{e_s} $ for some $s \in [r]$.
 \item if $B_m = A_i$ then $\nu_m = \mu_i$.
 \end{itemize}

 Note that  when $r=1$, the $r$-weighted partition poset $\Pi_S^r$ is isomorphic to  $\Pi_S$ and when $r=2$, the $r$-weighted partition poset $\Pi_S^r$ is isomorphic to the weighted partition poset $\mathcal W \Pi_S$.  Indeed,  the map taking the biweighted partition  $B_1^{\nu_1}|\cdots |B_{k}^{\nu_{k}}$   to the single weighted partition $B_1^{\nu_1(1)}|\cdots |B_{k}^{\nu_{k}(1)}$ is an isomorphism from $\Pi_S^2$ to  $\mathcal W\Pi_S$.  
 
 Let $\Pi^r_n:=\Pi_{[n]}^r$.  See Figure \ref{figure:example_weighted_partition_poset_n3r3} for the Hasse diagram of $\Pi_3^3$. 

\begin{figure}
    \centering
     \resizebox{\columnwidth}{!}{
    \begin{tikzpicture}[line join=bevel,scale=0.8]

\tikzstyle{every node}=[inner sep=0pt, scale=0.6, minimum width=4pt]
 \node (n102030) at (0,0)  {$1^{(0,0,0)}| 2^{(0,0,0)}| 3^{(0,0,0)}$};

  \node (n12i30) at (-8,2) {$12^{(1,0,0)}| 3^{(0,0,0)}$};
  \node (n13i20) at (-6,2) {$13^{ (1,0,0)}| 2^{ (0,0,0)}$};
  \node (n1023i) at (-4,2)  {$1^{(0,0,0)}| 23^{(1,0,0)}$};

 \node (n12j30) at (-2,2)  {$12^{ 1}| 3^{(0,1,0)}$};
 \node (n13j20) at (0,2)  {$13^{1}| 2^{(0,1,0)}$};
  \node (n1023j) at (2,2) {$1^{(0,0,0)}| 23^{(0,1,0)}$};

\node (n12k30) at (4,2)  {$12^{ 1}| 3^{(0,0,1)}$};
 \node (n13k20) at (6,2)  {$13^{1}| 2^{(0,0,1)}$};
  \node (n1023k) at (8,2) {$1^{(0,0,0)}| 23^{(0,0,1)}$};

 \node (n123ii) at (-6,4){$123^ {(2,0,0)}$};
 \node (n123ij) at (-3,4){$123^ {(1,1,0)}$};
 \node (n123ik) at (-1,4){$123^ {(1,0,1)}$};
 \node (n123jj) at (1,4){$123^ {(0,2,0)}$};
 \node (n123jk) at (3,4){$123^ {(0,1,1)}$};
 \node (n123kk) at (6,4){$123^ {(0,0,2)}$};

 \draw (n123ii) -- (n1023i) ;
 \draw (n123ii)-- (n13i20);
  \draw (n123ii) -- (n12i30);  

\draw (n123jj) -- (n1023j) ;
 \draw (n123jj)-- (n13j20);
  \draw (n123jj) -- (n12j30);  

\draw (n123kk) -- (n1023k) ;
 \draw (n123kk)-- (n13k20);
  \draw (n123kk) -- (n12k30); 

 \draw (n123ik) -- (n1023i) ;
 \draw (n123ik)-- (n13i20);
  \draw (n123ik) -- (n12i30);  
\draw (n123ik) -- (n1023k) ;
 \draw (n123ik)-- (n13k20);
  \draw (n123ik) -- (n12k30);

 \draw (n123ij) -- (n1023i) ;
 \draw (n123ij)-- (n13i20);
  \draw (n123ij) -- (n12i30);  
\draw (n123ij) -- (n1023j) ;
 \draw (n123ij)-- (n13j20);
  \draw (n123ij) -- (n12j30);
	
 \draw (n123jk) -- (n1023j) ;
 \draw (n123jk)-- (n13j20);
  \draw (n123jk) -- (n12j30);  
\draw (n123jk) -- (n1023k) ;
 \draw (n123jk)-- (n13k20);
  \draw (n123jk) -- (n12k30);

  \draw [] (n13i20) -- (n102030);
  \draw [] (n12i30)-- (n102030);
  \draw [] (n1023i)--(n102030);
 \draw [] (n1023j)  --  (n102030);
  \draw [] (n12j30) --  (n102030);
  \draw [] (n13j20) --(n102030);
 \draw [] (n1023k)  --  (n102030);
  \draw [] (n12k30) --  (n102030);
  \draw [] (n13k20) --(n102030);
\end{tikzpicture}
    }
    \caption{The poset of $3$-weighted partitions $\Pi_3^3$}
    \label{figure:example_weighted_partition_poset_n3r3}
\end{figure}

\begin{remark}
In \cite{GonzalezDLeon2016} Gonz\'alez D'Le\'on obtains results connecting the $r$-weighted partition poset $\Pi_n^r$ to the free Lie algebra with $r$ compatible brackets.   These results generalize classical results in the $r=1$ case, and results of both authors \cite{GonzalezDLeonWachs2016} in the $r=2$ case.  
\end{remark}

 Given a graph $G=(V,E)$ and $r \in \ZZ_{> 0} \cup \{ \infty \}$, we define the {\em $r$-weighted bond poset} $\Pi_G^r$ of $G$  to be the induced subposet of $\Pi_{V}^r$ consisting of $r$-weighted partitions  $\{B_1^{\nu_1},\dots,B_k^{\nu_k}\}$ whose underlying partition $\{B_1,\dots,B_k\}$ is  in $\Pi_G$, that is, for each $i$, the induced subgraph $G|_{B_i}$ is connected.  Note that $\Pi_G^r$ is finite if and only if $r$ is finite or  $E=\emptyset$. 
 
  Clearly, for all $r$,  
  $$\Pi_{K_n}^r = \Pi_n^r.$$  Note that for all graphs $G$,
$$\Pi_G^1 \simeq \Pi_G \,\,\mbox{ and } \,\, \Pi_G^2 \simeq \mathcal W\Pi_G.$$   See Figure~\ref{figure:weighted_bond_poset_P3r3} for the Hasse diagram of $\Pi_{P_3}^3$. 

\begin{figure}
    \centering
     \resizebox{\columnwidth}{!}{
    \begin{tikzpicture}[line join=bevel,scale=1]
\begin{scope}[scale=0.7,yshift=2cm]
        \node  at (-4,0) { $P_3$};
\tikzstyle{every node}=[draw,circle,scale=0.7]
\node (n0) at (-6,1) {\Large $1$};
\node (n1) at (-4,1) {\Large $2$};
\node (n2) at (-2,1) {\Large $3$};
\draw (n0) -- (n1) -- (n2);
\end{scope}
\begin{scope}[xshift=4.5cm,scale=0.8]

\tikzstyle{every node}=[inner sep=0pt, scale=0.45, minimum width=4pt]
 \node (n102030) at (0,0)  {$1^{(0,0,0)}| 2^{(0,0,0)}| 3^{(0,0,0)}$};

  \node (n12i30) at (-5.5,2) {$12^{(1,0,0)}| 3^{(0,0,0)}$};
  
  \node (n1023i) at (-3,2) {$1^{(0,0,0)}| 23^{(1,0,0)}$};

 \node (n12j30) at (-1,2) {$12^{ (0,1,0)}| 3^{(0,0,0)}$};
 
  \node (n1023j) at (1,2) {$1^{(0,0,0)}| 23^{(0,1,0)}$};

\node (n12k30) at (3,2) {$12^{ (0,0,1)}| 3^{(0,0,0)}$};
 
  \node (n1023k) at (5,2) {$1^{(0,0,0)}| 23^{(0,0,1)}$};

 \node (n123ii) at (-5.5,4) {$123^ {(2,0,0)}$};
 \node (n123ij) at (-3,4) {$123^ {(1,1,0)}$};
 \node (n123ik) at (-1,4){$123^ {(1,0,1)}$};
 \node (n123jj) at (1,4) {$123^ {(0,2,0)}$};
 \node (n123jk) at (3,4) {$123^ {(0,1,1)}$};
 \node (n123kk) at (5,4) {$123^ {(0,0,2)}$};

 \draw (n123ii) -- (n1023i) ;

  \draw (n123ii) -- (n12i30);  

\draw (n123jj) -- (n1023j) ;

  \draw (n123jj) -- (n12j30);  

\draw (n123kk) -- (n1023k) ;

  \draw (n123kk) -- (n12k30); 

 \draw (n123ik) -- (n1023i) ;

  \draw (n123ik) -- (n12i30);  
\draw (n123ik) -- (n1023k) ;

  \draw (n123ik) -- (n12k30);

 \draw (n123ij) -- (n1023i) ;

  \draw (n123ij) -- (n12i30);  
\draw (n123ij) -- (n1023j) ;

  \draw (n123ij) -- (n12j30);
	
 \draw (n123jk) -- (n1023j) ;

  \draw (n123jk) -- (n12j30);  
\draw (n123jk) -- (n1023k) ;

  \draw (n123jk) -- (n12k30);

  \draw [] (n12i30)-- (n102030);
  \draw [] (n1023i)--(n102030);
 \draw [] (n1023j)  --  (n102030);
  \draw [] (n12j30) --  (n102030);
 \draw [] (n1023k)  --  (n102030);
  \draw [] (n12k30) --  (n102030);
\end{scope}
\end{tikzpicture}
    }
    \caption{The 3-weighted bond poset $\Pi_{P_3}^3$}
    \label{figure:weighted_bond_poset_P3r3}
\end{figure}

 Just as for the $r=2$ case discused in Section~\ref{section:definitions}, for $ \balpha \le \bbeta$ in $ \Pi^r_G$,   let $[\balpha , \bbeta]_G$ denote the closed interval $\{ \bpi \in  \Pi_G^r : \balpha \le \bpi \le \bbeta\}$.  When $G=(V,E)$ is connected, again we drop the set brackets for the maximal elements $\{V^\nu\}$ of $ \Pi_G^r$.   
 
 For each graph $G=(V,E)$ with  $k$ connected components, and $r \in \ZZ_{\ge 1} \cup \{ \infty \} $, the poset $\Pi_G^r$ is pure of length $|V|-k$.  It has minimum element $\hat 0 := v_1^{\mathbf{0}}|\cdots | v_n^{\mathbf{0}}$, where $V=\{v_1,\dots,v_n\}$, and if $G$ is connected, $\Pi_G^r$ has maximal element $\{V^{\nu}\}$ (which we write as $V^{\nu}$) for each $\nu \in \wcomp_{n-1,r}$.  
 
  Proposition~\ref{prop:product} generalizes to  $r$-weighted bond posets $\Pi_G^r$.
 
 \begin{proposition} \label{proposition:multi_isomorphism_product} Let $G$ be a graph whose connected components are  $G_1,\dots, G_k$.
Then for all $r \in \ZZ_{\ge 1} \cup \{ \infty \} $, the map $$\varphi:  \Pi^r_{G_1} \times \cdots \times  \Pi^r_{G_k} \to \Pi^r_G $$  defined by $\varphi(\bpi_1,\dots, \bpi_k)= \bpi_1 \cup  \cdots \cup \bpi_k$ is a poset isomorphism.
\end{proposition}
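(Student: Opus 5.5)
We prove this directly from the definitions, checking that $\varphi$ is a well-defined bijection that preserves and reflects the order. Let $V_1,\dots,V_k$ be the vertex sets of $G_1,\dots,G_k$.

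\emph{Well-definedness.} Take $\bpi_i\in\Pi^r_{G_i}$. The union $\bpi_1\cup\cdots\cup\bpi_k$ is a set of $r$-weighted blocks whose underlying blocks partition $V$. Each block $B$ of $\bpi_i$ induces a connected subgraph of $G_i$, and $G|_B=G_i|_B$ because $B\subseteq V_i$. Each weight still lies in $\wcomp_{|B|-1,r}$. So the union lies in $\Pi^r_G$.

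\emph{Bijectivity.} Let $\bpi\in\Pi^r_G$. Every block $B$ of $\bpi$ induces a connected subgraph of $G$, so $B$ lies inside a single $V_i$. Hence $\bpi$ splits uniquely as $\bpi=\bpi_1\cup\cdots\cup\bpi_k$, where $\bpi_i$ consists of the weighted blocks contained in $V_i$. Each $\bpi_i$ lies in $\Pi^r_{G_i}$. The map $\bpi\mapsto(\bpi_1,\dots,\bpi_k)$ is the inverse of $\varphi$.

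\emph{Order.} Suppose $\balpha=\varphi(\balpha_1,\dots,\balpha_k)$ and $\bbeta=\varphi(\bbeta_1,\dots,\bbeta_k)$. Every block of $\bbeta$ lies in some $V_i$, and the blocks of $\balpha$ refining it are exactly blocks of $\balpha_i$. The first condition defining the order on $\Pi^r_V$ is refinement of underlying partitions. Refinement holds globally exactly when it holds within each $V_i$. The second condition is the weight inequality $\nu_m\ge\mu_{i_1}+\cdots+\mu_{i_l}$. It is checked block by block, and each check involves only blocks inside a single $V_i$. So it holds for $\balpha\le\bbeta$ exactly when it holds for every pair $\balpha_i\le\bbeta_i$. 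Since $\Pi^r_G$ and the $\Pi^r_{G_i}$ are induced subposets, $\balpha\le\bbeta$ if and only if $\balpha_i\le\bbeta_i$ for all $i$. This is the product order.

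The only point requiring any care is that blocks never straddle components. This holds because bond-lattice blocks induce connected subgraphs; it is the same observation as in Proposition~\ref{prop:product}. The argument works verbatim for $r=\infty$, since no finiteness is used.
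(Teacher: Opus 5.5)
Your proof is correct and is the routine verification the paper intends. The paper gives no written argument here: it only remarks that Proposition~\ref{prop:product}, whose proof is itself left to the reader, generalizes to $\Pi^r_G$. You identify the one substantive point, that blocks of a bond partition never straddle connected components, so both the refinement condition and the weight inequalities split componentwise.
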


 The following result  is straightforward to prove.
\begin{proposition} \label{proposition:isomorphic_intervals} Let $G$ be a  graph on vertex set $V$ and let $r,s \in \ZZ_{>0} \cup \{\infty\}$.
\begin{enumerate} 
\item If $r <s$   then each maximal interval of $\Pi_G^r$ is isomorphic to a maximal interval of  $\Pi_G^s$
\item Suppose  $\alpha \in \wcomp_{n-1,r} $ and  $\beta \in \wcomp_{n-1,s} $ have the same multiset of  nonzero entries.  If $G$ is connected   then the maximal  intervals $[\hat 0,V^\alpha]_G$ and $[\hat 0,V^\beta]_G$ are isomorphic.  
\end{enumerate} \end{proposition}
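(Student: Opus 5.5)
Both parts will be proved by exhibiting explicit poset isomorphisms that act only on the weights and leave the underlying partitions alone. The key structural fact is that in any interval $[\hat 0, V^\alpha]_G$ of $\Pi_G^r$, a weighted partition $\{B_1^{\nu_1},\dots,B_k^{\nu_k}\}$ lies below $V^\alpha$ exactly when its underlying partition is in $\Pi_G$ and $\nu_1+\cdots+\nu_k \le \alpha$ componentwise. Indeed, the order relation forces $\alpha \ge \sum_i \nu_i$. Conversely, if $\sum_i\nu_i\le\alpha$, then $V^\alpha$ is obtained by merging blocks, and $\alpha-\sum\nu_i$ is a weak composition of $k-1$. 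So every coordinate $s$ with $\alpha(s)=0$ has $\nu_i(s)=0$ throughout the interval.

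For part (2), write $\mathrm{supp}(\alpha)=\{s:\alpha(s)\neq 0\}$ and similarly for $\beta$. Because the multisets of nonzero entries agree, there is a bijection $\tau:\mathrm{supp}(\alpha)\to\mathrm{supp}(\beta)$ with $\beta(\tau(s))=\alpha(s)$. Define $\phi$ on $[\hat 0,V^\alpha]_G$ by sending each block weight $\nu$ (supported in $\mathrm{supp}(\alpha)$) to the weight $\nu'$ with $\nu'(\tau(s))=\nu(s)$ and zeros elsewhere. Then:
\begin{itemize}
\item $\nu'$ is a weak $s$-composition of the same size $|B|-1$, so blocks remain valid.
\item The condition $\sum\nu_i\le\alpha$ transforms into $\sum\nu_i'\le\beta$, so $\phi$ maps into $[\hat 0,V^\beta]_G$.
\item $\phi$ commutes with sums and with the componentwise order, so it preserves the relation $\le$ from the definition of $\Pi^r$.
\item The inverse is induced by $\tau^{-1}$, giving the isomorphism.
\end{itemize}

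For part (1), take a maximal element of $\Pi^r_G$. In the connected case it is $V^\alpha$ with $\alpha\in\wcomp_{n-1,r}$, and padding with zeros gives $\alpha\in\wcomp_{n-1,s}$. The intervals below $V^\alpha$ in $\Pi^r_G$ and in $\Pi^s_G$ coincide as sets and as posets, since by the opening observation all elements below have weights supported in $[r]$. For disconnected $G$, I would either apply Proposition~\ref{proposition:multi_isomorphism_product} together with the fact that intervals of products are products of intervals, or repeat the same argument blockwise on the components.

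The only step needing care is the opening observation: that every element of the interval has weights supported where $\alpha$ is nonzero, and that the order is determined by componentwise comparison of sums. This follows directly from the definition of the order relation together with transitivity.
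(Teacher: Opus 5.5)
Your proof is correct. The paper gives no proof here; it only says the result is straightforward. Your argument is the natural way to fill that in: every weight below $V^\alpha$ is supported on $\mathrm{supp}(\alpha)$, so padding with zeros leaves the interval unchanged, and relabeling color coordinates via $\tau$ is an order isomorphism. One cosmetic fix: you use $s$ both as the number of colors and as a coordinate index (e.g.\ $\nu'(\tau(s))=\nu(s)$), so rename the index.
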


 \subsection{M\"obius symmetric functions}

We are now ready to define the   graph invariant that generalizes the M\"obius polynomial defined in Section~\ref{section:definitions}.

For any graph $G$,  the {\em M\"obius symmetric function} of $G$ in  infinitely many variables $\xx = (x_1,x_2,\dots)$ is defined by
$$M_{G}(\xx) := \sum_{\bpi \in \Max(\Pi_G^\infty)} \mu_{\Pi_G^\infty} (\hat 0, \bpi) \xx^{w(\bpi)} ,$$
where, $\xx^{\nu}= x_1^{\nu(1)} x_2^{\nu(2)}\cdots$ for $\nu \in \wcomp_{m,\infty}$, and
$w(\bpi)$ is the sum of the weights of the blocks of $\bpi$.  (Recall $\Max(P)$ is the set of maximal elements of a poset $P$.) It follows from  Proposition~\ref{proposition:isomorphic_intervals}  that $M_{G}(\xx)$ is indeed a symmetric function. Moreover, if $G$ has $n$ vertices and $k$ connected components then  $M_{G}(\xx)$ is a homogeneous symmetric function of degree $n-k$. This is in fact the highest degree homogeneous component of the symmetric function defined in 
Equation~(\ref{equation:new_chromatic_symmetric_definition}) which, as we observe in Section \ref{section:new_chromatic_symmetric_function}, is a symmetric function analog of the chromatic polynomial of $G$.

Note that if $G$ is a connected graph on vertex set $V$ then  $$M_{G}(\xx) = \sum_{\nu \in \wcomp_{|V|-1}}\mu_{\Pi_{G}^\infty}(\hat 0, V^\nu) \xx^{\nu}.$$
Moreover, it follows from Proposition~\ref{proposition:isomorphic_intervals} that by viewing partitions  of $m$ as weak compositions of $m$ (by padding with zeros), we have for connected $G$,  $$ M_G(\xx) = \sum_{\lambda \vdash |V|-1} \mu_{\Pi_{G}^{\infty}} (\hat 0, V^\lambda) m_\lambda(\xx), $$
where $m_\lambda$ is the monomial symmetric function indexed by the partition~$\lambda$.

It also follows  from Proposition~\ref{proposition:isomorphic_intervals}
 that 
 for all $r$,
$$ M_{G}(x_1,\dots, x_r, 0,0,\dots) = \sum_{\bpi \in \Max(\Pi_G^r)} \mu_{\Pi_G^r} (\hat 0, \bpi) x_1^{w(\bpi)(1)} \cdots x_r^{w(\bpi)(r)}.$$
For each $r$, define the {\em M\"obius symmetric polynomial} $M_{G}(x_1,\dots, x_r)$  in $r$ variables to be $ M_{G}(x_1,\dots, x_r, 0,0,\dots)$.

Note that $M_{G}(1)$ is the M\"obius invariant $\mu(\Pi_G)$ of the bond lattice $\Pi_G$ and $M_{G}(1,t)$ is the M\"obius polynomial $\mu_G(t)$.  From Figure~\ref{figure:example_weighted_partition_poset_n3r3},  one can see that 
$$M_{K_3}(x_1,x_2,x_3)=2 (x_1^2+x_2^2+x_3^2)+ 5(x_1x_2+x_1x_3+ x_2x_3)$$ and from
Figure~\ref{figure:weighted_bond_poset_P3r3}, one can see that 
$$M_{P_3}(x_1,x_2,x_3)=(x_1^2+x_2^2+x_3^2)+ 3(x_1x_2+x_1x_3+ x_2x_3).$$ 

The isomorphism given in Proposition~\ref{proposition:multi_isomorphism_product}  yields the following generalization of Proposition~\ref{proposition:product_components}.
\begin{proposition} \label{proposition:M_product} Let $G$ be a graph whose connected components are $G_1,\dots,G_k$.  Then 
$$M_{G}(\xx) = \prod_{i=1}^k  M_{G_i} (\xx) .$$
\end{proposition}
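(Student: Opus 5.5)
The plan is to mimic the proof of Proposition~\ref{proposition:product_components}, now using the isomorphism of Proposition~\ref{proposition:multi_isomorphism_product} with $r=\infty$. For each $i \in [k]$ let $V_i$ be the vertex set of $G_i$. First I identify the maximal elements of $\Pi_G^\infty$. Since $\varphi$ is a poset isomorphism, they are exactly the weighted partitions $\{V_1^{\nu_1},\dots,V_k^{\nu_k}\}$ with $\nu_i \in \wcomp_{|V_i|-1}$, and these correspond to the tuples $(\{V_1^{\nu_1}\},\dots,\{V_k^{\nu_k}\})$ of maximal elements of the factors. The weight of such an element is $w = \nu_1+\cdots+\nu_k$, so
$$\xx^{w(\bpi)} = \prod_{i=1}^k \xx^{\nu_i}.$$

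Next I compute the M\"obius function. Since $\varphi$ is an isomorphism carrying $(\hat 0_1,\dots,\hat 0_k)$ to $\hat 0$, we have
$$\mu_{\Pi_G^\infty}(\hat 0,\{V_1^{\nu_1},\dots,V_k^{\nu_k}\}) = \mu_{\times_i \Pi_{G_i}^\infty}\big((\hat 0_1,\dots,\hat 0_k),(\{V_1^{\nu_1}\},\dots,\{V_k^{\nu_k}\})\big).$$
The multiplicative property of the M\"obius function \cite[Proposition~3.8.2]{Stanley2012} then gives $\prod_{i=1}^k \mu_{\Pi_{G_i}^\infty}(\hat 0_i, V_i^{\nu_i})$. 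The posets here are infinite, but every interval involved is finite: below a fixed element, the weights are bounded componentwise and only finitely many coordinates can be nonzero. So the M\"obius function and the product formula apply on these finite intervals without change.

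Finally I substitute into the definition of $M_G(\xx)$. This gives a sum over independent choices of $\nu_1,\dots,\nu_k$ of $\prod_i \mu_{\Pi_{G_i}^\infty}(\hat 0_i,V_i^{\nu_i})\,\xx^{\nu_i}$, which factors as $\prod_i M_{G_i}(\xx)$. This is an identity of formal power series, and it is valid because each $M_{G_i}$ is homogeneous of bounded degree with finitely many monomials in each degree.

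I expect no real obstacle. The only point requiring care is the finiteness of intervals in $\Pi^\infty$, which is what justifies the M\"obius computations.
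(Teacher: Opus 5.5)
Your proof is correct and follows the paper's intended argument: the paper derives this proposition directly from the isomorphism $\varphi$ of Proposition~\ref{proposition:multi_isomorphism_product}, repeating the proof of Proposition~\ref{proposition:product_components} with the multiplicative property of the M\"obius function. One small correction: in infinitely many variables there are infinitely many monomials of each positive degree, so your stated reason for the final factorization is wrong; the correct reason is that each $\xx^\nu$ has only finitely many decompositions $\nu=\nu_1+\cdots+\nu_k$, so every coefficient of the product is a finite sum.
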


\subsection{Recurrence relations for M\"obius symmetric functions}

The recursive structure of the weighted partition poset  is described in the following  propositions, whose straightforward proofs are left to the reader.  Recall the definitions of $G|B$ and $G/\pi$ given in Section~\ref{section:recursive_structure}. Equation~(\ref{equation:consequence_mobius}) below is a consequence of the multiplicative property of the  M\"obius function (see \cite[Proposition~3.8.2]{Stanley2012}).

\begin{proposition} \label{proposition:multiMobius_prod} Let $G$ be a  graph and let $r \in \ZZ_{>0}\cup \{\infty\}$.  If $$\bpi = \{B_1^{\nu_1},\dots, B_k^{\nu_k}\} \in \Pi_G^r,$$  then
$$[\hat 0, \bpi]_G \cong [\hat 0, B_1^{\nu_1}]_{G|_{B_1}} \times \cdots \times  [\hat 0, B_k^{\nu_k}]_{G|_{B_k}} .$$
Consequently,
\begin{equation} \label{equation:consequence_mobius} \mu_{\Pi_G^r}(\hat 0, \bpi) = \prod_{i=1}^k \mu_{\Pi^r_{G|_{B_i}}}(\hat 0, B_i^{\nu_i}) .\end{equation}
 \end{proposition}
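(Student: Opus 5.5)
The plan is to construct the isomorphism explicitly by splitting each element of the interval into its restrictions to the blocks $B_i$.

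Let $\bpi = \{B_1^{\nu_1},\dots,B_k^{\nu_k}\}$. Take $\bsigma \in [\hat 0,\bpi]_G$. Its underlying partition $\sigma$ refines $\pi$, so every block of $\bsigma$ lies in exactly one $B_i$. Let $\bsigma_i$ be the set of weighted blocks of $\bsigma$ contained in $B_i$. Define
$$\psi(\bsigma) = (\bsigma_1,\dots,\bsigma_k).$$

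The first step is to check that $\psi$ lands in the product of intervals. Each block of $\bsigma_i$ is a block of $\sigma \in \Pi_G$, so it induces a connected subgraph of $G$. Since it is contained in $B_i$, it also induces a connected subgraph of $G|_{B_i}$. Hence $\bsigma_i$ is an $r$-weighted partition in $\Pi^r_{G|_{B_i}}$. The relation $\bsigma \le \bpi$ says two things. First, $\sigma \le \pi$. Second, for each $i$, the weight $\nu_i$ dominates the sum of the weights of the blocks of $\bsigma$ whose union is $B_i$. These are exactly the conditions for $\bsigma_i \le \{B_i^{\nu_i}\}$ in $\Pi^r_{G|_{B_i}}$.

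The inverse map is $(\bsigma_1,\dots,\bsigma_k) \mapsto \bsigma_1\cup\cdots\cup\bsigma_k$. The union is a weighted partition of $V$ whose blocks are connected in $G$, because connectivity in $G|_{B_i}$ implies connectivity in $G$. Reading the same conditions backwards shows that the union lies below $\bpi$.

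Next, $\psi$ and its inverse are order preserving. Suppose $\bsigma \le \btau$ in $[\hat 0,\bpi]_G$. Every block $C$ of $\btau$ lies in a unique $B_i$, and the blocks of $\bsigma$ whose union is $C$ also lie in $B_i$. So the defining conditions of the order (refinement, and domination of the weight of $C$ by the sum of the weights of its sub-blocks) are checked block by block. Each such check happens inside a single component $\bsigma_i \le \btau_i$. The order conditions in the product are exactly these same per-block conditions, so $\psi$ is a poset isomorphism. This is the same bookkeeping as in Proposition~\ref{prop:product}. The only point needing care is that connectivity with respect to $G$ and with respect to $G|_{B_i}$ agree for subsets of $B_i$, and that is immediate from the definition of induced subgraph.

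Finally, the M\"obius identity \eqref{equation:consequence_mobius} follows from the product formula for M\"obius functions \cite[Proposition~3.8.2]{Stanley2012}. Under $\psi$, the minimum $\hat 0$ maps to the tuple of minimum elements, and $\bpi$ maps to $(B_1^{\nu_1},\dots,B_k^{\nu_k})$. The M\"obius function of $[\hat 0,\bpi]_G$ between its endpoints is therefore the product of the M\"obius functions of the factor intervals between their endpoints.

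No step is a real obstacle. The only thing to handle carefully is the block-by-block verification of the order relation when $r=\infty$, and the weak-composition inequalities there are componentwise, so the argument is unchanged.
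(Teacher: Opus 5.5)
Your proof is correct and is essentially the argument the paper intends; the paper leaves this proposition to the reader. Splitting each element of $[\hat 0,\bpi]_G$ into its restrictions to the blocks $B_i$, with union as the inverse, gives an isomorphism onto the product of the intervals $[\hat 0, B_i^{\nu_i}]_{G|_{B_i}}$, and the M\"obius identity then follows from the multiplicative property of the M\"obius function, exactly as the paper indicates. For $r=\infty$ it is worth adding that these intervals are finite, since every weight below $\bpi$ is componentwise bounded by some $\nu_i$; so the usual M\"obius function applies.
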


For $\bpi \in \Pi^r_G$, let $U^r_G(\bpi)$ be the upper order ideal (order filter) generated by $\bpi$. 
For connected $G$ on vertex set $V$ and $\nu \in \wcomp_{|V|-1,r}$, the maximal elements of $U^r_G(\bpi)$
are of the form $V^\nu$, where 
\begin{equation} \label{equation:multibounds} \nu-w(\bpi) \in \wcomp_{|\bpi|-1,r}. \end{equation}

\begin{proposition} \label{proposition:multi_upper_order} Let $G$ be a connnected graph on vertex set $V$, let $r \in \ZZ_{>0} \cup \{\infty\}$ and let $\bpi = \{B_1^{\nu_1},\dots, B_k^{\nu_k}\} \in \Pi_G^r$. Then
 $$U_G(\bpi) \cong \Pi^r_{G/{\{B_1,\dots,B_k\}}}.$$
Moreover, for all $\nu \in \wcomp_{|V|-1,r}$ such that  $\nu-w(\bpi) \in \wcomp_{k-1,r}$, we have
 $$[\bpi,V^\nu]_G\cong [\hat 0, \{\{B_1,\dots,B_k\}^{\nu-w(\bpi)}\} ]_{G/{\{B_1,\dots,B_k\}}}.$$
 Consequently,
 $$\mu_{\Pi_{G}^r}( \bpi,V^\nu) = \mu_{\Pi_{G/\{B_1,\dots,B_k\}}^r }(\hat 0,  \{\{B_1,\dots,B_k\}^{\nu-w(\bpi)} \}) .$$
 \end{proposition}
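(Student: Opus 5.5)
The plan is to define an explicit map $\Phi: U^r_G(\bpi) \to \Pi^r_{G/\pi}$, where $\pi=\{B_1,\dots,B_k\}$, and show it is an order isomorphism. Every $\bsigma \ge \bpi$ has an underlying partition $\sigma \ge \pi$, and each block $C$ of $\sigma$ is a union of blocks of $\pi$. Let $\bar C$ be the set of those blocks of $\pi$ contained in $C$. Then $\bar\sigma=\{\bar C : C\in \sigma\}$ is a partition of the vertex set $\pi$ of $G/\pi$. We define
$$\Phi(\bsigma) = \{\bar C^{\,\nu_C - \sum_{B_i\subseteq C}\nu_i} : C^{\nu_C}\in\bsigma\}.$$

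The first step is to check that $\Phi$ is well defined. By the order relation, $\nu_C \ge \sum_{B_i \subseteq C}\nu_i$, so the new weight lies in $\wcomp_{m,r}$ with
$$m=(|C|-1)-\sum_{B_i\subseteq C}(|B_i|-1)=|\bar C|-1,$$
as required. Connectivity also transfers: $G|_C$ is connected if and only if $(G/\pi)|_{\bar C}$ is connected. The forward direction holds because an edge of $G|_C$ joining different blocks gives an edge of the contraction. The reverse direction holds because each $G|_{B_i}$ is connected, since $\pi\in\Pi_G$, and the edges between blocks then link them together.

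The inverse map replaces each $\bar C^{\,\mu}$ by $C^{\,\mu+\sum_{B_i\subseteq C}\nu_i}$, where $C$ is the union of the blocks in $\bar C$. The same two checks show that this lands in $U^r_G(\bpi)$.

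The next step is to show that $\Phi$ preserves and reflects order. Take $\bsigma\le\btau$ in $U^r_G(\bpi)$ and a block $D$ of $\tau$ that is the union of blocks $C_1,\dots,C_l$ of $\sigma$. The condition $\nu_D \ge \sum_j \nu_{C_j}$ is equivalent to the same inequality after subtracting $\sum_{B_i\subseteq D}\nu_i$ from both sides. This subtraction splits as the sums over the $B_i\subseteq C_j$, so the condition is exactly the corresponding order condition in $\Pi^r_{G/\pi}$. Refinement of underlying partitions also corresponds exactly under $C\mapsto\bar C$. Together these give the isomorphism $U_G(\bpi)\cong\Pi^r_{G/\pi}$.

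For the interval statement, restrict $\Phi$ to $[\bpi,V^\nu]_G$. By (\ref{equation:multibounds}), $V^\nu$ lies in $U_G(\bpi)$ exactly when $\nu-w(\bpi)\in\wcomp_{k-1,r}$, and $\Phi(V^\nu)=\{\pi^{\nu-w(\bpi)}\}$. An order isomorphism carries intervals to intervals, so $[\bpi,V^\nu]_G$ maps onto $[\Phi(\bpi),\{\pi^{\nu-w(\bpi)}\}]$. Here $\Phi(\bpi)$ consists of the singletons $\{B_i\}$ with weight $\mathbf 0$, which is the $\hat 0$ of $\Pi^r_{G/\pi}$. The Möbius function identity then follows immediately, because the Möbius function is an invariant of the isomorphism type of an interval.

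I expect no real obstacle; the step needing the most care is the connectivity equivalence for blocks under contraction, which uses that the blocks of $\pi$ themselves induce connected subgraphs. The case $r=\infty$ needs no separate treatment, since all weights have finite support.
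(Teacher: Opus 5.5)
Your proof is correct. It is the verification the paper intends; the paper states this proposition and leaves its ``straightforward'' proof to the reader. Subtracting $\sum_{B_i\subseteq C}\nu_i$ from each block weight, using that $G|_C$ is connected exactly when $(G/\pi)|_{\bar C}$ is (because each $G|_{B_i}$ is connected), and noting that this map sends $\bpi$ to $\hat 0$ and $V^\nu$ to $\{\pi^{\nu-w(\bpi)}\}$ gives the order isomorphism, the interval isomorphism and the M\"obius identity exactly as you describe.
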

 
These recursive properties yield the following recurrence relations for the M\"obius symmetric functions, which reduce to those for the M\"obius polynomials given in Theorem~\ref{theorem:recurrence}.  Indeed,  $M_H(1,t) = \mu_H(t)$ for all graphs $H$ and $h_m(1,t) =  [m+1]_t$ for all $m \ge 0$.

\begin{theorem} \label{theorem:multi_recurrence} Let  $G$ be a connected graph with more than one vertex.  Then
\begin{align}\label{equation:recursion_symmetric_1}
   M_G(\xx) = -\sum_{\pi \in \Pi_{G} \setminus \{\hat 1\}} h_{|\pi|-1} (\xx)\prod_{B\in \pi} M_{G|_B}(\xx),
\end{align}
and
\begin{align}\label{equation:recursion_symmetric_2}
   M_{G}(\xx) = - \sum_{\pi \in \Pi_{G}\setminus \{\hat{0}\}} M_{G/\pi}(\xx)\prod_{B\in \pi} h_{|B|-1}(\xx),
\end{align}
where $h_n(\xx)$ is the complete homogenous symmetric function of degree~$n$.
\end{theorem}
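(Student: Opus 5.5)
We prove both recurrences by applying the Möbius recursion (and its dual) to each maximal interval $[\hat 0, V^\nu]_G$ of $\Pi_G^\infty$, with $\nu \in \wcomp_{n-1}$ and $n=|V|>1$. We then multiply by $\xx^\nu$, sum over $\nu$, and regroup the resulting sum by underlying partitions. In the regrouping, the complete homogeneous symmetric functions arise as generating functions for the weights of the maximal elements above a given element.

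For \eqref{equation:recursion_symmetric_1}, fix $\nu$. Since $n>1$ we have $\hat 0 < V^\nu$, so
$$\mu(\hat 0,V^\nu) = -\sum_{\bpi < V^\nu} \mu(\hat 0,\bpi).$$
Summing $\xx^\nu$ times this over all $\nu$ and exchanging the order of summation gives
$$M_G(\xx)=-\sum_{\bpi}\mu(\hat 0,\bpi)\sum_{\nu : V^\nu > \bpi}\xx^\nu,$$
where $\bpi$ runs over the non-maximal elements, i.e.\ those whose underlying partition $\pi$ is not $\hat 1$. By \eqref{equation:multibounds}, $V^\nu \ge \bpi$ exactly when $\nu = w(\bpi)+\delta$ with $\delta \in \wcomp_{|\pi|-1}$. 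Hence the inner sum equals $\xx^{w(\bpi)} h_{|\pi|-1}(\xx)$.

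Now group the elements $\bpi$ by their underlying partition $\pi \in \Pi_G\setminus\{\hat 1\}$. A weighted partition with underlying partition $\pi$ is an independent choice of weights $\nu_B \in \wcomp_{|B|-1}$ for each block $B\in\pi$. By \eqref{equation:consequence_mobius}, $\mu(\hat 0,\bpi)\xx^{w(\bpi)}$ factors as $\prod_{B\in\pi} \mu(\hat 0,B^{\nu_B})\xx^{\nu_B}$. Summing each factor over $\nu_B$ gives $M_{G|_B}(\xx)$, using the formula for $M$ of the connected graph $G|_B$. This yields \eqref{equation:recursion_symmetric_1}.

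For \eqref{equation:recursion_symmetric_2}, use the dual recursion
$$\mu(\hat 0,V^\nu) = -\sum_{\hat 0<\bpi\le V^\nu}\mu(\bpi,V^\nu).$$
By Proposition~\ref{proposition:multi_upper_order}, $\mu(\bpi,V^\nu)$ equals the Möbius value in $\Pi^\infty_{G/\pi}$ from $\hat 0$ to the top element with weight $\nu-w(\bpi)$. Multiply by $\xx^\nu$, sum over $\nu$, and swap the sums. For fixed $\bpi$, write $\nu = w(\bpi)+\delta$. Then
$$\sum_\nu \mu(\bpi,V^\nu)\xx^\nu = \xx^{w(\bpi)}\,M_{G/\pi}(\xx),$$
because $G/\pi$ is connected. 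Grouping by $\pi \in \Pi_G\setminus\{\hat 0\}$, the weights of the blocks of $\bpi$ range independently over $\wcomp_{|B|-1}$. Therefore $\sum_{\bpi}\xx^{w(\bpi)} = \prod_{B\in\pi}h_{|B|-1}(\xx)$, which gives \eqref{equation:recursion_symmetric_2}.

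The only real subtlety is working in $\Pi_G^\infty$, which is infinite. All the manipulations are identities of formal power series in which each monomial receives finitely many contributions, so the interchanges of summation are justified. Alternatively, one can argue in $\Pi_G^r$ for finite $r$, which gives the identity in $x_1,\dots,x_r$ for every $r$, and then let $r\to\infty$. Checking the bound \eqref{equation:multibounds} on which maximal elements lie above a given element is routine from the definition of the order.
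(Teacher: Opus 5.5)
Your proposal is correct and follows the paper's proof essentially step for step. The paper likewise sums the M\"obius recursion (resp.\ its dual) over all maximal intervals $[\hat 0, V^\nu]_G$ weighted by $\xx^\nu$, uses \eqref{equation:multibounds} to produce the factor $h_{|\pi|-1}(\xx)$ (resp.\ $\prod_{B\in\pi} h_{|B|-1}(\xx)$), and factors via \eqref{equation:consequence_mobius} (resp.\ Proposition~\ref{proposition:multi_upper_order}). The only difference is cosmetic: the paper sets the full sum equal to $0$ and isolates the $\pi=\hat 1$ (resp.\ $\pi=\hat 0$) term at the end, whereas you separate that term at the start.
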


\begin{proof} Let $V$ be the vertex set of $G$.

By the recursive definition of the M\"obius function, for $\nu \in \wcomp_{|V|-1}$,
$$\sum_{\bpi \in [\hat{0}, V^{\nu}]_G} \mu_{\Pi_G^\infty}(\hat{0},\bpi)=0.$$
Therefore by (\ref{equation:multibounds}),

\begin{align*}
 0 
 &=   \sum_{\nu \in \wcomp_{|V|-1}} \xx^\nu \sum_{\bpi \in [\hat{0}, V^{\nu}]_G} \mu_{\Pi_G^\infty}(\hat{0},\bpi)\\
   &=  \sum_{\bpi\in \Pi_G^{\infty}} \mu_{\Pi_G^\infty}(\hat{0},\bpi)\xx^{w(\bpi)} \sum_{\eta \in \wcomp_{|\bpi|-1}}\xx^{\eta} \\
 &=  \sum_{\bpi\in \Pi_G^{\infty}} \mu_{\Pi_G^\infty}(\hat{0},\bpi)\xx^{w(\bpi)}\, h_{|\bpi|-1}(\xx).  
\end{align*}

Now by  \eqref{equation:consequence_mobius},
\begin{align*} 
0 &=\sum_{k=1}^n h_{k-1}(\xx) \sum_{\{B_1,\dots,B_k\} \in \Pi_G}\,\, \sum_{\substack{\nu_1,\nu_2, \dots, \nu_k\\ \nu_i \in \wcomp_{ |B_i|-1}}}\,\, \prod_{i=1}^{k} \mu_{ \Pi^\infty_{G|_{B_i} }} (\hat 0, B_i^{\nu_i}) \xx^{\nu_i} 
\\&= \sum_{k=1}^n h_{k-1}(\xx) \sum_{\{B_1,\dots,B_k\} \in \Pi_G}\,\, \prod_{i=1}^k \sum_{\nu \in \wcomp_{|B_i|-1}} \mu_{\Pi^\infty_{G|_{B_i} }} (\hat 0, B_i^{\nu}) \xx^{\nu}
\\ &= \sum_{k=1}^n h_{k-1}(\xx)\sum_{\{B_1,\dots,B_k\} \in \Pi_G}\,\,  \prod_{i=1}^k M_{G|_{B_i}} (\xx),
\end{align*}
which proves (\ref{equation:recursion_symmetric_1}).

By the dual version of the recursive definition of the M\"obius function, and (\ref{equation:multibounds}),
\begin{align*}
 0 
 &= \sum_{\nu \in \wcomp_{|V|-1}} \xx^\nu \sum_{\bpi \in [\hat{0}, V^{\nu}]_G} \mu_{\Pi_G^\infty}(\bpi, V^\nu) \\
 &= \sum_{\bpi \in \Pi_G^{\infty}} \xx^{w(\bpi)} \sum_{\nu \in \wcomp_{|\bpi|-1}} \mu_{\Pi_G^\infty}(\bpi, V^{\nu+w(\bpi)} ) \xx^{\nu}.
\end{align*}

Now by Proposition~\ref{proposition:multi_upper_order},
\begin{align*}
0 
&= \sum_{k=1}^n \sum_{\pi =\{B_1,\dots,B_k\} \in \Pi_G}\,\,  \sum_{\substack{\nu_1,\nu_2, \dots, \nu_k\\ \nu_i \in \wcomp_{|B_i|-1}}} \xx^{\sum_{i=1}^k \nu_i} \sum_{\nu \in \wcomp_{k-1}}  \mu_{\Pi^\infty_{G/\pi}}  (\hat 0, V(G/\pi)^\nu ) \xx^{\nu}
\\& = \sum_{\pi \in \Pi_G} \prod_{B\in \pi} \sum_{\nu \in \wcomp_{|B|-1}} \!\!\!\!\! \!\!\xx^\nu  \,\,\, M_{G/\pi} (\xx), 
\end{align*}
which proves \eqref{equation:recursion_symmetric_2}.\end{proof}

\subsection{Examples} In \cite{GonzalezDLeon2016},  Gonz\'alez D'Le\'on uses the complete graph case of the recurrence relations of Theorem~\ref{theorem:multi_recurrence}  to obtain
the exponential generating function formula 
$$\sum_{n\ge 1} M_{K_n}(\xx) \frac{y^n} {n!} = \left(\sum_{n\ge 1} h_{n-1}(\xx)\dfrac{y^n}{n!}\right)^{\langle -1 \rangle},$$
 which is a symmetric function generalization of (\ref{equation:K_n_generating}).  In \cite{GonzalezDLeon2016} it is  shown  that the coefficient of $\xx^\nu$ in $(-1)^{n-1} M_{K_{n}}(\xx)$ is the dimension of a subspace of the multilineal component, associated with $\nu$, of the free Lie algebra with multiple compatible brackets.

The recurrence relations of Theorem~\ref{theorem:multi_recurrence} in the case of the star graph reduce to 
$$\sum_{k=0}^{n-1} \binom {n-1} k h_k(\xx)\,M_{St_{n-k}}(\xx) = \delta_{n,1}.$$  This was used by Gonz\'alez D'Le\'on \cite{GonzalezDLeon2018} to 
 obtain
the exponential generating function formula 
\begin{equation} \label{equation:symmetric_star_exponential}  \sum_{n\ge0}M_{St_{n+1}}(\xx)\frac{y^n}{n!}  = \left ( \sum_{n \ge 0} h_n(\xx) \frac {y^n}{n!} \right )^{-1} ,\end{equation}
which is a symmetric function generalization of \eqref{equation:star_exponential_2}.   It is shown  in \cite{GonzalezDLeon2018}  that the coefficient of $\xx^\nu$ in $(-1)^{n-1} M_{St_{n}}(\xx)$ is the dimension of a subspace of the multilinear component, associated with $\nu$, of the colored exterior algebra.

\begin{remark}\label{remark:guise} The $r$-weighted bond poset $\Pi_{St_n}^r$  appeared in  the guise of the $r$-weighted Boolean algebra in the work of Gonz\'alez D'Le\'on \cite{GonzalezDLeon2018} on colored exterior algebras.    
For each $\bpi \in \Pi^r_{St_n}$, let $B^\nu$ be the  weighted block of $\bpi$ that contains $1$. Now remove $1$ from $B$  and reduce the remaining   elements by $1$ to get the set $\bar B$.  The map taking $\bpi$ to $\bar B^\nu$ defines an isomorphism from  $\Pi^r_{St_n}$ to the $r$-weighted Boolean algebra consisting of weighted sets $S^\nu$, where $S\subseteq [n-1]$ and $\nu \in \wcomp_{|S|,r}$.  Note that in the $r=2$ case, this $r$-weighted Boolean algebra is a variant of the weighted Boolean algebra $\mathcal {WB}_n$ mentioned in the Introduction.  
\end{remark}
 
Next we turn to the path graph $P_n$.  By using the recurrence relations of Theorem~\ref{theorem:multi_recurrence}, we will show that the  M\"obius symmetric functions for the path graphs are related to  a class of symmetric functions known as parking function symmetric functions.
 
 A \emph{parking function} on $[n]$ is a sequence $(p_1,\dots,p_n) \in [n]^n$ whose  weakly increasing rearrangement $p_{\sigma(1)}\le\cdots \le p_{\sigma(n)}$ satisfies  $p_{\sigma(i)}\le i$ for all $i \in [n]$; 
 see~\cite[Exercise 5.49]{Stanley1999}.  Let $\PF_n(\xx)$ denote the Frobenius characteristic of the permutation representation associated with the action of  $\sym_n$ on the set of parking functions on $[n]$, where a permutation acts by permuting the entries of the parking function.  The symmetric function $\PF_n(\xx)$ is known as the \emph{parking function symmetric function} and was first considered in this context by Haiman \cite{Haiman1994}.  
 In \cite{Stanley1997} Stanley observed that 
 \begin{equation} \label{equation:parking} PF_n(\xx) = \mbox{ coeff.  of   $y^n$ in } \frac 1 {n+1} \left (\sum_{j\ge 0} h_j(\xx) y^j \right )^{n+1}.\end{equation}

Expansions of $\omega PF_n(\xx)$ in the monomial symmetric function basis $\{m_\lambda: \lambda \vdash n\}$ and in the elementary symmetric function basis $\{e_\lambda: \lambda \vdash n\}$ can be found   in \cite{Stanley1997} as well, where  $\omega$ is the involution on the ring of symmetric functions that takes  $h_n$ to  $e_n$.
 Given $\pi \in \Pi_n$, let $\lambda(\pi)$ be the  partition of $n$ given by 
\begin{equation} \label{equation:set_to_number} \lambda(\pi)= (|B_1|,|B_2|,\dots, |B_k|),\end{equation}
where $B_1,\dots, B_k$ are the blocks of $\pi$ listed in weakly decreasing order of size.
For all $n \ge 1$,
\begin{equation} \label{equation:parking2} \omega PF_n(\xx) = \frac{1}{n+1} \sum_{\lambda \vdash n} \prod_{i} \binom {n+1}{\lambda(i) } m_\lambda(\xx) \end{equation}
and
\begin{equation} \label{equation:parking3} \omega PF_n(\xx) = \sum_{\pi \in \mathcal{NC}_n} e_{\lambda(\pi)}(\xx),\end{equation}
where $\mathcal{NC}_n$ is the set of noncrossing partitions of $[n]$.

\begin{theorem} \label{theorem:parking} For all $n \ge 1$,
$$(-1)^{n-1} M_{P_n}(\xx) = \omega PF_{n-1} (\xx) .$$
\end{theorem}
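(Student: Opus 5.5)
We mimic the proof of Theorem~\ref{theorem:path_narayana}, replacing $[n]_t$ by $h_{n-1}(\xx)$ and the Narayana computation by Stanley's formula \eqref{equation:parking}.

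\textbf{Step 1: the recurrence for paths.} The connected blocks of $P_n$ are intervals, so the partitions in $\Pi_{P_n}$ correspond to compositions $(n_1,\dots,n_k)\vDash n$. Each restriction $P_n|_B$ is isomorphic to $P_{|B|}$, and the contraction $P_n/\pi$ is isomorphic to $P_k$. Since $M_G$ depends only on the isomorphism type of $G$, recurrence \eqref{equation:recursion_symmetric_1} of Theorem~\ref{theorem:multi_recurrence} becomes
$$\sum_{k=1}^n h_{k-1}(\xx)\sum_{(n_1,\dots,n_k)\vDash n}\prod_{i=1}^k M_{P_{n_i}}(\xx)=\delta_{n,1}.$$

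\textbf{Step 2: generating functions.} Put $F(y)=\sum_{n\ge1}(-1)^{n-1}M_{P_n}(\xx)y^n$ and $H(y)=\sum_{k\ge1}(-1)^{k-1}h_{k-1}(\xx)y^k$. The sign $(-1)^{n-1}=(-1)^{k-1}\prod_i(-1)^{n_i-1}$ is compatible with the recurrence, so the recurrence says exactly $H(F(y))=y$. Both series have constant term $0$ and linear coefficient $1$, so $F=H^{\langle -1\rangle}$.

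Write $H(y)=y/E(y)$ with $E(y)=\bigl(\sum_{j\ge0}(-1)^jh_j(\xx)y^j\bigr)^{-1}$. The identity $\sum_j h_j z^j\cdot\sum_j(-1)^je_jz^j=1$ with $z=-y$ gives $E(y)=\sum_{j\ge0}e_j(\xx)y^j$. As a check, specializing to $\xx=(1,t,0,\dots)$ gives $E=(1+y)(1+ty)$, which recovers \eqref{equation:path_generating}.

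\textbf{Step 3: Lagrange inversion.} By the Lagrange inversion formula (\cite[Theorem 5.4.2]{Stanley1999}), $n\,(-1)^{n-1}M_{P_n}(\xx)$ equals the coefficient of $y^{n-1}$ in $E(y)^n=\bigl(\sum_j e_jy^j\bigr)^n$. Hence
$$(-1)^{n-1}M_{P_n}(\xx)=\frac1n\,[y^{n-1}]\Bigl(\sum_{j\ge0}e_j(\xx)y^j\Bigr)^n.$$
Now apply $\omega$ to \eqref{equation:parking} with $n$ replaced by $n-1$. Since $\omega$ is a ring homomorphism taking $h_j$ to $e_j$, this gives
$$\omega PF_{n-1}(\xx)=\frac1n\,[y^{n-1}]\Bigl(\sum_j e_j y^j\Bigr)^n.$$
The two expressions agree, which proves the theorem.

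The main point needing care is Step 2. One has to check the sign bookkeeping and justify working with formal power series in $y$ over the ring of symmetric functions, where Lagrange inversion holds because the linear coefficient is $1$. Everything else follows directly from the earlier results.
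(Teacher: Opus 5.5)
Your proposal is correct and is essentially the paper's proof: you reduce the recurrence of Theorem~\ref{theorem:multi_recurrence} to a sum over compositions, identify the generating function as the compositional inverse of $y/\prod_{i}(1+x_iy)$, and then match Lagrange inversion against $\omega$ applied to Stanley's formula \eqref{equation:parking}. Your sign bookkeeping is slightly more careful than the paper's, which leaves out the factor $(-1)^{n-1}$ when it states the Lagrange inversion step.
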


\begin{proof}  The recurrence relations of Theorem~\ref{theorem:multi_recurrence} in the case of $G=P_n$ reduce to 
$$
 \sum_{k=1}^n  h_{k-1}(\xx) \sum_{(n_1,\dots, n_k) \vDash n   }   \prod_{i=1}^{k} M_{P_{n_i}}(\xx) = \delta_{n,1}$$ and
$$ \sum_{k=1}^{n}  M_{P_k}(\xx) \sum_{ (n_1,\dots, n_k) \vDash n   }   \prod_{i=1}^{k} h_{n_i-1}(\xx)  = \delta_{n,1}.$$
Both  equations are equivalent to the generating function formula
$$ \sum_{n\ge1}(-1)^{n-1} M_{P_n}(\xx)y^n = \left(\sum_{n\ge1}(-1)^{n-1}h_{n-1}(\xx) \, y^n\right )
^{\left\langle -1 \right\rangle}  = \left(\frac{y}{\prod_{i\ge 1} (1+x_i y)}\right)^{\left\langle -1 \right\rangle},$$
which is a symmetric function analog of (\ref{equation:path_generating}).    Following the proof of \cite[Proposition~2.2 (b)]{Stanley1997}, by the Lagrange inversion formula  (see \cite[Theorem 5.4.2]{Stanley1999}), we thus have that $ M_{P_n}(\xx)$ is equal to the coefficient of $y^{n-1}$ in $$\frac 1 n \left( \prod_{i\ge 1} (1+x_i y)\right)^n = \frac 1 n \left (\sum_{j\ge 0} e_j(\xx) y^j\right)^n.  $$  By (\ref{equation:parking}), this coefficient is precisely $\omega PF_{n-1}(\xx)$, which yields the desired result.  
\end{proof}

We  have the following consequence of Theorem~\ref{theorem:parking} and Equation~(\ref{equation:parking2}).
 \begin{corollary} \label{corollary:parking} For all $r, n \ge 1$,  $$(-1)^{n-1} M_{P_n}(x_1,\dots,x_r) =  \sum_{\nu \in \wcomp_{n-1,r}} \frac{1}{n} \prod_{i=1}^r \binom {n}{ \nu(i)} x_1^{\nu(1)} \cdots x_r^{\nu(r)}.$$
\end{corollary}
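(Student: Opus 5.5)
The plan is to combine Theorem~\ref{theorem:parking} with the monomial expansion \eqref{equation:parking2} and then specialize to finitely many variables. By Theorem~\ref{theorem:parking}, $(-1)^{n-1}M_{P_n}(\xx)=\omega PF_{n-1}(\xx)$. Applying \eqref{equation:parking2} with $n-1$ in place of $n$ gives
\begin{equation*}
(-1)^{n-1}M_{P_n}(\xx)=\frac{1}{n}\sum_{\lambda\vdash n-1}\prod_{i}\binom{n}{\lambda(i)}\, m_\lambda(\xx).
\end{equation*}

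Next I would expand each $m_\lambda(\xx)$ as the sum of $\xx^\nu$ over all weak compositions $\nu\in\wcomp_{n-1}$ whose multiset of nonzero entries equals the multiset of parts of $\lambda$. The coefficient $\prod_i\binom{n}{\lambda(i)}$ depends only on that multiset. Moreover, zero entries contribute the factor $\binom{n}{0}=1$, so this coefficient equals $\prod_{i\ge1}\binom{n}{\nu(i)}$ for each such $\nu$. Summing over $\lambda$ then turns the double sum into a single sum over all $\nu\in\wcomp_{n-1}$:
\begin{equation*}
(-1)^{n-1}M_{P_n}(\xx)=\sum_{\nu\in\wcomp_{n-1}}\frac1n\prod_{i\ge1}\binom{n}{\nu(i)}\,\xx^\nu.
\end{equation*}

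Finally, I would set $x_{r+1}=x_{r+2}=\cdots=0$, which by definition gives $M_{P_n}(x_1,\dots,x_r)$. This kills exactly the monomials with some $\nu(i)>0$ for $i>r$. The surviving $\nu$ are those in $\wcomp_{n-1,r}$, padded with zeros, and the product $\prod_{i\ge1}$ truncates to $\prod_{i=1}^r$, again because $\binom{n}{0}=1$. This yields the stated formula.

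There is no real obstacle; the only point to check carefully is the reindexing from partitions to weak compositions, namely that padding with zeros leaves the binomial product unchanged. As a cross-check I would also derive the formula directly from the Lagrange inversion step in the proof of Theorem~\ref{theorem:parking}. That step shows $(-1)^{n-1}M_{P_n}$ is the coefficient of $y^{n-1}$ in $\frac1n\prod_i(1+x_iy)^n$. Restricting to $r$ variables and expanding each factor $(1+x_iy)^n$ by the binomial theorem gives the same sum immediately.
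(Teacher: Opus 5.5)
Your proposal is correct and matches the paper's argument: the paper obtains the corollary by combining Theorem~\ref{theorem:parking} with the monomial expansion \eqref{equation:parking2} at $n-1$ and specializing to $r$ variables, and, as you note, the passage from $m_\lambda$ to weak compositions works because $\binom{n}{0}=1$. Your Lagrange-inversion cross-check is also valid, and it additionally covers the degenerate case $n=1$, where \eqref{equation:parking2} (stated only for $n\ge 1$) would otherwise have to be applied at $0$.
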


The coefficients $N_{\nu}:=\frac{1}{n} \prod_{i=1}^r \binom {n}{ \nu(i)}$ are sometimes called {\em generalized Fuss-Narayana numbers}, in part, because they refine the 
Fuss-Narayana numbers $N_r(n,j) :=\frac {1} {n}  \binom{n}{ j } \binom{(r-1)n}{n-j-1}  $, which in turn refine the Fuss-Catalan numbers $C_{n,r} := \frac 1 {n}  \binom {rn}{n-1}$.  
Indeed Vandermonde's identity yields,
$$\sum_{\substack{\nu \in \wcomp_{n-1,r}\\ \nu(r)=j}} N_{\nu} = N_r(n,j) $$
 and  $$\sum_{j = 0}^{n-1} N_r(n,j) =C_{n,r} .$$  It therefore follows from Corollary~\ref{corollary:parking}
 that
 $$(-1)^{n-1} M_{P_n}(\stackrel {r}{\overbrace{1,1,\dots,1}}) = C_{n,r}.$$
 Note that  $N_r(n,j)$ is the usual Narayana number $N(n,j) $ and  $C_{n,r}$ is the usual  Catalan number $C_n$ when $r=2$. Thus Corollary~\ref{corollary:parking} reduces to Theorem~\ref{theorem:path_narayana} when $r=2$.
 
Another  consequence of  Theorem~\ref{theorem:parking} is that .$(-1)^{n-1} M_{P_n}(\xx)$ is $e$-positive, that is when expanded in the basis of elementary symmetric functions, all the coefficients are nonnegative.   Indeed, by (\ref{equation:parking3}),
\begin{equation} \label{equation:parking_e} (-1)^{n-1} M_{P_n}(\xx) = \sum_{\pi \in \mathcal{NC}_{n-1}} e_{\lambda(\pi)}(\xx).\end{equation}
In the next section we establish $e$-positivity for all chordal graphs  and conjecture that it holds for all graphs.

\section{EL-shellability, \texorpdfstring{$e$}{}-positivity, and \texorpdfstring{$\gamma$}{}-positivity} \label{section:e_positivity}

In this section, we prove $e$-positivity results for the M\"obius symmetric functions. These results reduce to the $\gamma$-positivity results for the M\"obius polynomials  discussed in Section~\ref{subsection:chordal_graphs}.  Our proofs rely on the theory of  lexicographic shellability, which was first introduced by Bj\"orner in \cite{Bjorner1980} and further devoloped by Bj\"orner and Wachs in \cite{BjornerWachs1982,BjornerWachs1983,BjornerWachs1996,BjornerWachs1997}.
We briefly review the basics in Section~\ref{subsection:ELshellability}.  See \cite{Wachs2007} for further information on this topic.

\subsection{Review: EL labeling of the multiweighted partition poset}  \label{subsection:ELshellability}
  
An \emph{edge-labeling} of  a finite bounded poset $P$ is a function $\lambda:\E(P)\rightarrow \Lambda$  from the set $\E(P)$ of cover relations  of $P$ to some other poset $\Lambda$.  An edge labeling $\lambda:\E(P)\rightarrow \Lambda$   of $P$ is said to be an \emph{EL-labeling} if it
   satisfies the following conditions:
\begin{enumerate}
    \item In every closed interval $[x,y]$ of $P$ there is a unique \emph{(strictly) increasing maximal chain}, i.e., a maximal chain $\cc:=(x=x_0\lessdot x_1 \lessdot \cdots \lessdot x_\ell=y)$ such that
    $$\lambda(x_0\lessdot x_1)<\lambda(x_1\lessdot x_2)<\cdots<\lambda(x_{\ell-1}\lessdot x_\ell).$$
    \item The \emph{word of labels}
    $$\lambda(\cc):=\lambda(x_0\lessdot x_1)\lambda(x_1\lessdot x_2)\cdots\lambda(x_{\ell-1}\lessdot x_\ell)$$ associated to the
    unique increasing maximal chain $\cc$ comes, in the lexicographic order, before the word of labels of any other maximal chain in the interval. 
\end{enumerate}

If $P$ admits an EL-labeling, we say that it is  \emph{edge lexicographically shellable (EL-shellable)}. Note that the EL-labeling  restricts to an EL-labeling of every closed interval of $P$.  A maximal chain $x_0\lessdot x_1 \lessdot \cdots \lessdot x_\ell$ of a closed interval of  $P$ is said to be \emph{ascent-free} if its word of labels has no ascents, that is,  for all $i\in [\ell-1]$ we have $\lambda(x_{i-1}\lessdot x_i)\not < \lambda(x_i\lessdot x_{i+1})$. 

For any maximal chain $\cc$ of a closed interval $[x,y]$,  let $\bar\cc:=\cc-\{x,y\}$.  A chain $x_0 < x_1 <\cdots < x_{\ell}$ of length $\ell$ in $P$ will be referred to as an {\em $\ell$-chain}. 

EL-shellability has strong topological and algebraic implications  for the order complex $\Delta(x,y)$ of each open interval $(x,y)$ of the poset.  
 This will be discussed in Section \ref{section:topological_consequences}, while here we  only need its connection with the M\"obius function as given in the following result.

 \begin{theorem}[Stanley \cite{Stanley1974}, 
 cf.\! Bj\"orner {\cite[Theorem~2.7]{Bjorner1980}}]  \label{corollary:shellability_pure}
Let $\lambda$ be an EL-labeling of a pure bounded finite poset $P$. Then 
for every $x \le_P y$ in $P$, $$\mu(x,y)=(-1)^{\ell(x,y)}|\{\bar \cc\mid \cc \text{ is an ascent-free maximal chain of } [x,y]\}|.$$
\end{theorem}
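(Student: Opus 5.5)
The statement to prove is the classical result that in an EL-shellable pure bounded poset the M\"obius function counts ascent-free maximal chains with sign $(-1)^{\ell(x,y)}$. Since an EL-labeling of $P$ restricts to one on every closed interval, it suffices to treat $[x,y]=[\hat 0,\hat 1]$. I would route the argument through the flag $f$- and $h$-vectors rather than through topology. Let $\ell=\ell(P)$. For $S\subseteq[\ell-1]$ let $\alpha(S)$ be the number of chains of $P$ whose elements have rank set exactly $S$. Define $\beta(S)=\sum_{T\subseteq S}(-1)^{|S-T|}\alpha(T)$. By Philip Hall's theorem, $\mu(\hat 0,\hat 1)$ is the reduced Euler characteristic of the order complex of $(\hat 0,\hat 1)$, and therefore equals $(-1)^{\ell}\beta([\ell-1])$. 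So the statement reduces to showing that $\beta(S)$ is the number of maximal chains whose descent set is exactly $S$. For $S=[\ell-1]$ these are precisely the chains with no ascent.

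The key step is the counting claim: for each $S=\{s_1<\dots<s_k\}$, the map sending a maximal chain $\cc$ to its restriction $\cc|_S$ (the elements at ranks in $S$) is a bijection from maximal chains with descent set contained in $S$ onto chains with rank set $S$. This gives $\alpha(S)=\#\{\cc:\Des(\cc)\subseteq S\}$, and inclusion--exclusion then yields $\beta(S)=\#\{\cc:\Des(\cc)=S\}$.

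To prove the bijection, take a chain $\hat 0<y_1<\dots<y_k<\hat 1$ with rank set $S$. Condition (1) of the EL-labeling gives a unique increasing maximal chain in each interval $[\hat 0,y_1],[y_1,y_2],\dots,[y_k,\hat 1]$. Concatenating these produces the unique maximal chain through the $y_i$ that has no descents outside $S$. Conversely, a maximal chain with $\Des(\cc)\subseteq S$ is increasing on each segment between consecutive ranks in $S$, so by uniqueness it arises this way. Condition (2), lexicographic minimality, is not needed here. It is needed only for the shelling and topology statements, so this counting argument is the Stanley--Bj\"orner proof.

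The step I expect to be the main obstacle is keeping the sign and indexing conventions straight: the length of the open interval's order complex is $\ell-2$, the reduced Euler characteristic involves the empty face corresponding to $T=\emptyset$, and these must combine to give $\mu(\hat 0,\hat 1)=\sum_{T\subseteq[\ell-1]}(-1)^{|T|+1}\alpha(T)=(-1)^\ell\beta([\ell-1])$. I would check this carefully against Hall's formula $\mu(\hat 0,\hat 1)=\sum_{i\ge 0}(-1)^{i}c_i$, where $c_i$ counts chains $\hat 0=z_0<z_1<\dots<z_i=\hat 1$. Given any chain $T$ with $|T|$ interior elements, that chain contributes $(-1)^{|T|+1}$ to the sum, which matches. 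Finally, the set $\{\bar\cc\}$ is in bijection with $\{\cc\}$, since every maximal chain contains $x$ and $y$, so counting the $\bar\cc$ is the same as counting the chains $\cc$.
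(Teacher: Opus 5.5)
Your argument is correct and is essentially the classical proof behind the paper's citation. The paper does not reprove this result; its remark says it is implicit in Stanley's R-labeling work, and your route of counting maximal chains by descent set using only condition (1), then applying Hall's theorem, is that argument. Make two things explicit: $\Des(\cc)$ must mean the set of positions that are \emph{not} ascents, since the label poset (e.g.\ $\Lambda_{n,r}$) need not be totally ordered; and the case $x=y$ must be handled separately, because your identity $\mu(\hat 0,\hat 1)=\sum_{T}(-1)^{|T|+1}\alpha(T)$ assumes $\ell\ge 1$.
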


\begin{remark} This theorem is implicit in \cite{Stanley1974}, where a less stringent labeling, now known as an ER-labeling (or R-labeling), was used.  EL-shellabillity was introduced later in \cite{Bjorner1980} in order to obtain stronger  topological conclusions; see Theorem~\ref{theorem:BjornerWachs}.  In both papers the increasing maximal chains were only required to be weakly increasing and the ascents were defined as weak ascents.  The theorem is valid either way; see \cite[Remark 3.25]{Wachs2007}.
\end{remark}

Recall that the poset $\Pi_G^r$ is pure, but not bounded.  Hence we consider the {\em augmented $r$-weighted bond poset} $\widehat{ \Pi_G^r} := \Pi_G^r \cup \{\hat 1\} $, which is pure and bounded.  We now present an EL-labeling of  $\widehat{\Pi_{K_n}^r }= \widehat{\Pi_{n}^r}$ obtained by  Gonz\'alez D'Le\'on and Wachs \cite{GonzalezDLeonWachs2016}  in the $r=2$ case and by Gonz\'alez D'Le\'on \cite{GonzalezDLeon2016} for general   $r \in \ZZ_{>0} \cup \{\infty\}$. Note that  the definition of EL-labeling  makes sense not only for finite bounded posets, but also for {\em infinite} bounded posets in which all maximal chains are finite.  Thus we can extend the definition of EL-labeling to such posets, which include the poset $\widehat{\Pi_{n}^\infty}$.   Note also that all the closed intervals of $\Pi_{n}^\infty $ are finite.  So the restriction of any EL-labeling of $\widehat{\Pi_{n}^\infty} $  to any closed interval of $\Pi_{n}^\infty $ is an EL-labeling in the traditional sense.

For  fixed $r \in \ZZ_{>0}\cup \{\infty\}$ and $a\in [n]$, let $$\Gamma_{a,r}=\{(a,b,j)\mid a<b\le n+1 \text{ and } j\in[r]\}$$ with order relation given by $(a,b,j)\leq (a,b',j')$ whenever $b\le b'$ and $j\leq j'$. (Here $[\infty] := \ZZ_{>0}$.) We define then $\Lambda_{n,r}$ as the ordinal sum
$$\Lambda_{n,r}=\Gamma_{1,r}\oplus \Gamma_{2,r}\oplus \cdots\oplus \Gamma_{n,r}.$$
  See Figure~\ref{figure:example_poset_labelsn3k3} for the Hasse diagrams of $\Lambda_{3,2}$ and $\Lambda_{2,3}$.
 
\begin{figure}
      \centering
    \begin{tikzpicture}[scale=1.1]

\begin{scope}

 \tikzstyle{every node}=[inner sep=1pt, minimum width=14pt,scale=0.7, font=\footnotesize]
\draw (0,0) node (n121) {\color{blue}$(1,2,1)$};
\draw (0,1) node (n131) {\color{blue}$(1,3,1)$};
\draw (0,2) node (n141) {\color{blue}$(1,4,1)$};

\draw (1,0.5) node (n122) {\color{blue}$(1,2,2)$};
\draw (1,1.5) node (n132) {\color{blue}$(1,3,2)$};
\draw (1,2.5) node (n142) {\color{blue}$(1,4,2)$};

\draw (n132) -- (n131) ;
\draw (n122) -- (n121) ;
\draw (n131) -- (n121) ;
\draw (n132) -- (n122) ;
\draw (n131) -- (n141) ;
\draw (n132) -- (n142) ;
\draw (n141) -- (n142) ;

\draw (1,3.5) node (n231) {\color{blue}$(2,3,1)$};
\draw (2,4) node (n232) {\color{blue}$(2,3,2)$};
\draw (1,4.5) node (n241) {\color{blue}$(2,4,1)$};
\draw (2,5) node (n242) {\color{blue}$(2,4,2)$};

\draw (n231) -- (n142) ;
\draw (n232) -- (n231) ;
\draw (n241) -- (n242) ;
\draw (n232) -- (n242) ;
\draw (n231) -- (n241) ;

\draw (2,6) node (n341) {\color{blue}$(3,4,1)$};
\draw (3,6.5) node (n342) {\color{blue}$(3,4,2)$};

\draw (n242) -- (n341) ;
\draw (n342) -- (n341) ;

\node at (2.5,1.5) {\Large $\Lambda_{3,2}$};

\end{scope}

\begin{scope}[shift={(5,0)}]

 \tikzstyle{every node}=[inner sep=1pt, minimum width=14pt,scale=0.7, font=\footnotesize]
\draw (0,0) node (n121) {\color{blue}$(1,2,1)$};
\draw (0,1) node (n131) {\color{blue}$(1,3,1)$};

\draw (1,0.5) node (n122) {\color{blue}$(1,2,2)$};
\draw (1,1.5) node (n132) {\color{blue}$(1,3,2)$};

\draw (2,1) node (n123) {\color{blue}$(1,2,3)$};
\draw (2,2) node (n133) {\color{blue}$(1,3,3)$};

\draw (n133) -- (n132) -- (n131) ;
\draw (n123) -- (n122) -- (n121) ;
\draw (n131) -- (n121) ;
\draw (n132) -- (n122) ;

\draw (2,3) node (n231) {\color{blue}$(2,3,1)$};

\draw (3,3.5) node (n232) {\color{blue}$(2,3,2)$};

\draw (4,4) node (n233) {\color{blue}$(2,3,3)$};

\draw (n233) -- (n232) -- (n231) ;

\draw (n231)  -- (n133) -- (n123);

\node at (3.5,1.5) {\Large $\Lambda_{2,3}$};
\end{scope}

\end{tikzpicture}
    \caption{Posets of labels $\Lambda_{3,2}$ and $\Lambda_{2,3}$}
         \label{figure:example_poset_labelsn3k3}
\end{figure}

Recall that if $\balpha \lessdot \bbeta$ in $\Pi^r_n$   then $\bbeta$ is obtained from $\balpha$ by merging two $r$-weighted blocks $A^\mu$ and $B^\nu$ to obtain the $r$-weighted block $(A \cup B)^{{\bf e}_j+\mu+\nu}$, where $j \in [r]$.
 Assume $\min A < \min B$ and let $$\lambda(\balpha \lessdot \bbeta) = (\min A, \min B, j).$$    For all 
$\nu \in \wcomp_{n-1,r}$, let
$$  \lambda([n]^{\nu} \lessdot \hat{1})=(1,n+1,1).$$ This defines an edge labeling
$\lambda:\E(\widehat{\Pi^r_n}) \to \Lambda_{n,r}$.

\begin{theorem}[\cite{GonzalezDLeon2016},  \cite{GonzalezDLeonWachs2016} r=2 case] \label{theorem:ellabelingposet}
 For all $r \in \ZZ_{>0} \cup \{\infty\}$, the edge labeling $\lambda:\E(\widehat{\Pi^r_n})\rightarrow \Lambda _{n,r}$ defined above is an
EL-labeling of $\widehat{\Pi^r_n}$.  
\end{theorem}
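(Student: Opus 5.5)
The statement to prove is that $\lambda$ is an EL-labeling of $\widehat{\Pi^r_n}$. The plan is to verify the two EL conditions interval by interval, splitting into two kinds of closed intervals: $[\balpha,\bbeta]$ with $\bbeta\neq\hat 1$, and $[\balpha,\hat 1]$. By Proposition~\ref{proposition:multiMobius_prod} and Proposition~\ref{proposition:multi_upper_order} (with $G=K_n$), an interval $[\balpha,\bbeta]$ of the first kind is a product of intervals of the form $[\hat 0, S^{\nu}]$ in $\Pi^r_{S}$ for blocks $S$ of the underlying partition of $\bbeta$ after contracting the blocks of $\balpha$. In this product decomposition each merge keeps the label $(\min A,\min B,j)$, where $A,B$ are contracted blocks represented by their minima. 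Since labels from different factors have different first coordinates ordered by block minima, I would reduce to intervals $[\hat 0, [m]^\nu]$ in $\Pi^r_m$ (with atoms relabelled by the order-preserving map on minima), plus the top intervals $[\balpha,\hat 1]$.

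For $[\hat 0,[m]^\nu]$, I would first identify the increasing chain. Ordinal-sum structure forces the first coordinates to be weakly increasing along an increasing chain. A first coordinate of $1$ must occur in the final step, because that step merges the block containing $1$. The goal is to show every merge has first coordinate $1$, so the chain attaches $2,3,\dots,m$ to the block of $1$ in this order; this is forced since within $\Gamma_{1,r}$ the second coordinate $b$ must weakly increase and labels must strictly increase. The colors $j$ must then be weakly increasing, and must be strictly increasing in $(b,j)$ in the product order. Since $b$ strictly increases, the colors just need to be weakly increasing. The multiset of colors is determined by $\nu$: the weight $\nu$ is the sum of the $\mathbf e_{j}$ added at each of the $m-1$ merges. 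So exactly one color word works, namely the weakly increasing arrangement of $\nu$. That gives existence and uniqueness, modulo the argument that any increasing chain has all first coordinates equal to $1$. For that, I would argue that if some step had first coordinate $a>1$, every later step also has first coordinate $\ge a$, contradicting that the last step involves the block of $1$ unless the chain merges everything at once, which is impossible. Lexicographic minimality then follows by a greedy argument. The smallest possible first label of any chain is $(1,2,j_{\min})$ with $j_{\min}=\min\{j:\nu(j)>0\}$, and any chain whose first label is not minimal is lex larger. For ties, I would induct on the interval above the first atom, which is again of the same type by Proposition~\ref{proposition:multi_upper_order}. Since $\Lambda_{n,r}$ is only partially ordered, lex comparison must be taken with respect to a linear extension or handled carefully. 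The point I would check is that the increasing chain's label is less than or equal to the competing label in the first position where they differ, and not merely incomparable.

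For $[\balpha,\hat 1]$, maximal chains pass through some $[n]^\nu$ and end with label $(1,n+1,1)$. An increasing chain must have all labels $<(1,n+1,1)$, hence in $\Gamma_{1,r}$ with color $1$. It therefore attaches the blocks of $\balpha$ to the block containing $1$ in increasing order of minima, all with color $1$, and such a chain exists and is unique. Lex-minimality follows from the same greedy argument.

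The main obstacle I expect is the lexicographic comparison in a non-linearly ordered label poset. The key case is a competitor whose first label is $(1,b,j')$ with $j'$ incomparable to the increasing chain's label $(1,2,j)$, for instance $b=2$ with $j'\neq j$, or $b>2$ with $j'<j$. I would try first to show directly that $j_{\min}$ can be chosen at the first step by using the fact that the color multiset is fixed. In the top intervals, the corresponding point is that color $1$ labels precede all others, using the convention from \cite{GonzalezDLeon2016}.
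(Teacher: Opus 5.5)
Your strategy matches the argument in the cited proof, whose output is recorded in Lemma~\ref{theorem:increasingchains}. The increasing chain is the fundamental chain with weakly increasing colors, or with all colors equal to $1$ in $[\balpha,\hat 1]$. Lex-minimality comes from showing that its first label lies below every possible first label, then inducting on the interval above the first step. Two steps, however, do not go through as you wrote them.

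\textbf{The reduction to single-block intervals rests on a false claim.} Labels from different factors are \emph{not} ordered by block minima. In $[\hat 0, 146^{\nu_1}|25^{\nu_2}|3^{\mathbf 0}]$ (with $n=6$), a chain may begin by merging $4$ and $6$, giving first coordinate $4$, while the other factor produces first coordinate $2$. So $\lambda$ on the product is not an ordinal sum of the factor labelings. Bj\"orner's product theorem does not rescue this, since it builds a different labeling.

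What is true, and suffices, is the following: a merge inside $B_i$ has first coordinate in $B_i$, hence $\ge \min B_i$, with equality exactly when it involves the block containing $\min B_i$. So work directly on $[\balpha,\bbeta]$, using the notation of Definition~\ref{definition:fundamental}.
\begin{itemize}
\item The last merge inside $B_i$ has first coordinate $\min B_i$.
\item Weak monotonicity of first coordinates along an increasing chain then forces every merge inside $B_i$ to have first coordinate $\min B_i$.
\item Hence the chain is the concatenation, in order of $\min B_i$, of runs attaching $A_{i,2},A_{i,3},\dots$ to $A_{i,1}$ with weakly increasing colors.
\end{itemize}

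\textbf{The lexicographic obstacle you flag disappears, for exactly the reason you suspect, but you must say it.} Let $\delta_i$ be the weight of $B_i$ in $\bbeta$ minus the sum of the weights of the $A_{i,p}$. Any color $j$ used on an edge inside $B_i$, in any maximal chain, satisfies $\delta_i(j)\ge 1$. Let $i_0$ be the first index with $k_{i_0}\ge 2$, and let $j_0=\min\{j:\delta_{i_0}(j)>0\}$. Then every possible first label is $\ge (\min B_{i_0},\min A_{i_0,2},j_0)$ in $\Lambda_{n,r}$. It is comparable, not merely ``not smaller,'' and equality holds only for the same cover, since from a fixed element a label determines the cover.

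So neither incomparable case you list can occur. In your notation, $b=2$ with $j'\neq j$ forces $j'>j$, and $j'<j$ is impossible. For $[\balpha,\hat 1]$ the same argument works because $1$ is the least color, so $(1,b_2,1)\le(1,b,j)$ for all admissible $b,j$. The induction then closes, because the tail of the increasing chain is the unique increasing chain of $[\bpi_1,\bbeta]$.

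Finally, do not replace $\Lambda_{n,r}$ by a linear extension: that changes which chains are increasing. In $[\hat 0,123^{(1,1)}]$ of $\Pi^2_3$, the chain with labels $(1,2,2),(1,3,1)$ would become a second increasing chain. The comparisons must therefore be made in the poset itself, which the observation above allows.
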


 Given any chain ${\bf c}$ of $\Pi_n^r$, the underlying partitions of the weighted partitions of  ${\bf c}$ form a chain  $c$ in $\Pi_n$, which we call the {\em underlying chain} of ${\bf c}$.
 We now describe the underlying chain of the unique increasing maximal chain of each interval of $\Pi^r_n$ under the EL-labeling $\lambda$. 
 \begin{definition} \label{definition:fundamental}
Let $\alpha < \beta \in \Pi_n$
and let
$$B_1,\dots, B_m$$ be the  blocks of  $\beta$, listed in increasing order of their minimum elements.   For each $i=1,\dots, m$, let 
$$A_{i,1}, \dots, A_{i,k_i} $$
be the  blocks  of  $\alpha$ that are merged to form  
the  block $B_i$,   also listed in increasing order of their minimums.  
Now  define the {\em fundamental chain}  $c(\alpha,\beta)$ of the interval  $[\alpha, \beta]$ to be  the concatenation of the  chains $c_0, c_1,\dots,c_m$, where 
$c_0$ is the chain consisting only of the  partition  $\alpha$ and 
for each $i \ge 1$,   the chain $ c_i$   is

$$ (A_{i,1} \cup A_{i,2}) \lessdot_{\Pi_n} (\bigcup_{j=1}^{3}A_{i,j})  \lessdot_{\Pi_n} \cdots \lessdot_{\Pi_n} (\bigcup_{j=1}^{k_i}A_{i,j}) = B_i,$$
where only the newly merged block is listed.  In other words, $c_i$ is the chain $$ \pi_{i,1} \lessdot_{\Pi_n} \pi_{i,2} \lessdot_{\Pi_n} \cdots  \lessdot_{\Pi_n} \pi_{i,k_i-1}$$ where for each $p \in [k_i-1]$, the partition $\pi_{i,p} $ is equal to
$$ \{B_1,\dots,B_{i-1}\} \cup \{ \bigcup_{j=1}^{p+1}A_{i,j}, A_{i,p+2},\dots, A_{i,k_i}\} \cup \bigcup_{j=i+1}^m \{ A_{j,1},\dots, A_{j,k_j} \}.$$ 
\end{definition}

 The following result is extracted from the proof of Theorem~\ref{theorem:ellabelingposet} given in \cite{GonzalezDLeon2016, GonzalezDLeonWachs2016}. 

\begin{lemma}\label{theorem:increasingchains} Let $\balpha < \bbeta \in \widehat{\Pi^r_n}$.
\begin{enumerate}
 \item If $\bbeta \ne \hat 1$ then  the underlying chain of the unique increasing maximal chain  of $[\balpha,\bbeta]$ is the fundamental chain
 $c(\alpha, \beta)$, where $\alpha$ is the underlying partition of $\balpha$ and $\beta$ is the underlying partition of $\bbeta$.
 \item Suppose {$\bbeta= \hat 1$}.  Then the unique increasing maximal chain of $[\balpha,\bbeta]$ is
 the concatenation of the unique increasing maximal chain of $[\balpha, [n]^{w(\balpha)+(|\balpha|-1){\bf e}_1}]$ and $\hat 1$, where $w(\balpha)$ is the sum of the weights of the blocks of $\balpha$.
 \end{enumerate}
\end{lemma}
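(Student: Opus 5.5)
Both parts can be checked directly against the labeling $\lambda$. Since $\lambda$ is an EL-labeling (Theorem~\ref{theorem:ellabelingposet}), each interval has exactly one strictly increasing maximal chain. It is therefore enough to exhibit one strictly increasing maximal chain of $[\balpha,\bbeta]$ with the stated shape; uniqueness then identifies it.

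\emph{Part (1).} Let $\bbeta\ne\hat 1$. I lift the fundamental chain $c(\alpha,\beta)$ to a chain in $\Pi_n^r$ by choosing weight increments. In the step of $c_i$ that merges $\bigcup_{j\le p}A_{i,j}$ with $A_{i,p+1}$, the label is $(\min A_{i,1},\min A_{i,p+1},s)$. Here $\min A_{i,1}=\min B_i$, and the second coordinates $\min A_{i,p+1}$ strictly increase in $p$. The total increment available for $B_i$ is the weak composition $\nu_i-\sum_j \mu_{i,j}\in\wcomp_{k_i-1,r}$. I write it as the weakly increasing list of its color indices $s_1\le s_2\le\cdots\le s_{k_i-1}$, and use $s_p$ at step $p$. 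Within $c_i$ the first coordinate is constant, the second strictly increases and the third weakly increases. Hence the labels strictly increase in the product order on $\Gamma_{\min B_i,r}$.

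Between $c_i$ and $c_{i+1}$ the first coordinate jumps from $\min B_i$ to $\min B_{i+1}>\min B_i$. In the ordinal sum $\Lambda_{n,r}$ this gives a strict increase. The blocks $B_i$ with $k_i=1$ contribute no steps and can be skipped. So this lift is a strictly increasing maximal chain of $[\balpha,\bbeta]$, and by uniqueness it is the increasing one. Its underlying chain is $c(\alpha,\beta)$.

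\emph{Part (2).} Let $\bbeta=\hat 1$. Every maximal chain of $[\balpha,\hat 1]$ ends with an edge $[n]^\nu\lessdot\hat 1$, labelled $(1,n+1,1)$. In $\Gamma_{1,r}$, an element $(1,b,j)$ lies below $(1,n+1,1)$ exactly when $j=1$. Every element of $\Gamma_{a,r}$ with $a\ge 2$ lies above it. So a chain ending at $\hat1$ can be increasing only if every earlier label is of the form $(1,b,1)$. That forces every merge to involve the block containing $1$ and to use color $1$.

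I now apply part (1) to $[\balpha,[n]^{\nu}]$ with $\nu=w(\balpha)+(|\balpha|-1)\mathbf e_1$. Here $\beta=\{[n]\}$, so there is a single $c_1$, and its weight increments are all $\mathbf e_1$. Its labels are $(1,\min A_{1,p+1},1)$ with strictly increasing second coordinates, all below $(1,n+1,1)$. Appending $\hat1$ therefore gives a strictly increasing maximal chain of $[\balpha,\hat1]$, and by uniqueness it is the one. When $|\balpha|=1$ the chain is simply $\balpha\lessdot\hat1$, which is consistent with the claim.

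\emph{Main obstacle.} The delicate point is confirming that the comparisons across consecutive edges use exactly the order of $\Lambda_{n,r}$. The product order only applies inside a single $\Gamma_{a,r}$, and the ordinal sum handles all comparisons across different $\Gamma$'s. I will check these two cases separately. In part (1), the chosen lift must in fact lie in the interval. This holds because each intermediate weight is bounded by $\nu_i$, which is ensured by distributing the colors $s_p$ so that they sum exactly to $\nu_i-\sum_j\mu_{i,j}$.
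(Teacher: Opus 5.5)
Your proof is correct, but it follows a different logical route from the one the paper relies on. The paper gives no independent argument. It says the lemma is extracted from the proof of Theorem~\ref{theorem:ellabelingposet} in the cited works, where the increasing chain is identified in the course of proving the EL property. There, its uniqueness (and lexicographic minimality) has to be established directly, by showing that any strictly increasing chain must have the fundamental shape, rather than being assumed. Your observation in part (2), that every label before $(1,n+1,1)$ must have the form $(1,b,1)$, is exactly the kind of necessity argument that direct analysis uses.

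You instead use Theorem~\ref{theorem:ellabelingposet} as a black box, so uniqueness comes for free and you only need to exhibit one strictly increasing lift of $c(\alpha,\beta)$. Your check of that lift is complete:
\begin{itemize}
\item within each $c_i$, the first coordinate is constant and the second strictly increases;
\item the colors are the weakly increasing rearrangement of $\nu_i-\sum_j\mu_{i,j}$;
\item between consecutive $c_i$, the ordinal-sum jump gives a strict increase.
\end{itemize}
It also determines the colors of the increasing chain, which is more than part (1) states and is exactly what part (2) needs.

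This is legitimate here, since the theorem is quoted from the literature. It cannot replace the original argument, though: it could not be used to prove Theorem~\ref{theorem:ellabelingposet} itself without circularity, whereas the direct analysis yields the EL property and this lemma together.
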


\subsection{Review: Lyndon-colored trees and ascent-free chains}
In this section we present the description given in \cite{GonzalezDLeon2016, GonzalezDLeonWachs2016} of the ascent-free maximal chains of each maximal interval $[\hat 0, [n]^\nu]$ of $\Pi_n^r$ under the EL-labeling $\lambda$ of Theorem~\ref{theorem:ellabelingposet} in terms of certain leaf labeled binary trees with colored internal nodes.  This, by Corollary~\ref{corollary:shellability_pure}, gives  a combinatorial description of the coefficients of the M\"obius symmetric function.

Recall that for each $r$, each maximal interval $[\hat 0,[n]^\nu]$ of $\Pi_n^r$ can be viewed as a maximal interval of $\Pi_n^\infty $ by viewing the weak $r$-compositions as weak $\infty$-compositions.  Hence we can restrict ourselves to $r=\infty$ in what follows.  

Recall the discussion of leaf-labeled binary trees given in Section~\ref{subsection:chordal_graphs}.  Now we add {\em color} to the internal nodes of these trees.  Let $\internal(T)$ be the set of internal nodes of a  binary tree $T$.  A coloring of a  binary tree $T$ is a function $\clr:\internal(T)\rightarrow \ZZ_{>0}$ that assigns a positive integer or \emph{color} to every internal node of $T$.  For each colored leaf-labeled binary tree $T$, define the {\em color type} of $T$ to be the weak $\infty$-composition $\nu(T) = (\nu(1),\nu(2), \cdots) $, where $\nu(i)$ is the number of internal nodes of $T$ that have  color $i$. 
 For each  $\nu \in \wcomp_{n-1}$, let $\mathcal{CT}_{n,\nu}$ denote the set of  colored  binary 
 trees on leaf set $[n]$ that have color type $\nu$.  

Recall the discussion of  normalized leaf-labeled binary tree and  Lyndon nodes  in Section~\ref{subsection:chordal_graphs}.
A {\em Lyndon-coloring} of a normalized leaf-labeled binary tree $T$  is a coloring $\clr$ that satisfies: 
for all internal nodes  $x$ of $T$ whose left child $L(x)$ is not a leaf, $$\min A_{R(L(x))}< \min A_{R(x)} \implies  \clr(L(x))>\clr(x).$$ In other words, each nonLyndon internal node must be assigned a color that is less than the color assigned to its left child.   Thus only Lyndon trees can have a monochromatic Lyndon-coloring.   For $\nu \in \wcomp_{n-1}$,  let $\Lyn_{n,\nu}$ denote the set of Lyndon-colored normalized binary trees on leaf set $[n]$ that have color type $\nu$. 

An example of a Lyndon-colored normalized binary tree in $\Lyn_{6,(2,2,1)}$ is given in Figure~\ref{figure:example_colored_lyndon_tree_with_linear_extension}.  The small number next to each internal node $x$ is  $\min A_x$ and the circled numbers give an ordering of the internal nodes which will be explained below.   In Figure~\ref{figure:lyndon_basis_example},  all the Lyndon-colored normalized  binary trees on leaf set $[3]$ with colors in $[2]$ are listed. From this we see that
$$|\Lyn_{3,(2,0)}|   = 2\,\,  \mbox{ and } \,\, |\Lyn_{3,(1,1)}|  = 5  \,\,  \mbox{ and } \,\,  |\Lyn_{3,(0,2)}|  = 2$$

 \begin{figure}
    \centering
    \begin{tikzpicture}[scale=1.4]

\tikzstyle{every node}=[fill,draw,inner sep=3pt, scale=1.1, minimum width=4pt,scale=1.5]

\node[circle, red,pin={[red,draw,circle, inner sep =0.5pt, pin distance = 8pt, pin edge={red, thick, <-}]170:\footnotesize$3$}] (v2) at (-2,-0.5) {};

\node [rectangle, blue, inner sep = 4pt, pin={[red,draw,circle,inner sep =0.5pt, pin distance = 8pt, pin edge={red,  thick, <-}]170:\footnotesize$1$}] (v8) at (-0.5,-0.5) {};
\node [diamond, green, inner sep = 2.5pt, pin={[red,draw,circle,inner sep =0.5pt, pin distance = 8pt, pin edge={red, thick, <-}]170:\footnotesize$2$}] (v9) at (0,0) {};
\node[circle, red,pin={[red,draw,circle,inner sep =0.5pt, pin distance = 8pt, pin edge={red, thick, <-}]170:\footnotesize$4$}] (v3) at (-0.75,0.75) {};
\node[rectangle, blue, inner sep = 4pt,pin={[red,draw,circle,inner sep =0.5pt, pin distance = 8pt, pin edge={red, thick, <-}]170:\footnotesize$5$}] (v4) at (-0.25,1.25) {};

\tikzstyle{every node}=[inner sep=0pt, scale=1.1, minimum width=4pt,scale=1.5]

\node (v1) at (-2.5,-1) {$1$};
\node (v6) at (-1.5,-1) {$3$};
\node (v7) at (-1,-1) {$2$};
\node (v10) at (0,-1) {$6$};
\node (v11) at (0.5,-0.5) {$4$};
\node (v5) at (0.25,0.75) {5};
\tikzstyle{every path}=[ thick]

\draw  (v1) edge (v2);
\draw  (v2) edge (v3);
\draw  (v3) edge (v4);
\draw  (v4) edge (v5);
\draw  (v2) edge (v6);
\draw  (v7) edge (v8);
\draw  (v8) edge (v9);
\draw  (v8) edge (v10);
\draw  (v9) edge (v11);
\draw  (v3) edge (v9);
\node[draw, rectangle, blue,inner sep = 3pt, fill] at (-3.425,1.825) {};
\node[draw, circle, red,inner sep = 2pt, fill] at (-3.425,1.325) {};
\node[draw, diamond, green,inner sep = 2pt, fill] at (-3.425,0.825) {};
\node[blue] at (-2.925,1.825) {$1$};
\node[red] at (-2.925,1.325) {$2$};
\node[green] at (-2.925,0.825) {$3$};
\draw [gray] (-3.95,2.5) rectangle (-2.325,0.5);
\node at (-3.125,2.275) {\scriptsize Coloring };
\node[blue] at (-1.75,-0.5) {\tiny 1};
\node[blue] at (-0.5,0.75) {\tiny 1};
\node[blue] at (0,1.25) {\tiny 1};
\node[blue] at (0.25,0) {\tiny 2};
\node[blue] at (-0.25,-0.5) {\tiny 2};
\end{tikzpicture}
    \caption{Example of a Lyndon-colored binary tree in $\Lyn_{6,(2,2,1)}$}    \label{figure:example_colored_lyndon_tree_with_linear_extension}
\end{figure}

  \begin{figure}
    \centering
     \resizebox{\columnwidth}{!}{
     \begin{tikzpicture}[scale=0.5]

\tikzstyle{every node}=[fill, draw,inner sep=3pt,scale=0.8]
    \draw [color=blue] (-1,1.5)  node (i1){};
    \draw [circle, color=red] (-1,1)  node (i2){};
\tikzstyle{every node}=[inner sep=3pt,scale=0.8]
	\draw [color=blue] (-0.5,1.5)  node (i1){$1$};
    \draw [circle, color=red] (-0.5,1)  node (i2){$2$};

\begin{scope}[xshift=1cm,yshift=-2.5cm]

\tikzstyle{every node}=[fill, draw,inner sep=2pt,scale=0.8]
    \draw [color=blue] (1,1)  node (i1){};
    \draw [color=blue] (2,2)  node (i2){};

\tikzstyle{every node}=[inner sep=1pt, minimum width=14pt,scale=0.7]

    \draw (0,0)  node (m){$1$};
    \draw (2,0)  node (l1){$3$};
    \draw (3,1)  node (l2){$2$};

    \draw (m) --  (i1) ;
    \draw (i1) --  (l1) ;
    \draw (i1) --  (i2) ;
    \draw (i2) --  (l2) ;
\end{scope}

\begin{scope}[xshift=0cm]

\tikzstyle{every node}=[fill, draw,inner sep=2pt,scale=0.8]
    \draw [color=blue] (3,1)  node (i1){};
    \draw [color=blue] (2,2)  node (i2){};

\tikzstyle{every node}=[inner sep=1pt, minimum width=14pt,scale=0.7]

    \draw (2,0)  node (m){$2$};
    \draw (4,0)  node (l1){$3$};
    \draw (1,1)  node (l2){$1$};
    
    \draw (m) --  (i1) ;
    \draw (i1) --  (l1) ;
    \draw (i1) --  (i2) ;
    \draw (i2) --  (l2) ;
\end{scope}

\begin{scope}[xshift=4cm]

\tikzstyle{every node}=[fill, draw,inner sep=2pt,scale=0.8]
    \draw [circle,color=red] (3,1)  node (i1){};
    \draw [color=blue] (2,2)  node (i2){};

\tikzstyle{every node}=[inner sep=1pt, minimum width=14pt,scale=0.7]

    \draw (2,0)  node (m){$2$};
    \draw (4,0)  node (l1){$3$};
    \draw (1,1)  node (l2){$1$};
    
    \draw (m) --  (i1) ;
    \draw (i1) --  (l1) ;
    \draw (i1) --  (i2) ;
    \draw (i2) --  (l2) ;
\end{scope}

\begin{scope}[xshift=6.5cm]

\tikzstyle{every node}=[fill, draw,inner sep=2pt,scale=0.8]
    \draw [color=blue] (3,1)  node (i1){};
    \draw [circle,color=red] (2,2)  node (i2){};

\tikzstyle{every node}=[inner sep=1pt, minimum width=14pt,scale=0.7]

    \draw (2,0)  node (m){$2$};
    \draw (4,0)  node (l1){$3$};
    \draw (1,1)  node (l2){$1$};
    
    \draw (m) --  (i1) ;
    \draw (i1) --  (l1) ;
    \draw (i1) --  (i2) ;
    \draw (i2) --  (l2) ;
\end{scope}

\begin{scope}[xshift=11cm]
\tikzstyle{every node}=[fill, draw,inner sep=2pt,scale=0.8]
    \draw [circle,color=red] (1,1)  node (i1){};
    \draw [color=blue] (2,2)  node (i2){};

\tikzstyle{every node}=[inner sep=1pt, minimum width=14pt,scale=0.7]

    \draw (0,0)  node (m){$1$};
    \draw (2,0)  node (l1){$2$};
    \draw (3,1)  node (l2){$3$};

    \draw (m) --  (i1) ;
    \draw (i1) --  (l1) ;
    \draw (i1) --  (i2) ;
    \draw (i2) --  (l2) ;

\end{scope}

\begin{scope}[xshift=6.5cm,yshift=-2.5cm]
\tikzstyle{every node}=[fill, draw,inner sep=2pt,scale=0.8]
    \draw [circle,color=red] (1,1)  node (i1){};
    \draw [color=blue] (2,2)  node (i2){};

\tikzstyle{every node}=[inner sep=1pt, minimum width=14pt,scale=0.7]

    \draw (0,0)  node (m){$1$};
    \draw (2,0)  node (l1){$3$};
    \draw (3,1)  node (l2){$2$};

    \draw (m) --  (i1) ;
    \draw (i1) --  (l1) ;
    \draw (i1) --  (i2) ;
    \draw (i2) --  (l2) ;
\end{scope}

\begin{scope}[xshift=9.5cm,yshift=-2.5cm]
\tikzstyle{every node}=[fill, draw,inner sep=2pt,scale=0.8]
    \draw [color=blue] (1,1)  node (i1){};
    \draw [circle,color=red] (2,2)  node (i2){};

\tikzstyle{every node}=[inner sep=1pt, minimum width=14pt,scale=0.7]

    \draw (0,0)  node (m){$1$};
    \draw (2,0)  node (l1){$3$};
    \draw (3,1)  node (l2){$2$};

    \draw (m) --  (i1) ;
    \draw (i1) --  (l1) ;
    \draw (i1) --  (i2) ;
    \draw (i2) --  (l2) ;
\end{scope}
\begin{scope}[xshift=15.5cm,yshift = -2.5cm]
\tikzstyle{every node}=[fill, draw,inner sep=2pt,scale=0.8]
    \draw [circle,color=red] (1,1)  node (i1){};
    \draw [circle,color=red] (2,2)  node (i2){};

\tikzstyle{every node}=[inner sep=1pt, minimum width=14pt,scale=0.7]

    \draw (0,0)  node (m){$1$};
    \draw (2,0)  node (l1){$3$};
    \draw (3,1)  node (l2){$2$};
    
    \draw (m) --  (i1) ;
    \draw (i1) --  (l1) ;
    \draw (i1) --  (i2) ;
    \draw (i2) --  (l2) ;
\end{scope}

\begin{scope}[xshift=14.5cm]

\tikzstyle{every node}=[fill, draw,inner sep=2pt,scale=0.8]
    \draw [circle,color=red] (3,1)  node (i1){};
    \draw [circle,color=red] (2,2)  node (i2){};

\tikzstyle{every node}=[inner sep=1pt, minimum width=14pt,scale=0.7]

    \draw (2,0)  node (m){$2$};
    \draw (4,0)  node (l1){$3$};
    \draw (1,1)  node (l2){$1$};
    
    \draw (m) --  (i1) ;
    \draw (i1) --  (l1) ;
    \draw (i1) --  (i2) ;
    \draw (i2) --  (l2) ;
\end{scope}
\end{tikzpicture}
    }
    \caption{The Lyndon-colored trees on leaf set $[3]$ with colors in $[2]$.}
    \label{figure:lyndon_basis_example}
\end{figure}

 Binary trees on leaf set $[n]$ are used in \cite{Wachs1998} to encode maximal chains of $\Pi_n=\Pi_n^1$ and bicolored binary trees on leaf set $[n]$ are used in \cite{GonzalezDLeonWachs2016} to encode maximal chains of $\mathcal W\Pi_n= \Pi_n^2$.  The general $r$-colored binary trees on leaf set $[n]$ are used in \cite{GonzalezDLeon2016} to encode maximal chains of $\Pi_n^r$. Now we describe how this is done. 
  
We call an ordering of the internal nodes of a  binary tree $T$ in which each  node appears before its parent, a {\it linear extension} of $T$.  For each $\bT \in \mathcal {CT}_{n,\nu}$ and linear extension $\tau=(x_1,\dots,x_{n-1})$ of $\bT$, we can construct a maximal chain $$c(\bT,\tau)=(\hat{0}=\bpi_0\lessdot \bpi_1\lessdot \cdots \lessdot \bpi_{n-1}=[n]^{\nu})$$ of   $[\hat 0, [n]^\nu]$  such that for all $i \in [n-1]$, the weighted partition $\bpi_i$ is obtained from the weighted partition $\bpi_{i-1}$ by  merging the  weighted blocks $A_{L(x_i)}^{\alpha}$ and $A_{R(x_i)}^{\beta}$ of $\bpi_{i-1}$ to obtain the weighted block $A_{x_i}^{\alpha+\beta+{\bf e}_{\clr(x_i)}}$ of $\bpi_i$.  
 In Figure~\ref{figure:chain_associated_to_a_tree} we illustrate the chain $c(\bT,\tau)$ associated to the colored leaf-labeled tree $\bT$  of  Figure \ref{figure:example_colored_lyndon_tree_with_linear_extension} 
and the linear extension $\tau$ given by the circled numbers. 
 \begin{figure}
    \centering
    \begin{tikzpicture}

\tikzstyle{every node}=[inner sep=3pt, scale=1.1, minimum width=4pt]

\node (v1) at (-2.5,-1) {$1^{(0,0,0)}|2^{(0,0,0)}|3^{(0,0,0)}|4^{(0,0,0)}|5^{(0,0,0)}|6^{(0,0,0)}$};
\node (v2) at (-2.5,0.5) {$1^{(0,0,0)}|26^{(1,0,0)}|3^{(0,0,0)}|4^{(0,0,0)}|5^{(0,0,0)}$};
\node (v3) at (-2.5,2) {$1^{(0,0,0)}|246^{(1,0,1)}|3^{(0,0,0)}|5^{(0,0,0)}$};
\node (v4) at (-2.5,3.5) {$13^{(0,1,0)}|246^{(1,0,1)}|5^{(0,0,0)}$};
\node (v5) at (-2.5,5) {$12346^{(1,2,1)}|5^{(0,0,0)}$};
\node (v6) at (-2.5,6.5) {$123456^{(2,2,1)}$};

\draw[very thick, color=blue]  (v1) edge (v2);
\draw[very thick, color=green]  (v2) edge (v3);
\draw[very thick, color=red]  (v3) edge (v4);
\draw[very thick, color=red]  (v4) edge (v5);
\draw[very thick, color=blue]  (v5) edge (v6);

\end{tikzpicture}
    \caption{The chain $c(\bT,\tau)$ for the colored leaf-labeled tree $\bT$ and linear extension $\tau$ of Figure~\ref{figure:example_colored_lyndon_tree_with_linear_extension}.}
    \label{figure:chain_associated_to_a_tree}
\end{figure}

Note that every maximal chain of $[\hat 0, [n]^\nu]$ can be represented as $c(\bT,\tau)$ for some  $\bT\in \mathcal{CT}_{n,\nu}$ and  linear extension $\tau$ of $\bT$.  However, the pair $(\bT,\tau)$ is not unique unless  we restrict to  normalized trees in $\mathcal{CT}_{n,\nu}$.
  It is not difficult to see that for each  binary tree $T$ on leaf set $[n]$, there is a unique linear extension $(x_1,\dots,x_{n-1} )$  of $T$ such that
$$\min A_{x_1} \ge \min A_{x_2}  \ge \cdots \ge \min A_{x_{n-1}}. $$
We will call this linear extension the \emph{decreasing-minimum} linear extension of $T$ and denote it by $\tau_T$. The linear extension depicted by the circled numbers in Figure \ref{figure:example_colored_lyndon_tree_with_linear_extension} is the decreasing-minimum linear extension $\tau_T$ of the  colored tree $T$ in the figure.

 \begin{theorem}[\cite{GonzalezDLeon2016}, r=2 case \cite{GonzalezDLeonWachs2016}]\label{theorem:ascent_free_chains_K} For all $\nu \in \wcomp_{n-1,r}$,  $$\{ c(\bT,\tau_{\bT})\mid \bT \in \Lyn_{n,\nu}\}$$ is the set of ascent-free maximal chains of the interval $[\hat{0},[n]^\nu]$ under the 
  EL-labeling  of $\widehat{\Pi_n^\infty}$ given inTheorem~\ref{theorem:ellabelingposet}.
\end{theorem}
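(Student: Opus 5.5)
The plan is to translate the ascent-free condition on the label word of a maximal chain into conditions on a colored tree and a linear extension, and then check that these conditions say exactly that the tree is Lyndon-colored and the extension is decreasing-minimum. The first step is a bijection. Every maximal chain of $[\hat 0,[n]^\nu]$ is $c(\bT,\tau)$ for a \emph{unique} pair $(\bT,\tau)$ with $\bT\in\mathcal{CT}_{n,\nu}$ normalized and $\tau$ a linear extension of $\bT$. The tree is recovered from the chain by recording the successive merges. At each step two blocks $A^\alpha$ and $B^\beta$ with $\min A<\min B$ are merged, and the new weight is $\alpha+\beta+\mathbf e_j$. We create an internal node with left subtree the node for $A$, right subtree the node for $B$, and color $j$. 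Normalization fixes the left/right choice, so $\bT$ is determined. The order of merges gives $\tau$. Conversely, $c(\bT,\tau)$ determines $\bT$ and $\tau$ in this way.

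In this encoding, the $i$-th edge of $c(\bT,\tau)$, where $\tau=(x_1,\dots,x_{n-1})$, has label
$$\lambda_i=(\min A_{x_i},\ \min A_{R(x_i)},\ \clr(x_i)),$$
because $\min A_{L(x_i)}=\min A_{x_i}$ by normalization. Next I use the order on $\Lambda_{n,r}$. Labels with different first coordinates $a<a'$ are comparable, with the one having $a$ smaller, since $\Lambda_{n,r}$ is an ordinal sum. Labels with the same first coordinate satisfy $(a,b,j)<(a,b',j')$ iff $b\le b'$, $j\le j'$ and the labels are distinct. Hence the chain is ascent-free iff two conditions hold. First, $\min A_{x_1}\ge\min A_{x_2}\ge\cdots\ge\min A_{x_{n-1}}$. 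Second, whenever $\min A_{x_i}=\min A_{x_{i+1}}$, we do not have both $\min A_{R(x_i)}\le\min A_{R(x_{i+1})}$ and $\clr(x_i)\le\clr(x_{i+1})$.

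The first condition says precisely that $\tau=\tau_{\bT}$, the unique decreasing-minimum linear extension. It remains to analyze consecutive nodes with equal minimum in $\tau_{\bT}$. The nodes $x$ with $\min A_x=a$ form the chain of ancestors of leaf $a$ reached by repeatedly going up while $a$'s subtree is the left child. Because $\tau_{\bT}$ is a linear extension, these nodes appear in bottom-to-top order. Because the minima are weakly decreasing, they appear consecutively. So if $\min A_{x_i}=\min A_{x_{i+1}}$, then $x_i=L(x_{i+1})$. Writing $x=x_{i+1}$, the right children $R(L(x))$ and $R(x)$ have disjoint leaf sets, so their minima differ. An ascent at position $i$ therefore occurs iff
$$\min A_{R(L(x))}<\min A_{R(x)}\quad\text{and}\quad \clr(L(x))\le\clr(x).$$
The first inequality says $x$ is nonLyndon. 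So the absence of an ascent is exactly the implication in the definition of a Lyndon-coloring: $\min A_{R(L(x))}<\min A_{R(x)}\implies\clr(L(x))>\clr(x)$. Every internal node $x$ whose left child is internal arises as such an $x_{i+1}$, and these are the only nodes the Lyndon-coloring condition constrains.

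Combining these steps, $c(\bT,\tau)$ is ascent-free iff $\tau=\tau_{\bT}$ and $\bT\in\Lyn_{n,\nu}$, which proves the theorem. The step needing the most care is the consecutiveness claim: that equal-minimum nodes are consecutive in $\tau_{\bT}$ and form a left spine, so that $x_i=L(x_{i+1})$. Without it, two equal-minimum nodes could be adjacent without being parent and child, and the local analysis above would not apply. I would prove it by noting three things. Every node $y$ with $\min A_y=a$ lies on the path from leaf $a$ to the root. Along that path, $\min A$ is weakly decreasing. Only a node in the same left-spine segment can have the same minimum.
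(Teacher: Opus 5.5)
Your proof is correct and complete. You encode each maximal chain as a normalized colored tree $\bT$ together with a linear extension $\tau$. Ascent-freeness then splits into two conditions: weakly decreasing first label coordinates, which forces $\tau=\tau_{\bT}$, and a local condition on the consecutive equal-minimum pairs, which are exactly $(L(x),x)$ with $L(x)$ internal and where the local condition is precisely the Lyndon-coloring implication. The paper itself gives no proof of this theorem and cites \cite{GonzalezDLeon2016} and \cite{GonzalezDLeonWachs2016}; your argument is the natural direct translation of the order on $\Lambda_{n,r}$ into these tree conditions, which I take to be the route used there, though I cannot confirm this from the present paper.
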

Now by Corollary~\ref{corollary:shellability_pure}, we have the following result.

\begin{corollary}[\cite{GonzalezDLeon2016}] For all $n \ge 1$, 
$$(-1)^{n-1} M_{K_n} (\xx) = \sum_{\nu \in \wcomp_{n-1}} |\Lyn_{n,\nu}| \xx^\nu.$$
\end{corollary}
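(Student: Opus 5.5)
The plan is to combine Theorem~\ref{corollary:shellability_pure} with Theorem~\ref{theorem:ascent_free_chains_K}, working one maximal interval at a time. Since $K_n$ is connected, the maximal elements of $\Pi^\infty_{K_n}=\Pi^\infty_n$ are exactly the $[n]^\nu$ with $\nu\in\wcomp_{n-1}$. Therefore
$$M_{K_n}(\xx)=\sum_{\nu\in\wcomp_{n-1}}\mu_{\Pi_n^\infty}(\hat 0,[n]^\nu)\,\xx^\nu .$$
It suffices to show that $(-1)^{n-1}\mu_{\Pi_n^\infty}(\hat 0,[n]^\nu)=|\Lyn_{n,\nu}|$ for each $\nu$.

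Fix $\nu$. The closed interval $[\hat 0,[n]^\nu]$ is finite, pure of length $n-1$, and bounded. By Theorem~\ref{theorem:ellabelingposet}, the labeling $\lambda$ restricts to an EL-labeling of this interval; this is the remark made before that theorem about restricting to closed intervals of $\Pi_n^\infty$. Theorem~\ref{corollary:shellability_pure} then gives that $(-1)^{n-1}\mu(\hat 0,[n]^\nu)$ equals the number of ascent-free maximal chains of the interval. (Each maximal chain $\cc$ is determined by $\bar\cc$, since the endpoints are fixed.)

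By Theorem~\ref{theorem:ascent_free_chains_K}, these ascent-free chains are exactly the chains $c(\bT,\tau_{\bT})$ with $\bT\in\Lyn_{n,\nu}$. The count therefore equals $|\Lyn_{n,\nu}|$, provided the map $\bT\mapsto c(\bT,\tau_{\bT})$ is injective on $\Lyn_{n,\nu}$. This injectivity is the only point needing care.

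To prove injectivity, recover $\bT$ from the chain. Each cover $\bpi_{i-1}\lessdot\bpi_i$ merges two blocks $A$ and $B$ with $\min A<\min B$ and increments one coordinate $j$ of the weight. Build the internal node $x_i$ with left subtree the node or leaf already built on $A$, right subtree the one built on $B$, and color $j$. Normalization forces the left/right assignment, and the color is read off from the weight increment. Hence the normalized colored tree is uniquely determined by the chain.

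Summing $|\Lyn_{n,\nu}|\,\xx^\nu$ over $\nu$ gives the stated formula.
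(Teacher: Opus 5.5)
Your proposal is correct and is essentially the paper's argument: on each maximal interval $[\hat 0,[n]^\nu]$, apply Theorem~\ref{corollary:shellability_pure} to the restricted EL-labeling, with the ascent-free chains supplied by Theorem~\ref{theorem:ascent_free_chains_K}. Your explicit check that $\bT\mapsto c(\bT,\tau_{\bT})$ is injective (the tree and its coloring can be read back from the merges and weight increments) is a detail the paper leaves implicit, and it is right.
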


From this corollary, one can show that $(-1)^{n-1} M_{K_n}(\xx)$ is $e$-positive. Details are given in a more general setting in Section~\ref{subsection:chordal}.  From Figure~\ref{figure:lyndon_basis_example}, one can see that
$$M_{K_3} (x_1,x_2) = 2x_1^2  +5x_1x_2+2x_2^2 = e_2(x_1,x_2)+ 2e_{1,1}(x_1,x_2)$$

\subsection{Generalization to chordal graphs} \label{subsection:chordal}
In this section, we generalize the results of the previous section to multiweighted bond posets of chordal graphs.  

Recall the definitions of  PEO and  {\em perfectly labeled chordal graph} from Section~\ref{section:gamma_positive}. 
The following observation gives an equivalent definition of a  PEO.
\begin{proposition}\label{lemma:equivalent_condition_PEO}
An ordering $v_1,v_2,\cdots,v_n$ of the vertices of a graph $G$ is a PEO if and only if whenever $i<j<k$, and $\{v_i,v_k\}$ and $\{v_j,v_k\}$ are edges of $G$ then $\{v_i,v_j\}$ must also be an edge of $G$, i.e., there are no induced subgraphs that are paths of the form $v_i-v_k-v_j$.
\end{proposition}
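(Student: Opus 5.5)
The plan is to unwind the definition of a PEO and compare it with the stated condition in both directions.

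By definition, $v_1,\dots,v_n$ is a PEO if and only if, for every $k$, the set $N^-(v_k):=\{v_i : i<k,\ \{v_i,v_k\}\in E\}$ induces a clique. A set induces a clique exactly when every pair of distinct elements in it is adjacent. So the PEO condition says: for every $k$ and every pair of distinct indices $i,j<k$ with $v_i,v_j\in N^-(v_k)$, the pair $\{v_i,v_j\}$ is an edge.

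We may relabel such a pair so that $i<j$, since the roles of $i$ and $j$ are symmetric. The condition then reads: whenever $i<j<k$ and both $\{v_i,v_k\}$ and $\{v_j,v_k\}$ are edges, $\{v_i,v_j\}$ is an edge. This is precisely the condition in the statement, so the two conditions are literally equivalent. Both implications follow from this reformulation.

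It remains to justify the rephrasing ``no induced subgraphs that are paths of the form $v_i-v_k-v_j$.'' An induced subgraph on $\{v_i,v_j,v_k\}$ with $i<j<k$ is such a path exactly when $v_k$ is adjacent to both $v_i$ and $v_j$ and $v_i,v_j$ are nonadjacent. Forbidding these paths is therefore the same as the stated implication.

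There is no real obstacle. The only care needed is the symmetric reindexing $i<j$ and the observation that cliques are characterized by pairwise adjacency.
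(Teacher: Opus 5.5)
Your argument is correct. Unwinding the definition of a PEO (the earlier neighbors of each $v_k$ form a clique) into pairwise adjacency, and ordering the pair as $i<j<k$, gives exactly the stated condition. The paper gives no separate proof and presents the statement as an immediate observation from the definition, so your direct unwinding is essentially the intended argument.
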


\begin{corollary} \label{corollary:induced_PEO} Let $G$ be a perfectly labeled chordal graph and let $H$ be an induced subgraph.  Then $H$ is a perfectly labeled chordal graph. 
\end{corollary}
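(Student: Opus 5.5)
The plan is to apply the characterization of Proposition~\ref{lemma:equivalent_condition_PEO} directly. Let $G$ be a perfectly labeled chordal graph on vertex set $V=\{v_1<v_2<\dots<v_n\}$. Let $H$ be the subgraph induced by $W\subseteq V$, with $W=\{w_1<\dots<w_m\}$. We must show that the natural order $w_1<\dots<w_m$ is a PEO for $H$.

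By Proposition~\ref{lemma:equivalent_condition_PEO}, it suffices to take $a<b<c$ in $W$ such that $\{a,c\}$ and $\{b,c\}$ are edges of $H$, and to show that $\{a,b\}$ is an edge of $H$. Since $H$ is a subgraph of $G$, both $\{a,c\}$ and $\{b,c\}$ are edges of $G$. The order on $W$ is the restriction of the natural order on $V$, so $a<b<c$ in $V$ as well. Because $v_1<\dots<v_n$ is a PEO for $G$, the proposition gives that $\{a,b\}$ is an edge of $G$. Finally, $H$ is induced and $a,b\in W$, so $\{a,b\}$ is an edge of $H$.

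This shows $H$ has no induced path of the form $w_i-w_k-w_j$ with $i<j<k$. Hence the natural order on $W$ is a PEO, and $H$ is a perfectly labeled chordal graph.

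There is no real obstacle here. The only points to keep explicit are that the induced hypothesis is what lets the edge $\{a,b\}$ pass back from $G$ to $H$, and that the natural order on $W\subset\ZZ_{>0}$ agrees with the order inherited from $V$. Alternatively, one could argue from the original definition: the earlier neighbors of $w$ in $H$ are the earlier neighbors of $w$ in $G$ intersected with $W$, and a subset of a clique of $G$ lying in $W$ is a clique of the induced subgraph $H$.
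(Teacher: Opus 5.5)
Your argument is correct and is essentially the paper's: the paper states this corollary without proof as an immediate consequence of Proposition~\ref{lemma:equivalent_condition_PEO}, because forbidding induced paths $v_i - v_k - v_j$ with $i<j<k$ is a condition inherited by induced subgraphs under the restricted order. Your explicit check supplies exactly the omitted details: edges of $H$ are edges of $G$, the order on $W$ is the restricted order, and inducedness carries $\{a,b\}$ back into $H$.
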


Let $G$ be a graph on vertex set $\{v_1<\dots< v_n \} \subseteq \ZZ_{>0}$.  We say that a sequence $v_{j_1},v_{j_2},\dots,v_{j_m}$  is  an {\em increasing path} from $v_{j_1}$ to $v_{j_m}$ if  $j_1<j_2< \dots< j_m$ and $\{v_{j_i} ,v_{j_{i+1}}\}$ is an edge of $G$  for all $i \in [m-1]$.

\begin{lemma}\label{lemma:increasing_path}
Let $G$ be a connected perfectly labeled chordal graph on vertex set $\{v_1<\dots< v_n \} \subseteq \ZZ_{>0}$. Then for any $j\in [n]$ there is an increasing path from $v_1$ to $v_j$.
\end{lemma}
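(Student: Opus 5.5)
We argue by strong induction on $j$, using the characterization of a PEO in Proposition~\ref{lemma:equivalent_condition_PEO}. The case $j=1$ is trivial: the one-element sequence $v_1$ is an increasing path. So suppose $j>1$ and that an increasing path from $v_1$ to $v_i$ exists for every $i<j$.

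The key step is to show that $v_j$ has at least one neighbor $v_i$ with $i<j$. Once this is known, the inductive hypothesis gives an increasing path $v_1=v_{j_1},\dots,v_{j_m}=v_i$. Appending $v_j$ yields an increasing path from $v_1$ to $v_j$, since $i<j$ and $\{v_i,v_j\}$ is an edge.

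To find a smaller neighbor, we use connectivity of $G$. Choose a shortest path in $G$ from $v_1$ to $v_j$, and look at the vertex of this path with the largest index, say $v_k$. If $v_k=v_j$, the predecessor of $v_j$ on the path has smaller index and is a neighbor of $v_j$, and we are done. Otherwise $v_k$ is an interior vertex of the path, because $v_1$ has the smallest index and so cannot be the maximum. Then $v_k$ has two path-neighbors $v_a$ and $v_b$, both of index less than $k$. By Proposition~\ref{lemma:equivalent_condition_PEO}, $\{v_a,v_b\}$ is an edge of $G$. This edge gives a shortcut of the path, which contradicts minimality.

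A cleaner way to organize this step is to take, among all paths from $v_1$ to $v_j$, one minimizing its largest index, and derive a contradiction in the same way. In the shortest-path version above, the shortcut directly contradicts the choice of a shortest path, so either formulation works.

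The only point needing care is this shortcut argument, namely ensuring that $v_k$ is interior with two distinct path-neighbors. This holds because a shortest path is simple.
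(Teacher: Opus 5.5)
Your proof is correct, but it follows a different route from the paper's.

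\textbf{The paper's route.} The paper inducts on the number of vertices $n$ and argues by contradiction:
\begin{enumerate}
\item Assuming no increasing path to $v_j$ exists, it first reduces to the case where $G-\{v_j\}$ is connected. Otherwise it passes to the smaller induced subgraph on $v_j$ together with the component of $v_1$, which needs Corollary~\ref{corollary:induced_PEO}.
\item It takes the neighbor $v_k$ of $v_j$ of smallest index and shows $k>j$.
\item By the inductive hypothesis it obtains an increasing path to $v_k$ inside $G-\{v_j\}$.
\item The last edge $\{v_i,v_k\}$ of that path, together with $\{v_j,v_k\}$, forces $\{v_i,v_j\}$ to be an edge by Proposition~\ref{lemma:equivalent_condition_PEO}. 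This contradicts the minimality of $k$.
\end{enumerate}

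\textbf{Your route.} You keep $G$ fixed and induct on the index $j$. You isolate the single fact that every $v_j$ with $j>1$ has a neighbor of smaller index. You prove it with a shortcut argument on a shortest $v_1$--$v_j$ path: the largest-index vertex cannot be interior, since its two path-neighbors would be adjacent by the PEO condition.

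\textbf{Comparison.} Your argument avoids the vertex-deletion induction, the appeal to induced subgraphs, and the case split on whether $G-\{v_j\}$ is connected. It is therefore shorter and more elementary.

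Your shortcut argument also yields more, with no induction at all. Apply it to any interior vertex that is a local maximum of the index sequence, not just the global maximum. This shows a shortest path from $v_1$ has no interior local maxima. Since the path starts at the global minimum $v_1$, every shortest path from $v_1$ to $v_j$ is itself increasing.
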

\begin{proof}
Using the fact that any induced subgraph of a perfectly labeled chordal graph is a perfectly labeled chordal graph (Corollary~\ref{corollary:induced_PEO}), we will proceed by induction on the number $n$ of vertices of $G$. In graphs with one vertex the statement is trivial, so we assume that the lemma is true for any graph with less than $n$ vertices and let $G$ be a graph with $n$ vertices. Suppose there are no increasing paths from $v_1$ to $v_j$. Note then in particular,  $j>1$ and $\{v_1,v_j\}$ cannot be an edge of $G$. If the graph $G-\{v_j\}$ has more than one component, we can consider the induced subgraph formed by $v_j$ and the component of $G-\{v_j\}$ that contains $v_1$. Since this is a connected graph with less than $n$ vertices, we will find an increasing path from $v_1$ to $v_j$, which gives a contradiction. This implies that  $G-\{v_j\}$ has to be connected. 

Let $v_k$ be the neighbor of $v_j$ in $G$ with $k$ as small as possible. We know that $k>1$ but it must happen that $k>j$, otherwise if $k<j$ we could find an increasing path $p$ from $v_1$ to $v_k$ in $G-\{v_j\}$ and then  $p$ concatenated with $v_j$ is an increasing path  from $v_1$ to $v_j$ in $G$. So let now $p$ be an increasing path in $G-\{v_j\}$ from $v_1$ to $v_k$ and let $\{v_i,v_k\}$ be its last edge. Since $p$ is increasing $i<k$. 
Taking into account the minimality of $k$, the vertices $v_i,v_j, v_k$ provide a violation to the condition of a PEO given in Proposition~\ref{lemma:equivalent_condition_PEO}, which gives us a contradiction. Hence there must be an increasing path from $v_1$ to $v_j$ for all $j \in [n]$.
\end{proof}

We will need the following lemma to prove that $r$-weighted bond posets of chordal graphs are EL-shellable.

\begin{lemma}\label{lemma:connectedincreasing}
Let $G$ be a connected perfectly labeled chordal graph on $[n]$. Let $B_1,B_2,\dots, B_k$ be a collection of disjoint subsets of $[n]$ such that 
\begin{itemize}
    \item $\min B_1< \min B_2<\cdots<\min B_k$
    \item $G|_{B_j}$ is connected for all $j$
    \item $G|_{B_1\cup B_2\cup \cdots \cup B_k}$ is connected.
\end{itemize}
  Then for every $s\le k$, the subgraph $G|_{B_1\cup B_2\cup \cdots \cup B_s}$ is connected.
\end{lemma}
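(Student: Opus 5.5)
We argue by downward induction on $s$, or equivalently by showing that removing the last block preserves connectivity. Write $U_s := B_1\cup\cdots\cup B_s$. It suffices to prove: if $G|_{U_s}$ is connected and $s\ge 2$, then $G|_{U_{s-1}}$ is connected. Iterating from $s=k$ down then gives the statement for every $s\le k$.

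For this step we use Corollary~\ref{corollary:induced_PEO}: the induced subgraph $H:=G|_{U_s}$ is a connected perfectly labeled chordal graph on vertex set $U_s$, ordered naturally. Let $m=\min B_s$. Since the minima are increasing, every vertex of $U_{s-1}$ that is smaller than $m$ exists (e.g.\ $\min B_1<m$), and every vertex of $B_s$ is $\ge m$.

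Take an arbitrary vertex $u\in U_{s-1}$. We must connect it to $\min U_s=\min B_1$ inside $U_{s-1}$. By Lemma~\ref{lemma:increasing_path} applied to $H$, there is an increasing path in $H$ from $\min B_1$ to $u$. If this path avoids $B_s$ we are done. The danger is that it passes through $B_s$, so a rerouting argument is needed, and this is the main obstacle. A more robust tactic is a direct argument on components.

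Suppose $G|_{U_{s-1}}$ is disconnected, and let $C$ be a component of it not containing $\min B_1$. Each $B_j$ with $j<s$ is connected, so $C$ is a union of some of the blocks $B_j$, $j<s$. Let $c=\min C$; then $c=\min B_j$ for some $j\ge 2$, and $c<m$. Since $H$ is connected, Lemma~\ref{lemma:increasing_path} gives an increasing path $v_1=\min B_1<\cdots<v_p=c$ in $H$. Every vertex on it is $<c<m$, so no vertex lies in $B_s$. Hence the path lies in $U_{s-1}$, which contradicts $c$ and $\min B_1$ lying in different components.

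So the key observation is that increasing paths ending at a minimum of an earlier block stay below $\min B_s$ and therefore avoid $B_s$. This makes the rerouting worry disappear. One needs only to check that Lemma~\ref{lemma:increasing_path} applies to $H$, with vertex set a subset of $\ZZ_{>0}$ in its natural order, which Corollary~\ref{corollary:induced_PEO} guarantees. Note that $s=1$ is trivial because $G|_{B_1}$ is connected.
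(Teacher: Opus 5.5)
Your argument is correct and uses the same key idea as the paper. By Corollary~\ref{corollary:induced_PEO} and Lemma~\ref{lemma:increasing_path} there is an increasing path from $\min B_1$ to a block minimum, and monotonicity forces its vertices into the earlier blocks. The only difference is direction: the paper inducts upward, attaching $B_s$ to the already connected $G|_{B_1\cup\cdots\cup B_{s-1}}$ via the last edge of an increasing path in $G|_{B_1\cup\cdots\cup B_k}$ from $\min B_1$ to $\min B_s$, whereas you go downward by contradiction using the whole path (so your abandoned rerouting paragraph can be dropped).
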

\begin{proof}
We proceed by induction on $s$.  If $s=1$ we have from the hypothesis that $G|_{B_1}$ is connected; so let us assume that $G|_{B_1\cup B_2\cup \cdots \cup B_{s-1}}$ is connected. Since $G|_{ B_1\cup B_2\cup \cdots \cup B_k}$ is connected we know from Corollary~\ref{corollary:induced_PEO} and Lemma \ref{lemma:increasing_path} that there is an increasing path $p$ from $\min B_1$ to $\min B_s$. Let $\{j,\min B_s\}$ be the last edge of $p$. Since $p$ is increasing we have that $j<\min B_s$ and hence, by the fact that the $B_i$'s are listed in increasing order of their minimums, we have that $j\in B_1\cup\cdots\cup B_{s-1}$. Hence $G|_{ B_1\cup \cdots \cup B_s}$ is connected.
\end{proof}

We are now ready to prove that the augmented $r$-weighted bond posets of chordal graphs are EL-shellable. Let $G$ be a connected graph on $[n]$ and $r \in \ZZ_{>0} \cup \{\infty\}$. Note that each maximal chain of  $ \widehat{\Pi^r_G}$ is a maximal chain of 
$ \widehat{\Pi^r_n}$.
 Hence any edge labeling of $ \widehat{\Pi^r_n}$ can be restricted to an edge labeling of $ \widehat{\Pi^r_G}$.

 \begin{theorem} \label{theorem:ELbond} Let $G$ be a connected perfectly labeled chordal graph on $[n]$ and let $r \in \ZZ_{>0} \cup \{\infty\}$. Then the  restriction to $\E( \widehat{\Pi^r_G})$ of the EL labeling $\lambda:\E( \widehat{\Pi^r_n}) \to \Lambda_{n,r}$ given in Theorem~\ref{theorem:ellabelingposet} is an EL-labeling of $ \widehat{\Pi^r_G}$.
\end{theorem}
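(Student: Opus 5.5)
The plan is to exploit that every closed interval $[\balpha,\bbeta]$ of $\widehat{\Pi^r_G}$ is a subset of the corresponding interval of $\widehat{\Pi^r_n}$, and that every maximal chain of the former is a maximal chain of the latter (both posets are pure of the same length and covers in $\Pi^r_G$ are covers in $\Pi^r_n$). By Theorem~\ref{theorem:ellabelingposet}, the interval $[\balpha,\bbeta]$ in $\widehat{\Pi^r_n}$ has a unique increasing maximal chain, and its label word is lexicographically smallest there. If we show that this chain lies entirely in $\widehat{\Pi^r_G}$, then both EL conditions transfer. Indeed, any increasing chain of $[\balpha,\bbeta]_G$ is increasing in $\widehat{\Pi^r_n}$, so it must be this chain, which gives uniqueness. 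Its word also precedes the words of all other maximal chains of $\widehat{\Pi^r_n}$, and in particular those of $[\balpha,\bbeta]_G$.

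So the whole proof reduces to showing that the unique increasing chain stays in the bond poset. We split into two cases using Lemma~\ref{theorem:increasingchains}.

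\textbf{Case $\bbeta\ne\hat 1$.} Only underlying partitions matter for membership in $\Pi^r_G$, and by Lemma~\ref{theorem:increasingchains}(1) the underlying chain is the fundamental chain $c(\alpha,\beta)$. Each partition $\pi_{i,p}$ in Definition~\ref{definition:fundamental} has three kinds of blocks:
\begin{itemize}
\item the blocks $B_1,\dots,B_{i-1}$ of $\beta$, which induce connected subgraphs since $\bbeta\in\Pi^r_G$;
\item blocks $A_{j,q}$ of $\alpha$, which are connected since $\balpha\in\Pi^r_G$;
\item the single new block $A_{i,1}\cup\cdots\cup A_{i,p+1}$.
\end{itemize}
For the new block, we apply Lemma~\ref{lemma:connectedincreasing} to the collection $A_{i,1},\dots,A_{i,k_i}$. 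These sets are listed by increasing minima, each induces a connected subgraph, and their union $B_i$ induces a connected subgraph. The lemma requires a connected perfectly labeled chordal graph on $[n]$. Here we apply it to $G$ itself, which is connected, and the lemma's proof only uses connectivity of $G|_{B_i}$ via Corollary~\ref{corollary:induced_PEO} and Lemma~\ref{lemma:increasing_path}. Alternatively, we apply it to $G|_{B_i}$ after noting that it is perfectly labeled by Corollary~\ref{corollary:induced_PEO}, with vertex set a subset of $\ZZ_{>0}$. In either form, the new block is connected, so $\pi_{i,p}\in\Pi_G$.

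\textbf{Case $\bbeta=\hat 1$.} By Lemma~\ref{theorem:increasingchains}(2), the increasing chain is the increasing chain of $[\balpha,[n]^{\nu}]$, with $\nu=w(\balpha)+(|\balpha|-1)\mathbf e_1$, followed by $\hat 1$. Since $G$ is connected, $[n]^{\nu}\in\Pi^r_G$, so this reduces to the first case with $\bbeta=[n]^\nu$.

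The main obstacle is the connectivity of the intermediate merged blocks, that is, checking that the hypotheses of Lemma~\ref{lemma:connectedincreasing} hold for the sub-collection forming each $B_i$. This is where chordality and the perfect labeling are essential. The remaining step is a minor technical point: for $r=\infty$ the poset is infinite, so we note that all maximal chains are finite and that the EL definition extends, as remarked before Theorem~\ref{theorem:ellabelingposet}.
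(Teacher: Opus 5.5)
Your proposal is correct and follows the paper's proof essentially step for step. You reduce to showing that the unique increasing maximal chain of $[\balpha,\bbeta]$ in $\widehat{\Pi^r_n}$ lies in $\widehat{\Pi^r_G}$, handle $\bbeta\neq\hat 1$ via Lemma~\ref{theorem:increasingchains}(1) together with Lemma~\ref{lemma:connectedincreasing} applied to $A_{i,1},\dots,A_{i,k_i}$, and reduce $\bbeta=\hat 1$ to that case via Lemma~\ref{theorem:increasingchains}(2), exactly as the paper does (your hedging is unnecessary, since Lemma~\ref{lemma:connectedincreasing} as stated already applies directly in $G$ to any such collection of disjoint subsets of $[n]$, not just partitions of $[n]$).
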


\begin{proof} Let $\balpha<\bbeta$ in $ \widehat{\Pi^r_G}$. One need only show that the unique increasing maximal chain of the interval $[\balpha,\bbeta]$ in $ \widehat{\Pi^r_n}$ is also a chain $ \widehat{\Pi^r_G}$.  The result will then follow from Theorem~\ref{theorem:ellabelingposet}.

Suppose $\bbeta \ne \hat 1$ and let $\alpha$ and $\beta$ be the underlying partitions of $\balpha$ and $\bbeta$, respectively.   Let $A_{i,j}$, $1\le i \le m$, $1 \le j \le k_i$, be as in Definition~\ref{definition:fundamental}. Since $\alpha,\beta \in  \Pi_G$, each induced subgraph  $G|_{A_{i,j}}$ is connected as is $G|_{\bigcup_{j=1}^{k_i} A_{i,j}}$. It therefore follows from Lemma~\ref{lemma:connectedincreasing} that 
the induced subgraph $G|_{\bigcup_{j=1}^s A_{i,j} }$ is connected for all $i= 1,\dots,m$ and  $s = 1,\dots, k_i$.  Hence, the chains $c_i$ of Definition~\ref{definition:fundamental} are in $ \Pi_G$, which means that the fundamental chain $c(\alpha,\beta)$ of $[\alpha,\beta]$ (i.e, the concatenation  of $c_0,c_1,\dots,c_m$) is as well.  Now by Lemma~\ref{theorem:increasingchains} (1),   the unique increasing maximal chain of the interval $[\balpha, \bbeta]$ of $ \widehat{\Pi^r_n}$ is also a chain in $ \widehat{\Pi^r_G}$.  

Now suppose $\beta = \hat 1$.  Let $\nu \in \wcomp_{n-1,r}$ and let ${\bf c}_\nu$ be the unique increasing maximal chain of $[\balpha, [n]^\nu]$ in $ \widehat{\Pi^r_n}$.  By the previous case, ${\mathbf c}_\nu$ is also in $ \widehat{\Pi^r_G}$. Hence so is the concatenation of ${\bf c}_\nu$ with $\hat 1$.  This means that the unique increasing maximal chain of $[\balpha, \bbeta]$ in $ \widehat{\Pi^r_n}$, as identified by Lemma~\ref{theorem:increasingchains} (2), is also in $ \widehat{\Pi^r_G}$.  \end{proof}

\begin{corollary} If $G$ is a  chordal graph then all closed intervals of $\Pi^r_G$ are EL-shellable for all $r \in \ZZ_{>0} \cup \{\infty\}$.  
\end{corollary}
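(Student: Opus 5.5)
The plan is to reduce to the connected, perfectly labeled case and then invoke Theorem~\ref{theorem:ELbond}, which does essentially all of the work.

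First, let $G$ be any chordal graph and $r \in \ZZ_{>0}\cup\{\infty\}$. Relabeling the vertices induces an isomorphism of $r$-weighted bond posets, and every chordal graph is isomorphic to a perfectly labeled chordal graph. So we may assume $G$ is perfectly labeled. By Proposition~\ref{proposition:multi_isomorphism_product}, $\Pi^r_G \cong \Pi^r_{G_1}\times\cdots\times\Pi^r_{G_k}$, where $G_1,\dots,G_k$ are the connected components of $G$. Each $G_i$ is an induced subgraph, so it is again a perfectly labeled chordal graph by Corollary~\ref{corollary:induced_PEO}; its vertex set lies in $\ZZ_{>0}$ but need not be an initial segment. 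A closed interval of the product is a product of closed intervals $[\balpha_i,\bbeta_i]$ of the factors. Products of bounded posets admitting EL-labelings are EL-shellable (Bj\"orner--Wachs), so it suffices to treat each factor. Alternatively, one can avoid the product step: the unique increasing chain argument in Theorem~\ref{theorem:ELbond} applies directly to intervals whose underlying partitions lie in $\Pi_G$, even when $G$ is disconnected, because each block of $\beta$ lies in a single component.

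Second, for a connected perfectly labeled chordal graph on vertex set $\{v_1<\dots<v_m\}$, replace each $v_i$ by $i$. This order-preserving relabeling keeps the natural order a PEO, so we get a connected perfectly labeled chordal graph on $[m]$. Theorem~\ref{theorem:ELbond} then says that $\widehat{\Pi^r_{G_i}}$ has an EL-labeling. As noted before Theorem~\ref{theorem:ellabelingposet}, an EL-labeling restricts to an EL-labeling of every closed interval, and for finite $r$ all intervals are finite. For $r=\infty$, the closed intervals of $\Pi^\infty_{G_i}$ are still finite, so the restriction is an EL-labeling in the traditional sense. Each closed interval of $\Pi^r_{G_i}$ is therefore EL-shellable.

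The main obstacle is the product step. The cleanest route is the direct one: rerun the proof of Theorem~\ref{theorem:ELbond} for disconnected $G$, restricting to intervals $[\balpha,\bbeta]$ with $\bbeta\neq\hat 1$. Here Lemma~\ref{theorem:increasingchains}(1) identifies the increasing chain as the fundamental chain. Lemma~\ref{lemma:connectedincreasing}, applied to $G|_{B_i}$ for each block $B_i$ of $\beta$ (connected and perfectly labeled), shows that this chain stays in $\Pi_G$. This sidesteps any appeal to product-of-EL results.
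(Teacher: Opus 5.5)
Your argument is correct. Your first route is exactly the paper's proof: reduce to connected components via Proposition~\ref{proposition:multi_isomorphism_product}, get EL-shellability of each factor from Theorem~\ref{theorem:ELbond} after perfectly labeling it, and finish with the product theorem. The paper cites Bj\"orner's 1980 paper (Theorem~4.3) for that last step, not Bj\"orner--Wachs.

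Your preferred direct route is genuinely different. You perfectly label all of $G$ on $[n]$ at once and restrict $\lambda$ directly to each closed interval $[\balpha,\bbeta]_G$. This works because the $\bbeta\neq\hat 1$ case of the proof of Theorem~\ref{theorem:ELbond} never uses connectivity of $G$. It only uses that each block $B_i$ of $\beta$ induces a connected subgraph. So Lemma~\ref{lemma:connectedincreasing}, applied to $G|_{B_i}$ after your order-preserving relabeling, keeps the fundamental chain of Lemma~\ref{theorem:increasingchains}(1) inside $\Pi_G$.

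For the restriction to be an EL-labeling, you also need one fact you should state explicitly: maximal chains of $[\balpha,\bbeta]_G$ are maximal chains of the corresponding interval of $\Pi^r_n$. This holds because both posets are graded by the number of blocks. Between $\bx<\by$ in $\Pi^r_G$ with rank difference at least $2$, you can merge two adjacent blocks of $x$ inside a block of $y$, with a suitable weight, to get an intermediate element of $\Pi^r_G$. Given that, uniqueness and lexicographic minimality of the increasing chain pass to the subinterval.

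The direct route avoids the product theorem altogether. It also exhibits a concrete EL-labeling, the restriction of $\lambda$, on every closed interval of $\Pi^r_G$ even when $G$ is disconnected. The ascent-free chains are then just those of the corresponding interval of $\Pi^r_n$ that lie in $\Pi^r_G$, exactly as in the proof of Theorem~\ref{theorem:ascent_free_G}. The paper's route is shorter given the cited product theorem, and it follows the paper's standard pattern of reducing to connected components.
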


\begin{proof} We only need to prove this for maximal closed intervals.  Since $G$ is chordal,  its connected components $G_1,\dots, G_k$ are chordal as well.   Since every chordal graph can be perfectly  labeled, each $\widehat{\Pi_{G_i}^r}$ is EL-shellable by Theorem~\ref{theorem:ELbond}.  It follows that each maximal closed interval of $\Pi^r_{G_i}$ is EL-shellable. By Proposition~\ref{proposition:multi_isomorphism_product} each maximal closed interval of $\Pi^r_G$ is isomorphic to a product $P_1 \times \cdots \times P_k$, where each $P_i$ is a maximal closed interval of $\Pi^r_{G_i}$.  The result now follows from \cite[Theorem~4.3]{Bjorner1980}.
\end{proof}

\begin{question}Is $ \widehat{\Pi^r_G}$ EL-shellable for every graph $G$ and for all $r \in \ZZ_{>0} \cup \{\infty\}$?
\end{question}

We have the following generalization of Theorem~\ref{theorem:ascent_free_chains_K}.
\begin{theorem} \label{theorem:ascent_free_G} Let $G$ be a connected perfectly labeled chordal graph on $[n]$ and let $\nu \in \wcomp_{n-1}$. Then the set of ascent-free maximal chains of the interval $[\hat{0},[n]^{\nu}]_G$ for the 
  EL-labeling  of $\widehat{\Pi^\infty_G}$ given in Theorem~\ref{theorem:ELbond} is $$\{c(\bT,\tau_{T})\mid \bT \in \Lyn_{G,\nu}\},$$ where $$\Lyn_{G,\nu} := \{ \bT \in \Lyn_{n,\nu} : G|_{A_x} \mbox{ is connected for all nodes $x$ of $\bT$}\}.$$
  \end{theorem}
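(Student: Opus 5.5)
The plan is to deduce this directly from Theorem~\ref{theorem:ascent_free_chains_K}, using that $[\hat 0,[n]^\nu]_G$ is an induced subposet of $[\hat 0,[n]^\nu]$ in $\Pi_n^\infty$. By Theorem~\ref{theorem:ELbond}, the labeling of $\widehat{\Pi_G^\infty}$ is just the restriction of $\lambda$. Hence a maximal chain of $[\hat 0,[n]^\nu]_G$ is ascent-free in $\widehat{\Pi^\infty_G}$ if and only if it is an ascent-free maximal chain of $[\hat 0,[n]^\nu]$ in $\widehat{\Pi^\infty_n}$ all of whose elements lie in $\Pi^\infty_G$. This is because both posets have the same length $n-1$, so maximal chains of the $G$-interval are maximal chains of the $K_n$-interval, and the ascent condition depends only on the labels.

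So the set in question equals
\[
\{c(\bT,\tau_{\bT}) : \bT\in\Lyn_{n,\nu},\ c(\bT,\tau_{\bT})\subseteq \Pi_G^\infty\}.
\]
It remains to show that, for $\bT\in\Lyn_{n,\nu}$, the chain $c(\bT,\tau_{\bT})$ lies in $\Pi^\infty_G$ if and only if $G|_{A_x}$ is connected for every node $x$ of $\bT$.

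For the "if" direction, take any element $\bpi_i$ of the chain. Its blocks are exactly the sets $A_y$, where $y$ ranges over the nodes (leaves or internal nodes) that are maximal among the nodes already merged: internal nodes among $x_1,\dots,x_i$ and leaves not yet absorbed. Leaves give singletons, which are trivially connected, and internal nodes give connected sets by hypothesis. So every $\bpi_i$ has underlying partition in $\Pi_G$.

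For the "only if" direction, each internal node $x=x_i$ in the linear extension creates the block $A_x$ in $\bpi_i$. Since $\bpi_i\in\Pi^\infty_G$, the graph $G|_{A_x}$ is connected. Leaves are again trivial.

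Combining the two directions with Theorem~\ref{theorem:ascent_free_chains_K} gives exactly $\{c(\bT,\tau_{\bT}) : \bT\in\Lyn_{G,\nu}\}$. Distinct trees give distinct chains by that theorem, which matters for the later counting.

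The only delicate point is the first reduction: making sure that ascent-freeness and maximality transfer between the two intervals. Maximality transfers by purity and equal length. Ascent-freeness transfers because the labels are literally restricted, and the EL-property in $\widehat{\Pi^\infty_G}$ (Theorem~\ref{theorem:ELbond}) is only needed to ensure the notion is meaningful there. Chordality enters only through Theorem~\ref{theorem:ELbond}; the set-theoretic identification itself holds for any $G$.
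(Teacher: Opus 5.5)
Your proposal is correct and follows essentially the same route as the paper. Both restrict the $K_n$ labeling and take the chains $c(\bT,\tau_{\bT})$, $\bT\in\Lyn_{n,\nu}$, from Theorem~\ref{theorem:ascent_free_chains_K} that lie in $\Pi^\infty_G$, which happens exactly when every $G|_{A_x}$ is connected; you simply spell out the two directions of that equivalence, and the transfer of maximal chains via purity, where the paper just says ``clearly.''
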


\begin{proof} Since the EL-labeling of $\widehat{\Pi^\infty_G}$ is the restriction of the EL-labeling of $\widehat{\Pi^\infty_{K_n}}$, the ascent-free chains of any interval $[\balpha, \bbeta]$ of  $\widehat{\Pi^\infty_G}$ are the ascent-free chains of the  interval $[\balpha, \bbeta]$ of  $\widehat{\Pi^\infty_{K_n}}$ that are also in $\widehat{\Pi^\infty_G}$.  The ascent-free maximal chains of the interval $[\hat 0, [n]^\nu]_{K_n}$, as given in Theorem~\ref{theorem:ascent_free_chains_K}, have the form $c(\bT,\tau_T) $, where $\bT \in \Lyn_{n,\nu}$. Clearly,  the chain $c(\bT,\tau_{\bT})$ is in $\widehat{\Pi^\infty_G}$ if and only if $G|_{A_x} $ is  connected for all nodes $x$ of $\bT$.  Thus the ascent-free  chains of $[\hat 0, [n]^\nu]$ are precisely the chains $c(\bT,\tau_{\bT}) $ for which $\bT \in \Lyn_{G,\nu}$.
\end{proof}

Now by Corollary~\ref{corollary:shellability_pure} we have the following consequence.
\begin{corollary}  \label{corollary:lyndon_mobius} Let $G$ be a connected perfectly labeled chordal graph on $[n]$.  Then  $$(-1)^{n-1} M_G({\bf x}) =  \sum_{\nu \in \wcomp(n-1)} | \Lyn_{G,\nu}| {\bf x}^\nu .$$
\end{corollary}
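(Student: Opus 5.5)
This follows by combining the EL-labeling of Theorem~\ref{theorem:ELbond} with the Stanley--Bj\"orner formula of Theorem~\ref{corollary:shellability_pure}, applied to the finite intervals of $\Pi^\infty_G$. First fix $\nu \in \wcomp_{n-1}$. The maximal element $[n]^\nu$ exists in $\Pi^\infty_G$ because $G$ is connected. The interval $[\hat 0,[n]^\nu]_G$ is a finite closed interval of $\widehat{\Pi^\infty_G}$. By Theorem~\ref{theorem:ELbond}, the restricted labeling $\lambda$ is an EL-labeling of $\widehat{\Pi^\infty_G}$, and, as remarked before Theorem~\ref{theorem:ellabelingposet}, its restriction to any closed interval of $\Pi^\infty_G$ is an EL-labeling in the usual finite sense. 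The interval is pure of length $n-1$, because $\Pi^\infty_G$ is pure of length $|V|-k=n-1$ and every maximal chain of the interval runs from rank $0$ to rank $n-1$.

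Next apply Theorem~\ref{corollary:shellability_pure}. It gives
$$\mu_{\Pi^\infty_G}(\hat 0,[n]^\nu)=(-1)^{n-1}\,\bigl|\{\bar{\cc} : \cc \text{ ascent-free maximal chain of } [\hat 0,[n]^\nu]_G\}\bigr|.$$
By Theorem~\ref{theorem:ascent_free_G}, the ascent-free maximal chains are exactly the chains $c(\bT,\tau_{\bT})$ with $\bT\in\Lyn_{G,\nu}$.

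The only point needing care is that these chains are counted without repetition, that is, the map $\bT\mapsto c(\bT,\tau_{\bT})$ is injective on $\Lyn_{G,\nu}$, and distinct chains give distinct $\bar\cc$. The second part is immediate, since the endpoints are fixed. For injectivity, I would argue that a maximal chain determines the normalized colored tree. Each cover merges two blocks $A^\alpha,B^\beta$ with $\min A<\min B$ into a block with weight increment ${\bf e}_j$. So the chain records, for each internal node, its leaf set $A_x$, the split into left part (the one containing the smaller minimum) and right part, and the color $j$. This recovers the normalized tree $\bT$ and its coloring, as noted in the paper just before the definition of $\tau_T$, where uniqueness of the pair is asserted for normalized trees. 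Hence the number of ascent-free chains is $|\Lyn_{G,\nu}|$, and we obtain $(-1)^{n-1}\mu_{\Pi^\infty_G}(\hat 0,[n]^\nu)=|\Lyn_{G,\nu}|$.

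Finally, substitute into the connected-case formula
$$M_G(\xx)=\sum_{\nu\in\wcomp_{n-1}}\mu_{\Pi_G^\infty}(\hat 0,[n]^\nu)\,\xx^\nu$$
and multiply by $(-1)^{n-1}$. This yields the stated identity. No step is a serious obstacle; the injectivity bookkeeping above is the only thing to verify beyond citing earlier results.
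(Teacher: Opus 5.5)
Your proposal is correct and follows the same route as the paper. The paper obtains the corollary directly by applying Theorem~\ref{corollary:shellability_pure} to the finite intervals $[\hat 0,[n]^\nu]_G$, using Theorem~\ref{theorem:ascent_free_G} and the connected-case expansion of $M_G(\xx)$; your explicit check that $\bT\mapsto c(\bT,\tau_{\bT})$ is injective on normalized colored trees is a detail the paper leaves implicit, and you handle it correctly.
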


\begin{example} For $G=P_3$, the only trees of Figure~\ref{figure:lyndon_basis_example} that are in $\Lyn_{G,\nu}$ for some $\nu$ are the ones in the first row.  Hence by Corollary~\ref{corollary:lyndon_mobius}, $$M_{P_3}(x_1,x_2) = x_1^2 + 3x_1x_2 + x_2^2 = e_2(x_1,x_2) + e_{1,1}(x_1,x_2).$$
\end{example}

Given  a normalized binary tree $T$ on node set $[n]$, define the {\em Lyndon set partition} $\pi(T)$ to be the finest partition of the set $\internal(T)$ that satisfies if $x\in \internal(T)$ is not a Lyndon node, then $x$ and $L(x)$ are in the same block of $\pi$.  The \emph{Lyndon type} $\lambda(T)$ of $T$ is the partition of $n-1$ whose parts are the sizes of the blocks of the Lyndon set partition $\pi(T)$; that is 
\begin{equation} \label{equation:def_lambda_T} \lambda(T) = \lambda(\pi(T)),\end{equation}  where $\lambda(\pi) $ is the number partition associated with the set partition $\pi$, as in \eqref{equation:set_to_number}.
For example, for the tree $T$ of Figure \ref{figure:example_colored_lyndon_tree_with_linear_extension} (with colors ignored) we have that $\lambda(T)=(2,1,1,1)$.

\begin{theorem}  \label{theorem:e_positive} Let $G$ be a connected perfectly labeled chordal graph on $[n]$.  Then
$$  (-1)^{n-1} M_G({\bf x})=\sum_{T\in \mathcal N_G}e_{\lambda(T)}(\xx),
$$
where $\mathcal N_G$ is the set of normalized  binary trees $T$ on leaf set $[n]$  for which $G|_{A_x }$ is connected for all nodes $x$ of $T$.

Consequently, if $G$ is a connected chordal graph then $(-1)^{n-1} M_G({\bf x})$ is $e$-positive.
\end{theorem}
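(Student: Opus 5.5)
Starting from Corollary~\ref{corollary:lyndon_mobius}, the proof reduces to a counting statement. We have $(-1)^{n-1}M_G(\xx)=\sum_{\nu}|\Lyn_{G,\nu}|\xx^\nu$. Each Lyndon-colored tree in $\Lyn_{G,\nu}$ is a pair $(T,\clr)$, where $T\in\mathcal N_G$ (the connectivity condition depends only on the underlying tree) and $\clr$ is a Lyndon-coloring of $T$. Grouping by the underlying tree gives
$$(-1)^{n-1}M_G(\xx)=\sum_{T\in\mathcal N_G}\ \sum_{\clr \text{ Lyndon-coloring of } T}\xx^{\nu(T,\clr)} .$$
It therefore suffices to prove, for each fixed normalized tree $T$, that
$$\sum_{\clr}\xx^{\nu(T,\clr)}=e_{\lambda(T)}(\xx).$$
This identity is purely about $T$; the graph $G$ plays no further role.

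To prove it, I would analyze the Lyndon set partition $\pi(T)$. The constraint is: if $x$ is nonLyndon, then $L(x)$ is internal and $\clr(x)<\clr(L(x))$. There is no constraint in any other situation. In particular, a Lyndon node places no condition between itself and its left child, and right children impose nothing.

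Now look at the graph on $\internal(T)$ whose edges are the pairs $\{x,L(x)\}$ with $x$ nonLyndon. Each node has at most one left child and at most one parent along such edges, and these edges all point along left-child links. So every connected component is a chain $x_1,x_2=L(x_1),\dots,x_p=L(x_{p-1})$. In such a chain, $x_1,\dots,x_{p-1}$ are nonLyndon, and $x_p$ is either Lyndon or a nonLyndon node whose left child is not internal. The second case cannot occur, because a node with a leaf left child is Lyndon by definition. These chains are exactly the blocks of $\pi(T)$.

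The Lyndon-coloring condition thus says precisely that along each block the colors are strictly increasing going down the chain, $\clr(x_1)<\clr(x_2)<\cdots<\clr(x_p)$, with no interaction between different blocks. The colorings of a block of size $p$ therefore correspond to $p$-element subsets of $\ZZ_{>0}$, and their generating function is $e_p(\xx)$. Independence across blocks gives the product $\prod_{B\in\pi(T)}e_{|B|}(\xx)=e_{\lambda(T)}(\xx)$.

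The main point to verify carefully is that the Lyndon-coloring inequality only ever involves $x$ and $L(x)$ with $x$ nonLyndon. For a node whose left child is internal, the defining implication is triggered exactly when $\min A_{R(L(x))}<\min A_{R(x)}$, which is the negation of \eqref{eq:lynnode}. I expect no genuine obstacle beyond this bookkeeping.

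For the final assertion, every chordal graph is isomorphic to a perfectly labeled chordal graph, and $M_G$ is an isomorphism invariant. Applying the identity to that relabeling, every term $e_{\lambda(T)}$ appears with coefficient $1$, so $(-1)^{n-1}M_G(\xx)$ is $e$-positive.
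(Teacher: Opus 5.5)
Your proposal is correct and follows essentially the same route as the paper. Both arguments group the Lyndon-colored trees of Corollary~\ref{corollary:lyndon_mobius} by their underlying tree $T\in\mathcal N_G$, and both observe that each block of $\pi(T)$ is a left-child chain along which the Lyndon-colorings are exactly the strictly increasing colorings. Hence each $T$ contributes $e_{\lambda(T)}(\xx)$, and the general chordal case follows by relabeling to a perfect labeling.
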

\begin{proof} Let $T \in \mathcal N_G$. Each block  of $\pi(T)$ forms a path $y_1,y_2, \dots, y_k$  of internal nodes of $T$ such that for each $i \in [k-1]$,  $L(y_i) = y_{i+1} $ and  $y_i$  is  nonLyndon.  It follows that a coloring map $\clr:\internal(T) \to \ZZ_{>0}$ is a  Lyndon-coloring  if and only if 
$$ \clr(y_1)< \clr(y_2)< \dots < \clr(y_k) $$ for all blocks.

Hence,
\begin{equation} \label{equation:fixed_type} \sum_{\substack{{\bf T} \in \Lyn_{G} \\{\bf T} = (T, \clr)} } {\bf x}^{\nu({\bf T})} =  e_{\lambda(T)}({\bf x}),\end{equation}
where 
$$\Lyn_{G} := \bigcup_{\nu \in \wcomp_{n-1}} \Lyn_{G,\nu} $$
and $T$ is the underlying uncolored tree of ${\bf T}$.

From  Corollary \ref{corollary:lyndon_mobius} and Equation \eqref{equation:fixed_type} we have 
\begin{align*}
(-1)^{n-1} M_G({\bf x})    &=\sum_{\nu \in \wcomp_{n-1}}|\Lyn_{G,\nu}|\xx ^{\nu}\\
   &=\sum_{\bT \in \Lyn_{G}}\xx^{\nu(\bT)}\\
   &=\sum_{T\in \mathcal N_G}\,\sum_{\substack{{\bf T} \in \Lyn_{G} \\{\bf T} = (T, \clr)} } {\bf x}^{\nu({\bf T})}\\
   &= \sum_{T\in \mathcal N_G} \,e_{\lambda(T)}(\xx).
\end{align*}

\end{proof}

\begin{proof}[Proof of Theorem~\ref{theorem:gamma_positive}] This is a special case of Theorem~\ref{theorem:e_positive}.  Indeed, we use the facts that $\mu_G(t) = M_G(1,t)$ and 
$$e_\lambda(1,t) = \begin{cases} t^{m_2(\lambda) } (1+t)^{m_1(\lambda)} &\mbox{ if $\lambda$ has no parts greater than $2$} \\
0 &\mbox{ otherwise,}
\end{cases}$$
where $m_i(\lambda)$ is the number of parts of $\lambda$ that are equal to $i$.
Clearly for $T \in \mathcal N_G$,  the partition $\lambda(T)$ has no parts greater than $2$ if and only if $T \in \widehat{\mathcal N_G} $.  In this case $m_2(\lambda(T))$ equals  the number  $m(T)$ of nonLyndon nodes of $T$ and $m_1(\lambda(T))= n-1-2m_2(\lambda(T)).$ Hence the result follows from Theorem~\ref{theorem:e_positive}.
\end{proof}

\begin{corollary} \label{corollary:e_positive} Let $G$ be a  chordal graph with $n$ vertices and $k$ connected components. 
Then the symmetric function $(-1)^{n-k} M_G(\xx)$ is $e$-positive.  
\end{corollary}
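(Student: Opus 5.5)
The plan is to reduce to the connected case and then use multiplicativity. Let $G_1,\dots,G_k$ be the connected components of $G$, and let $n_i$ be the number of vertices of $G_i$, so that $\sum_i n_i = n$. Each component $G_i$ is chordal, since an induced cycle of length greater than $3$ in $G_i$ would be an induced cycle in $G$. By the remark in Section~\ref{section:gamma_positive}, each $G_i$ is isomorphic to a perfectly labeled chordal graph $G_i'$ on a vertex set of the form $[n_i]$.

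The M\"obius symmetric function is an isomorphism invariant. The poset $\Pi^\infty_{G}$ is defined purely in terms of which subsets induce connected subgraphs, so a graph isomorphism induces an isomorphism of the $\infty$-weighted bond posets, and this isomorphism preserves weights. Hence $M_{G_i}(\xx)=M_{G_i'}(\xx)$. Theorem~\ref{theorem:e_positive} then gives
$$(-1)^{n_i-1}M_{G_i}(\xx)=\sum_{T\in\mathcal N_{G_i'}} e_{\lambda(T)}(\xx),$$
which is $e$-positive. This is exactly the ``consequently'' clause of that theorem.

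Next, apply Proposition~\ref{proposition:M_product}:
$$(-1)^{n-k}M_G(\xx)=\prod_{i=1}^k (-1)^{n_i-1}M_{G_i}(\xx),$$
where the signs match because $\sum_i (n_i-1)=n-k$. A product of $e$-positive symmetric functions is $e$-positive, since $e_\lambda e_\mu=e_{\lambda\cup\mu}$, where $\lambda\cup\mu$ is the partition whose parts are those of $\lambda$ and $\mu$ together. Expanding the product therefore yields a nonnegative integer combination of $e_\nu$ with $\nu\vdash n-k$.

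I expect no step to be a genuine obstacle. The only point needing care is the relabeling step: the statement of Theorem~\ref{theorem:e_positive} is phrased for perfectly labeled graphs, so we must invoke invariance of $M_G$ under relabeling. This invariance is immediate from the definition, because $M_G$ depends only on the isomorphism class of $\Pi^\infty_G$ together with its weight function.
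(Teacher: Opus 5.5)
Your proposal is correct and follows essentially the same route as the paper's proof: factor $M_G$ over connected components via Proposition~\ref{proposition:M_product}, apply Theorem~\ref{theorem:e_positive} to each chordal component, and use that products of $e$-positive symmetric functions are $e$-positive. Your sign bookkeeping $\sum_i (n_i-1)=n-k$ and your relabeling step, which invokes isomorphism invariance of $M_G$ to reach a perfectly labeled graph on $[n_i]$, fill in details the paper leaves implicit.
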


\begin{proof}This follows from Theorem~\ref{theorem:e_positive}, Proposition~\ref{proposition:M_product}, and the fact that  products of $e$-positive symmetric functions are $e$-positive.  
\end{proof}

\begin{corollary}\label{corollary:e_positive_difference} Let $G$ and $H$ be    perfectly labeled chordal graphs  with $n$ vertices and $k$ connected components.  If  $H$ is a subgraph of $G$  
then $$(-1)^{n-k}(M_{G}({\xx}) -  M_{H}({\xx}))$$ is $e$-positive. 
\end{corollary}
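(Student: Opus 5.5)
The connected case comes straight from Theorem~\ref{theorem:e_positive}; the disconnected case I would reduce to it, component by component, using Proposition~\ref{proposition:M_product} and an algebraic identity for differences of products.

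First suppose $G$ and $H$ are connected, so $k=1$. Both are perfectly labeled chordal graphs on the same vertex set, and $H$ is a subgraph of $G$. For every node $x$ of a tree, $H|_{A_x}$ is a subgraph of $G|_{A_x}$ on the same vertices, so if $H|_{A_x}$ is connected then so is $G|_{A_x}$. Hence $\mathcal N_H\subseteq \mathcal N_G$. The Lyndon type $\lambda(T)$ depends only on the tree $T$ and not on the graph. So Theorem~\ref{theorem:e_positive} gives
$$(-1)^{n-1}\bigl(M_G(\xx)-M_H(\xx)\bigr)=\sum_{T\in \mathcal N_G\setminus \mathcal N_H} e_{\lambda(T)}(\xx),$$
which is $e$-positive.

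For the general case, the components of $H$ are not a priori the components of $G$. However, $H$ is a subgraph of $G$ on the same vertex set and has the same number $k$ of components. Each component of $H$ lies inside a component of $G$, so the partition of $V$ into components of $H$ refines the one for $G$. Since both partitions have $k$ blocks, they coincide. So I can label the components $G_1,\dots,G_k$ of $G$ and $H_1,\dots,H_k$ of $H$ so that $H_i$ is a spanning subgraph of $G_i$ on vertex set $V_i$.

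By Corollary~\ref{corollary:induced_PEO}, each $G_i$ and each $H_i$ is a connected perfectly labeled chordal graph, since the components are induced subgraphs. The connected case then shows that
$$D_i:=(-1)^{|V_i|-1}\bigl(M_{G_i}(\xx)-M_{H_i}(\xx)\bigr)$$
is $e$-positive. Theorem~\ref{theorem:e_positive} also shows that $a_i:=(-1)^{|V_i|-1}M_{G_i}(\xx)$ and $b_i:=(-1)^{|V_i|-1}M_{H_i}(\xx)$ are $e$-positive.

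By Proposition~\ref{proposition:M_product}, $(-1)^{n-k}(M_G-M_H)=\prod_i a_i-\prod_i b_i$. I would telescope this difference:
$$\prod_{i=1}^k a_i-\prod_{i=1}^k b_i=\sum_{j=1}^k \Bigl(\prod_{i<j} b_i\Bigr)(a_j-b_j)\Bigl(\prod_{i>j} a_i\Bigr).$$
Each summand is a product of $e$-positive symmetric functions, namely $b_i$, $a_j-b_j=D_j$ and $a_i$. Products of $e$-positive functions are $e$-positive because $e_\lambda e_\mu=e_{\lambda\cup\mu}$, and sums of them are too. This proves the claim.

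The only point needing care is the matching of components; the refinement-plus-counting argument above settles it.
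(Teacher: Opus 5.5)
Your proposal is correct and follows essentially the same route as the paper: the connected case comes from $\mathcal N_H\subseteq\mathcal N_G$ together with Theorem~\ref{theorem:e_positive}, and the general case is a componentwise reduction via Proposition~\ref{proposition:M_product} and a difference-of-products identity (your telescoping sum is the unrolled form of the paper's induction on $f_1f_2-g_1g_2=(f_1-g_1)f_2+(f_2-g_2)g_1$). Your refinement-and-counting argument that the components of $H$ and $G$ have the same vertex sets fills in a step the paper only asserts.
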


\begin{proof}  

Suppose $G$ is connected.  Then $H$ must be as well since it has the same number of connected components as $G$.   Clearly $\mathcal N_H \subseteq \mathcal N_G$.  Hence by Theorem~\ref{theorem:e_positive}
$$(-1)^{n-1}(M_{G}({\xx}) -  M_{H}({\xx})) = \sum_{T \in \mathcal N_G - \mathcal N_H} e_{\lambda(T)}(\xx).$$

Now suppose $G$ has connected components $G_i=(V_i,E_i)$, where $i \in [k]$.  Then $H$ must have connected components $H_i= (V_i, E_i^\prime)$ where $E_i^\prime \subseteq E_i$.  It follows from Theorem~\ref{theorem:e_positive} and Corollary~\ref{corollary:e_positive_difference} in the connected case that all $(-1)^{|V_i|-1}M_{G_i}({\xx})$, $(-1)^{|V_i|-1}M_{H_i}({\xx})$, and  $(-1)^{|V_i|-1}( M_{G_i}({\xx}) - M_{H_i}({\xx}))$ are $e$-positive.  
By Proposition~\ref{proposition:M_product},
$$ (-1)^{n-k} (M_G({\xx}) -  M_H(\xx) )= \prod_{i=1}^k (-1)^{|V_i|-1}M_{G_i}({\xx}) - \prod_{i=1}^k (-1)^{|V_i|-1} M_{H_i}({\xx}).$$ 
We now use the fact that if $f_i,g_i$, where $ i \in [k]$, are symmetric functions and the $f_i$'s, $g_i$'s and $f_i-g_i$'s are $e$-positive then $\prod_{i=1}^k f_i -  \prod_{i=1}^k g_i  $ is $e$-positive.  This follows by induction from the $k=2$ case, which holds because $$f_1 f_2 - g_1 g_2 = (f_1-g_1)f_2 + (f_2-g_2)g_1 $$ and products and sums of $e$-positive symmetric functions are $e$-positive.  Thus $(-1)^{n-k} (M_G({\xx}) -  M_H(\xx) )$ is $e$-positive.
\end{proof}

\begin{conjecture} \label{conjecture:e_positive} Let $G$ be a   graph with $n$ vertices and $k$ connected components.  Then 
\begin{enumerate} \item 
$(-1)^{n-k} M_G(\xx) $  is $e$-positive. 
\item If $H$ is a subgraph of $G$ with $n$ vertices and $k$ connected components  then 
$$(-1)^{n-k}(M_G({\xx}) -   M_H({\xx}))$$ 
 is $e$-positive.
\end{enumerate}
 \end{conjecture}

We have verified (1) for this conjecture computationally for all  graphs with up to $7$ vertices. 

\subsection{Examples}

Let $\mathcal {UT}_{n,r}$ denote the set of unlabeled (i.e. no leaf labels) $r$-ary trees with $n$ internal nodes.  For each $\nu \in \wcomp_{n,r}$, let $\mathcal {UT}_{\nu,r}$ be the set of $r$-ary planar trees $T$  such that,  for each $i$,  $\nu(i)$ is the number of internal nodes of $T$ whose   $i$th child (from left to right) is an internal node.   

\begin{theorem} \label{theorem:r_ary_trees} For all $r, n \ge 1$,  $$(-1)^{n-1} M_{P_n}(x_1,\dots,x_r) =  \sum_{\nu \in \wcomp_{n,r}}  |\mathcal {UT}_{\nu,r} | \,\, x_1^{\nu(1)} \cdots x_r^{\nu(r)}. 
$$
\end{theorem}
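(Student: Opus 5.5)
The plan is to reduce the statement to an enumeration, using the explicit monomial expansion already available in Corollary~\ref{corollary:parking}. By that corollary, the coefficient of $x_1^{\nu(1)}\cdots x_r^{\nu(r)}$ in $(-1)^{n-1}M_{P_n}(x_1,\dots,x_r)$ is the generalized Fuss--Narayana number
$$N_\nu=\frac1n\prod_{i=1}^r\binom{n}{\nu(i)} .$$
So it suffices to show that $|\mathcal{UT}_{\nu,r}|=N_\nu$ for each $\nu$. One indexing point comes first. In an $r$-ary tree with $n$ internal nodes, every non-root internal node is the child of exactly one internal node. Hence $\sum_i\nu(i)=n-1$, and the sum in the statement effectively ranges over the weak compositions of degree $n-1$ in $r$ parts, matching the degree of $M_{P_n}$. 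For all other $\nu$ both sides are $0$.

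To count $\mathcal{UT}_{\nu,r}$ I will encode a planar $r$-ary tree $T$ with $n$ internal nodes by the preorder list $y_1,\dots,y_n$ of its internal nodes. To each $y_j$ attach the subset $S_j\subseteq[r]$ of positions $i$ for which the $i$th child of $y_j$ is internal. The tree is recovered from $(S_1,\dots,S_n)$ by the usual Łukasiewicz-word reconstruction. A sequence of subsets arises from a tree if and only if the partial sums $\sum_{j\le m}(|S_j|-1)$ are $\ge 0$ for $m<n$ and equal $-1$ at $m=n$. The color type is recorded by the condition that, for each $i$, exactly $\nu(i)$ of the sets $S_j$ contain $i$.

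Next, count all sequences $(S_1,\dots,S_n)$ with this type, ignoring the prefix condition. For each $i$ one chooses which $\nu(i)$ of the $n$ positions contain $i$, and these choices are independent, giving $\prod_i\binom{n}{\nu(i)}$ sequences. Each such sequence has total weight $\sum_j(|S_j|-1)=(n-1)-n=-1$. I then apply the cycle lemma (Dvoretzky--Motzkin / Raney): among the $n$ cyclic rotations of a sequence of integers $\ge -1$ with total sum $-1$, exactly one satisfies the prefix condition. Moreover, such a sequence has no nontrivial rotational symmetry, since the sum $-1$ is coprime to every period. Rotation preserves the type $\nu$, so the valid sequences number exactly $\frac1n\prod_i\binom n{\nu(i)}=N_\nu$, which completes the argument.

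The main obstacle I anticipate is stating the tree/word bijection carefully enough that the refinement by $\nu$ visibly survives. The key point is that position-$i$ membership is a statistic of each letter, so it is preserved both under the bijection and under rotation. If the cycle-lemma route gets awkward, the fallback is to rederive the count by Lagrange inversion, exactly as in the proof of Theorem~\ref{theorem:parking}. The generating function $F$ of $r$-ary trees, with each internal $i$th child weighted by $x_i$, satisfies $F=y\prod_i(1+x_iF)$, so $[y^n]F=\frac1n[u^{n-1}]\prod_i(1+x_iu)^n$. This is precisely the expression identified there with $(-1)^{n-1}M_{P_n}$.
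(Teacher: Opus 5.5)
Your argument is correct, but it runs in the opposite direction from the paper's proof.

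The paper does not use Corollary~\ref{corollary:parking} to prove this theorem. It starts from Corollary~\ref{corollary:lyndon_mobius}, which comes from the EL-labeling of Theorem~\ref{theorem:ELbond}: the coefficient of $\xx^\nu$ counts the Lyndon-colored trees in $\Lyn_{P_n,\nu}$. It views these as unlabeled colored binary trees and writes each along its left spine as $(\cdots(T_0\overset{c_1}{\land}T_1)\overset{c_2}{\land}\cdots)\overset{c_k}{\land}T_k$, with $T_0$ a leaf and $r\ge c_1>\cdots>c_k\ge 1$. Such a tree is sent recursively to the $r$-ary tree whose $c_j$th root subtree is the image of $T_j$, with leaves in all other positions. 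This is a type-preserving bijection, and $|\mathcal{UT}_{\nu,r}|=N_\nu$ appears only afterwards, as a remark obtained by comparing with Corollary~\ref{corollary:parking}.

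You reverse the logic. You take Corollary~\ref{corollary:parking} as input and prove $|\mathcal{UT}_{\nu,r}|=N_\nu$ directly, via the {\L}ukasiewicz encoding and the cycle lemma. Your fallback is even shorter than you suggest. $F=y\prod_i(1+x_iF)$ is exactly the functional equation satisfied by $\sum_n(-1)^{n-1}M_{P_n}(\xx)y^n$ in the proof of Theorem~\ref{theorem:parking}, so the two series coincide without extracting any coefficients.

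What each route buys:
\begin{itemize}
\item Your route is purely enumerative. It bypasses the shellability machinery and needs only the recurrences of Theorem~\ref{theorem:multi_recurrence} and Lagrange inversion. It also proves the Fuss--Narayana refinement as a standalone fact.
\item The paper's route identifies $r$-ary trees of type $\nu$ with the ascent-free maximal chains of $[\hat 0,[n]^\nu]_{P_n}$. Hence they index a basis of the top cohomology of that interval, which a counting argument cannot see.
\end{itemize}

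One correction to your indexing remark. The index set $\wcomp_{n,r}$ in the statement is a typo for $\wcomp_{n-1,r}$; compare the remark following the theorem. Under the paper's definition, $\mathcal{UT}_{\nu,r}$ for $\nu\in\wcomp_{n,r}$ consists of trees with $n+1$ internal nodes and is nonempty. So ``both sides are $0$'' for the other $\nu$ is not the right justification. Simply state that you are proving the version indexed by $\wcomp_{n-1,r}$.
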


\begin{proof} For any graph $G$ on $[n]$, let $\Lyn_{G,r} = \bigcup_{\nu \in \wcomp_{n-1,r} } \Lyn_{G,\nu}$.  The result follows from Corollary~\ref{corollary:lyndon_mobius} and the direct bijection between $\Lyn_{P_n,r}$ and $\mathcal {UT}_{n,r}$ given below.  By the discussion in Example~\ref{example:path}, we can  view the trees in $\Lyn_{P_n,r}$ as  {\em unlabeled} binary trees whose internal nodes are colored with colors in $[r]$.  We represent such a binary tree $T$ as
$T=T_L \overset{c}{\land} T_R$, where $T_L$ is the colored left subtree of $T$,   $T_R$ is the colored right subtree of $T$, and $c$ is the color of the root of $T$.
Then the (unlabeled) tree $T \in \Lyn_{P_n,r}$ if and only if $$T=( \dots (T_0  \overset{c_1}{\land} T_1) \overset{c_2}\land \cdots )\overset{c_{k}} \land T_k,$$ where $T_0$ is a single (unlabeled) leaf, each (unlabeled) $T_i \in \Lyn_{P_{n_i},r}$, for some $n_i$, and 
$r\ge c_1 > c_2> \dots > c_{k}   \ge 1$.  Now let $\psi(T)$ be the $r$-ary tree defined recursively by letting the $i$th subtree of the root of $\psi(T)$ be $\psi(T_j)$ if $i = c_j$ and be a leaf if 
 $i \in [r]-\{c_1,\dots, c_k\}$. This defines a map from $\Lyn_{P_n,r}$ to $\mathcal {UT}_{n,r}$, which is easily seen to be a bijection.  Also,   for  each $\nu \in \wcomp_{n,r}$, we have $T \in \Lyn_{P_n,\nu}$ if and only if $\psi(T) \in \mathcal {UT}_{\nu,r}$.  Hence $|\Lyn_{P_n,\nu}| = |\mathcal {UT}_{\nu,r}|$.
\end{proof}  

\begin{remark} Note that Corollary~\ref{corollary:parking} and Theorem~\ref{theorem:r_ary_trees} imply that  $|\mathcal {UT}_{\nu,r} |$ is equal to the generalized Fuss-Narayana number $N_\nu$ 
 for all $\nu \in \wcomp_{n-1,r}$. This is a refinement of the well known result that the Fuss-Catalan numbers count $r$-ary trees.
 \end{remark}
 
 \begin{remark}An alternative proof of  Theorem~\ref{theorem:parking} can be given that uses   Theorem~\ref{theorem:e_positive} and a  bijection from  $\mathcal N_{P_n}$   to the set of noncrossing partitions $\mathcal {NC}_{n}$.  By the discussion in Example~\ref{example:path},  we can ignore the leaf labels and view the trees in $\mathcal N_{P_n}$  as trees in $\mathcal{UT}_{n,2}$. The bijection is as follows.  For  each $T \in \mathcal{UT}_{n,2}$, label its internal nodes  
 in inorder (i.e. left subtree labeled in inorder first, then root, then right subtree labeled in inorder)  and let $\pi(T)$ 
be the partition  of $[n]$ whose blocks  are the  maximal left paths  of labeled internal nodes of $T$,  that is,  the 
blocks of $\pi(T)$ are of the form $\{x_1,\dots, x_m\}$, where $L(x_i) = x_{i+1}$ for all $i\in [m-1]$, $x_1$ is not a left 
child and the left child of $x_m$ is a leaf.  We leave it to the reader to check that this defines a bijection  $\pi:\mathcal N_{P_n}=\mathcal{UT}_{n,2} \to \mathcal {NC}_{n}$.    Note that  the Lyndon type $\lambda(T)$ of each   $T\in \mathcal N_{P_n}$ is equal to the block type $\lambda(\pi(T))$.  Hence $\pi$ is a bijection from $\mathcal N_{P_n}$ to $\mathcal {NC}_{n}$ that takes Lyndon type to block type.  Theorem~\ref{theorem:parking} now follows from (\ref{equation:parking3}) and Theorem~\ref{theorem:e_positive}.
 \end{remark}

Next we apply Theorem~\ref{theorem:e_positive} to the star graph $St_n$.  
For each subset $S=\{s_1,s_2,\dots,s_{k-1}\}_<$ of $[n-1]$,  the composition of $n$ associated with $S$ is defined by, 
$$\co(S) = (s_1,s_2-s_1,s_3-s_2,\dots,n-s_{k-1}).$$ 
Given a composition $\nu$ with $k$ parts, let $e_\nu$ be the elementary symmetric function $e_{\nu(1)}e_{\nu(2)}\cdots e_{\nu(k)}$. Recall, the {\em descent set} $\Des(\sigma)$ of a permutation $\sigma \in \mathfrak S_n$ is defined to be
$$\Des(\sigma) := \{i \in [n-1] : \sigma(i) > \sigma(i+1) \}.$$

\begin{theorem}[\cite{GonzalezDLeon2018}] \label{corollary:star_e_pos} For all $n \ge 1$, $$(-1)^{n} M_{St_{n+1}}(\xx) = \sum_{\sigma \in \mathfrak S_{n} } e_{\co(\Des(\sigma))}(\xx).$$
\end{theorem}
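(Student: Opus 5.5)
The plan is to specialize Theorem~\ref{theorem:e_positive} to the star graph and then identify the trees in $\mathcal N_{St_{n+1}}$ with permutations in $\mathfrak S_n$ so that the Lyndon type of a tree becomes the composition of the descent set of the corresponding permutation.

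\emph{Step 1: the theorem applies.} The star $St_{n+1}$, with center $1$, is a connected perfectly labeled chordal graph. It is an increasing tree rooted at $1$, and the lower neighbourhood of each vertex $i>1$ is $\{1\}$, which is a clique. Since $St_{n+1}$ has $n+1$ vertices, Theorem~\ref{theorem:e_positive} gives
$$(-1)^{n} M_{St_{n+1}}(\xx)=\sum_{T\in\mathcal N_{St_{n+1}}} e_{\lambda(T)}(\xx).$$

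\emph{Step 2: describe $\mathcal N_{St_{n+1}}$.} This confirms the description in Example~\ref{example:star}.
\begin{enumerate}
\item A subset of $[n+1]$ with at least two elements induces a connected subgraph of $St_{n+1}$ if and only if it contains $1$.
\item Hence every internal node $x$ of $T\in\mathcal N_{St_{n+1}}$ satisfies $1\in A_x$.
\item Two sibling internal nodes cannot both contain $1$, so at every internal node one child is a leaf.
\item Normalization forces the subtree containing $1$ to be the left child. Thus the right child of every internal node is a leaf.
\end{enumerate}
It follows that $T$ is a left comb
$$(\cdots((1\land\sigma(2))\land\sigma(3))\cdots)\land\sigma(n+1).$$
Conversely, every such comb lies in $\mathcal N_{St_{n+1}}$. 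These trees are in bijection with $\sigma_T\in\mathfrak S_n$, where $\sigma_T(i)=\sigma(i+1)-1$.

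\emph{Step 3: compute the Lyndon type.} Let $x_i$ denote the internal node whose right child is the leaf $\sigma(i)$, for $2\le i\le n+1$. Then $L(x_i)=x_{i-1}$ for $i\ge 3$, and $L(x_2)$ is the leaf $1$.
\begin{enumerate}
\item The node $x_2$ is Lyndon, because its left child is a leaf.
\item For $i\ge 3$, condition \eqref{eq:lynnode} reads $\sigma(i-1)>\sigma(i)$. So $x_i$ is nonLyndon exactly when $\sigma(i-1)<\sigma(i)$, that is, when $i-2$ is an ascent of $\sigma_T$.
\item In the Lyndon set partition $\pi(T)$, a nonLyndon $x_i$ is placed in the same block as $x_{i-1}$.
\item Therefore the blocks of $\pi(T)$ are the maximal runs of consecutive positions forming increasing runs of $\sigma_T$.
\item Their sizes, read in order, are the parts of $\co(\Des(\sigma_T))$.
\end{enumerate}
Since the $e_k$ commute, $e_{\lambda(T)}=e_{\co(\Des(\sigma_T))}$. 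Summing over the bijection $T\mapsto\sigma_T$ yields the claimed formula.

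The only step needing real care is the structural characterization in Step 2, especially the argument that no internal node can avoid $1$. Once that is settled, Steps 1 and 3 are direct bookkeeping with the definitions.
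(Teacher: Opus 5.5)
Your proposal is correct and follows the paper's proof: you specialize Theorem~\ref{theorem:e_positive} to $St_{n+1}$, use the comb bijection $T\mapsto\sigma_T$ of Example~\ref{example:star}, and match the blocks of $\pi(T)$ with the maximal ascending runs of $\sigma_T$, which gives $e_{\lambda(T)}=e_{\co(\Des(\sigma_T))}$. Your Steps~2 and~3 add the verifications (that $\mathcal N_{St_{n+1}}$ consists of left combs, and the Lyndon-node criterion) which the paper states without proof in Example~\ref{example:star}.
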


\begin{proof} Consider the bijection, described in  Example~\ref{example:star}, from $\mathcal N_{St_{n+1}}$ to  $\mathfrak S_{n}$   that takes $T$ to $\sigma_T$.  Also recall the definition of the Lyndon-set partition $\pi(T)$ used to define the Lyndon type   as $\lambda(T) = \lambda(\pi(T))$ in (\ref{equation:def_lambda_T}).

 Note that for all $T \in \mathcal N_{St_{n+1}}$, 
 each block of the partition $\pi(T)$  is in bijective correspondence with  a maximal ascending run of the permutation $\sigma_T$, where a maximal ascending run of a permutation $\sigma \in \mathfrak S_{n}$ is a set of the form $\{\sigma(s_{i}+1),\sigma(s_{i}+2), \dots, \sigma(s_{i+1})\}$, where 
 $\{s_0,s_1,\dots,s_k\}_< = \Des(\sigma) \cup \{0,n\}$.  It follows that the Lyndon type $\lambda(T)$ is the weakly decreasing rearrangement of $\co(\Des(\sigma_T))$.  Hence for all $T \in \mathcal N_{St_{n+1}}$, we have  $e_{\lambda(T)} = e_{\co(\Des(\sigma_T))} $.  The result now follows from  Theorem~\ref{theorem:e_positive}.
\end{proof}

Note that since $\tilde A_n(t) = (-1)^{n} \mu_{St_{n+1}}(t) = (-1)^{n} M_{St_{n+1}} (1,t)$,  Equation  (\ref{equation:tildeA_gamma}) is a consequence of Theorem~\ref{corollary:star_e_pos}.

\begin{remark}
 Let $\Lambda_{\QQ}^n$ be the vector space  of homogeneous symmetric functions of degree $n$ over $\QQ$.  Let $\eta: \Lambda_{\QQ}^n \to \QQ[t]$ be the homomorphism defined on the basis $\{e_\lambda: \lambda \vdash n\}$ by $\eta(e_\lambda) = t^{\ell(\lambda)-1}$.

(1) In \cite{GonzalezDLeon2016} Gonz\'alez D'Le\'on shows that $(-1)^{n-1}\eta( M_{K_n}(\xx)) $ is equal to a polynomial whose coefficients are  called {\em second order Eulerian numbers} in \cite{GrahamKnuthPatashnik1989}.  Stanley and Gessel \cite{StanleyGessel1978} give a combinatorial interpretation of these polynomials in terms of descent number of certain multiset permutations that they call {\em Stirling permutations}.   In \cite{GonzalezDLeon2019} Gonz\'alez D'Le\'on gives a combinatorial interpretation of  $(-1)^{n-1} M_{K_n}(\xx)$ in terms of Stirling permutations, which, under the specialization $\eta$, reduces to the Stanley-Gessel interpretation of the second order Eulerian polynomials.  

(2)  It follows from  Equation~(\ref{equation:parking_e}) and a well-known interpretation of the Narayana polynomials as $N_{n}(t) = \sum_{\pi \in \mathcal{NC}_n} t^{|\pi|-1}$ that
$$(-1)^{n-1}  \eta(M_{P_n}(\xx))= N_{n-1}(t) .
$$ 

(3) It follows from Theorem~\ref{corollary:star_e_pos} that $$(-1)^{n-1} \eta(M_{St_n}(\xx)) = A_{n-1}(t),$$  which  appeared in \cite{GonzalezDLeon2019}.
\end{remark}

\subsection{Topological consequences}
\label{section:topological_consequences}

Although not needed in the rest of the paper, we discuss some topological consequences of the EL-labeling of Theorem \ref{theorem:ELbond}. First we recall the well-known Philip Hall's theorem stating that given any open interval $(x,y)$ of a pure bounded finite poset $P$, the value of the
M\"obius function $\mu_P(x,y)$  is equal to the reduced Euler characteristic of the order complex $\Delta(x,y)$ of the interval $(x,y)$, where the order complex of a poset is the simplicial complex whose faces are the chains of the poset.
An EL-labeling has the following further topological and homological implications for the order complex $\Delta(x,y)$.

\begin{theorem}[Bj\"orner \cite{Bjorner1980}, Bj\"orner and Wachs {\cite[Theorem 5.9]{BjornerWachs1996}}, cf. Wachs {\cite[Theorem 3.2.4] {Wachs2007}}]\label{theorem:BjornerWachs}
Let $\lambda$ be an EL-labeling of a pure bounded finite poset $P$. Then for each  open interval $(x,y)$ of $P$ we have that, 
\begin{enumerate}
    \item the order complex $\Delta(x,y)$ has the homotopy type of a wedge of 
    $$|\{\cc\mid \cc \text{ is an   ascent-free maximal chain of }[x,y] \}|$$
    $\ell(x,y)$-spheres, and hence the poset $P$ is \emph{Cohen-Macaulay}, i.e., $\widetilde H^d(\Delta(x,y))=0$ for every  $d\neq \ell(x,y)$;
    \item the set 
    $$\{\bar \cc\mid \cc \text{ is an   ascent-free maximal chain of }[x,y] \}$$ forms a basis for the reduced cohomology $\widetilde H^{\ell(x,y)}(\Delta(x,y))$. 
   
\end{enumerate}

\end{theorem}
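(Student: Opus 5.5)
The plan is to prove that the lexicographic order on maximal chains, determined by their label words, gives a shelling of $\Delta(x,y)$. Then (1) and (2) follow from the general theory of shellable complexes, by identifying the facets that are attached along their entire boundary.

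First I restrict $\lambda$ to $[x,y]$; this is again an EL-labeling. The facets of $\Delta(x,y)$ are the sets $\bar\cc$ for the maximal chains $\cc$ of $[x,y]$, and all of them have dimension $\ell(x,y)-2$. I fix a total order $\cc_1,\cc_2,\dots$ of these maximal chains that refines the lexicographic order of the words $\lambda(\cc)$, with ties broken arbitrarily. The key step is the following exchange property. If $\cc'$ precedes $\cc$, then there is a chain $\cc''$ preceding $\cc$ such that $\bar\cc\cap\bar\cc'\subseteq\bar\cc\cap\bar\cc''$ and $|\bar\cc\setminus\bar\cc''|=1$.

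To prove it, write $\cc=(x=x_0\lessdot\cdots\lessdot x_\ell=y)$. Let $i$ be the first index at which $\cc'$ leaves $\cc$, and let $x_j$ be the first element of $\cc$ after $x_{i-1}$ that $\cc'$ meets again. The segment of $\cc$ from $x_{i-1}$ to $x_j$ cannot be the increasing chain of $[x_{i-1},x_j]$. If it were, condition (2) of the EL definition would make that segment lexicographically smallest in $[x_{i-1},x_j]$, so $\cc'$ would come after $\cc$, a contradiction. Hence the segment has a descent at some $x_k$ with $i-1<k<j$. I then replace $x_k$ by the middle element of the unique increasing chain of $[x_{k-1},x_{k+1}]$. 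The resulting $\cc''$ differs from $\cc$ only at $x_k$, which is not in $\cc'$, and it is lexicographically smaller than $\cc$.

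Using the exchange property, the restriction face of $\bar\cc$ in this shelling is $\{x_k : \lambda(x_{k-1}\lessdot x_k)\not<\lambda(x_k\lessdot x_{k+1})\}$. For the reverse inclusion, if there is an ascent at $x_k$, then swapping $x_k$ in the other direction only raises $\cc$ in lexicographic order. So $\bar\cc$ is attached along its whole boundary exactly when $\cc$ is ascent-free.

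Standard facts about pure shellable complexes then finish the argument. Such a complex is homotopy equivalent to a wedge of spheres of top dimension, with one sphere for each facet whose restriction is the whole facet; here there is one sphere per ascent-free maximal chain. The dimension is $\ell(x,y)-2$ as a simplicial complex, which is the convention implicit in the statement's indexing. Applying this to every interval gives the vanishing of cohomology in the other degrees, so $P$ is Cohen–Macaulay.

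For (2), I plan to show that the elementary cochains on the ascent-free facets are cocycles spanning top cohomology. The matrix pairing them with a homology basis built inductively along the shelling order is unitriangular, since each homology facet contributes a cycle whose first new facet is itself. Together with the count in (1), this shows they form a basis.

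I expect the main obstacle to be the exchange lemma, specifically making sure that the local replacement at a descent really produces a chain that precedes $\cc$ in the chosen total order even when tie-breaking is involved. It does, because its label word is strictly lexicographically smaller than that of $\cc$.
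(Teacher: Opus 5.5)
Your proposal is correct and is essentially the standard proof from the sources the paper cites (the paper itself only quotes this theorem without proof): Björner's lexicographic shelling via an exchange at a non-ascent, the identification of the restriction face of $\bar\cc$ with its set of non-ascent positions, and then the general wedge-of-spheres and homology-facet cohomology-basis facts for shellable complexes. You also correctly handle poset-valued labels, using a linear extension of the lexicographic partial order and $\not<$ rather than descents, and you get the sphere dimension right: it is $\dim\Delta(x,y)$, two less than the length of $[x,y]$, which matches the statement once its $\ell(x,y)$ is read as the length of the open interval $(x,y)$.
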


The following are therefore topological consequences of Theorems \ref{theorem:ELbond} and \ref{theorem:ascent_free_G}. These consequences were proved in \cite{GonzalezDLeon2016} ($r=2$ case in \cite{GonzalezDLeonWachs2016}) for $G=K_n$.

\begin{theorem}
Let $G$ be a connected chordal graph on $[n]$ and let $r \in \ZZ_{>0} \cup \{\infty\}$. Then $\widehat{\Pi^r_G}$ is Cohen-Macaulay. 
\end{theorem}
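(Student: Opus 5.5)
To show that $\widehat{\Pi^r_G}$ is Cohen-Macaulay, I will combine the EL-labeling of Theorem~\ref{theorem:ELbond} with Theorem~\ref{theorem:BjornerWachs}(1). Since $G$ is chordal, it admits a perfect elimination order. Relabeling the vertices along this order gives an isomorphic graph $G'$ on $[n]$ that is perfectly labeled chordal. The relabeling induces a poset isomorphism $\widehat{\Pi^r_G}\cong\widehat{\Pi^r_{G'}}$, and Cohen-Macaulayness is a property of the poset alone. So it suffices to treat the case where $G$ itself is perfectly labeled.

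For perfectly labeled $G$, Theorem~\ref{theorem:ELbond} shows that the restriction of $\lambda$ is an EL-labeling of $\widehat{\Pi^r_G}$. This poset is pure, bounded, and all of its maximal chains have length $n$. For finite $r$ it is finite, so Theorem~\ref{theorem:BjornerWachs}(1) applies to every open interval $(x,y)$. That theorem shows $\Delta(x,y)$ is homotopy equivalent to a wedge of $\ell(x,y)$-spheres, so its reduced cohomology vanishes outside degree $\ell(x,y)$. This is exactly the Cohen-Macaulay condition.

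The only subtle point is $r=\infty$, where $\widehat{\Pi^\infty_G}$ is infinite. I would argue interval by interval. Every closed interval $[x,y]$ with $y\neq\hat 1$ is a finite interval of $\Pi^\infty_G$. By Proposition~\ref{proposition:isomorphic_intervals} it is isomorphic to an interval of $\Pi^r_G$ for some finite $r$, and the restricted labeling is an honest EL-labeling of this finite bounded poset. So the finite argument applies.

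It remains to handle the intervals $(x,\hat 1)$ when $r=\infty$; these are infinite, and I regard them as the main obstacle. Each such interval is the union of the finite subposets obtained by allowing only colors in $[r]$. These subposets are exactly the intervals $(x,\hat 1)$ in $\widehat{\Pi^r_G}$ after padding weights with zeros, and each is Cohen-Macaulay by the finite case. Since simplicial cohomology of the order complex commutes with this directed union, vanishing in each degree $d\neq\ell(x,\hat 1)$ passes to the limit. If one prefers to avoid limits, one can simply state the Cohen-Macaulay conclusion for finite $r$ together with all finite intervals of $\Pi^\infty_G$, which is the usual convention for infinite posets whose maximal chains are finite.
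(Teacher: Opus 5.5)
Your argument is the paper's: the theorem is stated there as a direct consequence of the EL-labeling of Theorem~\ref{theorem:ELbond} (after relabeling along a perfect elimination order) together with Theorem~\ref{theorem:BjornerWachs}, and the paper does not treat $r=\infty$ separately. Your extra treatment of $r=\infty$ reaches the right conclusion, but the stated reason is wrong, because cohomology does not in general commute with increasing unions (a $\lim^1$ term can appear); instead, run that step in homology, which does commute with increasing unions of subcomplexes since cycles and the chains they bound are finite, and then return to cohomology by universal coefficients using that $\Delta(x,\hat 1)$ is finite-dimensional (also, the spheres in Theorem~\ref{theorem:BjornerWachs} have dimension $\ell(x,y)-2$, not $\ell(x,y)$).
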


\begin{theorem} Let $G$ be a connected perfectly labeled chordal graph on $[n]$ and let $\nu \in \wcomp_{n-1,r}$, where $r \in \ZZ_{>0} \cup \{\infty\}$. Then the set $$\{\bar c(\bT,\rho_{T})\mid \bT \in \Lyn_{G,\nu}\},$$ where $\bar c(\bT,\rho_{T}):=c(\bT,\rho_{T})\setminus \{\hat{0},[n]^{\nu}\}$ and  $\rho_T$ is any linear extension of $T$, is a basis for  top cohomology $\widetilde{H}^{n-3}(\Delta(\hat{0},[n]^\nu))$ of the order complex $\Delta(\hat{0},[n]^\nu)$ of the open interval $(\hat 0, [n]^\nu)$ of $\Pi^r_G$.
\end{theorem}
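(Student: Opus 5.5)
The statement is a direct application of Theorem~\ref{theorem:BjornerWachs}(2) to the maximal interval $[\hat 0,[n]^\nu]_G$, combined with the description of ascent-free chains in Theorem~\ref{theorem:ascent_free_G}. The plan has two parts. First, transport the EL-labeling to this finite interval. Second, replace the decreasing-minimum linear extension $\tau_T$ by an arbitrary linear extension $\rho_T$ without changing the cohomology class up to sign.

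\emph{Step 1.} By Theorem~\ref{theorem:ELbond}, the restriction of $\lambda$ is an EL-labeling of $\widehat{\Pi^\infty_G}$. Hence it restricts to an EL-labeling of the finite interval $[\hat 0,[n]^\nu]_G$. By Proposition~\ref{proposition:isomorphic_intervals}, this interval is the same for any $r$ with $\nu\in\wcomp_{n-1,r}$. It has length $n-1$, so the open interval has order complex of dimension $n-3$. Theorem~\ref{theorem:BjornerWachs}(2) then says that the chains $\bar c$, for $c$ ascent-free, form a basis of $\widetilde H^{n-3}(\Delta(\hat 0,[n]^\nu))$. By Theorem~\ref{theorem:ascent_free_G}, these are exactly the chains $\bar c(\bT,\tau_T)$ with $\bT\in\Lyn_{G,\nu}$. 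This settles the case $\rho_T=\tau_T$.

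\emph{Step 2.} For a general $\rho_T$, I would show that $\bar c(\bT,\rho_T)=\pm\,\bar c(\bT,\tau_T)$ in top cohomology. The cochain groups are spanned by the maximal faces, namely the maximal chains of the open interval. Any two linear extensions of $T$ are connected by a sequence of adjacent transpositions of incomparable internal nodes $x_i,x_{i+1}$. Such a swap changes only the rank-$i$ element of the chain.

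Fix such a swap. The two chains share the codimension-one face $F$ obtained by deleting the rank-$i$ element. Around $F$, $\bpi_{i-1}$ and $\bpi_{i+1}$ differ by two disjoint merges of colored blocks. So the open interval $(\bpi_{i-1},\bpi_{i+1})$ has exactly two elements, corresponding to the two orders of the merges. These lie in $\Pi^r_G$ because each $A_{x}$ induces a connected subgraph of $G$. The coboundary of the indicator cochain of $F$ is therefore $\pm$ the sum of the two adjacent facets. Hence in cohomology the two facets agree up to sign, with the sign coming from the standard orientation convention.

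Iterating over the sequence of swaps gives $[\bar c(\bT,\rho_T)]=\pm[\bar c(\bT,\tau_T)]$. Multiplying basis elements by signs preserves the basis property, which proves the statement.

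The main obstacle is making the codimension-one argument precise. One has to check that $F$ lies in exactly two facets, which holds because the rank-$i$ interval has exactly two elements given disjoint merges. The remaining care is the bookkeeping of signs, which is irrelevant to the basis claim. The other potential issue is whether Theorem~\ref{theorem:BjornerWachs} applies to an interval of the infinite poset $\widehat{\Pi^\infty_G}$. This is handled by the finiteness of closed intervals noted before Theorem~\ref{theorem:ellabelingposet}.
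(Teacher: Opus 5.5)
Your proposal is correct and follows the same route as the paper. The case $\rho_T=\tau_T$ comes from Theorems~\ref{theorem:BjornerWachs} and~\ref{theorem:ascent_free_G}, and arbitrary linear extensions are handled by showing $\bar c(\bT,\rho_T)=\pm\bar c(\bT,\tau_T)$ in top cohomology; the paper simply cites \cite{GonzalezDLeon2016} for this sign relation, whereas you spell out the standard argument (adjacent swaps of incomparable nodes, with each codimension-one face lying in exactly two facets because two disjoint merges give a length-two interval with exactly two interior elements, both in $\Pi^r_G$ by connectivity of the $A_x$), and that argument is correct.
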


\begin{proof} It follows from   Theorems~\ref{theorem:BjornerWachs} and~\ref{theorem:ascent_free_G} that $$\{\bar c(\bT,\tau_{T})\mid \bT \in \Lyn_{G,\nu}\},$$ where $\tau_T$ is the decreasing-minimum linear extension, is a basis for  top cohomology of the open interval $(\hat 0, [n]^\nu)$. We claim  that if $\tau$ and $\tau'$ are any two linear extensions of the internal nodes of   $\bT\in \Lyn_{G,\nu}$, then in the top cohomology  group  of the order complex of the interval $(\hat{0},[n]^\nu)$ of $\Pi_G^r$ we have that 
$$\bar c(\bT,\tau)=\pm \bar c(\bT,\tau').$$  Indeed,  this was proved in \cite{GonzalezDLeon2016} for $G=K_n$ and the proof goes through for any connected perfectly labeled chordal graph $G$.
It follows that the decreasing-minimum linear extension $\tau_T$ can be replaced by any linear extension $\rho_T$.
\end{proof}

\section{A symmetric function analog of the chromatic polynomial}\label{section:new_chromatic_symmetric_function}

We consider a symmetric function analog of the characteristic polynomial of a bond lattice. Thanks to Whitney's formula in Equation \eqref{equation:chromatic}, this is a symmetric function analog of the chromatic polynomial of a graph, which is different from Stanley's well-known chromatic symmetric function  \cite{Stanley1995}.

\subsection{Preliminaries}
Given a pure (or ranked) poset $P$, the {\em characteristic polynomial} of a pure poset $P$ with a minimum element $\hat 0$ is defined as
 $${\rm ch}_P(t) := \sum_{x \in P} \mu_P(\hat 0,x) t^{\rho(x)},$$ where $\rho(x)$ is the rank of $x$, i.e., the length of the interval $[\hat 0,x]$.

For any graph $G$, let
\begin{align}\label{equation:new_chromatic_symmetric_definition}
    \Psi_G(\xx) &= \sum_{\bpi \in \Pi_G^\infty} \mu_{\Pi_G^\infty} (\hat 0, \bpi)\xx^{w(\bpi)}
    \end{align}
Equivalently,
\begin{align*}
 \Psi_G(\xx) &= \sum_{\pi \in \Pi_G} M_{G|_\pi}(\xx)
 \end{align*}
where $G|_\pi$ is the subgraph of $G$ whose connected components are of the form $G|_B$ where $B
 \in \pi$.  For example,
 \begin{align}
     \Psi_{P_3}(\xx) &= 1-2e_1(\xx)+e_{1,1}(\xx)+e_{2}(\xx)\label{equation:examples_P_3}
 \end{align}

 The specialization
 $\Psi_G(\stackrel {r}{\overbrace{t,t,\dots,t}},0,0,\dots)$ is equal to the characteristic polynomial  of the $r$-weighted bond poset $\Pi^r_G$.
  By (\ref{equation:chromatic}), the chromatic polynomial $\chi_G(t) $ is obtained from $\Psi_G(t,0,0,\dots) $  by reversing its coefficients.  That is 
 $$\Psi_G(t,0,0,\dots)=t^n\chi_G(t^{-1}) ,$$  where $n$ is the number of vertices of $G$.  Thus we may think of $\Psi_G(\xx)$ as a symmetric function analog of the chromatic polynomial.

There is a well-known classical formula for the characteristic polynomial of $\Pi_{K_n}=\Pi_{n}$ given by,
\begin{equation} \label{equation:char_K_n} \Psi_{K_n}(t,0,0,0,\dots) =  (t-1)(t-2)\cdots(t-n+1).\end{equation} 
 The formula for the characteristic polynomial of $\Pi^2_{K_n} = \mathcal{W}\Pi_{K_n}$ was given in \cite[Theorem 2.8]{GonzalezDLeonWachs2016}, 
\begin{equation} \label{equation:char_K_n_2} \Psi_{K_n}(t,t,0,0,\dots) = (t-n)^{n-1}.\end{equation} 

\begin{question}
    Are there nice formulas for $\Psi_{K_n}(\stackrel {r}{\overbrace{t,t,\dots,t}},0,0,\dots)$ when $r>2$?
\end{question}

\subsection{Alternating $e$-positivity and Schur-log-concavity}
The next theorem  generalizes, in the case of chordal graphs, the fact that the characteristic polynomials of bond lattices  alternate in sign.  Given a symmetric function $f(\xx)$, let  $f(\xx)|_{\deg d}$ denote the degree $d$  homogeneous component of $f(\xx)$.  We say that $f(\xx)$ is {\em alternating $e$-positive} if  $(-1)^d f(\xx)|_{\deg d}$ is $e$-positive for all $d \ge 0$.

\begin{theorem} \label{theorem:chromatic_symmetric} 
 If $G$ is a chordal graph then $\Psi_G(\xx)$ is alternating $e$-positive.
\end{theorem}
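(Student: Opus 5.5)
We will work from the equivalent form of the definition,
$$\Psi_G(\xx)=\sum_{\pi\in\Pi_G} M_{G|_\pi}(\xx),$$
together with the observation that each summand is homogeneous. Indeed, $G|_\pi$ has $n$ vertices and $|\pi|$ connected components, so $M_{G|_\pi}(\xx)$ is homogeneous of degree $n-|\pi|$. Hence the degree $d$ component of $\Psi_G(\xx)$ is
$$\Psi_G(\xx)|_{\deg d}=\sum_{\substack{\pi\in\Pi_G\\ |\pi|=n-d}} M_{G|_\pi}(\xx),$$
and alternating $e$-positivity amounts to showing that $(-1)^{n-|\pi|}M_{G|_\pi}(\xx)$ is $e$-positive for every $\pi\in\Pi_G$. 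A sum of $e$-positive symmetric functions is $e$-positive, so the statement reduces to this claim.

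The key step is to verify that each $G|_\pi$ is chordal, so that Corollary~\ref{corollary:e_positive} applies to it. By definition, $G|_\pi$ is the disjoint union of the induced subgraphs $G|_B$ for $B\in\pi$. An induced subgraph of a chordal graph is chordal, since an induced cycle of length greater than $3$ in $G|_B$ is also an induced cycle in $G$; one can equally use Corollary~\ref{corollary:induced_PEO} after perfectly labeling $G$. A disjoint union of chordal graphs is chordal, because any induced cycle lies in a single component. Thus $G|_\pi$ is a chordal graph with $n$ vertices and exactly $|\pi|$ connected components, each $G|_B$ being connected because $\pi\in\Pi_G$. Corollary~\ref{corollary:e_positive} then gives that $(-1)^{n-|\pi|}M_{G|_\pi}(\xx)$ is $e$-positive. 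Alternatively, one can use Proposition~\ref{proposition:M_product} to write $M_{G|_\pi}=\prod_{B\in\pi}M_{G|_B}$ and apply Theorem~\ref{theorem:e_positive} to each connected chordal factor, using $\sum_{B\in\pi}(|B|-1)=n-|\pi|$ for the signs.

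Summing over $\pi$ with $|\pi|=n-d$ then shows that $(-1)^d\Psi_G(\xx)|_{\deg d}$ is $e$-positive for every $d$, and the components with $d>n-1$ vanish. There is no real obstacle here. The only points needing care are the bookkeeping that the degree of $M_{G|_\pi}$ is $n-|\pi|$, so that the sign $(-1)^d$ matches the sign $(-1)^{n-k}$ in Corollary~\ref{corollary:e_positive}, and the check that the equivalent form of $\Psi_G$ holds. That form follows from Proposition~\ref{proposition:multiMobius_prod}: grouping the elements $\bpi$ of $\Pi_G^\infty$ by their underlying partition $\pi$ and factoring the M\"obius function over the blocks gives $\sum_{\bpi}\mu(\hat 0,\bpi)\xx^{w(\bpi)}=\prod_{B\in\pi}M_{G|_B}(\xx)=M_{G|_\pi}(\xx)$.
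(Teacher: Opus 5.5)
Your proposal is correct and follows essentially the same route as the paper's proof: you split $\Psi_G(\xx)$ into homogeneous components indexed by $|\pi|=n-d$, observe that each $G|_\pi$ is chordal with $n$ vertices and $|\pi|$ components, and apply Corollary~\ref{corollary:e_positive}. Your check of the equivalent form of $\Psi_G$ via Propositions~\ref{proposition:multiMobius_prod} and~\ref{proposition:M_product} is a useful detail that the paper leaves implicit.
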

\begin{proof}  Let $n$ be the number of vertices of $G$.
 Note  that  $G|_\pi$ has $n$ vertices and $|\pi|$ connected components for all   $\pi \in \Pi_G$.
 Since $M_{G|_\pi}(\xx)$ is homogeneous of degree $n-|\pi|$,  for all $d$ we have $$\Psi_G(\xx)|_{\deg d} = \sum_{\substack{\pi \in \Pi_G \\ |\pi| = n-d}} M_{G|_\pi}(\xx).$$
 
Since $G$ is chordal, each $G|_\pi$  is chordal.  We can therefore apply Corollary~\ref{corollary:e_positive} to each 
 $G|_\pi$.  Thus, if $|\pi| =n-d$ then $(-1)^{d}  M_{G|_\pi}(\xx)$ is $e$-positive.  It follows that each $(-1)^d \Psi_G(\xx)|_{\deg d}$ is $e$-positive giving the desired result.
\end{proof}

A similar argument shows that Conjecture~\ref{conjecture:e_positive} (1) is equivalent to  the conjecture that Theorem~\ref{theorem:chromatic_symmetric} is true for any graph.

A polynomial $\sum_{j=0}^d a_j t^j \in \RR[t]$ is said to be \emph{log-concave} if for all $i\in [d-1]$, 
\begin{equation}\label{equation:log_concave}
a_i^2\ge a_{i-1}a_{i+1}.  
\end{equation}
 For sequences of positive real numbers, log-concavity implies unimodality (see \cite[Lemma 1.1]{Branden2015}).  

A longstanding conjecture of  Hoggar \cite{Hoggar1974}, proved in 2012 by Huh  \cite{Huh2012}, asserts
that for any graph $G$, the chromatic polynomial $\chi_G(t)$ is  log-concave. Since the coefficients of the chromatic polynomial  alternate in sign,  Huh's result also settles a conjecture of Read \cite{Read1968} stating that $(-1)^n \chi_G(-t) $ is unimodal. 

Note that log-concavity  of $\chi_G(t)$ is equivalent to  log-concavity  of $\Psi_G(t,0,0,\dots)$ since the sequences of coefficients are the reverse of each other.  This leads to the following  conjectural generalization.
We will say that a symmetric function $f(\xx)$ is {\em Schur-log-concave} if for all $d \ge 1$,
$$(f(\xx)|_{\deg d})^2 - f(\xx)|_{\deg d-1}f(\xx)|_{\deg d+1} $$ is Schur-positive (i.e., when expanded in the Schur basis 
$\{s_\lambda(\xx) : \lambda \vdash 2d \}$, the coefficients are nonnegative). The following conjecture has been shown computationally for all connected graphs $G$ with at most $7$ vertices.

\begin{conjecture} \label{conjecture:Schur_log} For any graph $G$, the symmetric function
$\Psi_G(\xx)$ is Schur-log-concave.
\end{conjecture}

\begin{example} Checking the Schur-log-concavity condition at $d=1$ for $\Psi_{P_3}(\xx)$  
 from Equation \eqref{equation:examples_P_3} yields
\begin{align*}
    (-2e_{1}(\xx) )^2- (1) (e_{1,1}(\xx) +e_{2}(\xx)) &= 3 s_{1,1}(\xx) +2 s_2(\xx),
\end{align*}
 which is Schur-positive.  
\end{example}

Note that Schur-log-concavity of $\Psi_G(\xx)$ implies log-concavity of the characteristic polynomial $\Psi_G(t,\dotsm,t,0,0,\dots)$ of $\Pi_G^r$ for all $r$.   Huh's result (i.e. the r=1 case) is therefore a specialization of Conjecture~\ref{conjecture:Schur_log} for $\Psi_G(\xx)$.  
For $r=2$ and $G=K_n$, log-concavity of $\Psi_{K_n}(t,t, 0,0,\dots)$ follows from~\eqref{equation:char_K_n_2} and the fact that  real-rooted polynomials with nonnegative coefficients are log-concave (see \cite[Lemma 1.1]{Branden2015}).

In a follow-up paper, we discuss Conjecture~\ref{conjecture:Schur_log} further, as well as a related symmetric function generalization of the rank polynomial.

\section*{Acknowledgements}
R. S. Gonz\'alez D'Le\'on would like to thank Universidad Sergio Arboleda and the program of postdoctoral fellowships of Minciencias (Colombian Ministry of Science)  since part of this project was completed with their support. He is also thankful for the partial support for this research provided by an AMS-Simons Research Enhancement Grant for Primarily Undergraduate Institution Faculty at Loyola University Chicago.

\bibliographystyle{plain}
\bibliography{weighted_bond_posets}

@article{Stanley1997,
	author = "Stanley, Richard P.",
	fjournal = "Electronic Journal of Combinatorics",
	issn = "1077-8926",
	journal = "Electron. J. Combin.",
	mrclass = "05A17 (05E05)",
	mrnumber = "1444167",
	note = "The Wilf Festschrift (Philadelphia, PA, 1996)",
	number = "2",
	pages = "Research Paper 20, approx. 14",
	title = "{Parking functions and noncrossing partitions}",
	url = "http://www.combinatorics.org/Volume_4/Abstracts/v4i2r20.html",
	volume = "4",
	year = "1997"
}

@article{Stanley1974,
author = {Richard P. Stanley},
title = {Finite lattices and {J}ordan-{H}{\"o}lder sets},
journal = {Algebra Universalis},
volume = {4},
number = {1},
pages = {361--371},
year = {1974},
doi = {10.1007/BF02485748}
}

@article{BjornerWachs1996,
	author = "Bj{\"o}rner, Anders and Wachs, Michelle L.",
	coden = "TAMTAM",
	doi = "10.1090/S0002-9947-96-01534-6",
	fjournal = "Transactions of the American Mathematical Society",
	issn = "0002-9947",
	journal = "Trans. Amer. Math. Soc.",
	mrclass = "06A08 (05E99 52B99)",
	mrnumber = "1333388",
	mrreviewer = "T. S. Blyth",
	number = "4",
	pages = "1299--1327",
	title = "{Shellable nonpure complexes and posets. {I}}",
	url = "http://dx.doi.org/10.1090/S0002-9947-96-01534-6",
	volume = "348",
	year = "1996"
}

@article{BjornerWachs1997,
	author = "Bj{\"o}rner, Anders and Wachs, Michelle L.",
	coden = "TAMTAM",
	doi = "10.1090/S0002-9947-97-01838-2",
	fjournal = "Transactions of the American Mathematical Society",
	issn = "0002-9947",
	journal = "Trans. Amer. Math. Soc.",
	mrclass = "06A08 (05E99)",
	mrnumber = "1401765",
	mrreviewer = "Volkmar Welker",
	number = "10",
	pages = "3945--3975",
	title = "{Shellable nonpure complexes and posets. {II}}",
	url = "http://dx.doi.org/10.1090/S0002-9947-97-01838-2",
	volume = "349",
	year = "1997"
}

@article{Bjorner1980,
	author = "Bj{\"o}rner, Anders",
	coden = "TAMTAM",
	fjournal = "Transactions of the American Mathematical Society",
	issn = "0002-9947",
	journal = "Trans. Amer. Math. Soc.",
	mrclass = "06A10 (13H10 52A25)",
	mrnumber = "570784 (81i:06001)",
	mrreviewer = "P. McMullen",
	number = "1",
	pages = "159--183",
	title = "{Shellable and {C}ohen-{M}acaulay partially ordered sets}",
	volume = "260",
	year = "1980"
}

@article{BjornerWachs1983,
	author = "Bj{\"o}rner, Anders and Wachs, Michelle",
	coden = "TAMTAM",
	fjournal = "Transactions of the American Mathematical Society",
	issn = "0002-9947",
	journal = "Trans. Amer. Math. Soc.",
	mrclass = "06A10 (05A99 52A25 57Q05)",
	mrnumber = "690055 (84f:06004)",
	mrreviewer = "R. P. Dilworth",
	number = "1",
	pages = "323--341",
	title = "{On lexicographically shellable posets}",
	volume = "277",
	year = "1983"
}

@incollection{Wachs2007,
	address = "Providence, RI",
	author = "Wachs, Michelle L.",
	booktitle = "{Geometric combinatorics}",
	mrclass = "06B30 (05E10 52C35 55R80)",
	mrnumber = "2383132",
	pages = "497--615",
	publisher = "Amer. Math. Soc.",
	series = "{IAS/Park City Math. Ser.}",
	title = "{Poset topology: tools and applications}",
	volume = "13",
	year = "2007"
}

@article{Wachs1998,
	author = "Wachs, Michelle L.",
	coden = "DSMHA4",
	doi = "10.1016/S0012-365X(98)00147-2",
	fjournal = "Discrete Mathematics",
	issn = "0012-365X",
	journal = "Discrete Math.",
	mrclass = "05E25 (17B01 20C30)",
	mrnumber = "1661375 (2000b:05134)",
	mrreviewer = "Viorel Mihai Gontineac",
	note = "Selected papers in honor of Adriano Garsia (Taormina, 1994)",
	number = "1-3",
	pages = "287--319",
	title = "{On the (co)homology of the partition lattice and the free {L}ie algebra}",
	url = "http://dx.doi.org/10.1016/S0012-365X(98)00147-2",
	volume = "193",
	year = "1998"
}

@mastersthesis{Avila2022,
	author = "Avila, N.",
	title = "The $\mu$-polynomials of graph associahedra",
    type="Bachelor's Thesis",
    school  = "Universidad Sergio Arboleda",
    year    = "2022"
}

@article{AvilaCarrilloGonzalezDleon2026,
	author = "Avila, N. and Carrillo, S.A. and Gonz{\'a}lez D'Le{\'o}n, R.~S.",
	journal = "In preparation",
	title = "The $\mu$-polynomials of nestohedra",
	year = "2026"
}

@article {Read1968,
    AUTHOR = {Read, Ronald C.},
     TITLE = {An introduction to chromatic polynomials},
   JOURNAL = {J. Combinatorial Theory},
  FJOURNAL = {Journal of Combinatorial Theory},
    VOLUME = {4},
      YEAR = {1968},
     PAGES = {52--71},
      ISSN = {0021-9800},
   MRCLASS = {05.55},
  MRNUMBER = {224505},
MRREVIEWER = {K. Wagner},
}

@article {Hoggar1974,
    AUTHOR = {Hoggar, S. G.},
     TITLE = {Chromatic polynomials and logarithmic concavity},
   JOURNAL = {J. Combinatorial Theory Ser. B},
  FJOURNAL = {Journal of Combinatorial Theory. Series B},
    VOLUME = {16},
      YEAR = {1974},
     PAGES = {248--254},
      ISSN = {0095-8956},
   MRCLASS = {05C15},
  MRNUMBER = {342424},
MRREVIEWER = {Ruth Bari},
       DOI = {10.1016/0095-8956(74)90071-9},
       URL = {https://doi.org/10.1016/0095-8956(74)90071-9},
}

@article {Whitney1932,
    AUTHOR = {Whitney, Hassler},
     TITLE = {A logical expansion in mathematics},
   JOURNAL = {Bull. Amer. Math. Soc.},
  FJOURNAL = {Bulletin of the American Mathematical Society},
    VOLUME = {38},
      YEAR = {1932},
    NUMBER = {8},
     PAGES = {572--579},
      ISSN = {0002-9904},
   MRCLASS = {DML},
  MRNUMBER = {1562461},
       DOI = {10.1090/S0002-9904-1932-05460-X},
       URL = {https://doi.org/10.1090/S0002-9904-1932-05460-X},
}

@article {GonzalezDLeon2016,
    AUTHOR = {Gonz\'{a}lez D'Le\'{o}n, Rafael S.},
     TITLE = {On the free {L}ie algebra with multiple brackets},
   JOURNAL = {Adv. in Appl. Math.},
  FJOURNAL = {Advances in Applied Mathematics},
    VOLUME = {79},
      YEAR = {2016},
     PAGES = {37--97},
      ISSN = {0196-8858},
   MRCLASS = {05E45 (05A18 05E18 17B01 20C30)},
  MRNUMBER = {3505221},
       DOI = {10.1016/j.aam.2016.02.008},
       URL = {https://doi.org/10.1016/j.aam.2016.02.008},
}

@article {GonzalezDLeonWachs2016,
    AUTHOR = {Gonz\'{a}lez D'Le\'{o}n, Rafael S. and Wachs, Michelle L.},
     TITLE = {On the (co)homology of the poset of weighted partitions},
   JOURNAL = {Trans. Amer. Math. Soc.},
  FJOURNAL = {Transactions of the American Mathematical Society},
    VOLUME = {368},
      YEAR = {2016},
    NUMBER = {10},
     PAGES = {6779--6818},
      ISSN = {0002-9947},
   MRCLASS = {05E45 (05E15 05E18 06A11 17B01)},
  MRNUMBER = {3471077},
MRREVIEWER = {Steven Klee},
       DOI = {10.1090/tran/6483},
       URL = {https://doi.org/10.1090/tran/6483},
}

@article{DotsenkoKhoroshkin2007,
	author = "Dotsenko, V. V. and Khoroshkin, A. S.",
	doi = "10.1007/s10688-007-0001-3",
	fjournal = "Rossi\u\i skaya Akademiya Nauk. Funktsional\cprime ny\u\i\ Analiz i ego Prilozheniya",
	issn = "0374-1990",
	journal = "Funktsional. Anal. i Prilozhen.",
	mrclass = "18D50 (17B63 55P48)",
	mrnumber = "2333979 (2008d:18006)",
	mrreviewer = "Eugen Paal",
	number = "1",
	pages = "1--22, 96",
	title = "{Character formulas for the operad of a pair of compatible brackets and for the bi-{H}amiltonian operad}",
	url = "http://dx.doi.org/10.1007/s10688-007-0001-3",
	volume = "41",
	year = "2007"
}

@article {CarrDevadoss2006,
    AUTHOR = {Carr, Michael P. and Devadoss, Satyan L.},
     TITLE = {Coxeter complexes and graph-associahedra},
   JOURNAL = {Topology Appl.},
  FJOURNAL = {Topology and its Applications},
    VOLUME = {153},
      YEAR = {2006},
    NUMBER = {12},
     PAGES = {2155--2168},
      ISSN = {0166-8641},
   MRCLASS = {52B11 (05B45 14H10 14P25)},
  MRNUMBER = {2239078},
MRREVIEWER = {Seth Sullivant},
       DOI = {10.1016/j.topol.2005.08.010},
       URL = {https://doi.org/10.1016/j.topol.2005.08.010},
}

@article{GonzalezDLeon2016-2,
	author = "{Gonz{\'a}lez D'Le{\'o}n}, Rafael S.",
	journal = "The Electronic Journal of Combinatorics",
	number = "1",
	pages = "P1--20",
	title = "{A Note on the $\gamma$-Coefficients of the Tree Eulerian Polynomial}",
	volume = "23",
	year = "2016"
}

@article{StanleyGessel1978,
	author = "Gessel, Ira and Stanley, Richard P.",
	fjournal = "Journal of Combinatorial Theory. Series A",
	journal = "J. Combinatorial Theory Ser. A",
	mrclass = "05A15",
	mrnumber = "0462961 (57 \#2926)",
	mrreviewer = "Stephen Tanny",
	number = "1",
	pages = "24--33",
	title = "{Stirling polynomials}",
	volume = "24",
	year = "1978"
}

@article{PostnikovReinerWilliams2008,
  title={Faces of generalized permutohedra},
  author={Postnikov, Alex and Reiner, Victor and Williams, Lauren},
  journal={Doc. Math},
  volume={13},
  number={207-273},
  pages={51},
  year={2008}
}

@article{Haiman1994,
	author = "Haiman, Mark D.",
	coden = "JAOME7",
	doi = "10.1023/A:1022450120589",
	fjournal = "Journal of Algebraic Combinatorics. An International Journal",
	issn = "0925-9899",
	journal = "J. Algebraic Combin.",
	mrclass = "20C30 (05E05)",
	mrnumber = "1256101 (95a:20014)",
	mrreviewer = "A. O. Morris",
	number = "1",
	pages = "17--76",
	title = "{Conjectures on the quotient ring by diagonal invariants}",
	url = "http://dx.doi.org/10.1023/A:1022450120589",
	volume = "3",
	year = "1994"
}

@article{GonzalezDLeon2019,
  title={A family of symmetric functions associated with Stirling permutations},
  author={Gonz{\'a}lez D’Le{\'o}n, Rafael S},
  journal={Journal of Combinatorics},
  volume={10},
  number={4},
  pages={675--709},
  year={2019},
  publisher={International Press of Boston}
}

@article{GonzalezDLeon2018,
  title={The colored symmetric and exterior algebras},
  author={Gonz{\'a}lez D’Le{\'o}n, Rafael S},
  journal={Journal of Algebra},
  volume={496},
  pages={187--215},
  year={2018},
  publisher={Elsevier}
}

@misc{Ellzey2014,
  author = "Ellzey, Brittney",
  date = "2014",
  howpublished = "personal communication"
}

@article{DeConciniProcesi1995,
  title={Wonderful models of subspace arrangements},
  author={De Concini, Corrado and Procesi, Claudio},
  journal={Selecta Mathematica},
  volume={1},
  number={3},
  pages={459--494},
  year={1995},
  publisher={Birkh{\"a}user-Verlag}
}

@article {AguiarArdila2023,
    AUTHOR = {Aguiar, Marcelo and Ardila, Federico},
     TITLE = {Hopf monoids and generalized permutahedra},
   JOURNAL = {Mem. Amer. Math. Soc.},
  FJOURNAL = {Memoirs of the American Mathematical Society},
    VOLUME = {289},
      YEAR = {2023},
    NUMBER = {1437},
     PAGES = {vi+119},
      ISSN = {0065-9266,1947-6221},
      ISBN = {978-1-4704-6708-1; 978-1-4704-7592-5},
   MRCLASS = {52B05 (05B35 05C65 16T30 52B40)},
  MRNUMBER = {4651496},
MRREVIEWER = {Joseph\ Kung},
       DOI = {10.1090/memo/1437},
       URL = {https://doi.org/10.1090/memo/1437},
}

@article{Postnikov2009,
  title={Permutohedra, associahedra, and beyond},
  author={Postnikov, Alexander},
  journal={International Mathematics Research Notices},
  volume={2009},
  number={6},
  pages={1026--1106},
  year={2009},
  publisher={OUP}
}

@article{Gal2005,
  title={Real root conjecture fails for five-and higher-dimensional spheres},
  author={Gal, Swiatoslaw R},
  journal={Discrete \& Computational Geometry},
  volume={34},
  number={2},
  pages={269--284},
  year={2005},
  publisher={Springer}
}

@article {Stanley1980,
    AUTHOR = {Stanley, Richard P.},
     TITLE = {The number of faces of a simplicial convex polytope},
   JOURNAL = {Adv. in Math.},
  FJOURNAL = {Advances in Mathematics},
    VOLUME = {35},
      YEAR = {1980},
    NUMBER = {3},
     PAGES = {236--238},
      ISSN = {0001-8708},
   MRCLASS = {52A25 (05B99 14M99)},
  MRNUMBER = {563925},
MRREVIEWER = {I. Dolgachev},
       DOI = {10.1016/0001-8708(80)90050-X},
       URL = {https://doi.org/10.1016/0001-8708(80)90050-X},
}

@article{Stanley1995,
  title={A symmetric function generalization of the chromatic polynomial of a graph},
  author={Stanley, Richard P},
  journal={Advances in Mathematics},
  volume={111},
  number={1},
  pages={166--194},
  year={1995},
  publisher={Elsevier}
  }

@book{Stanley1999,
	address = "Cambridge",
	author = "Stanley, Richard P.",
	doi = "10.1017/CBO9780511609589",
	isbn = "0-521-56069-1; 0-521-78987-7",
	mrclass = "05A15 (05-02 05E05 05E10 68R05)",
	mrnumber = "1676282 (2000k:05026)",
	mrreviewer = "Ira Gessel",
	note = "With a foreword by Gian-Carlo Rota and appendix 1 by Sergey Fomin",
	pages = "xii+581",
	publisher = "Cambridge University Press",
	series = "{Cambridge Studies in Advanced Mathematics}",
	title = "{Enumerative combinatorics. {V}ol. 2}",
	url = "http://dx.doi.org/10.1017/CBO9780511609589",
	volume = "62",
	year = "1999"
}

@book{Stanley2012,
	address = "Cambridge",
	author = "Stanley, Richard P.",
	edition = "Second",
	isbn = "978-1-107-60262-5",
	mrclass = "05-02 (05A15 06-02)",
	mrnumber = "2868112",
	pages = "xiv+626",
	publisher = "Cambridge University Press",
	series = "{Cambridge Studies in Advanced Mathematics}",
	title = "{Enumerative combinatorics. {V}olume 1}",
	volume = "49",
	year = "2012"
}

@article{BjornerWelker2005,
	author = "Bj{\"o}rner, Anders and Welker, Volkmar",
	coden = "JPAAA2",
	doi = "10.1016/j.jpaa.2004.11.013",
	fjournal = "Journal of Pure and Applied Algebra",
	issn = "0022-4049",
	journal = "J. Pure Appl. Algebra",
	mrclass = "06A11 (05E25 16S37)",
	mrnumber = "2132872 (2005m:06006)",
	mrreviewer = "Joseph Neggers",
	number = "1-3",
	pages = "43--55",
	title = "{Segre and {R}ees products of posets, with ring-theoretic applications}",
	url = "http://dx.doi.org/10.1016/j.jpaa.2004.11.013",
	volume = "198",
	year = "2005"
}

@article{ShareshianWachs2009,
	author = "Shareshian, John and Wachs, Michelle L.",
	fjournal = "Electronic Journal of Combinatorics",
	issn = "1077-8926",
	journal = "Electron. J. Combin.",
	mrclass = "06A07 (05A30 05E10 05E15)",
	mrnumber = "2576383 (2011e:06003)",
	number = "2, Special volume in honor of Anders Bjorner",
	pages = "Research Paper 20, 29",
	title = "{Poset homology of {R}ees products, and {$q$}-{E}ulerian polynomials}",
	url = "http://www.combinatorics.org/Volume_16/Abstracts/v16i2r20.html",
	volume = "16",
	year = "2009"
}

@article {ShareshianWachs2020,
    AUTHOR = {Shareshian, John and Wachs, Michelle L.},
     TITLE = {Gamma-positivity of variations of {E}ulerian polynomials},
   JOURNAL = {J. Comb.},
  FJOURNAL = {Journal of Combinatorics},
    VOLUME = {11},
      YEAR = {2020},
    NUMBER = {1},
     PAGES = {1--33},
      ISSN = {2156-3527},
 PUBLISHER = {International Press of Boston},
   MRCLASS = {05E05 (11B65 11B68 52B20)},
  MRNUMBER = {4015851},
MRREVIEWER = {Ivica Martinjak},
       DOI = {10.4310/JOC.2020.v11.n1.a1},
       URL = {https://doi.org/10.4310/JOC.2020.v11.n1.a1},
}

@article {HaglundZhang2019,
    AUTHOR = {Haglund, James and Zhang, Philip B.},
     TITLE = {Real-rootedness of variations of {E}ulerian polynomials},
   JOURNAL = {Adv. in Appl. Math.},
  FJOURNAL = {Advances in Applied Mathematics},
    VOLUME = {109},
      YEAR = {2019},
     PAGES = {38--54},
      ISSN = {0196-8858},
   MRCLASS = {05A15 (05A05 26C10 52B05)},
  MRNUMBER = {3954084},
MRREVIEWER = {Mihai Cipu},
       DOI = {10.1016/j.aam.2019.05.001},
       URL = {https://doi.org/10.1016/j.aam.2019.05.001},
}

@book{Drake2008,
	author = "Drake, Brian",
	isbn = "978-0549-69926-2",
	mrclass = "Thesis",
	mrnumber = "2712031",
	note = "Thesis (Ph.D.)--Brandeis University",
	pages = "114",
	publisher = "ProQuest LLC, Ann Arbor, MI",
	title = "{An inversion theorem for labeled trees and some limits of areas under lattice paths}",
	url = "http://gateway.proquest.com/openurl?url_ver=Z39.88-2004&rft_val_fmt=info:ofi/fmt:kev:mtx:dissertation&res_dat=xri:pqdiss&rft_dat=xri:pqdiss:3316489",
	year = "2008"
}

@article {LinussonShareshianWachs2012,
    AUTHOR = {Linusson, Svante and Shareshian, John and Wachs, Michelle L.},
     TITLE = {Rees products and lexicographic shellability},
   JOURNAL = {J. Comb.},
  FJOURNAL = {Journal of Combinatorics},
    VOLUME = {3},
      YEAR = {2012},
    NUMBER = {3},
     PAGES = {243--276},
      ISSN = {2156-3527},
   MRCLASS = {06A11 (05A05 05A30 05E05 05E45 06A07)},
  MRNUMBER = {3029437},
MRREVIEWER = {Peter R. W. McNamara},
       DOI = {10.4310/JOC.2012.v3.n3.a1},
       URL = {https://doi.org/10.4310/JOC.2012.v3.n3.a1},
}

@book {Petersen2015,
    AUTHOR = {Petersen, T. Kyle},
     TITLE = {Eulerian numbers},
    SERIES = {Birkh\"{a}user Advanced Texts: Basler Lehrb\"{u}cher. [Birkh\"{a}user
              Advanced Texts: Basel Textbooks]},
      NOTE = {With a foreword by Richard Stanley},
 PUBLISHER = {Birkh\"{a}user/Springer, New York},
      YEAR = {2015},
     PAGES = {xviii+456},
      ISBN = {978-1-4939-3090-6; 978-1-4939-3091-3},
   MRCLASS = {05-02 (05A15 05Exx 06A07 11B65 11B75 20F55)},
  MRNUMBER = {3408615},
MRREVIEWER = {Damir Yeliussizov},
       DOI = {10.1007/978-1-4939-3091-3},
       URL = {https://doi.org/10.1007/978-1-4939-3091-3},
}

@book {FoataSchutzenberger1970,
    AUTHOR = {Foata, Dominique and Sch\"{u}tzenberger, Marcel-P.},
     TITLE = {Th\'{e}orie g\'{e}om\'{e}trique des polyn\^{o}mes eul\'{e}riens},
    SERIES = {Lecture Notes in Mathematics, Vol. 138},
 PUBLISHER = {Springer-Verlag, Berlin-New York},
      YEAR = {1970},
     PAGES = {v+94},
   MRCLASS = {05.10},
  MRNUMBER = {0272642},
MRREVIEWER = {David P. Roselle},
}

@incollection {Branden2015,
    AUTHOR = {Br\"{a}nd\'{e}n, Petter},
     TITLE = {Unimodality, log-concavity, real-rootedness and beyond},
 BOOKTITLE = {Handbook of enumerative combinatorics},
    SERIES = {Discrete Math. Appl. (Boca Raton)},
     PAGES = {437--483},
 PUBLISHER = {CRC Press, Boca Raton, FL},
      YEAR = {2015},
   MRCLASS = {05Axx (05D40 05E10)},
  MRNUMBER = {3409348},
}

@article {BjornerWachs1982,
    AUTHOR = {Bj\"{o}rner, Anders and Wachs, Michelle},
     TITLE = {Bruhat order of {C}oxeter groups and shellability},
   JOURNAL = {Adv. in Math.},
  FJOURNAL = {Advances in Mathematics},
    VOLUME = {43},
      YEAR = {1982},
    NUMBER = {1},
     PAGES = {87--100},
      ISSN = {0001-8708},
   MRCLASS = {20H15 (06A10 13F20 14L30 52A43)},
  MRNUMBER = {644668},
MRREVIEWER = {S. Milne},
       DOI = {10.1016/0001-8708(82)90029-9},
       URL = {https://doi.org/10.1016/0001-8708(82)90029-9},
}

@inproceedings {Huh2018,
    AUTHOR = {Huh, June},
     TITLE = {Combinatorial applications of the {H}odge-{R}iemann relations},
 BOOKTITLE = {Proceedings of the {I}nternational {C}ongress of
              {M}athematicians---{R}io de {J}aneiro 2018. {V}ol. {IV}.
              {I}nvited lectures},
     PAGES = {3093--3111},
 PUBLISHER = {World Sci. Publ., Hackensack, NJ},
      YEAR = {2018},
   MRCLASS = {05A20},
  MRNUMBER = {3966524},
}

@book {GrahamKnuthPatashnik1989,
    AUTHOR = {Graham, Ronald L. and Knuth, Donald E. and Patashnik, Oren},
     TITLE = {Concrete mathematics},
      NOTE = {A foundation for computer science},
 PUBLISHER = {Addison-Wesley Publishing Company, Advanced Book Program,
              Reading, MA},
      YEAR = {1989},
     PAGES = {xiv+625},
      ISBN = {0-201-14236-8},
   MRCLASS = {00A05 (00-01 05-01 68-01 68Rxx)},
  MRNUMBER = {1001562},
MRREVIEWER = {Volker Strehl},
}

@article {Huh2012,
    AUTHOR = {Huh, June},
     TITLE = {Milnor numbers of projective hypersurfaces and the chromatic
              polynomial of graphs},
   JOURNAL = {J. Amer. Math. Soc.},
  FJOURNAL = {Journal of the American Mathematical Society},
    VOLUME = {25},
      YEAR = {2012},
    NUMBER = {3},
     PAGES = {907--927},
      ISSN = {0894-0347},
   MRCLASS = {14B05 (05B35 14C17)},
  MRNUMBER = {2904577},
MRREVIEWER = {Paolo Aluffi},
       DOI = {10.1090/S0894-0347-2012-00731-0},
       URL = {https://doi.org/10.1090/S0894-0347-2012-00731-0},
}

@article{GesselSeo2004,
	author = "Gessel, Ira M. and Seo, Seunghyun",
	fjournal = "Electronic Journal of Combinatorics",
	issn = "1077-8926",
	journal = "Electron. J. Combin.",
	mrclass = "05A15 (05C05)",
	mrnumber = "2224940 (2006m:05010)",
	mrreviewer = "Pavlo Pylyavskyy",
	number = "2",
	pages = "Research Paper 27, 23 pp. (electronic)",
	title = "{A refinement of {C}ayley's formula for trees}",
	url = "http://www.combinatorics.org/Volume_11/Abstracts/v11i2r27.html",
	volume = "11",
	year = "2004/06"
}

@article {GradyPoznanovic2020,
    AUTHOR = {Grady, Amy and Poznanovi\'c, Svetlana},
     TITLE = {Tree descent polynomials: unimodality and central limit
              theorem},
   JOURNAL = {Ann. Comb.},
  FJOURNAL = {Annals of Combinatorics},
    VOLUME = {24},
      YEAR = {2020},
    NUMBER = {1},
     PAGES = {109--117},
      ISSN = {0218-0006,0219-3094},
   MRCLASS = {05E05 (05A05)},
  MRNUMBER = {4078142},
MRREVIEWER = {Jia\ Huang},
       DOI = {10.1007/s00026-019-00484-1},
       URL = {https://doi.org/10.1007/s00026-019-00484-1},
}

\end{document}